\newtheorem{lem}{Lemma}[section]
\newtheorem{prop}[lem]{Proposition}
\newtheorem{thm}[lem]{Theorem}
\newtheorem{remark}[lem]{Remark}
\numberwithin{equation}{section}
\providecommand{\abs}[1]{\left\vert#1\right\vert}
\providecommand{\norm}[1]{\left\Vert#1\right\Vert}
\providecommand{\Rn}[1]{\mathbb{R}^{#1}}
\providecommand{\snormspace}[3]{\left\Vert#1\right\Vert_{H^{#2}({#3})}}
\providecommand{\sd}[1]{\mathcal{D}_{#1}}
\providecommand{\se}[1]{\mathcal{E}_{#1}}
\providecommand{\sdb}[1]{\bar{\mathcal{D}}_{#1}}
\providecommand{\snormspace}[3]{\left\Vert#1\right\Vert_{H^{#2}({#3})}}
\providecommand{\ns}[1]{\norm{#1}^2}
\def\Lbrack{\left \llbracket}
\def\Rbrack{\right \rrbracket}
\def\({\left (}
\def\){\right )}
\DeclareMathOperator{\trace}{tr}
\DeclareMathOperator{\diverge}{div}
\providecommand{\Rn}[1]{\mathbb{R}^{#1}}
\providecommand{\norm}[1]{\left\Vert#1\right\Vert}
\def\ls{\lesssim}
\def\dt{\partial_t}
\def\H{{}_0H^1}
\def\na{\nabla}
\def\pa{\partial}
\def\rj{\Lbrack \bar{\rho} \Rbrack}
\providecommand{\jump}[1]{\left\llbracket #1 \right\rrbracket }
\def\RRvert2{\right \vert\! \right\vert}
\def\Lvert3{\left \vert\!\left\vert\!\left\vert}
\def\Rvert3{\right \vert\!\right\vert\!\right\vert}
\def\nab{\nabla}
\def\al{\alpha}
\def\dt{\partial_t}
\def\dtt{ \frac{d}{dt}}
\def\hal{\frac{1}{2}}
\def\ls{\lesssim}
\def\p{\partial}
\def\sg{\mathbb{D}}
\def\sgz{\mathbb{D}^0}
\def\da{\Delta_{\mathcal{A}}}
\def\naba{\nab_{\mathcal{A}}}
\def\diva{\diverge_{\mathcal{A}}}
\def\pal{\p^\alpha}
\def\a{\mathcal{A}}
\def\f{\mathcal{F}}
\def\fj1{\mathcal{J}^{-1}}
\def\n{\mathcal{N}}
\def\w{\mathcal{W}}
\def\y{\mathcal{Y}}
\def\z{\mathcal{Z}}
\def\q{q}
\def\S{\mathbb{S}}
\def\Ef{\mathfrak{E}}
\def\Ef{\mathfrak{E}_{2N}^\sigma}
\def\Df{\mathfrak{D}_{2N}}
\def\Af{\mathfrak{A}_{2N}^{j,k}}
\def\Bf{\mathfrak{B}_{2N}^{j,k}}
\def\Hf{\mathfrak{H}_{2N}^{j,k}}
\title[Compressible viscous surface-internal waves]{The compressible viscous surface-internal wave problem: nonlinear Rayleigh-Taylor instability}
\author{Juhi Jang}
\address{
Department of Mathematics\\
University of California, Riverside\\
Riverside, CA 92521, USA
}
\email[J. Jang]{juhijang@math.ucr.edu}
\thanks{J. Jang was supported in part by NSF grants DMS-1212142 and DMS-1351898.}
\author{Ian Tice}
\address{
Department of Mathematical Sciences\\
Carnegie Mellon University\\
Pittsburgh, PA 15213, USA
}
\email[I. Tice]{iantice@andrew.cmu.edu}
\author{Yanjin Wang}
\address{
School of Mathematical Sciences\\
Xiamen University\\
Xiamen, Fujian 361005, China}
\email[Y. J. Wang]{yanjin$\_$wang@xmu.edu.cn}
\thanks{Y. J. Wang was supported by the National Natural Science Foundation of China (No. 11201389)}
\subjclass[2010]{Primary 35Q30, 35R35, 76N10; Secondary 76E17, 76E19, 76N99 }
\keywords{Free boundary problems, Viscous surface-internal waves, Compressible fluids}
\begin{document}

\begin{abstract}
This paper concerns the dynamics of two layers of compressible, barotropic, viscous fluid lying atop one another. The lower fluid is bounded below by a rigid bottom, and the upper fluid is bounded above by a trivial fluid of constant pressure.  This is a free boundary problem: the interfaces between the fluids and above the upper fluid are free to move. The fluids are acted on by gravity in the bulk, and at the free interfaces we consider both the case of surface tension and the case of no surface forces.  We are concerned with the Rayleigh-Taylor instability when the upper fluid is heavier than the lower fluid along the equilibrium interface. When the surface tension at the free internal interface is below the critical value, we prove that the problem is nonlinear unstable.

\end{abstract}

\maketitle


\section{Introduction}


The Rayleigh-Taylor instability, one of the classic examples of hydrodynamic instability, is an interfacial instability between two fluids of different densities that occurs when a heavy fluid initially lies above a lighter one in a gravitational field. The instability is well-known since the classical work of Rayleigh \cite{3R} and of Taylor \cite{3T}, and it is one of the fundamental problems in fluid dynamics. A general discussion of the physics related to this topic can be found, for example, in \cite{3K}.

The Rayleigh-Taylor instability problem has received a lot of attention in the mathematics community due both to its physical importance and to the mathematical challenges it offers.  The linear Rayleigh-Taylor instability is well understood (see, for instance, Chandrasekhar's book \cite{3C}). However, there is no general theory that guarantees the passage from linear instability to nonlinear instability for PDEs, so the question of nonlinear instability is not immediately resolved by the linear analysis.

In this paper, we are concerned with the nonlinear dynamical Rayleigh-Taylor instability of viscous compressible two-fluids having different densities along a free interface, when the upper fluid is
heavier than the lower fluid along the equilibrium interface.  This is the final paper in a trio \cite{JTW_LWP, JTW_GWP} that establishes a sharp stability criterion for the compressible viscous surface-internal wave problem.

\subsection{Governing equations in Eulerian coordinates}

We consider two distinct,
immiscible, viscous, compressible, barotropic fluids evolving in a moving domain $\Omega(t)=\Omega_+(t)\cup \Omega_-(t)$ for time $t\ge0$. One fluid $(+)$, called the ``upper fluid'', fills the upper domain
\begin{equation}\label{omega_plus}
\Omega_+(t)=\{y\in  \mathrm{T}^2\times \mathbb{R}\mid \eta_-(y_1,y_2,t)<y_3< \ell +\eta_+(y_1,y_2,t)\},
\end{equation}
and the other fluid $(-)$, called the ``lower fluid'', fills the lower domain
\begin{equation}\label{omega_minus}
\Omega_-(t)=\{y\in  \mathrm{T}^2\times \mathbb{R}\mid  -b <y_3<\eta_-(y_1,y_2,t)\}.
\end{equation}
Here we assume the domains are horizontally periodic by setting $\mathrm{T}^2=(2\pi L_1\mathbb{T}) \times (2\pi L_2\mathbb{T})$ for $\mathbb{T} = \mathbb{R}/\mathbb{Z}$ the usual 1--torus and $L_1,L_2>0$ the periodicity lengths.  We assume that $\ell,b >0$ are two fixed and given constants, but the two surface functions $\eta_\pm$ are free and unknown.  The surface $\Gamma_+(t) = \{y_3= \ell  + \eta_+(y_1,y_2,t)\}$ is the moving upper boundary of $\Omega_+(t)$ where the upper fluid is in contact with the atmosphere, $\Gamma_-(t) = \{y_3=\eta_-(y_1,y_2,t)\}$ is the moving internal interface between the two fluids, and $\Sigma_b = \{y_3=-b \}$ is the fixed lower boundary of $\Omega_-(t)$.

The two fluids are described by their density and velocity functions, which are given for each $t\ge0$ by $\tilde{\rho}_\pm
(\cdot,t):\Omega_\pm (t)\rightarrow \mathbb{R}^+$ and $\tilde{u}_\pm (\cdot,t):\Omega_\pm (t)\rightarrow \mathbb{R}^3$, respectively.  In each fluid the pressure is a function of density: $P_\pm =P_\pm(\tilde{\rho}_\pm)>0$, and the pressure function is assumed to be smooth, positive, and strictly increasing.  For a vector function $u\in \Rn{3}$ we define the symmetric gradient by $(\sg u)_{ij} =   \p_i u_j + \p_j u_i$ for $i,j=1,2,3$;  its deviatoric (trace-free) part  is then
\begin{equation}\label{deviatoric_def}
 \sgz u = \sg u - \frac{2}{3} \diverge{u} I,
\end{equation}
where $I$ is the $3 \times 3$ identity matrix.  The viscous stress tensor in each fluid is then given by
\begin{equation}
\S_\pm(\tilde{u}_\pm) := \mu_\pm \sgz \tilde u_\pm +\mu'_\pm \diverge\tilde{u}_\pm I,
\end{equation}
where $\mu_\pm$ is the shear viscosity and  $\mu'_\pm$ is the bulk viscosity; we assume these satisfy the usual physical conditions
\begin{equation}\label{viscosity}
\mu_\pm>0,\quad \mu_\pm'\ge 0.
\end{equation}
The tensor $P_\pm(\tilde{\rho}_\pm) I-\S_\pm(\tilde{u}_\pm)$ is known as the stress tensor.  The divergence of a symmetric tensor $\mathbb{M}$ is defined to be the vector with components $(\diverge \mathbb{M})_i = \p_j \mathbb{M}_{ij}$.  Note then that
\begin{equation}
 \diverge\left( P_\pm(\tilde{\rho}_\pm) I-\S_\pm(\tilde{u}_\pm) \right) = \nab P_\pm(\tilde{\rho}_\pm) - \mu_\pm \Delta \tilde{u}_\pm - \left(\frac{\mu_\pm}{3} + \mu_\pm' \right)\nab \diverge{\tilde{u}_\pm}.
\end{equation}

For each $t>0$ we require that $(\tilde{\rho}_\pm,\tilde{u}_\pm, \eta_\pm)$ satisfy the following equations:
\begin{equation}\label{ns_euler}
\begin{cases}
\partial_t\tilde{\rho}_\pm+\diverge (\tilde{\rho}_\pm \tilde{u}_\pm)=0 & \text{in }\Omega_\pm(t)
\\\tilde{\rho}_\pm  (\partial_t\tilde{u}_\pm  +   \tilde{u}_\pm \cdot \nabla \tilde{u}_\pm ) +\nab P_\pm(\tilde{\rho}_\pm) -  \diverge \S_\pm(\tilde{u}_\pm) =-g\tilde{\rho}_\pm e_3 & \text{in } \Omega_\pm(t)
\\\partial_t\eta_\pm=\tilde{u}_{3,\pm}-\tilde{u}_{1,\pm}\partial_{y_1}\eta_\pm-\tilde{u}_{2,\pm}\partial_{y_2}\eta_\pm &\hbox{on } \Gamma_\pm(t)
\\(P_+(\tilde{\rho}_+)I-\S_+(\tilde{u}_+))n_+=p_{atm}n_+-\sigma_+ \mathcal{H}_+ n_+ &\hbox{on }\Gamma_+(t)
\\ (P_+(\tilde{\rho}_+)I-\S_+( \tilde{u}_+))n_-=(P_-(\tilde{\rho}_-)I-\S_-( \tilde{u}_-))n_-+ \sigma_- \mathcal{H}_-n_- &\hbox{on }\Gamma_-(t) \\\tilde{u}_+=\tilde{u}_-  &\hbox{on }\Gamma_-(t) \\\tilde{u}_-=0 &\hbox{on }\Sigma_b.
\end{cases}
\end{equation}
In the equations $-g \tilde{\rho}_\pm e_3$ is the gravitational force with the constant $g>0$ the acceleration of gravity and $e_3$ the vertical unit vector. The constant $p_{atm}>0$ is the atmospheric pressure, and we take $\sigma_\pm\ge 0$ to be the constant coefficients of surface tension. In this paper, we let $\nabla_\ast$ denote the horizontal gradient, $\diverge_\ast$ denote the horizontal divergence and $\Delta_\ast$ denote the horizontal Laplace operator. Then the upward-pointing unit normal of $\Gamma_\pm(t)$, $n_\pm$,  is given by
\begin{equation}
n_\pm=\frac{(-\nabla_\ast\eta_\pm,1)}
{\sqrt{1+|\nabla_\ast\eta_\pm|^2}},
\end{equation}
and  $\mathcal{H}_\pm$, twice the mean curvature of the surface $\Gamma_\pm(t)$, is given by the formula
\begin{equation}
\mathcal{H}_\pm=\diverge_\ast\left(\frac{\nabla_\ast\eta_\pm}
{\sqrt{1+|\nabla_\ast\eta_\pm|^2}}\right).
\end{equation}
The third equation in \eqref{ns_euler} is called the kinematic boundary condition since it implies that the free surfaces are advected with the fluids. The  boundary equations in \eqref{ns_euler} involving the stress tensor are called the dynamic boundary conditions. Notice that on $\Gamma_-(t)$, the continuity of velocity, $\tilde{u}_+ = \tilde{u}_-$,  means that it is the common value of $\tilde{u}_\pm$ that advects the interface. For a more physical description of the equations \eqref{ns_euler} and the boundary conditions in \eqref{ns_euler}, we refer to \cite{3WL}.

To complete the statement of the problem, we must specify the initial conditions. We assume that the initial surfaces $\Gamma_\pm(0)$ are given by the graphs of the functions $\eta_\pm(0)$, which yield the open sets $\Omega_\pm(0)$ on which we specify the initial data for the density, $\tilde{\rho}_\pm(0): \Omega_\pm(0) \rightarrow  \mathbb{R}^+$, and the velocity, $\tilde{u}_\pm(0): \Omega_\pm(0) \rightarrow  \mathbb{R}^3$. We will assume that $\ell+\eta_+(0)>\eta_-(0)>-b $ on $\mathrm{T}^2$, which means that at the initial time the boundaries do not intersect with each other.

\subsection{Equilibria}

We now seek a steady-state equilibrium solution to \eqref{ns_euler} with $\tilde{u}_\pm=0, \eta_\pm =0$, and the equilibrium domains given by the slabs
\begin{equation}
\Omega_+=\{y\in   \mathrm{T}^2\times \mathbb{R}\mid 0 < y_3< \ell \} \text{ and }
\Omega_-=\{y\in  \mathrm{T}^2\times \mathbb{R}\mid  -b <y_3< 0 \}.
\end{equation}
Then the system \eqref{ns_euler} reduces to the ODEs for the equilibrium densities  $\tilde\rho_\pm = \bar{\rho}_\pm(y_3)$:
\begin{equation}\label{steady}
\begin{cases}
\displaystyle\frac{d(P_+ (\bar{\rho}_+ ))}{dy_3} = -g\bar{\rho}_+, & \text{for }y_3 \in (0,\ell), \\
\displaystyle\frac{d(P_- (\bar{\rho}_- ))}{dy_3} = -g\bar{\rho}_-, & \text{for } y_3 \in (-b,0), \\
P_+(\bar{\rho}_+(\ell)) = p_{atm}, \\
P_+(\bar{\rho}_+(0))  =P_-(\bar{\rho}_-(0)).
\end{cases}
\end{equation}
The system \eqref{steady} admits a solution $\bar{\rho}_\pm >0$ if and only if the equilibrium heights $b,\ell>0$, the pressure laws $P_\pm$, and the atmospheric pressure $p_{atm}$ fulfill a collection of admissibility conditions.  These are enumerated in detail in our companion paper \cite{JTW_GWP}.  For the sake of brevity we will not repeat them here, but we will assume that those admissibility conditions are satisfied so that an equilibrium exists. We remark that the equilibrium densities $\bar{\rho}$ are strictly positive and smooth when restricted to  $[-b,0]$ and $[0,\ell]$.

We 
denote the equilibrium density at the fluid interfaces by:
\begin{equation}
 \bar{\rho}_1 = \bar{\rho}_+(\ell), \;  \bar{\rho}^+ = \bar{\rho}_+(0),\;  \bar{\rho}^- = \bar{\rho}_-(0).
\end{equation}
Notice in particular that the equilibrium density can jump across the internal interface.   The jump in the equilibrium density, which we denote by
\begin{equation}\label{rho+-}
\rj := \bar{\rho}_+(0)-\bar{\rho}_-(0)= \bar{\rho}^+ - \bar{\rho}^-,
\end{equation}
is of fundamental importance in the analysis of solutions to \eqref{ns_euler} near the equilibrium. Since we are interested in the Rayleigh-Taylor instability, we assume $\rj > 0$, that is, the upper fluid is heavier than the lower fluid along the equilibrium interface.  We refer to our companion paper \cite{JTW_GWP} for the analysis of the stable regime $\rj \le 0$.

In studying perturbations of the equilibrium density it will be useful 
to employ the enthalpy functions.  These are defined in terms of the pressure laws $P_\pm$ and the equilibrium density values via
\begin{equation}\label{h'}
 h_+(z) = \int_{\bar{\rho}_1}^z \frac{P'_+(r)}{r}dr \text{ and }  h_-(z) = \int_{\bar{\rho}^-}^z \frac{P'_-(r)}{r}dr.
\end{equation}

\subsection{Reformulation in flattened coordinates}\label{sec1.3}

The movement of the free surfaces $\Gamma_\pm(t)$ and the subsequent change of the domains $\Omega_\pm(t)$ create numerous mathematical difficulties. To circumvent these, we will switch to a  coordinate system in which  the boundaries and the domains stay fixed in time.  In order to be consistent with our study of the nonlinear stability of the equilibrium state in \cite{JTW_GWP}, we will use the equilibrium domain as the fixed domain. We will not use a Lagrangian coordinate transformation, but rather utilize a special flattening coordinate transformation motivated by Beale \cite{B2}.

To this end, we define the fixed domain
\begin{equation}
\Omega = \Omega_+\cup\Omega_-\text{ with }\Omega_+:=\{0<x_3<\ell \} \text{ and } \Omega_-:=\{-b<x_3<0\},
\end{equation}
for which we have written the coordinates as $x\in \Omega$. We shall write $\Sigma_+:=\{x_3= \ell\}$ for the upper boundary, $\Sigma_-:=\{x_3=0\}$ for the internal interface and $\Sigma_b:=\{x_3=-b\}$ for the lower boundary.  Throughout the paper we will write $\Sigma = \Sigma_+ \cup \Sigma_-$.   We think of $\eta_\pm$ as a function on $\Sigma_\pm$ according to $\eta_+: (\mathrm{T}^2\times\{\ell\}) \times \mathbb{R}^{+} \rightarrow\mathbb{R}$ and $\eta_-:(\mathrm{T}^2\times\{0\}) \times \mathbb{R}^{+} \rightarrow \mathbb{R}$, respectively. We will transform the free boundary problem in $\Omega(t)$ to one in the fixed domain $\Omega $ by using the unknown free surface functions $\eta_\pm$. For this we define
\begin{equation}
\bar{\eta}_+:=\mathcal{P}_+\eta_+=\text{Poisson extension of }\eta_+ \text{ into }\mathrm{T}^2 \times \{x_3\le \ell\}
\end{equation}
and
\begin{equation}
\bar{\eta}_-:=\mathcal{P}_-\eta_-=\text{specialized Poisson extension of }\eta_-\text{ into }\mathrm{T}^2 \times \mathbb{R},
\end{equation}
where $\mathcal{P}_\pm$ are defined in the appendix by \eqref{P+def} and \eqref{P-def}. The Poisson extensions $\bar{\eta}_\pm$ allow us to flatten the coordinate domains via the following special coordinate transformation:
\begin{equation}\label{cotr}
\Omega_\pm \ni x\mapsto(x_1,x_2, x_3+ \tilde{b}_1\bar{\eta}_++\tilde{b}_2\bar{\eta}_-):=\Theta (x,t)=(y_1,y_2,y_3)\in\Omega_\pm(t),
\end{equation}
where we have chosen $\tilde{b}_1=\tilde{b}_1(x_3), \tilde{b}_2=\tilde{b}_2(x_3)$ to be two smooth functions in $\mathbb{R}$ that satisfy
\begin{equation}\label{b function}
\tilde{b}_1(0)=\tilde{b}_1(-b)=0, \tilde{b}_1(\ell)=1\text{ and }\tilde{b}_2(\ell)=\tilde{b}_2(-b)=0, \tilde{b}_2(0)=1.
\end{equation}
Note that $\Theta(\Sigma_+,t)=\Gamma_+(t),\ \Theta (\Sigma_-,t)=\Gamma_-(t)$ and $\Theta(\cdot,t) \mid_{\Sigma_b} = Id \mid_{\Sigma_b}$.

If $\eta $ is sufficiently small (in an appropriate Sobolev space), then the mapping $\Theta $ is a diffeomorphism.  This allows us to transform the problem \eqref{ns_euler} to one in the fixed spatial domain $\Omega$ for each $t\ge 0$.
In order to write down the equations in the new coordinate system, we compute
\begin{equation}\label{A_def}
\begin{array}{ll} \nabla\Theta  =\left(\begin{array}{ccc}1&0&0\\0&1&0\\A  &B  &J  \end{array}\right)
\text{ and }\mathcal{A}  := \left(\nabla\Theta
^{-1}\right)^T=\left(\begin{array}{ccc}1&0&-A   K  \\0&1&-B   K  \\0&0&K
\end{array}\right)\end{array}.
\end{equation}
Here the components in the matrix are
\begin{equation}\label{ABJ_def}
A  =\p_1\theta ,\
B  =\p_2\theta,\
J = 1 + \p_3\theta,\  K  =J^{-1},
\end{equation}
where we have written
\begin{equation}\label{theta}
\theta:=\tilde{b}_1\bar{\eta}_++\tilde{b}_2\bar{\eta}_-.
\end{equation}
Notice that $J={\rm det}\, \nabla\Theta $ is the Jacobian of the coordinate transformation. It is straightforward to check that, because of how we have defined $\bar{\eta}_-$ and $\Theta $, the matrix $\mathcal{A}$ is regular across the interface $\Sigma_-$.

We now define the density $\rho_\pm$ and the velocity $u_\pm$ on $\Omega_\pm$ by the compositions $\rho_\pm(x,t)=\tilde \rho_\pm(\Theta_\pm(x,t),t)$ and $  u_\pm(x,t)=\tilde u_\pm(\Theta_\pm(x,t),t)$. Since the domains $\Omega_\pm$ and the boundaries $\Sigma_\pm$ are now fixed, we henceforth consolidate notation by writing $f$ to refer to $f_\pm$ except when necessary to distinguish the two; when we write an equation for $f$ we assume that the equation holds with the subscripts added on the domains $\Omega_\pm$ or $\Sigma_\pm$. To write the jump conditions on $\Sigma_-$, for a quantity $f=f_\pm$, we define the interfacial jump as
\begin{equation}
\jump{f} := f_+ \vert_{\{x_3=0\}} - f_- \vert_{\{x_3=0\}}.
\end{equation}
In the new coordinates, the PDEs \eqref{ns_euler} become the following system for $(\rho, u,\eta)$:
\begin{equation}\label{ns_geometric}
\begin{cases}
\partial_t \rho-K\p_t\theta\p_3\rho +\diverge_\a (  {\rho}   u)=0 & \text{in }
\Omega  \\
\rho (\partial_t    u -K\p_t\theta\p_3 u+u\cdot\nabla_\a u  ) + \nabla_\a P ( {\rho} )    -\diva \S_\a (u) =- g\rho e_3 & \text{in }
\Omega
\\ \partial_t \eta = u\cdot \n &
\text{on }\Sigma
\\ (P ( {\rho} ) I- \S_{\a}(u))\n
=p_{atm}\n -\sigma_+  \mathcal{H} \n  &\hbox{on }\Sigma_+
 \\ \jump{P ( {\rho} ) I- \S_\a(u)}\n
= \sigma_-  \mathcal{H} \n  &\hbox{on }\Sigma_-
 \\\jump{u}=0   &\hbox{on }\Sigma_-\\  {u}_- =0 &\text{on }\Sigma_b.
\end{cases}
\end{equation}
Here we have written the differential operators $\naba$, $\diva$, and $\da$ with their actions given by
\begin{equation}
 (\naba f)_i := \a_{ij} \p_j f,\; \diva X := \a_{ij}\p_j X_i, \text{ and }\da f := \diva \naba f
\end{equation}
for appropriate $f$ and $X$.  We have also written
\begin{equation}\label{n_def}
\n := (-\p_1 \eta, - \p_2 \eta,1)
\end{equation}
for the non-unit normal vector to $\Sigma(t)$, and we have written
\begin{multline}\label{deviatoric_a_def}
(\sg_{\a} u)_{ij} =  \a_{ik} \p_k u_j + \a_{jk} \p_k u_i, \qquad \sgz_{\a} u = \sg_{\a} u - \frac{2}{3} \diva u I,\\
 \text{and } \S_{\a,\pm}(u): =\mu_\pm \sgz_{\a} u+ \mu_\pm' \diva u I.
\end{multline}
Note that if we extend $\diva$ to act on symmetric tensors in the natural way, then $\diva \S_{\a} u =\mu\Delta_\a u+(\mu/3+\mu')\nabla_\a \diverge_\a u$. Recall that $\a$ is determined by $\eta$ through \eqref{A_def}. This means that all of the differential operators in \eqref{ns_geometric} are connected to $\eta$, and hence to the geometry of the free surfaces.


\subsection{Perturbation equations}

We will now rephrase the PDEs \eqref{ns_geometric} in a perturbation formulation around the steady-state solution 
$(\bar\rho, 0,0)$. We define a special density perturbation by
\begin{equation}\label{q_def}
 \q=\rho-\bar\rho- \p_3\bar\rho\theta.
\end{equation}
For the pressure term $P(\rho)=P(\bar\rho+\q+ \p_3\bar\rho\theta)$, we expand it via the Taylor expansion: by \eqref{steady} we have
\begin{equation}\label{R1}
P (\bar\rho+\q+\p_3\bar\rho\theta)=P (\bar{\rho} )+P '(\bar{\rho} )(\q+\p_3\bar\rho\theta)+\mathcal{R}=P (\bar{\rho} )+P '(\bar{\rho} ) \q -g\bar\rho\theta+\mathcal{R},
\end{equation}
where the remainder term is given by
\begin{equation}\label{R_def}
\mathcal{R} =\int_{\bar{\rho} }^{\bar{\rho} +\q+\p_3\bar\rho\theta}(\bar{\rho} +\q+\p_3\bar\rho\theta-z)  P ^{\prime\prime}(z)\,dz.
\end{equation}
Recalling \eqref{rho+-}, \eqref{b function}, and \eqref{theta}, we find that
\begin{equation}
 -g\bar\rho_+\theta=-\bar{\rho}_1  g\eta_+\text{ on }\Sigma_+,\text{ and } \jump{-g\bar\rho\theta}  =-\rj g \eta_-\text{ on }\Sigma_-.
\end{equation}

The equations \eqref{ns_geometric} can be written as the following system when perturbed around the equilibrium $(\bar{\rho},0,0)$:
\begin{equation}\label{geometric}
\begin{cases}
\partial_t \q +\diverge_\a((\bar{\rho} + \q+\p_3\bar\rho\theta) u ) - \p_3^2\bar\rho K \theta \p_t\theta - K\p_t\theta \pa_3  \q =0 & \text{in } \Omega
\\
( \bar{\rho} +  \q+\p_3\bar\rho\theta)\partial_t    u
+( \bar{\rho} +  \q+\p_3\bar\rho\theta) (-K\p_t\theta \pa_3  u  +   u \cdot \nab_\a  u )
+ \bar{\rho}\nabla_\a \left(h'(\bar{\rho})\q\right)  \\
\quad -\diva \S_{\a} u =- \nabla_\a\mathcal{R}-g( \q+\p_3\bar\rho\theta ) \nabla_\a \theta & \text{in }
\Omega
\\
\partial_t \eta = u\cdot \n & \text{on }\Sigma
\\
(  P'(\bar\rho)\q I- \S_{\a}(  u))\n  =  \bar{\rho}_1  g \eta \n-\sigma_+ \mathcal{H}   \n
- \mathcal{R}_+ \n
 & \text{on } \Sigma_+
 \\
 \jump{P'(\bar\rho)\q I- \S_\a(u)}\n = \rj g\eta\n +\sigma_- \mathcal{H} \n  - \jump{ \mathcal{R} }\n
 &\text{on }\Sigma_-
\\
 \jump{u}=0 &\text{on } \Sigma_-
\\
u_-=0 &\hbox{on }\Sigma_b.
\end{cases}
\end{equation}

\begin{remark}
The special density perturbation $q$ given by \eqref{q_def} and the subsequent perturbation equations of the form \eqref{geometric} are crucial for our study in the stability regime in \cite{JTW_GWP}. However, it is not essential for the instability regime in this paper. Indeed, we could consider $\rho-\bar\rho$ directly. We choose here to consider $q$ in order to be consistent with the study in \cite{JTW_GWP}.
\end{remark}

\subsection{Main result}



For a given jump value in the equilibrium density {$\jump{\bar\rho}>0$}, we define the critical surface tension value by
\begin{equation}\label{sigma_c}
\sigma_c:= {\jump{\bar\rho}}g\max\{L_1^2,L_2^2\}.
\end{equation}
 In our companion paper \cite{JTW_GWP}, we have proved the global existence of solutions decaying to the equilibrium state $(0,0,0)$ in the problem \eqref{geometric} when $\sigma_- > \sigma_c$. The goal of this paper is to show that when $\sigma_-<\sigma_c$, the equilibrium state $(0,0,0)$ is unstable in the compressible viscous surface-internal wave problem \eqref{geometric}.

Our main result can be stated as follows:

\begin{thm}\label{maintheorem}
Assume that $\rj >0$ and $\sigma_-<\sigma_c$, where $\sigma_c$ is defined by \eqref{sigma_c}. Let the triple norm $\Lvert3   \cdot   \Rvert3_{00}$ be defined by \eqref{norm3} (with $N\ge 3$ an integer). There exist constants $\theta_0>0$  and $C>0$ such that for any sufficiently small $0< \iota<\theta_0$ there  exist solutions $(q^\iota(t), u^\iota(t) , \eta^\iota (t))$ to \eqref{geometric} such that
 \begin{equation}
  \Lvert3   ( q^\iota(0) , u^\iota(0) , \eta^\iota(0) )   \Rvert3_{00}\le C\iota,\hbox{ but }  \norm{ \eta_-^\iota(T^\iota)}_{L^2}\ge \frac{\theta_0}{2}.
 \end{equation}
Here the escape time $T^\iota>0$ is
 \begin{equation}\label{escape_time}
T^\iota:=\frac{1}{\lambda}\log\frac{\theta_0}{\iota},
\end{equation}
 where $\frac{\Lambda}{2}<\lambda\le \Lambda$ with $\Lambda$ the sharp linear growth rate defined by \eqref{Lambda}.
\end{thm}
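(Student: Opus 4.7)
My plan follows the Guo-Strauss nonlinear instability framework, adapted to the compressible viscous surface-internal wave problem and combined with the local existence theory of \cite{JTW_LWP} and the high-order energy-method machinery of \cite{JTW_GWP}. The first step is to linearize \eqref{geometric} about $(\bar\rho,0,0)$ and construct exponentially growing normal-mode solutions. Seeking solutions of the separated form $e^{st}(q(x),u(x),\eta(x_1,x_2))$ and Fourier-decomposing in the horizontal variables reduces the problem, at each horizontal frequency $\xi\in(L_1^{-1}\mathbb{Z})\times(L_2^{-1}\mathbb{Z})$, to a one-dimensional ODE eigenvalue problem on $(-b,\ell)$ with transmission conditions at $x_3=0$ that carry the destabilizing term $\rj g$ and the capillary term $\sigma_-|\xi|^2$. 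Since $\sigma_-<\sigma_c$, there is a nonempty set of frequencies for which the destabilizing contribution dominates, and a modified variational argument produces a positive eigenvalue $\lambda(\xi)>0$ with smooth eigenfunction whose internal-surface component $\eta_-$ is nontrivial. Viscosity forces $\lambda(\xi)\to 0$ as $|\xi|\to\infty$, so the supremum $\Lambda:=\sup_\xi\lambda(\xi)$ in \eqref{Lambda} is finite and attained; fix a maximizing mode $(\bar q,\bar u,\bar\eta)$ normalized by $\|\bar\eta_-\|_{L^2}=1$.

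Next, use the local well-posedness of \cite{JTW_LWP} to solve \eqref{geometric} with initial data $(q^\iota(0),u^\iota(0),\eta^\iota(0))=\iota(\bar q,\bar u,\bar\eta)$, obtaining a strong solution on a maximal interval $[0,T^\iota_{\max})$ whose triple norm at $t=0$ is bounded by $C\iota$. Writing
\[
(q^\iota,u^\iota,\eta^\iota)=\iota e^{\Lambda t}(\bar q,\bar u,\bar\eta)+(q^d,u^d,\eta^d),
\]
the difference $(q^d,u^d,\eta^d)$ satisfies the linearized system forced by a quadratic nonlinearity $\mathcal{N}$ that collects the Taylor remainder $\mathcal{R}$ from \eqref{R_def}, the geometric corrections involving $\mathcal{A}-I$ and the transport contribution $K\partial_t\theta\partial_3$, the convection $u\cdot\nabla_\mathcal{A} u$, and the nonlinear boundary terms involving $\mathcal{H}$ and $\mathcal{R}$. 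The crucial a priori estimate is that, as long as $\Lvert3(q^\iota,u^\iota,\eta^\iota)\Rvert3_{00}(t)\le\delta_0$ for a small $\delta_0$ independent of $\iota$, the forcing is pointwise bounded by $C(\iota e^{\Lambda t})^2$, and the linearized evolution restricted to the complement of the growing eigenspace obeys a sharp semigroup bound of order $e^{\Lambda t}$. A Duhamel-type closure then yields
\[
\Lvert3(q^d,u^d,\eta^d)(t)\Rvert3_{00}\le C(\iota e^{\Lambda t})^2.
\]

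To conclude, let $T^\iota$ be the escape time \eqref{escape_time} and set $T^\ast:=\sup\{t\in[0,T^\iota_{\max}):\iota e^{\Lambda t}\le\delta_0\}$. On $[0,T^\iota\wedge T^\ast]$ the triangle inequality combined with the difference estimate gives
\[
\|\eta_-^\iota(t)\|_{L^2}\ge \iota e^{\Lambda t}\|\bar\eta_-\|_{L^2}-C(\iota e^{\Lambda t})^2.
\]
At $t=T^\iota$, the identity $\iota e^{\lambda T^\iota}=\theta_0$ together with $\Lambda\ge\lambda$ gives $\iota e^{\Lambda T^\iota}\ge\theta_0$, so for $\theta_0$ sufficiently small the right-hand side is at least $\theta_0/2$. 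The condition $\lambda>\Lambda/2$ then ensures $T^\iota\le T^\ast$ for all sufficiently small $\iota$, and a continuation argument guarantees $T^\iota<T^\iota_{\max}$, producing the stated escape $\|\eta_-^\iota(T^\iota)\|_{L^2}\ge\theta_0/2$.

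The main obstacle I anticipate is the energy estimate on the difference. Because $\sigma_-<\sigma_c$, the coercive capillary contribution at $\Sigma_-$ can no longer absorb the destabilizing term $\rj g\eta_-$ in the standard energy identity, so the estimate must be performed modulo the finite-dimensional unstable eigenspace, exploiting the spectral gap between the unstable modes and the rest of the spectrum of the linearized operator. Transferring this spectral splitting to the fully nonlinear system in the flattened coordinates of Section~\ref{sec1.3}, while simultaneously absorbing the quadratic forcing from the Taylor remainder $\mathcal{R}$ and the nonlinear coupling across the two free interfaces in the triple norm $\Lvert3\cdot\Rvert3_{00}$, is the most delicate part of the argument.
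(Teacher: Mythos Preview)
Your overall strategy follows the Guo--Strauss bootstrap, which is also the paper's route, but two steps in your outline have genuine gaps.

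First, you take the nonlinear initial data to be exactly $\iota(\bar q,\bar u,\bar\eta)$. This fails: the local well-posedness theory from \cite{JTW_LWP} in the high-regularity space $\Lvert3\cdot\Rvert3_{00}$ requires nonlinear compatibility conditions on the boundary (arising from differentiating the dynamic boundary conditions in time), and the linear growing mode satisfies only the \emph{linear} compatibility conditions. The paper handles this (Proposition~\ref{intialle}) by constructing corrected data $U_0^\iota=\iota U^\star+\iota^2\tilde U(\iota)$ with $\Lvert3\tilde U(\iota)\Rvert3_{00}$ uniformly bounded; the $\iota^2$ correction is small enough not to disturb the bootstrap but is essential for the solution to exist in the space at all.

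Second, and more seriously, your closure for the difference $(q^d,u^d,\eta^d)$ rests on two assertions that are not available here: a spectral splitting of the linearized operator into the unstable eigenspace and a complement on which the semigroup is bounded by $e^{\Lambda t}$, and a Duhamel-type estimate using that splitting. The paper explicitly points out that the spectrum of the linearized solution operator is complicated enough that such $L^2\to L^2$ semigroup bounds are not accessible, and Duhamel's principle is not directly usable because the strong linear estimates require compatibility conditions that the nonlinear forcing would not preserve. The paper replaces this entirely: it proves, by energy methods, growth bounds for arbitrary solutions of the \emph{inhomogeneous} linear problem (Theorem~\ref{lineargrownth}), with the growth rate $\Lambda$ entering through the variational characterization in Lemma~\ref{lin_en_bound}. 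Separately, the full nonlinear energy estimate (Theorem~\ref{energyeses}) controls $\Lvert3 U^\iota\Rvert3_{00}^2$ by the initial data, a cubic term, and $\int_0^t\|\eta_-\|_0^2$; the destabilizing $\rj g\eta_-$ term is never absorbed by capillarity or by any spectral projection, but simply left on the right-hand side and closed via a second bootstrap hypothesis $\|\eta_-^\iota(t)\|_0\le 2\iota e^{\lambda t}$. The two hypotheses ($\Lvert3 U^\iota\Rvert3_{00}\le\tilde\delta$ and the $\eta_-$ bound) are then shown to persist up to $T^\iota$ by Gronwall and the inhomogeneous linear estimate, respectively. Your proposed ``spectral gap'' mechanism is not used and would be difficult to justify for this free-boundary system.

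A minor point: when $\sigma_-=0$ the supremum $\Lambda$ need not be attained, so the paper works with a mode of rate $\Lambda_\ast\in(\Lambda/2,\Lambda]$; this is why $\lambda$ is allowed to be strictly less than $\Lambda$ in the statement, and the condition $\lambda>\Lambda/2$ is what makes the quadratic error $\iota^2 e^{2\lambda t}$ dominate the linear-inhomogeneous error $e^{\Lambda t}\iota^2$ at $t=T^\iota$.
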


\begin{remark}
Theorem \ref{maintheorem} shows that the instability occurs in the $L^2$ norm of $\eta_-$.   This highlights the fact that the instability occurs at the internal interface. This also means that although our instability analysis works in a framework with some degree of regularity, the onset of instability occurs at the lowest level of regularity.
\end{remark}

\begin{remark}
Our results can be readily applied to the compressible viscous internal wave problem,  i.e. the problem posed with a rigid top in place of the upper free surface.
\end{remark}

To our best knowledge, this work is the first rigorous result to address the nonlinear Rayleigh-Taylor instability for compressible viscous fluids with or without surface tension; for the compressible viscous internal wave problem, the linear instability was shown by Guo and Tice \cite{3GT2}. Theorem \ref{maintheorem} together with our results in \cite{JTW_GWP} establish  sharp nonlinear stability criteria for the equilibrium state in the compressible viscous surface-internal wave problem.  We summarize these and the rates of decay to equilibrium in the following table.

\begin{displaymath}
\begin{array}{| c | c | c |  c |}
\hline
 & \rj < 0 & \rj =0 & \rj >0  \\ \hline
 \sigma_\pm =0 &
\begin{array}{c}
\textnormal{nonlinearly stable} \\
\textnormal{almost exponential decay}
\end{array} &
\textnormal{locally well-posed} &
\textnormal{nonlinearly unstable}     \\ \hline
\begin{array}{c}
0 < \sigma_+  \\ 0 < \sigma_- < \sigma_c
\end{array} &
\begin{array}{c}
\textnormal{nonlinearly stable} \\
\textnormal{exponential decay}
\end{array} &
\begin{array}{c}
\textnormal{nonlinearly stable} \\
\textnormal{exponential decay}
\end{array} &
\textnormal{nonlinearly unstable}      \\ \hline
\begin{array}{c}
0 < \sigma_+  \\  \sigma_c = \sigma_-
\end{array} &
\begin{array}{c}
\textnormal{nonlinearly stable} \\
\textnormal{exponential decay}
\end{array} &
\begin{array}{c}
\textnormal{nonlinearly stable} \\
\textnormal{exponential decay}
\end{array} &
\textnormal{locally well-posed}
\\ \hline
\begin{array}{c}
0 < \sigma_+  \\  \sigma_c < \sigma_-
\end{array} &
\begin{array}{c}
\textnormal{nonlinearly stable} \\
\textnormal{exponential decay}
\end{array} &
\begin{array}{c}
\textnormal{nonlinearly stable} \\
\textnormal{exponential decay}
\end{array} &
\begin{array}{c}
\textnormal{nonlinearly stable} \\
\textnormal{exponential decay}
\end{array}
\\ \hline
\end{array}
\end{displaymath}
Note that our results do not cover the critical case:  $\sigma_- = \sigma_c$. From \cite{JTW_LWP} we know  that the problem is locally well-posed, but at the moment of writing it is not clear to us what the stability of the system should be.

We mention some previous mathematical results concerning the Rayleigh-Taylor instability. For the inviscid  Rayleigh-Taylor problem without surface tension, Ebin \cite{3E} proved the nonlinear ill-posedness of the problem for incompressible fluids, Guo and Tice \cite{3GT1} showed an analogous result for compressible fluids, and Hwang and Guo \cite{hw_guo} proved the nonlinear instability of the incompressible problem with a continuous density distribution. For the viscous Rayleigh-Taylor problem, Pr$\ddot{\text{u}}$ss and Simonett \cite{PS2} proved the nonlinear instability for incompressible fluids with surface tension in an $L^p$ setting by using Henry's instability theorem, and Wang, Tice, and Kim \cite{WT, WTK} established the sharp nonlinear instability criteria  for the incompressible surface-internal wave problem with or without surface tension.

Since linear instability can be established in the same way as that for the compressible viscous internal wave problem in \cite{3GT2}, the heart of the proof of Theorem \ref{maintheorem} is the passage from linear instability to nonlinear instability.  This is in general a delicate issue for PDEs since the spectrum of the linear part is fairly complicated and the unboundedness of the nonlinear part usually yields a loss in derivatives. Our proof is based on a variant of the bootstrap argument first developed by Guo and Strauss \cite{GS}. The main strategy of Guo-Strauss approach is to show that on the time scale of the instability, higher-regularity norms of the solution are actually bounded by the growth of low-regularity norms (in our case $L^2$).  For our problem, the term $\rj g \eta_-$ along the interface is the cause of the instability; since it is of low order, we are led to use the Guo-Strauss bootstrap framework here.

We encounter a number of mathematical difficulties in analyzing our complicated nonlinear problem, especially due to the fact that our problem is defined in a domain with a boundary.  First, even to guarantee the existence of local-in-time solutions in our energy space, the initial data must satisfy certain nonlinear compatibility conditions that the growing modes to the linearized problem constructed in Section \ref{growing mode} would not satisfy. We employ an argument from Jang and Tice \cite{JT} that uses the linear growing mode to construct initial data for the nonlinear problem.

Second, because the spectrum of the solution operator for the linearized problem is complicated, we can only derive the largest growth rate for the linearized problem by using careful energy estimates as in \cite{3GT2}; this is in the context of strong solutions, which requires the initial data to satisfy the linear compatibility conditions.  Such estimates would not be applicable to the nonlinear problem by directly employing Duhamel's principle.  To get around this issue, we provide the estimates for the growth in time of arbitrary solutions of the linear inhomogeneous equations in Section \ref{growth}; clearly, the estimates can be applied directly to the nonlinear problem.

The last difficulty is to derive the bootstrap energy estimates, a key step in showing the instability of $\eta_-$.   We employ a variant of the energy method elaborated in our companion paper \cite{JTW_GWP} to derive these estimates in Section \ref{energy}. There are new ingredients.  First, we need to weaken the dissipation (see \eqref{p_dissipation_def} for $\sd{2N}^\sigma$) due to the lack of the dissipation estimates of $L^2$ norm of $q$ and $\eta$. The missing terms are controlled by using the energy $\se{2N}^\sigma$ instead.  This is not possible in the global analysis of \cite{JTW_GWP} but is effective in the local-in-time framework of our instability analysis.   Second, the term $\rj g \eta_-$ contributes  negative energy in the unstable regime and thus the estimates of  $\eta_-$ can not be obtained simultaneously with the other terms.  We derive the estimates of $\eta_-$ by making use of the kinematic boundary condition, a transport equation for $\eta_-$.   Our complete bootstrap estimate is recorded in Theorem \ref{energyeses}, which shows that the stronger Sobolev norm $\Lvert3   \cdot   \Rvert3_{00}$ of the solution is actually bounded by the lower-order norm $\norm{ \eta_- }_{L^2}$.  The bootstrap analysis allows us to finally prove Theorem \ref{maintheorem} in Section \ref{proof}.

\subsection{Definitions and terminology} \label{def-ter}

We now mention some of the definitions, bits of notation, and conventions that we will use throughout the paper.

{ \bf Universal constants}

Throughout the paper we will refer to generic constants as ``universal'' if they depend on $N$, $\Omega_\pm$,  the various parameters of the problem (e.g. $g$, $\mu_\pm$, $\mu'_\pm$, $\sigma_\pm$) and the functions $\bar{\rho}_\pm$,  with the caveat that if the constant depends on $\sigma_\pm$, then it remains bounded above as either $\sigma_\pm$ tend to $0$.  For example this allows constants of the form $g\mu_+ + 3 \sigma_-^2 + \sigma_+$ but forbids constants of the form $3 + 1/\sigma_-$. We make this choice in order to be able to handle together all the cases $\sigma_\pm\ge 0$.

We will employ the notation $a \ls b$ to mean that $a \le C b$ for a universal constant $C>0$. Universal constants are allowed to change from line to line. When a constant depends on a quantity $z$ we will write $C = C(z)= C_z$ to indicate this. To indicate some constants in some places so that they can be referred to later, we will denote them in particular by $C_1,C_2$, etc.

{ \bf Norms }

We write $H^k(\Omega_\pm)$ with $k\ge 0$ and $H^s(\Sigma_\pm)$ with $s \in \Rn{}$ for the usual Sobolev spaces.   We will typically write $H^0 = L^2$.  If we write $f \in H^k(\Omega)$, the understanding is that $f$ represents the pair $f_\pm$ defined on $\Omega_\pm$ respectively, and that $f_\pm \in H^k(\Omega_\pm)$. We employ the same convention on $\Sigma_\pm$.  We will refer to the space $\H(\Omega)$ defined as follows:
\begin{equation}
 \H(\Omega) = \{ v \in H^1(\Omega) \; \vert \; \jump{v}=0 \text{ on } \Sigma_- \text{ and } v_- = 0 \text{ on } \Sigma_b\}.
\end{equation}

To avoid notational clutter, we will avoid writing $H^k(\Omega)$ or $H^k(\Sigma)$ in our norms and typically write only $\norm{\cdot}_{k}$, which we actually use to refer to  sums
\begin{equation}
 \ns{f}_k = \ns{f_+}_{H^k(\Omega_+)} + \ns{f_-}_{H^k(\Omega_-)}   \text{ or }  \ns{f}_k = \ns{f_+}_{H^k(\Sigma_+)} + \ns{f_-}_{H^k(\Sigma_-)}.
\end{equation}
Since we will do this for functions defined on both $\Omega$ and $\Sigma$, this presents some ambiguity.  We avoid this by adopting two conventions.  First, we assume that functions have natural spaces on which they ``live.''  For example, the functions $u$, $\rho$, $q$, and $\bar{\eta}$ live on $\Omega$, while $\eta$ lives on $\Sigma$.  As we proceed in our analysis, we will introduce various auxiliary functions; the spaces they live on will always be clear from the context.  Second, whenever the norm of a function is computed on a space different from the one in which it lives, we will explicitly write the space.  This typically arises when computing norms of traces onto $\Sigma_\pm$ of functions that live on $\Omega$.

Occasionally we will need to refer to the product of a norm of $\eta$ and a constant that depends on $\pm$.  To denote this we will write
\begin{equation}
\gamma \ns{\eta}_{k} = \gamma_+ \ns{\eta_+}_{H^k(\Sigma_+)} + \gamma_- \ns{\eta_-}_{H^k(\Sigma_-)}.
\end{equation}

{ \bf Derivatives }

We write $\mathbb{N} = \{ 0,1,2,\dotsc\}$ for the collection of non-negative integers.  When using space-time differential multi-indices, we will write $\mathbb{N}^{1+m} = \{ \alpha = (\alpha_0,\alpha_1,\dotsc,\alpha_m) \}$ to emphasize that the $0-$index term is related to temporal derivatives.  For just spatial derivatives we write $\mathbb{N}^m$.  For $\alpha \in \mathbb{N}^{1+m}$ we write $\pal = \dt^{\alpha_0} \p_1^{\alpha_1}\cdots \p_m^{\alpha_m}.$ We define the parabolic counting of such multi-indices by writing $\abs{\alpha} = 2 \alpha_0 + \alpha_1 + \cdots + \alpha_m.$  We will write $\nab_{\ast}f$ for the horizontal gradient of $f$, i.e. $\nab_{\ast}f = \p_1 f e_1 + \p_2 f e_2$, while $\nab f$ will denote the usual full gradient.

For a \textit{given norm} $\norm{\cdot}$ and an integer $k\ge 0$, we introduce the following notation for sums of spatial derivatives:
\begin{equation}
 \norm{\nab_{\ast}^k f}^2 := \sum_{\substack{\alpha \in \mathbb{N}^2 \\  \abs{ \alpha}\le k} } \norm{\pa^\al  f}^2 \text{ and }
\norm{\nab^k f}^2 := \sum_{\substack{\alpha \in \mathbb{N}^{3} \\   \abs{\alpha}\le k} } \norm{\pa^\al  f}^2.
\end{equation}
The convention we adopt in this notation is that $\nab_{\ast}$ refers to only ``horizontal'' spatial derivatives, while $\nab$ refers to full spatial derivatives.   For space-time derivatives we add bars to our notation:
\begin{equation}\label{barnotation}
 \norm{\bar{\nab}_{\ast}^k f}^2 := \sum_{\substack{\alpha \in \mathbb{N}^{1+2} \\   \abs{\alpha}\le k} } \norm{\pa^\al  f}^2 \text{ and }
\norm{\bar{\nab}^k f}^2 := \sum_{\substack{\alpha \in \mathbb{N}^{1+3} \\   \abs{\alpha}\le k} } \norm{\pa^\al  f}^2.
\end{equation}
We allow for composition of derivatives in this counting scheme in a natural way; for example, we write
\begin{equation}
 \norm{\nab_{\ast} \nab_{\ast}^{k} f}^2 = \norm{ \nab_{\ast}^k \nab_{\ast} f}^2 = \sum_{\substack{\alpha \in \mathbb{N}^{2} \\   \abs{\alpha}\le k} } \norm{\pa^\al  \nab_{\ast} f}^2  = \sum_{\substack{\alpha \in \mathbb{N}^{2} \\  1\le \abs{\alpha}\le k+1} } \norm{\pa^\al   f}^2.
\end{equation}

\section{Growing mode solution to the linearized equations}\label{growing mode}

In this section, we consider the linearization of \eqref{geometric}:
\begin{equation}\label{linear}
\begin{cases}
\partial_t \q +\diverge ( \bar{\rho}   u)=0 & \text{in }
\Omega  \\
 \bar{\rho} \partial_t    u   + \bar{\rho}\nabla \left(h'(\bar{\rho})\q\right)   -\diverge \S(u) =0 & \text{in }
\Omega  \\
\partial_t \eta = u_3 &
\text{on }\Sigma  \\
(  P'(\bar\rho)\q I- \S(  u))e_3  = (\rho_1  g \eta_+ -\sigma_+ \Delta_\ast \eta_+ ) e_3
 & \text{on } \Sigma_+
 \\\jump{P'(\bar\rho)\q I- \S(u)}e_3
=(\rj g\eta_- +\sigma_- \Delta_\ast \eta_-)e_3&\hbox{on }\Sigma_-
\\\jump{u}=0 &\hbox{on }\Sigma_-
\\ u_-=0 &\hbox{on }\Sigma_b.
\end{cases}%
\end{equation}
We seek a growing mode solution to \eqref{linear} of the following form:
\begin{equation}\label{ansatz}
 u(x,t) = w(x) e^{\lambda t},\ \q(x,t)= \tilde{q}(x) e^{\lambda t},\ \eta(x',t) = \tilde{\eta}(x') e^{\lambda t}
\end{equation}
for some $\lambda>0$ (the same in the upper and lower fluids), where $x'=(x_1,x_2)$. Substituting the ansatz \eqref{ansatz} into \eqref{linear}, we find that
\begin{equation}\label{ansatz1}
\tilde{q}={-\lambda^{-1}{\rm div}(\bar\rho w)}\;\text{ and }\; \tilde{\eta}=\lambda^{-1}w_3|_{\Sigma}.
\end{equation}
By using \eqref{ansatz1}, we can eliminate $\tilde{q},\tilde{\eta}$ from \eqref{linear} and arrive at the following time-invariant system for $w$:
\begin{equation}\label{linear2}
\begin{cases}
\lambda^2 \bar{\rho} w -\bar{\rho}\nabla \left(h'(\bar{\rho})\diverge ( \bar{\rho}   w)\right)   -\lambda\diverge \S(w) =0 & \text{in }
\Omega  \\
( -P'(\bar\rho)\diverge ( \bar{\rho}   w) I- \lambda\S( w))e_3  = (\rho_1  g   w_{3} -\sigma_+ \Delta_\ast   w_{3} ) e_3
 & \text{on } \Sigma_+
 \\ \jump{-P'(\bar\rho)\diverge ( \bar{\rho}   w) I- \lambda\S(w)}e_3
=(\rj g  w_{3} +\sigma_- \Delta_\ast w_{3})e_3&\hbox{on }\Sigma_-
\\\jump{w}=0 &\hbox{on }\Sigma_-
\\w_-=0 &\hbox{on }\Sigma_b.
\end{cases}%
\end{equation}

Since the coefficients of the linear problem \eqref{linear2} only depend on the $x_3$ variable, we are free to make the further structural assumption that the $x'$ dependence of $w$ is given as a Fourier mode $e^{i x' \cdot \xi}$ for the spatial frequency $\xi=(\xi_1,\xi_2)\in L_1^{-1}\mathbb{Z}\times L_2^{-1}\mathbb{Z}$.  Together with the growing mode ansatz \eqref{ansatz}, this constitutes a ``normal mode'' ansatz, which is standard in fluid stability analysis \cite{3C}.  We define the new unknowns $\varphi,\theta,\psi:(-b,\ell) \rightarrow \Rn{}$ according to
\begin{equation}\label{ansatz2}
w_1(x)=-i\varphi( x_3){\rm e}^{ix'\cdot\xi},\ w_2(x)=-i\theta( x_3){\rm e}^{ix'\cdot\xi},
\ \text{and }w_3(x)= \psi( x_3){\rm e}^{ix'\cdot\xi}.
\end{equation}
For each fixed $\xi$, and for the new unknowns $\varphi(x_3), \theta(x_3), \psi(x_3)$, and $\lambda$, we obtain the following system of ODEs (here $' = d/dx_3$):
\begin{equation}\label{linear3}
\begin{cases}
-\left(\lambda \mu  \varphi'\right)' + \left[ \lambda^2 \bar\rho   + \lambda \mu  \abs{\xi}^2 + \xi_1^2 \left( \lambda\mu' + \lambda \mu /3+ P'(\bar\rho ) \bar\rho  \right)   \right] \varphi \\ \quad= - \xi_1 \left[ \left(\lambda \mu'  + \lambda \mu /3 \right)\psi' + P'(\bar\rho )(\bar\rho\psi)'\right]
 -\xi_1 \xi_2  \left[ \lambda \mu'  + \lambda \mu /3 + P'(\bar\rho ) \bar\rho   \right] \theta&\hbox{in }
(-b,\ell)\\
 -(\lambda \mu  \theta')' + \left[ \lambda^2 \bar\rho   + \lambda \mu  \abs{\xi}^2 + \xi_2^2 \left( \lambda \mu'  + \lambda \mu /3 +  P'(\bar\rho ) \bar\rho  \right)   \right] \theta \\ \quad= - \xi_2\left[ \left(\lambda \mu'  + \lambda \mu /3 \right)\psi' + P'(\bar\rho )(\bar\rho\psi)'\right]
 -\xi_1 \xi_2  \left[ \lambda \mu'  + \lambda \mu /3 +  P'(\bar\rho ) \bar\rho   \right] \varphi&\hbox{in }
(-b,\ell)\\
  -\left[\left(  4\lambda \mu /3 + \lambda \mu'     \right) \psi'\right]'- \bar\rho\left[h'(\bar\rho )( \bar\rho\psi)'    \right]'
+ \left( \lambda^2 \bar\rho  + \lambda \mu  \abs{\xi}^2  \right) \psi \\
\quad =  \left[  \left( \lambda \mu' + \lambda \mu /3   \right) \left( \xi_1 \varphi + \xi_2 \theta \right)  \right]' +\bar\rho\left[  P'(\bar\rho )  \left( \xi_1 \varphi + \xi_2 \theta \right)  \right]'&\hbox{in }
(-b,\ell)\\
\mu_+\lambda(\xi_1\psi_+-\varphi_+') = \mu_+\lambda(\xi_2\psi_+-\theta_+')=0&\hbox{at }
x_3=\ell
\\ {-(\lambda \mu'+ \lambda \mu/3 ) (\psi' + \xi_1 \varphi + \xi_2 \theta )
-P'(\bar\rho)\left((\bar\rho\psi)'+\bar\rho ( \xi_1 \varphi + \xi_2 \theta )\right)}&
\\\quad{-\lambda \mu \left(\psi' - \xi_1 \varphi - \xi_2 \theta    \right)}
  =(\rho_1g+\sigma_+\abs{\xi}^2) \psi &\hbox{at }
x_3=\ell
\\
\llbracket\varphi\rrbracket=\llbracket\theta\rrbracket=\llbracket\psi\rrbracket=\llbracket\mu\lambda(\xi_1\psi-\varphi') \rrbracket=\llbracket\mu\lambda(\xi_2\psi-\theta') \rrbracket=0&\hbox{at }
x_3=0
\\\jump{(\lambda \mu'+ \lambda \mu/3 ) (\psi' + \xi_1 \varphi + \xi_2 \theta )}
+ \jump{P'(\bar\rho)\left((\bar\rho\psi)'+\bar\rho ( \xi_1 \varphi + \xi_2 \theta )\right)}&
\\\quad+  \jump{\lambda \mu \left(\psi' - \xi_1 \varphi - \xi_2 \theta    \right)  }
  =-(\rj g-\sigma_-\abs{\xi}^2) \psi &\hbox{at }
x_3=0
\\\varphi={\theta}= \psi =0&\hbox{at }
x_3=-b.
\end{cases}
\end{equation}

We can reduce the complexity of the problem by removing the component $\theta$.  To this end, note that if $\varphi,\theta,\psi$ solve the equations \eqref{linear3} for $\xi\in \Rn{2}$ and $\lambda>0$, then for any rotation operator $R\in SO(2)$,  $(\tilde{\varphi},\tilde{\theta}) := R (\varphi,\theta)$ solve the same equations for $\tilde{\xi} := R\xi$ with $\psi, \lambda$ unchanged.  So, by choosing an appropriate rotation, we may assume without loss of generality that $\xi_2=0$ and $\xi_1= \abs{\xi} \ge 0$.  In this setting $\theta$ solves
\begin{equation}
\begin{cases}
 -(\lambda \mu \theta')' + (\lambda^2 \bar\rho + \lambda \mu \abs{\xi}^2) \theta =0 \\
 \theta(-b) = \theta'(\ell) = 0 \\
 \jump{\theta} = \jump{\lambda \mu \theta'} =0,
\end{cases}
\end{equation}
which implies that $\theta=0$ since we assume $\lambda >0$.  Then the equations \eqref{linear3} are reduced to the equations for $\varphi,\psi$:
\begin{equation}\label{linear4}
\begin{cases}
-\left(\lambda \mu  \varphi'\right)' + \left[ \lambda^2 \bar\rho   + \lambda \mu  \abs{\xi}^2 + \abs{\xi}^2 \left( \lambda\mu' + \lambda \mu /3+ P'(\bar\rho ) \bar\rho  \right)   \right] \varphi \\ \quad= - \abs{\xi} \left[ \left(\lambda \mu'  + \lambda \mu /3 \right)\psi' + P'(\bar\rho )(\bar\rho\psi)'\right]&\hbox{in }
(-b,\ell)\\
  -\left[\left(  4\lambda \mu /3 + \lambda \mu'     \right) \psi'\right]'- \bar\rho\left[h'(\bar\rho )( \bar\rho\psi)'    \right]'
+ \left( \lambda^2 \bar\rho  + \lambda \mu  \abs{\xi}^2  \right) \psi \\
\quad =  \left[  \left( \lambda \mu' + \lambda \mu /3   \right) \abs{\xi} \varphi  \right]' +\bar\rho\left[  P'(\bar\rho )  \abs{\xi} \varphi   \right]'&\hbox{in }
(-b,\ell)\\
\mu_+\lambda(\abs{\xi}\psi_+-\varphi_+') =0&\hbox{at }
x_3=\ell
\\ {- (\lambda \mu'+ \lambda \mu/3 ) (\psi' +\abs{\xi}\varphi )
-P'(\bar\rho)\left((\bar\rho\psi)'+\bar\rho \abs{\xi} \varphi \right)}&
\\\quad{-\lambda \mu \left(\psi' - \abs{\xi}\varphi  \right)}
  =(\rho_1g+\sigma_+\abs{\xi}^2) \psi &\hbox{at }
x_3=\ell
\\
\llbracket\varphi\rrbracket =\llbracket\psi\rrbracket=\llbracket\mu\lambda(\abs{\xi}\psi-\varphi') \rrbracket=0&\hbox{at }
x_3=0
\\\jump{(\lambda \mu'+ \lambda \mu/3 ) (\psi' + \abs{\xi}\varphi )}
+ \jump{P'(\bar\rho)\left((\bar\rho\psi)'+\bar\rho \abs{\xi} \varphi \right)}&
\\\quad+  \jump{\lambda \mu \left(\psi' - \abs{\xi} \varphi  \right)  }
  =-(\rj g-\sigma_-\abs{\xi}^2) \psi &\hbox{at }
x_3=0
\\\varphi = \psi =0&\hbox{at }
x_3=-b.
\end{cases}
\end{equation}

Solutions to \eqref{linear4} can be constructed in the same way as that for the compressible viscous internal wave problem in \cite{3GT2}, so we will outline the procedure with minor modifications and refer some of the proofs to \cite{3GT2}. 

It is not trivial at all  to construct solutions by utilizing variational methods since $\lambda$ appears both linearly and quadratically. In order to circumvent this problem and restore the ability to use variational methods, we artificially remove the linear dependence on $\lambda$ in \eqref{linear4} by introducing an arbitrary parameter $s>0$. This results in a family $(s>0)$ of modified problems:
\begin{equation}\label{linear5}
\begin{cases}
-\left(s \mu  \varphi'\right)' + \left[ \lambda^2 \bar\rho   + s \mu  \abs{\xi}^2 + \abs{\xi}^2 \left( s\mu' + s \mu /3+ P'(\bar\rho ) \bar\rho  \right)   \right] \varphi \\ \quad= - \abs{\xi} \left[ \left(s \mu'  + s \mu /3 \right)\psi' + P'(\bar\rho )(\bar\rho\psi)'\right]&\hbox{in }
(-b,\ell)\\
  -\left[\left(  4s \mu /3 + s \mu'     \right) \psi'\right]'- \bar\rho\left[h'(\bar\rho )( \bar\rho\psi)'    \right]'
+ \left( \lambda^2 \bar\rho  + s \mu  \abs{\xi}^2  \right) \psi \\
\quad =  \left[  \left( s \mu' + s \mu /3   \right) \abs{\xi} \varphi  \right]' +\bar\rho\left[  P'(\bar\rho )  \abs{\xi} \varphi   \right]'&\hbox{in }
(-b,\ell)\\
\mu_+s(\abs{\xi}\psi_+-\varphi_+') =0&\hbox{at }
x_3=\ell
\\ {-(s \mu'+ s \mu/3 ) (\psi' +\abs{\xi}\varphi )
-P'(\bar\rho)\left((\bar\rho\psi)'+\bar\rho \abs{\xi} \varphi \right)}&
\\\quad{-s \mu \left(\psi' - \abs{\xi}\varphi  \right)}
  =(\rho_1g+\sigma_+\abs{\xi}^2) \psi &\hbox{at }
x_3=\ell
\\
\llbracket\varphi\rrbracket =\llbracket\psi\rrbracket=\llbracket\mu s(\abs{\xi}\psi-\varphi') \rrbracket=0&\hbox{at }
x_3=0
\\\jump{(s \mu'+ s \mu/3 ) (\psi' + \abs{\xi}\varphi )}
+ \jump{P'(\bar\rho)\left((\bar\rho\psi)'+\bar\rho \abs{\xi} \varphi \right)}&
\\\quad+  \jump{s \mu \left(\psi' - \abs{\xi} \varphi  \right)  }
  =-(\rj g-\sigma_-\abs{\xi}^2) \psi &\hbox{at }
x_3=0
\\\varphi = \psi =0&\hbox{at }
x_3=-b.
\end{cases}
\end{equation}
A solution to the modified problem \eqref{linear5} with $\lambda = s$ corresponds to a solution to the original problem \eqref{linear4}.
Note that for any fixed $s>0$ and $\xi$, \eqref{linear5} is a standard eigenvalue problem for $-\lambda^2$, which has a natural variational structure that allows us to use variational methods to construct solutions. In order to understand $\lambda$ in a variational framework, we consider the energy functional
\begin{equation}\label{E_def}
E(\varphi,\psi;s) = E_0(\varphi,\psi) + s E_1(\varphi,\psi)
\end{equation}
with
\begin{equation}\label{E0_def}
E_0(\varphi,\psi) = \frac{\sigma_- \abs{\xi}^2-\rj g}{2}(\psi(0))^2+\frac{\sigma_+ \abs{\xi}^2+\rho_1 g}{2}(\psi(\ell))^2
+ \hal \int_{-b}^\ell h'(\bar\rho) ((\bar\rho\psi)' + \bar\rho\abs{\xi} \varphi)^2,
\end{equation}
\begin{equation}\label{E1_def}
E_1(\varphi,\psi)= \hal \int_{-b}^\ell  \mu \left(  (\varphi' - \abs{\xi} \psi)^2 + (\psi'- \abs{\xi}\varphi)^2 + \frac{1}{3}(\psi' + \abs{\xi} \varphi)^2\right) + \mu' (\psi' + \abs{\xi} \varphi)^2,
\end{equation}
and
\begin{equation}\label{J_def}
 J(\varphi,\psi) = \hal \int_{-{b}}^\ell \bar\rho  (\varphi^2 + \psi^2),
\end{equation}
which are both well-defined on the space ${}_0H^1((-b,\ell)) \times {}_0H^1((-b,\ell))$ where
\begin{equation}
 \H((-b,\ell)) = \{ \phi \in H^1((-b,\ell)) \; \vert \;  \phi_- = 0 \text{ at } x_3=-b\}.
\end{equation}
Note that functions in this space automatically satisfy the condition $\jump{\phi}=0$ at $x_3=0$.  Consider the admissible set
\begin{equation}
 \mathfrak{S} = \{ (\varphi,\psi)\in {}_0H^1((-b,\ell)) \times {}_0H^1((-b,\ell)) \;\vert\;  J(\varphi,\psi)=1  \}.
\end{equation}
Notice that $E_0(\varphi,\psi)$ is not positive definite for $\jump{\bar\rho} >0$. 


The first proposition asserts that a minimizer of $E$ in \eqref{E_def} over  $\mathfrak{S}$ exists and the minimizer solves \eqref{linear5}.

\begin{prop} Let $\xi$ and $s>0$ be fixed. Then the following hold:
\begin{enumerate}
\item $E$ achieves its infimum over $\mathfrak{S}$.
\item The minimizers are smooth when restricted to $(-b,0)$ or $(0,\ell)$ and solve the equations \eqref{linear5} with $\lambda^2$ given by
\begin{equation}\label{mu_def}
-\lambda^2=\alpha(s) := \inf_{(\varphi,\psi)\in \mathfrak{S}}E(\varphi,\psi;s).
\end{equation}
\end{enumerate}
\end{prop}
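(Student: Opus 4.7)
The plan is to apply the direct method of the calculus of variations to produce a minimizer on $\mathfrak{S}$, and then derive the Euler--Lagrange system to recover \eqref{linear5}. The first step is to establish a coercivity/boundedness-below estimate on $\mathfrak{S}$. Since $\varphi(-b)=\psi(-b)=0$, a Poincar\'e-type inequality bounds $\psi(0)^2$ by $b\|\psi'\|_{L^2(-b,0)}^2$, so the indefinite trace term $-\tfrac{\rj g}{2}\psi(0)^2$ is controlled by a constant multiple of $\|\psi'\|_{L^2}^2$. Combining this with the positive contribution $s E_1$, whose integrand is a sum of squares of symmetrized derivatives (yielding $H^1$-control modulo $L^2$ by a Korn-type estimate), and with the positive integral $\tfrac12 \int h'(\bar\rho)((\bar\rho\psi)'+\bar\rho|\xi|\varphi)^2$ in $E_0$, one absorbs the bad trace term. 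Using the constraint $J(\varphi,\psi)=1$ to bound the $L^2$ norms, this yields $E(\varphi,\psi;s)\ge -C(s,\xi)$ together with an a priori bound $\|(\varphi,\psi)\|_{H^1\times H^1}^2 \le C(s,\xi)(1+E(\varphi,\psi;s))$ on $\mathfrak{S}$.

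Next, I would take a minimizing sequence $(\varphi_n,\psi_n)\subset\mathfrak{S}$ with $E(\varphi_n,\psi_n;s)\to\alpha(s)$. The coercivity estimate gives a uniform $H^1\times H^1$ bound, so, passing to a subsequence, $(\varphi_n,\psi_n)\rightharpoonup(\varphi,\psi)$ weakly in $H^1\times H^1$ and strongly in $L^2\times L^2$ by Rellich--Kondrachov. Strong $L^2$ convergence preserves the constraint $J(\varphi,\psi)=1$, so $(\varphi,\psi)\in\mathfrak{S}$. Lower semicontinuity of $E$ follows from two observations: the $E_1$ functional and the $h'(\bar\rho)$-integral in $E_0$ are convex, nonnegative quadratic forms in the derivatives, hence weakly lower semicontinuous; the point values $\psi_n(0)^2$ and $\psi_n(\ell)^2$ converge because the trace map $H^1((-b,\ell))\hookrightarrow C([-b,\ell])$ is compact. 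Hence $E(\varphi,\psi;s)\le \liminf_n E(\varphi_n,\psi_n;s)=\alpha(s)$, and $(\varphi,\psi)$ achieves the infimum.

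To recover \eqref{linear5}, I would introduce a Lagrange multiplier for the constraint $J=1$: setting $-\lambda^2:=\alpha(s)$ and perturbing $(\varphi+t\tilde\varphi,\psi+t\tilde\psi)$, the stationarity of $E-(-\lambda^2)J$ at $t=0$ yields the weak form of \eqref{linear5} tested against arbitrary $(\tilde\varphi,\tilde\psi)\in {}_0H^1\times {}_0H^1$. Integration by parts on each subinterval $(-b,0)$ and $(0,\ell)$ separately, with careful bookkeeping of boundary contributions at $x_3=\ell$, $x_3=0^\pm$, and the built-in continuity $\jump{\varphi}=\jump{\psi}=0$, produces both the interior ODE system and the jump/boundary conditions displayed in \eqref{linear5}. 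Standard elliptic regularity, applied component-wise on each subdomain where the coefficients $\mu,\mu',\bar\rho,P'(\bar\rho),h'(\bar\rho)$ are smooth, then upgrades $(\varphi,\psi)$ to classical smoothness on $(-b,0)$ and $(0,\ell)$.

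The main obstacle is the coercivity step, since $E_0$ is genuinely indefinite whenever $\sigma_-|\xi|^2<\rj g$. The mechanism that saves the argument is the combination of the Dirichlet condition at $x_3=-b$, which supplies the Poincar\'e-trace inequality $\psi(0)^2\le b\|\psi'\|_{L^2(-b,0)}^2$, with the uniform coercivity of the quadratic forms in $sE_1$ and in the $h'(\bar\rho)$-integral of $E_0$. This is precisely the structure exploited for the single-fluid internal wave problem in \cite{3GT2}, and the two-layer, free-upper-boundary modification requires only the extra nonnegative trace term $\tfrac{\sigma_+|\xi|^2+\rho_1 g}{2}\psi(\ell)^2$, which only helps.
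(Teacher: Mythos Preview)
Your overall strategy---direct method, weak lower semicontinuity, Euler--Lagrange, elliptic bootstrap---is correct and is also what the paper does. The gap is in your coercivity step. You propose to absorb the indefinite trace term $-\tfrac{\rj g}{2}\psi(0)^2$ into $sE_1$ and the $h'(\bar\rho)$-integral via the Poincar\'e-trace bound $\psi(0)^2\le b\|\psi'\|_{L^2(-b,0)}^2$. Absorbing into $sE_1$ alone gives only $E\ge (s-b\rj g/\mu_-)E_1$ (this is exactly the computation in the proof of Proposition~\ref{prop3.5}), which is vacuous for small $s$. Absorbing into the $h'(\bar\rho)$-integral alone yields $E_0\ge(\tfrac12-C\rj g)\int h'(\bar\rho)(\cdots)^2-C'(\xi)$ for a constant $C$ depending on $b$, $\bar\rho$, and $h'(\bar\rho)$, and there is no reason $C\rj g<\tfrac12$ in general. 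A convex combination of the two routes closes only when $s$ exceeds a threshold determined by these structural constants; for arbitrary small $s>0$ your argument does not give a lower bound on $E$ over $\mathfrak{S}$.

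The paper sidesteps this with an algebraic identity rather than an absorption. Using $h'(\bar\rho)\bar\rho'=-g$ (from \eqref{steady}) and integrating $-2g\bar\rho\psi'\psi-g\bar\rho'\psi^2=-g(\bar\rho\psi^2)'$ by parts, the boundary contributions cancel the trace terms $\rho_1 g\psi(\ell)^2$ and $-\rj g\psi(0)^2$ in $E_0$ exactly, leaving
\[
E_0=\frac{\sigma_-|\xi|^2}{2}\psi(0)^2+\frac{\sigma_+|\xi|^2}{2}\psi(\ell)^2+\frac12\int_{-b}^\ell P'(\bar\rho)\bar\rho\bigl(\psi'+|\xi|\varphi\bigr)^2 - 2g\bar\rho|\xi|\psi\varphi.
\]
Now the only sign-indefinite piece is a bulk cross term, controlled directly by the constraint: $\bigl|\int\bar\rho\psi\varphi\bigr|\le J(\varphi,\psi)=1$, giving $E\ge E_0\ge -g|\xi|$ uniformly in $s>0$. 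This $s$-independent lower bound on $E_0$ also supplies the $H^1$-bound on minimizing sequences via $sE_1=E-E_0\le E+g|\xi|$, after which Korn plus $J=1$ finishes. Once you insert this identity at the start, the remainder of your argument (compactness, Euler--Lagrange derivation, regularity) goes through as written.
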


\begin{proof}
A completion of the square and the fact that $\bar\rho$ solves \eqref{steady} allow us to write
\begin{equation}
 h'(\bar\rho) \left((\bar\rho\psi)' + \bar\rho\abs{\xi} \varphi\right)^2=P'(\bar\rho)\bar\rho \left(\psi' + \abs{\xi} \varphi\right)^2\\-2 g \bar\rho \abs{\xi} \psi \varphi-2 g  \bar\rho\psi ' \psi  -g\bar\rho'  \psi^2.
\end{equation}
We employ an integration by parts to see that
\begin{equation}\label{n_i_1}
  \int_{-b}^\ell-2 g  \bar\rho\psi ' \psi  -g\bar\rho'  \psi^2=-\int_{-b}^\ell  g\left( \bar\rho\psi^2\right)'
= -{\rho_1 g (\psi(\ell))^2} +{ \rj g (\psi(0))^2}.
\end{equation}
We then obtain another expression for $E_0(\varphi,\psi)$:
\begin{multline}\label{n_i_111}
E_0(\varphi,\psi) = \frac{\sigma_- \abs{\xi}^2 }{2}(\psi(0))^2+\frac{\sigma_+ \abs{\xi}^2}{2}(\psi(\ell))^2
 + \hal \int_{-b}^\ell P'(\bar\rho)\bar\rho \left(\psi' + \abs{\xi} \varphi\right)^2- 2g \bar\rho \abs{\xi} \psi \varphi.
\end{multline}
Notice that by further employing the identity $-2ab=(a-b)^2-(a^2+b^2)$ and the constraint on $J(\varphi,\psi)$, we see from \eqref{n_i_111} that
\begin{equation}
E(\varphi,\psi;s)\ge E_0(\varphi,\psi) \ge -2g\abs{\xi}\int_{-b}^\ell   \bar\rho  \psi \varphi \ge -g\abs{\xi}
\end{equation}
for any $(\varphi,\psi)\in \mathfrak{S}$. This shows that $E$ is bounded below on $\mathfrak{S}$.
The results thus follow from  standard compactness arguments, the variational principle for Euler-Langrange equations, and a bootstrap argument for smoothness.  For more details, we refer to Propositions 3.1 and 3.2 of \cite{3GT2}.
\end{proof}

In order to construct the growing mode solution  to the original problem \eqref{linear} we first need to ensure the negativity of the infimum \eqref{mu_def}. For $\sigma_-\ge\sigma_c$, we always have that $ \sigma_- \abs{\xi}^2-\rj g \ge 0$ for any nonzero frequency $\xi\in L_1^{-1}\mathbb{Z}\times L_2^{-1}\mathbb{Z}$, which implies $E(\varphi,\psi;s)\ge0$ because of \eqref{E0_def}. This then means that $\alpha(s)\ge0$, which suggests that no growing mode solution to \eqref{linear} can be constructed when $\sigma_- \ge \sigma_c$, and in turn indicates that the system is linearly stable. In fact, in \cite{JTW_GWP} we have established the nonlinear stability of the compressible viscous surface-internal wave problem for the case $\sigma_->\sigma_c$.  However, when $0\le\sigma_-<\sigma_c$, for $0<|\xi|< \sqrt{\rj g/\sigma_-}$ (it is interpreted that when $\sigma_-=0$ this means $0<|\xi|< \infty$), $ \sigma_- \abs{\xi}^2-\rj g<0$, and then it is possible for $E(\varphi,\psi;s)$ to be negative. We denote this critical frequency by $ |\xi|_c$:
\begin{equation}
 |\xi|_c:=\sqrt{\frac{\rj g}{\sigma_-}}.
\end{equation}

\begin{lem} Let $0<|\xi|<  |\xi|_c$. 
Then there exists $s_0>0$ depending on $\sigma_\pm,g,\bar\rho,P, b,\ell,\mu_\pm,\mu'_\pm,|\xi|$ such that for $0<s\leq s_0$ it holds that $\alpha(s)<0$.
\end{lem}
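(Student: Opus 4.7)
The plan is to exploit the fact that since $E_1\ge 0$, the map $s\mapsto E(\varphi,\psi;s)=E_0(\varphi,\psi)+sE_1(\varphi,\psi)$ is nondecreasing in $s$ for each fixed $(\varphi,\psi)$, so $\alpha(s)\le E(\varphi_*,\psi_*;s)$ for any single admissible test pair $(\varphi_*,\psi_*)\in \mathfrak{S}$. It therefore suffices to produce one such pair with $E_0(\varphi_*,\psi_*)<0$ and then take $s_0$ small enough that $s_0\,E_1(\varphi_*,\psi_*)<\tfrac12|E_0(\varphi_*,\psi_*)|$. The whole problem reduces to the $s=0$ question: build an admissible pair on which $E_0$ is strictly negative.

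The hypothesis $0<|\xi|<|\xi|_c$ enters precisely here, because it forces the interfacial coefficient $\sigma_-|\xi|^2-\jump{\bar\rho}g$ appearing in \eqref{E0_def} to be strictly negative. To make this negativity dominate, I would pick a smooth bump $\psi_\delta\in C^\infty_c((-\delta,\delta))$ with $\delta<\tfrac12\min(b,\ell)$, $\psi_\delta(0)=1$, and $\|\psi_\delta\|_{L^\infty}\le 1$, and set $\varphi_\delta := -\psi_\delta'/|\xi|$. Both functions are smooth and compactly supported in $(-b,\ell)$, and so lie in ${}_0H^1((-b,\ell))$ with $\jump{\varphi_\delta}=0$ automatic across the interface. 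With this specific coupling, the quantity inside the integral in $E_0$ collapses,
\begin{equation*}
(\bar\rho\psi_\delta)'+\bar\rho|\xi|\varphi_\delta \;=\; \bar\rho'\psi_\delta
\end{equation*}
interpreted piecewise on $(-b,0)$ and $(0,\ell)$, and since $\psi_\delta(\ell)=0$ the upper boundary contribution vanishes as well, yielding
\begin{equation*}
E_0(\varphi_\delta,\psi_\delta) \;\le\; \frac{\sigma_-|\xi|^2-\jump{\bar\rho}g}{2} + C\delta,
\end{equation*}
where $C$ depends only on the equilibrium data through $\|h'(\bar\rho)(\bar\rho')^2\|_{L^\infty}$. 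Choosing $\delta$ small enough relative to the fixed positive number $\jump{\bar\rho}g-\sigma_-|\xi|^2$ gives $E_0(\varphi_\delta,\psi_\delta)<0$.

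Since $E_0$ and $J$ are both quadratic, the rescaled pair $(\tilde\varphi,\tilde\psi):=(\varphi_\delta,\psi_\delta)/\sqrt{J(\varphi_\delta,\psi_\delta)}$ lies in $\mathfrak{S}$ and satisfies $E_0(\tilde\varphi,\tilde\psi)=-c<0$ for some $c>0$. With $E_1(\tilde\varphi,\tilde\psi)$ a finite nonnegative number depending only on the fixed data, I would set
\begin{equation*}
s_0 \;:=\; \frac{c}{2\,E_1(\tilde\varphi,\tilde\psi)+1},
\end{equation*}
which depends only on $\sigma_\pm,g,\bar\rho,P,b,\ell,\mu_\pm,\mu_\pm',|\xi|$. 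Then $\alpha(s)\le E(\tilde\varphi,\tilde\psi;s)\le -c/2<0$ for every $0<s\le s_0$, as required.

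The main point of care, rather than a genuine obstacle, is the admissibility of $\varphi_\delta$: it must lie in ${}_0H^1((-b,\ell))$ and in particular satisfy $\jump{\varphi_\delta}=0$ at $x_3=0$. Insisting that $\psi_\delta$ be $C^\infty$ (rather than, say, piecewise linear) makes $\varphi_\delta=-\psi_\delta'/|\xi|$ smooth across the interface and compactly supported inside $(-b,\ell)$ automatically, sidestepping any potential mismatch between $\bar\rho^+$ and $\bar\rho^-$ that the density jump $\jump{\bar\rho}\neq 0$ could otherwise induce in the more naive choice $\varphi=-(\bar\rho\psi)'/(\bar\rho|\xi|)$.
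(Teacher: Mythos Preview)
Your proof is correct and follows essentially the same strategy as the paper: choose $\varphi=-\psi'/|\xi|$ so that $(\bar\rho\psi)'+\bar\rho|\xi|\varphi=\bar\rho'\psi$, construct $\psi$ with $\psi(0)=1$, $\psi(\ell)=0$, and small $L^2$ norm, so that $E_0$ is dominated by the negative interfacial coefficient $\tfrac{1}{2}(\sigma_-|\xi|^2-\jump{\bar\rho}g)$, and then take $s_0$ small enough to control $sE_1$. The only difference is the concrete test function: the paper uses the explicit family $\psi_\alpha(x_3)=(1-x_3^2/\ell^2)^{\alpha/2}$ (and its analogue on $(-b,0)$) with $\alpha\to\infty$, whereas you use a smooth bump with shrinking support $(-\delta,\delta)$; both produce $\|\psi\|_{L^2}^2\to 0$ while keeping $\psi(0)=1$, which is all the argument needs.
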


\begin{proof}
Since $E$ and $J$ have the same homogeneity, we may reduce to constructing any pair $(\varphi,\psi)\in {}_0H^1((-b,\ell)) \times {}_0H^1((-b,\ell))$ such that $E(\varphi,\psi;s) <0$.  We will take $\psi$ with $\psi(\ell)=0$ and $\varphi = -\psi'/\abs{\xi}$, and we then further reduce to constructing any $\psi \in H_0^2((-b,\ell))$ such that
\begin{equation}
\tilde{E}(\psi;s) := E(-\psi'/\abs{\xi},\psi;s) = \frac{\sigma_- \abs{\xi}^2-\rj g}{2}(\psi(0))^2
+   \hal \int_{-b}^\ell P'(\bar\rho)\bar\rho \psi^2 + sE_1(-\psi'/\abs{\xi},\psi) <0.
\end{equation}

For $\alpha \ge 5$ we define the test function $\psi_\alpha \in H_0^2((-b,\ell))$ according to
\begin{equation}
 \psi_\alpha(x_3) =
\begin{cases}
  \left(1-\frac{x_3^2}{\ell^2} \right)^{\alpha/2}, & x_3 \in [0,\ell) \\
  \left(1-\frac{x_3^2}{b^2} \right)^{\alpha/2}, &  x_3\in (-b,0).
\end{cases}
\end{equation}
Simple calculations then show that
\begin{equation}
 \int_{-m}^\ell (\psi_\alpha)^2 = \frac{\sqrt{\pi}(b+\ell) \Gamma(\alpha +1 )}{2 \Gamma(\alpha +3/2)} = o_\alpha(1),
\end{equation}
where $o_\alpha(1)$ is a quantity that vanishes as $\alpha \rightarrow \infty$. We thus find that
\begin{equation}
\tilde{E}(\psi_\alpha;s) = \frac{\sigma_- \abs{\xi}^2-\rj g}{2}+o_\alpha(1)
+ s C
\end{equation}
for the constant $C=E_1(-\psi_\alpha'/\abs{\xi},\psi_\alpha)$, which depends on
$\alpha,\bar\rho, b,\ell,\mu_\pm,\mu'_\pm,|\xi|$. Since $ \sigma_- \abs{\xi}^2-\rj g <0$, we may then fix $\alpha$ sufficiently large so that the first two terms sum to something strictly negative. Then
there exists $s_0>0$ depending on $\sigma_\pm,g,\bar\rho,P, b,\ell,\mu_\pm,\mu'_\pm,|\xi|$ so that for
$s\le s_0$ it holds that
$\tilde{E}(\psi_\alpha;s)<0$.  Thus  $\alpha(s)<0$ for $s \le s_0$.
\end{proof}

\begin{remark}
For a  minimizer $(\varphi,\psi) \in \mathfrak{S}$ we have
\begin{equation}
 \frac{\sigma_- \abs{\xi}^2 - g{\jump{\bar\rho}}}{2} (\psi(0))^2\le \al(s) <0,
\end{equation}
which in particular requires that $\psi(0)\neq 0$ and $\abs{\xi}^2 < g{\jump{\bar\rho}}/\sigma_-$.
\end{remark}


As in Proposition 3.6 of \cite{3GT2}, one can prove that $\al=\al(s)$ is continuous and strictly increasing and that there exists $s_*\in(0,\infty)$ so that
\begin{equation}
 \mathcal{S} = \al^{-1}((-\infty,0)) = (0,s_*).
\end{equation}
Arguing as in Theorem 3.8 of \cite{3GT2}, we deduce the following.

\begin{lem} For each fixed $0< \abs{\xi} <  |\xi|_c$ there exists a unique $s \in \mathcal{S}$ so that $\lambda(|\xi|,s)=\sqrt{-\al(s)}>0$ and
\begin{equation}\label{inv_0}
 s= \lambda(|\xi|,s).
\end{equation}
\end{lem}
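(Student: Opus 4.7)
The plan is to reformulate the identity $s = \lambda(|\xi|,s) = \sqrt{-\alpha(s)}$ as a fixed point problem for the scalar function
\[
F(s) := s - \sqrt{-\alpha(s)}
\]
on the interval $\mathcal{S} = (0,s_*)$, and to apply the intermediate value theorem together with strict monotonicity to obtain existence and uniqueness.

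First, I would record the boundary behaviour of $\alpha$ on $\mathcal{S}$. From the preceding discussion we already know that $\alpha\colon(0,s_*)\to(-\infty,0)$ is continuous and strictly increasing, and that by the definition of $s_*$ as $\sup\mathcal{S}$ combined with continuity, $\alpha(s)\nearrow 0^-$ as $s\to s_*^-$. As $s\to 0^+$, the decomposition $E(\varphi,\psi;s)=E_0(\varphi,\psi)+sE_1(\varphi,\psi)$ together with $E_1\ge 0$ shows that $\alpha(s)$ is monotone and bounded below by $\inf_{\mathfrak{S}}E_0$, and by the energy lower bound $E\ge -g|\xi|$ this limit $\alpha_0:=\lim_{s\to 0^+}\alpha(s)$ is finite. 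The strict negativity $\alpha_0<0$ is obtained by inserting the same test function $\psi_\alpha$ used in the previous lemma with $s=0$: the estimate
\[
\tilde{E}(\psi_\alpha;0)=\frac{\sigma_-|\xi|^2-\jump{\bar\rho}g}{2}+o_\alpha(1)
\]
is strictly negative for $\alpha$ large enough, using $|\xi|<|\xi|_c$.

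Next, I would define $\lambda(|\xi|,s):=\sqrt{-\alpha(s)}$ on $\mathcal{S}$. By the above, $\lambda(|\xi|,\cdot)$ is continuous and strictly decreasing, with
\[
\lim_{s\to 0^+}\lambda(|\xi|,s)=\sqrt{-\alpha_0}>0,\qquad \lim_{s\to s_*^-}\lambda(|\xi|,s)=0.
\]
Hence $F(s)=s-\lambda(|\xi|,s)$ is continuous and \emph{strictly increasing} on $(0,s_*)$, with
\[
\lim_{s\to 0^+}F(s)=-\sqrt{-\alpha_0}<0,\qquad \lim_{s\to s_*^-}F(s)=s_*>0.
\]
The intermediate value theorem therefore produces a zero $s\in(0,s_*)$, and strict monotonicity of $F$ guarantees uniqueness. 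This value satisfies $s=\lambda(|\xi|,s)=\sqrt{-\alpha(s)}>0$, as required.

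The only delicate step is verifying the limiting behaviour of $\alpha$ at the endpoints of $\mathcal{S}$ with enough precision to run the intermediate value argument — in particular the strict inequality $\alpha_0<0$ at $s=0$, since this is what ensures $F(0^+)<0$. This in turn rests on the fact that $|\xi|<|\xi|_c$ makes the boundary coefficient $\sigma_-|\xi|^2-\jump{\bar\rho}g$ strictly negative, so the test function construction of the previous lemma still applies in the limit $s\to 0$. Everything else is a direct transcription of the monotonicity and continuity of $\alpha$ established in analogy with Proposition~3.6 of \cite{3GT2}.
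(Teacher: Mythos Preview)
Your proof is correct and follows essentially the same intermediate value argument that the paper inherits from Theorem~3.8 of \cite{3GT2}: continuity and strict monotonicity of $\alpha$ make $F(s)=s-\sqrt{-\alpha(s)}$ strictly increasing with a sign change on $(0,s_*)$, yielding existence and uniqueness. One minor simplification: you do not need to reinvoke the test function $\psi_\alpha$ to get $\alpha_0<0$, since strict monotonicity of $\alpha$ already gives $\alpha_0\le\alpha(s_1)<0$ for any fixed $s_1\in(0,s_*)$.
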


Hence, we may now think of $s = s(\abs{\xi})$ and we may also write $\lambda = \lambda(\abs{\xi})$ from now on. In conclusion, we now have the existence of solutions to the system \eqref{linear3}.

\begin{prop}\label{w_soln_2}
For $\xi\in\Rn{2}$ so that $0 < \abs{\xi} <  \abs{\xi}_c$ there exists a solution $\varphi = \varphi(\xi,x_3)$, $\theta = \theta(\xi,x_3)$, $\psi = \psi(\xi,x_3)$, and $\lambda = \lambda(\abs{\xi})>0$  to \eqref{linear3} so that  $\psi(\xi,0)\neq 0$.
The solutions are smooth when restricted to $(-b,0)$ or $(0,\ell)$, and they are equivariant in $\xi$ in the sense that if $R\in SO(2)$ is a rotation operator, then
\begin{equation}\label{w_s_0}
\begin{pmatrix}
\varphi(R \xi,x_3) \\ \theta(R \xi,x_3) \\ \psi(R \xi,x_3)
\end{pmatrix}
=
\begin{pmatrix}
R_{11} & R_{12} & 0 \\
R_{21} & R_{22} & 0 \\
0      & 0      & 1
\end{pmatrix}
\begin{pmatrix}
\varphi(\xi,x_3) \\ \theta(\xi,x_3) \\ \psi(\xi,x_3)
\end{pmatrix}.
\end{equation}
\end{prop}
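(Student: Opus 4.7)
The plan is to assemble the proposition from the three preceding ingredients: the minimization result yielding a solution of the reduced modified problem \eqref{linear5}, the fixed-point lemma that pins down the eigenvalue via $s=\lambda(|\xi|,s)$, and the rotational symmetry observation between \eqref{linear3} and \eqref{linear4}.

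First, I fix $\xi\in\Rn{2}$ with $0<|\xi|<|\xi|_c$. By the preceding lemma there exists a unique $s=s(|\xi|)\in\mathcal{S}$ with $\lambda := \sqrt{-\alpha(s)}=s>0$. Setting the parameter in \eqref{linear5} equal to this $\lambda$ turns the modified system into \eqref{linear4}. Applying the minimization proposition at parameter $s=\lambda$ produces a minimizing pair $(\varphi_0,\psi_0)\in{}_0H^1((-b,\ell))^2$ with $J(\varphi_0,\psi_0)=1$ that solves \eqref{linear4} at horizontal frequency $|\xi|$. The non-vanishing $\psi_0(0)\neq 0$ follows from the remark: because $\alpha(s)<0$ and the boundary term $\frac{\sigma_-|\xi|^2-\jump{\bar\rho}g}{2}(\psi_0(0))^2$ is the only manifestly negative contribution to $E$, necessarily $\psi_0(0)\neq 0$.

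Next, to produce a solution of the full system \eqref{linear3} at general $\xi\neq 0$, I invoke the rotational symmetry noted just before \eqref{linear4}. Choose the unique rotation $R_\xi\in SO(2)$ with $R_\xi\xi=(|\xi|,0)$, and define
\begin{equation}
\begin{pmatrix}\varphi(\xi,x_3)\\ \theta(\xi,x_3)\end{pmatrix} := R_\xi^{T}\begin{pmatrix}\varphi_0(x_3)\\ 0\end{pmatrix}, \qquad \psi(\xi,x_3):=\psi_0(x_3),
\end{equation}
with $\lambda=\lambda(|\xi|)$ as above. By construction $(\varphi_0,0,\psi_0)$ solves \eqref{linear3} at the rotated frequency $(|\xi|,0)$, since $\theta=0$ satisfies the decoupled equation there and the remaining $(\varphi,\psi)$ equations are exactly \eqref{linear4}. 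Applying the rotational covariance in reverse then yields that $(\varphi(\xi,\cdot),\theta(\xi,\cdot),\psi(\xi,\cdot))$ solves \eqref{linear3} at frequency $\xi$. Smoothness on $(-b,0)$ and $(0,\ell)$ is inherited from the smoothness of $(\varphi_0,\psi_0)$ proved in the minimization proposition (which in turn comes from an ODE bootstrap on the interior equations), and $\psi(\xi,0)=\psi_0(0)\neq 0$.

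Finally, for the equivariance \eqref{w_s_0}, let $R\in SO(2)$ be arbitrary. The uniqueness of the rotation sending $\xi$ to $(|\xi|,0)$ forces $R_{R\xi} = R_\xi R^{T}$. Therefore
\begin{equation}
\begin{pmatrix}\varphi(R\xi,x_3)\\ \theta(R\xi,x_3)\end{pmatrix}
= R_{R\xi}^{T}\begin{pmatrix}\varphi_0\\ 0\end{pmatrix}
= R\, R_\xi^{T}\begin{pmatrix}\varphi_0\\ 0\end{pmatrix}
= R\begin{pmatrix}\varphi(\xi,x_3)\\ \theta(\xi,x_3)\end{pmatrix},
\end{equation}
while $\psi(R\xi,x_3)=\psi_0(x_3)=\psi(\xi,x_3)$, which is exactly \eqref{w_s_0}. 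The only genuinely delicate point is making sure the rotation selection $\xi\mapsto R_\xi$ is consistent enough to read off the equivariance; this is harmless because $R_\xi$ is uniquely determined by $\xi$ in $SO(2)$, and the scalar quantities $|\xi|,\lambda,\psi$ depend only on $|\xi|$.
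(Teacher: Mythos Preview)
Your proof is correct and follows essentially the same approach as the paper: rotate $\xi$ to $(|\xi|,0)$, use the minimizer of \eqref{mu_def} with $s=\lambda(|\xi|)$ to solve \eqref{linear4}, then rotate back to obtain a solution of \eqref{linear3}. Your treatment is in fact more thorough than the paper's---you spell out why $\psi(0)\neq 0$ via the remark, and you verify the equivariance \eqref{w_s_0} explicitly through the identity $R_{R\xi}=R_\xi R^{T}$, whereas the paper simply asserts that equivariance ``follows from the definition.''
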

\begin{proof}
We may find a rotation operator $R \in SO(2)$ so that $R \xi = (\abs{\xi},0)$.   For $\lambda = \lambda(\abs{\xi})$ given in \eqref{inv_0}, we define $(\varphi(\xi,x_3),\theta(\xi,x_3)) = R^{-1} (\varphi(\abs{\xi},x_3),0)$ and $\psi(\xi,x_3) = \psi(\abs{\xi},x_3)$, where the functions $\varphi(\abs{\xi},x_3)$ and $\psi(\abs{\xi},x_3)$ are the minimizer of \eqref{mu_def}, which solves the equations \eqref{linear5}, with $s=\lambda$.  This gives a solution to \eqref{linear3}.  The equivariance in $\xi$ follows from the definition.
\end{proof}

To obtain a largest growth rate, we next show the boundedness of $\lambda(\abs{\xi})$.

\begin{prop}\label{prop3.5} For any $0< \abs{\xi}<  \abs{\xi}_c$, $\lambda(\abs{\xi})$ satisfies the bound
\begin{equation}\label{bound}
\lambda(|\xi|)\le \frac{b g \rj }{\mu_-}.
 \end{equation}
\end{prop}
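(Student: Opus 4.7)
The plan is to extract the bound directly from the variational characterization of $\lambda$ at $s=\lambda$, combined with a Poincaré/trace-type estimate in the lower fluid that exploits $\psi_-(-b)=0$.

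First I would exploit the identity $E(\varphi,\psi;\lambda)=\alpha(\lambda)=-\lambda^2$ at the minimizer with $J(\varphi,\psi)=1$. Writing this out,
\begin{equation}\label{plan:en}
E_0(\varphi,\psi)+\lambda E_1(\varphi,\psi)=-\lambda^2.
\end{equation}
Next, from the original definition \eqref{E0_def} of $E_0$, the boundary term at $x_3=\ell$ and the interior integral are non-negative, so dropping them gives the clean lower bound
\begin{equation*}
E_0(\varphi,\psi)\ \ge\ \frac{\sigma_-|\xi|^2-\rj g}{2}\psi(0)^2\ \ge\ -\frac{\rj g}{2}\psi(0)^2.
\end{equation*}
Combining this with \eqref{plan:en} yields the decisive upper bound on $\lambda E_1$:
\begin{equation}\label{plan:up}
\lambda E_1(\varphi,\psi)=-\lambda^2-E_0(\varphi,\psi)\ \le\ \frac{\rj g}{2}\psi(0)^2.
\end{equation}

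Second, I would produce a matching lower bound on $E_1$ in terms of $\psi(0)^2$, using only contributions from the lower fluid. Setting $a=\psi_-'$ and $b=|\xi|\varphi_-$, the elementary inequality $a^2-ab+b^2\ge \tfrac34 a^2$ gives $(a-b)^2+\tfrac13(a+b)^2=\tfrac43(a^2-ab+b^2)\ge a^2$, hence
\begin{equation*}
\mu_-\Big[(\psi_-'-|\xi|\varphi_-)^2+\tfrac13(\psi_-'+|\xi|\varphi_-)^2\Big]\ \ge\ \mu_-(\psi_-')^2.
\end{equation*}
Dropping every other non-negative contribution in \eqref{E1_def} therefore yields $E_1(\varphi,\psi)\ge \tfrac{\mu_-}{2}\int_{-b}^0(\psi_-')^2\,dx_3$. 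Since $\psi_-(-b)=0$, Cauchy--Schwarz applied to $\psi_-(0)=\int_{-b}^0\psi_-'\,dx_3$ gives $\psi(0)^2\le b\int_{-b}^0(\psi_-')^2\,dx_3$, so
\begin{equation}\label{plan:lo}
E_1(\varphi,\psi)\ \ge\ \frac{\mu_-}{2b}\,\psi(0)^2.
\end{equation}

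Finally, combining \eqref{plan:up} and \eqref{plan:lo}, and invoking the Remark after Lemma preceding this one which guarantees $\psi(0)\neq 0$, I would divide by $\psi(0)^2>0$ to obtain
\begin{equation*}
\lambda\cdot \frac{\mu_-}{2b}\ \le\ \frac{\rj g}{2},\qquad\text{i.e.,}\qquad \lambda(|\xi|)\ \le\ \frac{b g\rj}{\mu_-},
\end{equation*}
which is exactly the desired estimate \eqref{bound}. I expect no serious obstacle here: the only subtle point is getting a clean constant in the dissipation lower bound \eqref{plan:lo} that exactly matches the constant in \eqref{plan:up}, which is handled by the sharp inequality $a^2-ab+b^2\ge \tfrac34 a^2$ and the optimal use of $\psi_-(-b)=0$ in the one-dimensional Poincaré estimate.
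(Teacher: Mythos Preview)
Your proof is correct and follows essentially the same approach as the paper's own proof. The only cosmetic differences are that the paper phrases the key pointwise inequality as $(A+B)^2\le 4(A^2+B^2/3)$ (with $A=\tfrac{\psi'-|\xi|\varphi}{2}$, $B=\tfrac{\psi'+|\xi|\varphi}{2}$), which is algebraically identical to your $(a-b)^2+\tfrac13(a+b)^2\ge a^2$, and that the paper concludes by dividing through by $E_1$ rather than by $\psi(0)^2$; both divisions are justified since $\psi(0)\neq 0$ forces $E_1>0$.
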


\begin{proof} For given $|\xi|\in(0,|\xi|_c)$, let $(\varphi, \psi)$ be the corresponding minimizer of $E$ so that
$-\lambda^2=E(\varphi, \psi; \lambda)$. From \eqref{E_def} and \eqref{E0_def}, we have $E=E_0+\lambda E_1$ and $E_0\geq -{ \rj g}(\psi(0))^2/2$. Hence,
\begin{equation}\label{3.27}
\lambda E_1 \leq -E_0 \leq \frac{ \rj g}{2}(\psi(0))^2
\end{equation}
On the other hand, since $\psi(-b)=0$, $\psi(0)=\int_{-b}^0 \psi' dx_3 \leq \sqrt{b}(\int_{-b}^0 (\psi')^2 dx_3 )^{1/2}$, and thus
\begin{equation}
(\psi(0))^2\leq b \int_{-b}^0 (\psi')^2 dx_3 =  b \int_{-b}^0 \left(\frac{\psi'-|\xi|\varphi}{2} + \frac{\psi'+|\xi|\varphi}{2} \right)^2 dx_3.
\end{equation}
By further using the inequality: $(A+B)^2\leq 4(A^2+B^2/3)$ for all $A,B\in \mathbb{R}$, we have
\begin{equation}
\begin{split}
(\psi(0))^2&\leq b  \int_{-b}^0 (\psi'-|\xi|\varphi)^2 +\frac13(\psi'+|\xi|\varphi)^2 dx_3\\
&= \frac{2b}{\mu_-}\,\frac12  \int_{-b}^0 \mu_-\left( (\psi'-|\xi|\varphi)^2 +\frac13(\psi'+|\xi|\varphi)^2 \right)dx_3 \leq \frac{2b}{\mu_-} E_1.
\end{split}
\end{equation}
Combining this with \eqref{3.27}, we deduce \eqref{bound}.
\end{proof}

Proposition \ref{prop3.5} then allows us to define
\begin{equation} \label{Lambda}
0<\Lambda:= \sup_{0<|\xi|< |\xi|_c} {\lambda(|\xi|)}<\infty .
\end{equation}
For $\sigma_->0$, only a finite number of spatial frequencies $\xi\in (L_1^{-1}\mathbb{Z})\times (L_2^{-1}\mathbb{Z})$ satisfy $|\xi|<|\xi|_c$, so the the largest growth rate $\Lambda$  must be achieved when $0<\sigma_- <\sigma_c$. For $\sigma_-=0$ it is not clear whether $\Lambda$ is achieved.  However, we can achieve a growth rate that is arbitrarily close to $\Lambda$, and so in particular $\Lambda_\ast$ is achieved, where
\begin{equation} \label{littlelambda}
0<\Lambda/2<\Lambda_\ast\le \Lambda.
\end{equation}

We may now construct a growing mode solution to the linearized problem \eqref{linear}.

\begin{thm}\label{growingmode}
Let $\Lambda$ be defined by \eqref{Lambda} and $\Lambda_\ast$ be defined by \eqref{littlelambda}.  Then the following hold.
\begin{enumerate}
 \item Let $0< \sigma_- < \sigma_c$.  Then there is a growing mode solution to \eqref{linear} so that
\begin{equation}\label{gro1}
\|q(t)\|_{k}=e^{\Lambda t}\|q(0)\|_{k},\;\|u(t)\|_{k}=e^{\Lambda t}\|u(0)\|_{k}, \;\|\eta(t)\|_{k}=e^{\Lambda t}\|\eta(0)\|_{k}
\end{equation}
for any $k\geq0$.
 \item  Let $\sigma_-=0$. Then there is a growing mode solution to \eqref{linear} so that
\begin{equation}\label{gro2}
\|q(t)\|_{k}=e^{\Lambda_\ast t}\|q(0)\|_{k},\; \|u(t)\|_{k}=e^{\Lambda_\ast t}\|u(0)\|_{k},\; \|\eta(t)\|_{k}=e^{\Lambda_\ast t}\|\eta(0)\|_{k}
\end{equation}
for any $k\geq0$.
\end{enumerate}
\end{thm}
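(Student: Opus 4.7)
The plan is to assemble the growing mode from the ODE solutions furnished by Proposition \ref{w_soln_2}, reversing the chain of ansatzes \eqref{ansatz}--\eqref{ansatz2} and then selecting an optimal (or near-optimal) horizontal frequency.

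First I would fix an admissible frequency $\xi \in (L_1^{-1}\mathbb{Z}) \times (L_2^{-1}\mathbb{Z})$ with $0<|\xi|<|\xi|_c$ and take the triple $(\varphi(\xi,\cdot),\theta(\xi,\cdot),\psi(\xi,\cdot))$ together with $\lambda = \lambda(|\xi|)>0$ provided by Proposition \ref{w_soln_2}. Via \eqref{ansatz2} this defines $w(x) = (-i\varphi, -i\theta, \psi)\,e^{ix'\cdot\xi}$, and via \eqref{ansatz1} we set $\tilde q = -\lambda^{-1}\diverge(\bar\rho w)$ and $\tilde\eta = \lambda^{-1} w_3|_\Sigma$. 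Defining $u(x,t) = w(x)e^{\lambda t}$, $q(x,t) = \tilde q(x) e^{\lambda t}$, $\eta(x',t) = \tilde\eta(x')e^{\lambda t}$, I trace backwards through \eqref{linear4}, \eqref{linear3}, \eqref{linear2} to verify that $(q,u,\eta)$ solves \eqref{linear}: the bulk and interface equations of \eqref{linear3} are exactly what the equations of \eqref{linear} become after the ansatz, and the continuity and Dirichlet boundary conditions hold by construction. To obtain a real-valued solution one takes the real part of the complex solution (combining with the equivariant counterpart at $-\xi$ from \eqref{w_s_0}), which preserves the form of the growing mode.

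Next I would choose $\xi$ to realize the desired growth rate. For part (1), since $\sigma_->0$, the definition $|\xi|_c = \sqrt{\jump{\bar\rho}g/\sigma_-}<\infty$ forces the set
\begin{equation*}
\mathcal{F} := \{\xi \in (L_1^{-1}\mathbb{Z}) \times (L_2^{-1}\mathbb{Z}) : 0<|\xi|<|\xi|_c\}
\end{equation*}
to be finite, so the supremum $\Lambda = \sup_{\xi\in\mathcal{F}} \lambda(|\xi|)$ is attained at some $\xi^\ast\in\mathcal{F}$; taking the construction above with $\xi = \xi^\ast$ produces a growing mode with exact rate $\Lambda$. For part (2), with $\sigma_-=0$ we have $|\xi|_c=\infty$ and $\mathcal{F}$ is infinite, so $\Lambda$ may fail to be attained. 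However, by the definition of supremum together with \eqref{littlelambda} there exists $\xi^\ast\in\mathcal{F}$ with $\lambda(|\xi^\ast|) = \Lambda_\ast \in (\Lambda/2,\Lambda]$; indeed one can simply \emph{define} $\Lambda_\ast := \lambda(|\xi^\ast|)$ for any $\xi^\ast$ with $\lambda(|\xi^\ast|) > \Lambda/2$, whose existence follows from the definition of the supremum $\Lambda$.

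Finally, the norm identities \eqref{gro1} and \eqref{gro2} are immediate from the separated-variables structure of the ansatz: for any integer $k\geq 0$, the exponential time factor pulls out of every spatial norm, so $\|u(t)\|_k = e^{\lambda t}\|w\|_k = e^{\lambda t}\|u(0)\|_k$, and likewise $\|q(t)\|_k = e^{\lambda t}\|q(0)\|_k$, $\|\eta(t)\|_k = e^{\lambda t}\|\eta(0)\|_k$, where $\lambda = \Lambda$ in case (1) and $\lambda = \Lambda_\ast$ in case (2). The only mildly subtle point in the whole argument is the non-attainment of $\Lambda$ when $\sigma_-=0$, but this is precisely what motivated the definition of $\Lambda_\ast$ in \eqref{littlelambda} and so is handled without further work.
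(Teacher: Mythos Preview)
Your proposal is correct and follows essentially the same approach as the paper: pick a frequency $\xi$ at which $\lambda(|\xi|)$ equals $\Lambda$ (case $\sigma_->0$, using the finiteness of admissible frequencies) or $\Lambda_\ast$ (case $\sigma_-=0$), invoke Proposition \ref{w_soln_2} to obtain $(\varphi,\theta,\psi)$, and then reverse the ansatzes \eqref{ansatz}--\eqref{ansatz2} to build $(q,u,\eta)$. Your added remarks on taking real parts and on why the supremum is attained are fine elaborations of details the paper leaves implicit.
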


\begin{proof}
Let $|\xi|>0$ be so that $\lambda(|\xi|)=\Lambda$ for $\sigma_->0$ or $\lambda(|\xi|)=\Lambda_\ast$ for $\sigma_-=0$. Let $\varphi = \varphi(\xi,x_3)$, $\theta = \theta(\xi,x_3)$, $\psi = \psi(\xi,x_3)$ be the solution to \eqref{linear3} with $\lambda(|\xi|)$ as stated in Proposition \ref{w_soln_2}. We then define $q$, $u$, and $\eta$ according to \eqref{ansatz}, \eqref{ansatz1}, and \eqref{ansatz2}. Then we have that $q \in {H}^k(\Omega)$, $u\in {}_0H^1(\Omega)\cap {H}^k(\Omega)$, and $\eta\in H^k(\Sigma)$  for any $k \ge 0$ and $(q,u,\eta)$ solve the linearized problem \eqref{linear}.  Moreover, $q,u,\eta$ satisfy \eqref{gro1} or \eqref{gro2}.
\end{proof}

\section{Growth of solutions to the linear inhomogeneous equations}\label{growth}

In this section, we will show that $\Lambda$ defined by \eqref{Lambda} is the sharp growth rate of arbitrary solutions to the linearized problem \eqref{linear}. Since the spectrum of the linear operator is  complicated, it is hard to obtain the largest growth rate of the solution operator in ``$L^2\rightarrow L^2$'' in the usual way. Instead, motivated by \cite{3GT2}, we can use careful energy estimates to show that $e^{\Lambda t}$ is the sharp growth rate in a slightly weaker sense, say, for instance ``$H^2\rightarrow L^2$''. However, this will be done for strong solutions to the problem, and it may be difficult to apply directly to the nonlinear problem due to the issue of compatibility conditions of the initial and boundary data since the problem is defined in a domain with boundary. We overcome this obstacle by proving the estimates for the growth in time of arbitrary solutions to the linear inhomogeneous equations:
\begin{equation}\label{linear ho}
\begin{cases}
\partial_t \q +\diverge ( \bar{\rho}   u)=G^1 & \text{in }
\Omega  \\
 \bar{\rho} \partial_t    u   + \bar{\rho}\nabla \left(h'(\bar{\rho})\q\right)   -\diverge \S(u) =G^2 & \text{in }
\Omega  \\
\partial_t \eta = u_3+G^4 &
\text{on }\Sigma  \\
(  P'(\bar\rho)\q I- \S(  u))e_3  = (\rho_1  g \eta_+ -\sigma_+ \Delta_\ast \eta_+ ) e_3 +G_+^3
 & \text{on } \Sigma_+
 \\ \jump{P'(\bar\rho)\q I- \S(u)}e_3
=(\rj g\eta_- +\sigma_- \Delta_\ast \eta_-)e_3-G_-^3&\hbox{on }\Sigma_-
\\\jump{u}=0 &\hbox{on }\Sigma_-
\\ u_-=0 &\hbox{on }\Sigma_b,
\end{cases}%
\end{equation}
where $G^i$'s are given functions.

It will be convenient to work with a second-order formulation of the equations \eqref{linear ho}. To arrive at this, we differentiate the second equation in time and eliminate the $\q$ and
$\eta$ terms using the other equations. The resulting equations for $u$ read as
\begin{equation}\label{second_order}
\begin{cases}
 \bar{\rho} \p_{tt}    u  -\bar{\rho}\nabla \left(h'(\bar{\rho})\diverge ( \bar{\rho}   u)\right)   -\diverge \S(\p_t u) =\mathfrak{F} & \text{in }
\Omega  \\
( -P'(\bar\rho)\diverge ( \bar{\rho}   u) I- \S( \p_t u))e_3  = (\rho_1  g   u_{3} -\sigma_+ \Delta_\ast   u_{3} ) e_3 +\mathfrak{G}_+
 & \text{on } \Sigma_+
 \\  \jump{-P'(\bar\rho)\diverge ( \bar{\rho}   u) I- \S(\p_t u)}e_3
=(\rj g  u_{3} +\sigma_- \Delta_\ast u_{3})e_3-\mathfrak{G}_-&\hbox{on }\Sigma_-
\\\jump{\p_t u}=0 &\hbox{on }\Sigma_-
\\\p_t u_-=0 &\hbox{on }\Sigma_b,
\end{cases}%
\end{equation}
where
\begin{equation}\label{F_def}
 \mathfrak{F} :=-\bar{\rho}\nabla \left(h'(\bar{\rho})G^1\right)+\p_t G^2,
\end{equation}
\begin{equation}\label{G+_def}
 \mathfrak{G}_+ :=-P_+'(\bar\rho_+) G^1 e_3-\p_t G_+^3+(\rho_1  g  G^4_+ -\sigma_+ \Delta_\ast   G^4_+ ) e_3,
\end{equation}
and
\begin{equation}\label{G-_def}
- \mathfrak{G}_- :=-\jump{P_+'(\bar\rho_+) G^1} e_3-\p_t G_-^3+(\rj  g  G^4_- +\sigma_- \Delta_\ast   G^4_- ) e_3.
\end{equation}

Our first result gives an energy and its evolution equation for solutions to the second-order problem \eqref{second_order}.
\begin{lem}\label{lin_en_evolve le}
Let $u$ solve \eqref{second_order}. Then
\begin{align}\label{energyidentity}
&\nonumber\frac{d}{dt}\left(\int_\Omega \frac{\bar\rho}{2} \abs{\dt u}^2+ \frac{h'(\bar\rho)}{2}\abs{ \diverge{(\bar\rho u)}}^2 + \int_\Sigma \frac{\sigma}{2}\abs{\nab_\ast u_3}^2+\int_{\Sigma_-} \frac{g \rho_1}{2} \abs{u_3}^2 +\int_{\Sigma_-} -\frac{g \jump{\rho}}{2} \abs{u_3}^2\right)
 \\&\quad+ \int_\Omega \frac{\mu}{2} \abs{D \dt u + D \dt u^T - \frac{2}{3} (\diverge{\dt u})I  }^2+\int_\Omega \mu' \abs{\diverge{\dt u}}^2
= \int_\Omega \mathfrak{F}\cdot\dt u-\int_\Sigma \mathfrak{G}\cdot \dt u.
\end{align}
\end{lem}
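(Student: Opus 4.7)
The plan is to derive the identity by taking the $L^2(\Omega)$ inner product of the bulk equation in \eqref{second_order} with $\dt u$ and processing each term by integration by parts, then using the two free-boundary conditions to rewrite the boundary contributions.

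First, the inertial term is immediate: $\int_\Omega \bar\rho\,\p_{tt}u\cdot\dt u = \tfrac{d}{dt}\int_\Omega \tfrac{\bar\rho}{2}\abs{\dt u}^2$. For the pressure-type term $-\int_\Omega \bar\rho \na(h'(\bar\rho)\diverge(\bar\rho u))\cdot \dt u$, I would pair the factor $\bar\rho$ with $\dt u$ and integrate by parts, using the identity $\bar\rho\, h'(\bar\rho) = P'(\bar\rho)$ (which follows from \eqref{h'}) at the boundary and the commutation $\diverge(\bar\rho\dt u) = \p_t\diverge(\bar\rho u)$ in the interior. The interior contribution becomes $\tfrac{d}{dt}\int_\Omega \tfrac{h'(\bar\rho)}{2}\abs{\diverge(\bar\rho u)}^2$, while the surface contributions produce $-\int_{\Sigma_+}P'(\bar\rho_+)\diverge(\bar\rho_+ u_+)\dt u_{+,3}$ on $\Sigma_+$ and, using $\jump{\dt u}=0$, a jump term $+\int_{\Sigma_-}\jump{P'(\bar\rho)\diverge(\bar\rho u)}\dt u_3$ on $\Sigma_-$; the $\Sigma_b$ term vanishes since $\dt u_-=0$.

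For the viscous term $-\int_\Omega \diverge\S(\dt u)\cdot\dt u$, integration by parts and symmetry of $\S$ give $\tfrac12\int_\Omega \S(\dt u):\sg\dt u$, which equals the stated dissipation $\int_\Omega \tfrac{\mu}{2}\abs{\sgz\dt u}^2 + \mu'\abs{\diverge\dt u}^2$ after noting that $\sgz\dt u$ is trace-free so $\sgz\dt u:\sg\dt u = \abs{\sgz\dt u}^2$. The boundary integrals from this IBP combine with the pressure-type boundary integrals above to give, on $\Sigma_+$, the quantity $\int_{\Sigma_+}\bigl[(-P'(\bar\rho)\diverge(\bar\rho u)I - \S(\dt u))e_3\bigr]\cdot \dt u_+$, which by the $\Sigma_+$ condition in \eqref{second_order} equals $\int_{\Sigma_+}(\rho_1 g u_{+,3} - \sigma_+\Delta_\ast u_{+,3})\dt u_{+,3} + \int_{\Sigma_+}\mathfrak{G}_+\cdot\dt u_+$; a further horizontal IBP on the torus $\Sigma_+$ converts the $\sigma_+$ term into $\tfrac{d}{dt}\int_{\Sigma_+}\tfrac{\sigma_+}{2}\abs{\na_\ast u_{+,3}}^2$, while the $\rho_1 g$ term becomes $\tfrac{d}{dt}\int_{\Sigma_+}\tfrac{\rho_1 g}{2}\abs{u_{+,3}}^2$.

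On $\Sigma_-$, the contributions from integrating by parts over $\Omega_+$ and $\Omega_-$ combine with opposite-sign normals and, using $\jump{\dt u}=0$, produce $-\int_{\Sigma_-}\jump{-P'(\bar\rho)\diverge(\bar\rho u)I - \S(\dt u)}e_3\cdot \dt u$; by the $\Sigma_-$ jump condition in \eqref{second_order} this equals $-\int_{\Sigma_-}(\rj g u_3 + \sigma_-\Delta_\ast u_3)\dt u_3 + \int_{\Sigma_-}\mathfrak{G}_-\cdot \dt u$, yielding $-\tfrac{d}{dt}\int_{\Sigma_-}\tfrac{\jump{\rho}g}{2}\abs{u_3}^2 + \tfrac{d}{dt}\int_{\Sigma_-}\tfrac{\sigma_-}{2}\abs{\na_\ast u_3}^2 + \int_{\Sigma_-}\mathfrak{G}_-\cdot\dt u$. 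Assembling all pieces against $\int_\Omega \mathfrak{F}\cdot\dt u$ produces \eqref{energyidentity}. The only genuine obstacle is the sign/normal bookkeeping on $\Sigma_-$, where the two outward normals for $\Omega_\pm$ point in opposite directions, so one must verify that the resulting bilinear surface pairing really does reconstruct the jump bracket in the exact form dictated by the interface condition; once that is set up, every other step is a routine IBP or a time-derivative rewrite.
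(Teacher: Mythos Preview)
Your argument is correct and is precisely the approach the paper takes: multiply the bulk equation in \eqref{second_order} by $\dt u$, integrate by parts over $\Omega$, and use the boundary conditions together with $\jump{\dt u}=0$ and $\dt u_-|_{\Sigma_b}=0$ to convert the surface terms into the stated time-derivative contributions. Your detailed bookkeeping of signs, normals, and the identity $\bar\rho\,h'(\bar\rho)=P'(\bar\rho)$ simply fleshes out what the paper compresses into a single sentence.
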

\begin{proof}
We multiply the first equation of $\eqref{second_order}$ by $\partial_{t} u$ and  then integrate by parts over $\Omega$. By using the boundary conditions in $\eqref{second_order}$, we obtain \eqref{energyidentity}.
\end{proof}

The variational characterization of $\Lambda$, which was given by \eqref{Lambda}, gives rise to the next result.
\begin{lem}\label{lin_en_bound}
Let $u\in {}_0H^1(\Omega)\cap {H}^2(\Omega)$. Then we have the inequality
\begin{align}
&\nonumber\int_\Omega \frac{h'(\bar\rho)}{2}\abs{ \diverge{(\bar\rho u)}}^2 + \int_\Sigma \frac{\sigma}{2}\abs{\nab_\ast u_3}^2+\int_{\Sigma_-} \frac{g \rho_1}{2} \abs{u_3}^2 +\int_{\Sigma_-} -\frac{g \jump{\rho}}{2} \abs{u_3}^2
\\ &\quad\ge -\frac{\Lambda^2}{2}\int_\Omega \bar\rho \abs{u}^2
- \frac{\Lambda}{2} \int_\Omega \frac{\mu}{2} \abs{Du + Du^T - \frac{2}{3}(\diverge{u})I  }^2 + \mu' \abs{\diverge{u}}^2 .
\end{align}
 \end{lem}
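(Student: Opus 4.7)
The plan is to reduce the claimed inequality to a pointwise (in the Fourier variable $\xi$) bound that comes directly from the variational characterization of $\Lambda$ established in Section~\ref{growing mode}. Since the coefficients of every quadratic form appearing in the statement depend only on $x_3$, the discrete Fourier expansion $u(x',x_3) = \sum_\xi \hat u(\xi,x_3)e^{ix'\cdot\xi}$ on $\mathrm{T}^2$ combined with Parseval decouples both sides of the inequality into sums over $\xi$. Hence it suffices to verify the bound mode by mode; the boundary condition $u_-=0$ on $\Sigma_b$ guarantees that each Fourier mode lies in the admissible class $\H((-b,\ell))\times\H((-b,\ell))$.

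For a fixed $\xi$ I would apply a rotation $R\in SO(2)$ sending $\xi$ to $(\abs{\xi},0)$, rotate the horizontal components of $\hat u$ accordingly, and write the result as $(\hat u_1,\hat u_2,\hat u_3)=(-i\varphi,-i\theta,\psi)$. A direct computation identifies the mode-level integrand of each term of the lemma with the one-dimensional functionals of \eqref{E0_def}--\eqref{J_def}: the left-hand side is exactly $E_0(\varphi,\psi)$ (no $\theta$ appears because $\xi_2=0$ eliminates $\theta$ from both $u_3$ and $\diverge(\bar\rho u)$); the bulk integrand $\bar\rho\abs{u}^2$ is $\bar\rho(\varphi^2+\psi^2)+\bar\rho\theta^2$; and the viscous integrand $\tfrac{\mu}{2}\abs{\sgz u}^2 + \mu'\abs{\diverge u}^2$ equals twice the $E_1$ integrand of \eqref{E1_def} plus the nonnegative remainder $\mu\abs{\xi}^2\theta^2+\mu\abs{\theta'}^2$. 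The extra $\theta$ contributions only make the right-hand side more negative, so after absorbing the factor of two into the prefactors $\Lambda^2/2$ and $\Lambda/2$, the mode inequality reduces to
\begin{equation*}
E_0(\varphi,\psi)+\Lambda E_1(\varphi,\psi)\ge -\Lambda^2 J(\varphi,\psi).
\end{equation*}

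This pointwise inequality is handled by a trichotomy on $\abs{\xi}$. If $\abs{\xi}\ge \abs{\xi}_c$, then $\sigma_-\abs{\xi}^2-\jump{\bar\rho}g\ge 0$ and every term of $E_0$ is manifestly nonnegative (using $h'(\bar\rho)>0$), so the bound is trivial; if $\abs{\xi}=0$, the integration-by-parts identity used to pass from \eqref{E0_def} to \eqref{n_i_111} reduces $E_0$ to $\tfrac{1}{2}\int P'(\bar\rho)\bar\rho(\psi')^2\ge 0$ and the bound is again immediate. In the essential range $0<\abs{\xi}<\abs{\xi}_c$, the fixed-point analysis preceding Proposition~\ref{w_soln_2} produces $\lambda(\abs{\xi})\in(0,\Lambda]$ satisfying $\alpha(\lambda(\abs{\xi}))=-\lambda(\abs{\xi})^2$, and the minimization property \eqref{mu_def} together with the joint homogeneity of $E_0$, $E_1$, $J$ gives $E_0+\lambda(\abs{\xi})E_1\ge -\lambda(\abs{\xi})^2 J$ for every admissible pair; since $\Lambda\ge \lambda(\abs{\xi})$ and $E_1\ge 0$, one concludes $E_0+\Lambda E_1\ge E_0+\lambda(\abs{\xi})E_1\ge -\lambda(\abs{\xi})^2 J\ge -\Lambda^2 J$. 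The main technical obstacle is the matching step in the second paragraph: the 3D viscous density is not equal but only proportional (with factor two) to the 1D $E_1$ density, and it is precisely this factor of two that balances the prefactors $\Lambda/2$ and $\Lambda^2/2$ appearing in the lemma.
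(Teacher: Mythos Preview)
Your proposal is correct and follows essentially the same route as the paper's proof: Fourier decomposition in $x'$ via Parseval, rotation to $\xi=(\abs{\xi},0)$, identification of the left-hand side with $E_0(\varphi,\psi)$ and of the right-hand side with $-\Lambda^2 J-\Lambda E_1$ (modulo the nonnegative $\theta$ remainders, which only help), and then the same trichotomy on $\abs{\xi}$ using \eqref{sum0} for $\xi=0$, the variational bound $E(\cdot;\lambda(\abs{\xi}))\ge -\lambda(\abs{\xi})^2 J$ together with $\lambda(\abs{\xi})\le\Lambda$ for $0<\abs{\xi}<\abs{\xi}_c$, and the manifest nonnegativity of $E_0$ for $\abs{\xi}\ge\abs{\xi}_c$. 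Your bookkeeping of the factor of two between the 3D viscous density and the $E_1$ integrand is exactly the match the paper uses implicitly when translating \eqref{ininin} back to the original variables.
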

 \begin{proof}
We take the horizontal Fourier transform to see that
\begin{align}\label{sum}
&\nonumber4\pi^2\left(\int_\Omega \frac{h'(\bar\rho)}{2}\abs{ \diverge{(\bar\rho u)}}^2+ \int_\Sigma \frac{\sigma}{2}\abs{\nab_\ast u_3}^2+\int_{\Sigma_-} \frac{g \rho_1}{2} \abs{u_3}^2 +\int_{\Sigma_-} -\frac{g \jump{\rho}}{2} \abs{u_3}^2\right)
\\ &\nonumber\quad=\sum_{\xi\in L_1^{-1}\mathbb{Z}\times
L_2^{-1}\mathbb{Z}}\left\{\int_{-b}^\ell \frac{h'(\bar\rho) }{2}\abs{ i\xi_1 \bar\rho\hat{u}_1 + i\xi_2\bar\rho \hat{u}_2 + \partial_3 (\bar\rho\hat{u}_3)}^2 dx_3\right.
\\ &\qquad\left.+  \frac{\sigma_+|\xi|^2
+\rho_1g }{2}|\hat{u}_3(\ell)|^2+  \frac{\sigma_-|\xi|^2
-\rj g}{2} |\hat{u}_3(0)|^2 \right\}.
\end{align}

For $\xi=0$, the term in the sum of \eqref{sum} is
\begin{equation}
\frac{  \rho_1g}{2}\abs{\hat{u}_3(\ell)}^2-\frac{g \jump{\rho} }{2} \abs{\hat{u}_3(0)}^2
+\int_{-b}^\ell \frac{h'(\bar\rho) }{2}\abs{ \partial_3 (\bar\rho\hat{v}_3)}^2 dx_3.
\end{equation}
We expand the derivative term in the integral and integrate by parts to get
\begin{align}
\nonumber\int_{-b}^\ell \frac{h'(\bar\rho) }{2}\abs{ \partial_3 (\bar\rho\hat{u}_3)}^2 dx_3& =\int_{-b}^\ell \frac{h'(\bar\rho) }{2}\abs{ \partial_3 \bar\rho\hat{u}_3+\bar\rho\partial_3 \hat{u}_3 }^2 dx_3  \\ \nonumber&= \int_{-b}^\ell \hal P'(\bar\rho) \bar\rho \abs{\partial_3 \hat{u}_3}^2 -g\bar\rho \partial_3 \hat{u}_3 \hat{u}_3-\hal g\p_3\bar\rho\abs{\hat{u}_3}^2 dx_3
\\ &= \int_{-b}^\ell \hal P'(\bar\rho) \bar\rho \abs{\partial_3 \hat{u}_3}^2 -\frac{  \rho_1g}{2}\abs{\hat{u}_3(\ell)}^2+\frac{g \jump{\bar\rho} }{2} \abs{\hat{u}_3(0)}^2.
\end{align}
This implies
\begin{equation}\label{sum0}
\frac{  \rho_1g}{2}\abs{\hat{u}_3(\ell)}^2-\frac{g \jump{\bar\rho} }{2} \abs{\hat{u}_3(0)}^2
+\int_{-b}^\ell \frac{h'(\bar\rho) }{2}\abs{ \partial_3 (\bar\rho\hat{u}_3)}^2 dx_3 =\hal \int_{-b}^\ell P'(\bar\rho) \bar\rho \abs{\partial_3 \hat{u}_3}^2\ge 0.
\end{equation}

Consider now the sum of \eqref{sum} for fixed $\xi\neq 0$, writing $\varphi(x_3)= i \hat{u}_1(\xi,x_3)$, $\theta(x_3)= i \hat{u}_2(\xi,x_3)$, $\psi(x_3) = \hat{u}_3(\xi,x_3)$.  That is, define
\begin{align}
\nonumber Z(\varphi,\theta,\psi;\xi) =& \frac{\sigma_+ \abs{\xi}^2+\rho_1 g}{2}(\psi(\ell))^2+ \frac{\sigma_- \abs{\xi}^2-\rj g}{2}(\psi(0))^2\\&+\int_{-b}^\ell \frac{h'(\bar\rho) }{2}\abs{  \xi_1 \bar\rho\varphi+  \xi_2\bar\rho \theta + (\bar\rho\psi)'}^2 dx_3,
\end{align}
where $' = \partial_3$. The expression for $Z$ is invariant under simultaneous rotations of $\xi$ and $(\varphi,\theta)$, so without loss of generality we may assume that $\xi = (\abs{\xi},0)$ with $\abs{\xi} > 0$ and $\theta =0$.   If $\sigma_->0$ then we assume that $\abs{\xi} < \abs{\xi}_c$ as well.  Then, using \eqref{E_def} with $ s=\lambda(\abs{\xi})$, we may rewrite
\begin{align}
\nonumber Z(\varphi,\theta,\psi;\xi) =& E(\varphi,\psi;\lambda(\abs{\xi}))- \frac{\lambda(\abs{\xi})}{2} \int_{-b}^\ell \mu'  \abs{\psi'+\abs{\xi} \varphi}^2 \\&
-\frac{\lambda(\abs{\xi})}{2} \int_{-b}^\ell \mu  \left(  \abs{\varphi' - \abs{\xi} \psi}^2 + \abs{\psi' -\abs{\xi} \varphi}^2 + \frac{1}{3} \abs{\psi' + \abs{\xi} \varphi}^2\right)
\end{align}
and hence
\begin{align}\label{ininin}
\nonumber Z(\varphi,\theta,\psi;\xi) \ge &-\frac{\Lambda^2}{2} \int_{-b}^\ell \bar\rho  (\abs{\varphi}^2  +\abs{\psi}^2)-\frac{\Lambda}{2} \int_{-b}^\ell \mu' \abs{\psi'+\abs{\xi} \varphi}^2 \\
&-\frac{\Lambda}{2} \int_{-b}^\ell \mu  \left(  \abs{\varphi' - \abs{\xi} \psi}^2 + \abs{\psi' -\abs{\xi} \varphi}^2 + \frac{1}{3} \abs{\psi' + \abs{\xi} \varphi}^2\right).
\end{align}
Here in the inequality above we have used the following variational characterization for $\Lambda$, which follows directly from the definitions \eqref{mu_def} and \eqref{Lambda},
\begin{equation}
E(\varphi,\psi;\lambda(\abs{\xi}))\ge -\lambda(\abs{\xi})^2
J(\varphi,\psi)\ge -\frac{\Lambda^2}{2} \int_{-b}^\ell \bar\rho  (\abs{\varphi}^2  +\abs{\psi}^2).
\end{equation}
For $\abs{\xi} \ge \xi_c$ the expression for $Z$ is non-negative, so the inequality \eqref{ininin} holds trivially, and so we deduce that it holds for all $\abs{\xi}>0$.

Translating the inequality \eqref{ininin} back to the original notation for fixed $\xi$, we find
\begin{align}
\nonumber &\frac{\sigma_+|\xi|^2
+\rho_1g }{2}|\hat{u}_3(\ell)|^2+  \frac{\sigma_-|\xi|^2
-\rj g}{2} |\hat{u}_3(0)|^2
+ \int_{-b}^\ell \frac{h'(\bar\rho) }{2}\abs{ i\xi_1 \bar\rho\hat{u}_1 + i\xi_2\bar\rho \hat{u}_2 + \partial_3 (\bar\rho\hat{u}_3)}^2 dx_3 \\ &\quad
\ge- \frac{\Lambda^2}{2} \int_{-b}^\ell \bar\rho  \abs{\hat{u}}^2
- \frac{\Lambda}{2} \int_{-b}^\ell \mu'  \abs{i\xi_1 \hat{u}_1 + i\xi_2 \hat{u}_2 + \partial_3 \hat{u}_3 }^2 + \frac{\mu }{2} \abs{\hat{B}}^2,
\end{align}
where
\begin{equation}
 B = Du+ Du^T - \frac{2}{3}(\diverge{u})I.
\end{equation}
Taking sum of each side of this inequality over all $0\neq\xi\in L_1^{-1}\mathbb{Z}\times
L_2^{-1}\mathbb{Z}$, together with \eqref{sum0}, then proves the result.
\end{proof}

Now we can prove our main result of this section. We write
\begin{equation}
\mathcal{E}_{G}:=\norm{G^1 }_1^2 +\norm{\dt G^2 }_0^2 +\norm{\dt G^3 }_0^2 +\norm{ G^4 }_2^2.
\end{equation}
\begin{thm} \label{lineargrownth}
Let  $(q,u,\eta)$ solve \eqref{linear ho}. Then we have the following estimates for $t \ge 0$:
\begin{align}\label{result1}
\norm{u(t)}_{1}^2  &\ls
e^{2 \Lambda t} (\norm{ u(0)}_2^2 +\norm{\partial_t u(0)}_0^2)
 + \int_0^t  e^{2\Lambda
(t-s)} \sqrt{\mathcal{E}_{G}(s)}\norm{\dt u(s)}_{1}ds,
\\\label{result2}
\norm{\eta(t)}_{0} & \ls
e^{\Lambda t} (\norm{ u(0)}_2 +\norm{\partial_t u(0)}_0+\norm{\eta(0)}_0)+\int_0^t \norm{G^4(s)}_0ds
\\&\nonumber\quad+ \int_0^t\sqrt{\int_0^s  e^{2\Lambda
(s-\tau)}\sqrt{\mathcal{E}_{G}(\tau)}\norm{\dt u(\tau)}_{1}d\tau}ds,
\\\label{result3}
\norm{q(t)}_{0}  &\ls
e^{ \Lambda t} (\norm{ u(0)}_2 +\norm{\partial_t u(0)}_0+\norm{q(0)}_0)+\int_0^t \norm{G^1(s)}_0ds
\\&\nonumber\quad+ \int_0^t\sqrt{\int_0^s  e^{2\Lambda
(s-\tau)}\sqrt{\mathcal{E}_{G}(\tau)}\norm{\dt u(\tau)}_{1}d\tau}ds,
\end{align}
\end{thm}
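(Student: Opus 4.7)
The plan is to construct a modified energy $\Phi(t)\ge 0$ that (i) dominates each of $\norm{\sqrt{\bar\rho}\,\dt u(t)}_0^2$, $\Lambda^2\norm{\sqrt{\bar\rho}\,u(t)}_0^2$, and $\Lambda V(u(t))$ up to a factor $2$, and (ii) satisfies the sharp differential inequality $\tfrac{d}{dt}\Phi \le 2\Lambda\Phi + |I_G|$, where $I_G := \int_\Omega \mathfrak{F}\cdot\dt u - \int_\Sigma \mathfrak{G}\cdot\dt u$ is the forcing appearing in \eqref{energyidentity} and $V(u) := \int_\Omega \tfrac{\mu}{2}\bigl|Du+Du^T-\tfrac{2}{3}(\diverge u)I\bigr|^2 + \mu'|\diverge u|^2$, so that $V(\dt u)=\D$. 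Specifically, I set
\begin{equation*}
\Phi(t) := \E(t) + \Lambda^2 \norm{\sqrt{\bar\rho}\,u(t)}_0^2 + \Lambda V(u(t)).
\end{equation*}
Adding $\Lambda^2\norm{\sqrt{\bar\rho}u}_0^2 + \Lambda V(u)$ to both sides of Lemma~\ref{lin_en_bound} applied to $u$ gives $\Phi(t) \ge \tfrac{1}{2}\bigl(\norm{\sqrt{\bar\rho}\,\dt u}_0^2 + \Lambda^2\norm{\sqrt{\bar\rho}\,u}_0^2 + \Lambda V(u)\bigr)$.

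Differentiating $\Phi$ and using Lemma~\ref{lin_en_evolve le} yields $\tfrac{d}{dt}\Phi = -\D + I_G + 2\Lambda\langle u,\dt u\rangle_\Lambda$, where $\langle u,v\rangle_\Lambda := \Lambda\int_\Omega \bar\rho\,u\cdot v + [u,v]$ is the positive-semidefinite bilinear form on $\H(\Omega)$ polarizing $\Lambda\norm{\sqrt{\bar\rho}\cdot}_0^2 + V(\cdot)$. Cauchy--Schwarz for $\langle\cdot,\cdot\rangle_\Lambda$ followed by the weighted AM-GM $2\sqrt{xy}\le x+y$ produces
\begin{equation*}
2\Lambda\langle u,\dt u\rangle_\Lambda \le 2\sqrt{\Lambda^2\langle u,u\rangle_\Lambda\cdot\langle \dt u,\dt u\rangle_\Lambda} \le \Lambda^2\langle u,u\rangle_\Lambda + \langle \dt u,\dt u\rangle_\Lambda.
\end{equation*}
Since $\langle \dt u,\dt u\rangle_\Lambda = \Lambda\norm{\sqrt{\bar\rho}\dt u}_0^2 + \D$, the $+\D$ just produced exactly cancels the $-\D$ from the energy identity, and the lower bound on $\Phi$ above collapses the remaining terms to $2\Lambda\Phi$, delivering $\tfrac{d}{dt}\Phi \le I_G + 2\Lambda\Phi$. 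A routine trace and Cauchy--Schwarz estimate applied to the definitions \eqref{F_def}--\eqref{G-_def} then bounds $|I_G(t)| \ls \sqrt{\mathcal{E}_G(t)}\norm{\dt u(t)}_1$.

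Gronwall now produces $\Phi(t) \ls e^{2\Lambda t}\Phi(0) + \int_0^t e^{2\Lambda(t-s)}\sqrt{\mathcal{E}_G(s)}\norm{\dt u(s)}_1\,ds$, while trace inequalities at $t=0$ give $\Phi(0) \ls \norm{u(0)}_2^2 + \norm{\dt u(0)}_0^2$. Korn's inequality $\norm{u}_1^2 \ls \norm{u}_0^2 + V(u)$ together with the extractions $\norm{u}_0^2 \ls \Phi/\Lambda^2$ and $V(u) \ls \Phi/\Lambda$ yields \eqref{result1}. Inequality \eqref{result2} follows by integrating the kinematic condition $\dt\eta = u_3|_\Sigma + G^4$ in time, using the trace bound $\norm{u_3}_{L^2(\Sigma)}\ls \norm{u}_1$, and substituting the square root of \eqref{result1} inside the resulting time integral; \eqref{result3} is obtained in precisely the same way from the continuity equation $\dt q + \diverge(\bar\rho u) = G^1$ via $\norm{\diverge(\bar\rho u)}_0 \ls \norm{u}_1$.

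The substantive obstacle is securing the sharp exponent $2\Lambda$ in the differential inequality. A naive Cauchy--Schwarz applied to $2\Lambda^2\int\bar\rho u\cdot\dt u$ and $2\Lambda[u,\dt u]$ separately produces a Gronwall exponent strictly larger than $2\Lambda$ and does not absorb the dissipation. The device of the composite bilinear form $\langle\cdot,\cdot\rangle_\Lambda$ weighted by $\Lambda$ is the crux: the induced Cauchy--Schwarz inequality is saturated precisely by the growing-mode ansatz $\dt u = \Lambda u$, which is the exact configuration picked out by the variational characterization of $\Lambda$ underlying Lemma~\ref{lin_en_bound}, and the accompanying $+\D$ that this Cauchy--Schwarz generates is exactly what is needed to cancel the dissipation produced by the energy identity.
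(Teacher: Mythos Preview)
Your argument is correct and reaches the same conclusions, but by a genuinely different and somewhat cleaner route than the paper's.

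The paper does not build a single Lyapunov functional. Instead it integrates the energy identity of Lemma~\ref{lin_en_evolve le} in time, applies Lemma~\ref{lin_en_bound} to the resulting expression, and rewrites everything in terms of the two norms $\norm{\cdot}_\star^2=\int_\Omega\bar\rho|\cdot|^2$ and $\norm{\cdot}_{\star\star}^2=V(\cdot)$. It then uses two separate Cauchy inequalities (one on $\Lambda\norm{u}_{\star\star}^2$, one on $\Lambda\partial_t\norm{u}_\star^2$) to derive a differential inequality for the auxiliary quantity $\norm{u(t)}_\star^2+\int_0^t\norm{u(s)}_{\star\star}^2\,ds$, applies Gronwall to that, and finally substitutes back into the integrated inequality to extract $\norm{u(t)}_{\star\star}^2$. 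Because the forcing enters first through the time-integrated $\mathfrak{H}(t)=\int_0^t I_G$, an additional integration by parts in $t$ is needed at the end to convert $\int_0^t e^{2\Lambda(t-s)}\mathfrak{H}(s)\,ds$ into the desired form involving $I_G$ directly.

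Your approach collapses all of this into one step: the modified energy $\Phi=\E+\Lambda^2\norm{\sqrt{\bar\rho}\,u}_0^2+\Lambda V(u)$ together with the composite inner product $\langle\cdot,\cdot\rangle_\Lambda$ lets a single Cauchy--Schwarz produce exactly the $+\D$ needed to cancel the dissipation and exactly the terms $\le 2\Lambda\Phi$ needed for the sharp exponent, yielding $\Phi'\le 2\Lambda\Phi+|I_G|$ directly. This avoids the back-substitution and the time integration by parts, and your observation that the Cauchy--Schwarz saturates on the growing mode $\partial_t u=\Lambda u$ makes explicit the mechanism that in the paper is hidden inside the two-step Cauchy bounds. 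The paper's approach has the advantage of following the established template of Guo--Tice \cite{3GT2}, but yours is shorter and more transparent.
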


\begin{proof}
Integrating the result of Lemma \ref{lin_en_evolve le} in time from $0$ to $t$, and then applying Lemma \ref{lin_en_bound}, we find that
\begin{align}
\nonumber & \int_\Omega \frac{\bar\rho}{2} \abs{\dt u}^2+ \int_0^t\int_\Omega \frac{\mu}{2} \abs{D \dt u + D \dt u^T - \frac{2}{3} (\diverge{\dt u})I  }^2+\int_\Omega \mu' \abs{\diverge{\dt u}}^2
\\\nonumber&\quad\le K_0+\int_0^t \int_\Omega \mathfrak{F}\cdot\dt u-\int_0^t\int_\Sigma \mathfrak{G}\cdot \dt u
\\\nonumber&\qquad-\left(\int_\Omega \frac{h'(\bar\rho)}{2}\abs{ \diverge{(\bar\rho u)}}^2 + \int_\Sigma \frac{\sigma}{2}\abs{\nab_\ast u_3}^2+\int_{\Sigma_-} \frac{g \rho_1}{2} \abs{u_3}^2 +\int_{\Sigma_-} -\frac{g \jump{\rho}}{2} \abs{u_3}^2\right)
\\\nonumber &\quad\le K_0+ \int_0^t\int_\Omega \mathfrak{F}\cdot\dt u-\int_0^t\int_\Sigma \mathfrak{G}\cdot \dt u
\\&\qquad+\frac{\Lambda^2}{2}\int_\Omega \bar\rho \abs{u}^2
+\frac{\Lambda}{2} \int_\Omega \frac{\mu}{2} \abs{Du + Du^T - \frac{2}{3}(\diverge{u})I  }^2 + \mu' \abs{\diverge{u}}^2,
\end{align}
where
\begin{align}
\nonumber K_0 =& \int_\Omega \frac{\bar\rho}{2} \abs{\dt u(0)}^2 + \frac{h'(\bar\rho)}{2}\abs{ \diverge{(\bar\rho u(0))}}^2 \\&+ \int_\Sigma \frac{\sigma}{2}\abs{\nab_\ast u_3(0)}^2+\int_{\Sigma_-} \frac{g \rho_1}{2} \abs{u_3(0)}^2 +\int_{\Sigma_-} -\frac{g \jump{\rho}}{2} \abs{u_3(0)}^2.
\end{align}
For notational simplicity we introduce the norms
\begin{equation}
\norm{u}_\star^2:=\int_\Omega\bar\rho |  u|^2\hbox{ and }\norm{u}_{\star\star}^2:=\int_\Omega \frac{\mu}{2} \abs{Du + Du^T - \frac{2}{3}(\diverge{u})I  }^2 + \mu' \abs{\diverge{u}}^2
\end{equation}
and the corresponding inner-products given by  $\langle \cdot,\cdot\rangle_{\star}$ and $\langle \cdot,\cdot\rangle_{\star\star}$, respectively.  We may then compactly rewrite the previous inequality as
\begin{equation}\label{j4}
\frac{1}{2}\norm{\partial_t  u(t)}_\star^2 + \int_0^t\norm{\partial_t  u(s)}_{\star\star}^2 ds \le K_0
+\frac{\Lambda^2}{2}\norm{ u(t) }_\star^2 +  \frac{\Lambda}{2}\norm{ u(t)}_{\star\star}^2 + \mathfrak{H}(t)
\end{equation}
where we have written
\begin{equation}
\mathfrak{H}(t)=\int_0^t \int_\Omega \mathfrak{F}\cdot\dt u-\int_0^t\int_\Sigma \mathfrak{G}\cdot \dt u.
\end{equation}

Integrating in time and using Cauchy's inequality, we may bound
\begin{equation}\label{j222}
\begin{split}
\Lambda\norm{u(t)}_{\star\star}^2
&=\Lambda\norm{u(0)}_{\star\star}^2+ \Lambda\int_0^t 2 \langle
u(s),\partial_t u(s) \rangle_{\star\star}\,ds
\\&\le
\Lambda\norm{u(0)}_{\star\star}^2+\int_0^t\norm{\partial_tu(s)}_{\star\star}^2\,ds
+\Lambda^2\int_0^t\norm{u(s)}_{\star\star}^2\,ds.
\end{split}
\end{equation}
On the other hand
\begin{equation}\label{j333}
\Lambda\partial_t\norm{u(t)}_{\star}^2=2 \Lambda \langle
u(t),\partial_t u(t) \rangle_{\star}\le \norm{\partial_tu(t)}_{\star}^2
+\Lambda^2\norm{u(t)}_{\star}^2.
\end{equation}
We may combine these two inequalities with \eqref{j4} to derive the differential inequality
\begin{equation}\label{j5}
\partial_t\norm{u(t)}_\star^2+\norm{u(t)}_{\star\star}^2\le K_1+2\Lambda \left( \norm{u(t)}_\star^2+\int_0^t\norm{u(s)}_{\star\star}^2 \,ds \right)+\frac{2}{\Lambda}\mathfrak{H}(t)
\end{equation}
for $K_1=2K_0/\Lambda+2\norm{u(0)}_{\star\star}^2$. An application of Gronwall's theorem then shows that
\begin{equation}\label{j6}
\norm{u(t)}_\star^2+\int_0^t\norm{u(s)}_{\star\star}^2
\le e^{2\Lambda t}\norm{u(0)}_\star^2+\frac{K_1}{2\Lambda}( e^{2\Lambda
t}-1)+\frac{2}{\Lambda}\int_0^t e^{2\Lambda
(t-s)}\mathfrak{H}(s)ds.
\end{equation}
Now plugging \eqref{j6} and \eqref{j222} into \eqref{j4}, we find that
\begin{align}
\nonumber\frac{1}{\Lambda}\norm{\partial_tu(t)}_\star^2+\norm{u(t)}_{\star\star}^2
&\le K_1+\Lambda \norm{u(t)}_\star^2+2\Lambda \int_0^t
\norm{u(s)}_{\star\star}^2\,ds+\frac{2}{\Lambda}\mathfrak{H}(t)
\\ &\le  e^{2\Lambda
t}(2\Lambda\norm{u(0)}_\star^2+K_1)+\frac{2}{\Lambda}\mathfrak{H}(t)+4\int_0^t e^{2\Lambda
(t-s)}\mathfrak{H}(s)ds.
\end{align}
Notice that
\begin{align}
\nonumber 4\int_0^t e^{2\Lambda
(t-s)}\mathfrak{H}(s)ds
&=-\frac{2}{\Lambda}\int_0^t \p_t\left(e^{2\Lambda
(t-s)}\right)\mathfrak{H}(s)ds\\&=-\frac{2}{\Lambda}\mathfrak{H}(t)+e^{2\Lambda
 t }\mathfrak{H}(0)+\frac{2}{\Lambda}\int_0^t e^{2\Lambda
(t-s)} \p_t \mathfrak{H}(s)ds
\end{align}
and
\begin{equation}
\mathfrak{H}(0)=0 \text{ and }\p_t \mathfrak{H} =\int_\Omega \mathfrak{F}\cdot\dt u-\int_\Sigma \mathfrak{G}\cdot \dt u.
\end{equation}
We then have,
\begin{equation}\label{j7}
\norm{u(t)}_{\star\star}^2
  \le  e^{2\Lambda
t}(2\Lambda\norm{ u(0)}_\star^2+K_1)+\frac{2}{\Lambda}\int_0^t  e^{2\Lambda
(t-s)} \left(\int_\Omega \mathfrak{F}(s)\cdot\dt u (s)-\int_\Sigma \mathfrak{G}(s)\cdot \dt u(s) \right) ds.
\end{equation}
By the trace theorem,
\begin{equation}
K_1\ls
\norm{ u(0)}_2^2+\norm{\partial_t u(0)}_0^2 .
\end{equation}
On the other hand,
\begin{equation}
\norm{\mathfrak{F}}_0^2+\norm{\mathfrak{G}}_0^2\ls \norm{G^1}_1^2+\norm{\dt G^2}_0^2+\norm{\dt G^3}_0^2+\norm{ G^4}_2^2= \mathcal{E}_{G}.
\end{equation}
So by Korn's inequality  (Proposition \ref{layer_korn}) and the trace theorem, \eqref{j7} implies \eqref{result1}.

Next we use the kinematic boundary condition $\dt \eta = u_3+G^4$ and the trace theorem to estimate
\begin{equation}\label{j8}
\norm{ \partial_t\eta(t)}_{0}
 \le  \norm{u_3}_{H^{0}(\Sigma)}+\norm{G^4}_{0}
 \lesssim  \norm{ u_3}_{1}+\norm{ G^4}_{0}.
\end{equation}
This and \eqref{result1} allow us to estimate
 \begin{align}\label{j9}
\nonumber\norm{ \eta(t)}_{0}
& \le  \norm{ \eta(0)}_{0} +\int_0^t \norm{\partial_t\eta(s)}_0 \,ds \lesssim  \norm{ \eta(0)}_{0}+\int_0^t \norm{G^4(s)}_0ds+\int_0^t \norm{u_3(s)}_{1}ds
\\&\nonumber\lesssim  \norm{ \eta(0)}_{0}+\int_0^t \norm{G^4(s)}_0ds+\int_0^t e^{ \Lambda s} (\norm{ u(0)}_2  +\norm{\partial_t u(0)}_0 )
\\  &\quad+ \int_0^t\sqrt{\int_0^s  e^{2\Lambda
(s-\tau)}\sqrt{\mathcal{E}_{G}(\tau)}\norm{\dt u(\tau)}_{1}d\tau}ds,
\end{align}
which implies \eqref{result2}.

Similarly, we use the continuity equation $\partial_t \q =-\diverge ( \bar{\rho}   u)+G^1$ to estimate
\begin{equation}\label{j10}
\| \partial_tq \|_{0}\le \|{\rm div}(\bar\rho u) \|_0+\norm{G^1}_0
 \lesssim  \| u\|_{1}+\norm{G^1}_0.
\end{equation}
We then deduce \eqref{result3} as that for \eqref{result2}.
\end{proof}

\section{Nonlinear energy estimates}\label{energy}

This section, the most technical part of the paper, is devoted to the nonlinear energy estimates for the system \eqref{geometric}.  Our analysis here is similar to that found in our companion paper on the stable regime \cite{JTW_GWP}.  The primary difference is that we will use  slightly different versions of the energy and dissipation functionals in order to handle the fact that the internal interface makes a negative contribution to the energy and  dissipation.

For any integer $N\ge 3$, we define the energy as
\begin{align}\label{p_energy_def}
 \nonumber\se{2N}^\sigma := &\sum_{j=0}^{2N}  \ns{\dt^j u}_{4N-2j} + \ns{\q}_{4N}  + \sum_{j=1}^{2N} \ns{ \dt^j \q}_{4N-2j+1}
 \\&+\sigma\ns{ \nabla_\ast\eta}_{4N} +\ns{ \eta}_{4N}+ \sum_{j=1}^{2N} \ns{\dt^j \eta}_{4N-2j+3/2},
\end{align}
and the ``dissipation" as
\begin{align}\label{p_dissipation_def}
\nonumber  \sd{2N}^\sigma := & \sum_{j=0}^{2N} \ns{\dt^j u}_{4N-2j+1}  + \ns{\dt \q}_{4N-1} + \sum_{j=2}^{2N+1} \ns{\dt^j \q}_{4N-2j+2}
\\
&+   \sigma^2 \ns{ \nabla_\ast \eta}_{4N+1/2}
+ \sigma^2\ns{\partial_t \eta}_{4N+1/2} +\ns{\partial_t \eta}_{4N-1/2}
 + \sum_{j=2}^{4N+1} \ns{\dt^j \eta}_{4N-2j+5/2}.
\end{align}
We also define
\begin{equation}\label{fff}
\f_{2N}:=  \ns{\eta}_{4N+1/2}.
\end{equation}
The surface tension coefficients $\sigma_\pm$ are included in the definitions \eqref{p_energy_def} and \eqref{p_dissipation_def} so that we will be able to treat the cases with and without surface tension together.
It is noteworthy that the definition \eqref{p_dissipation_def} of $\sd{2N}^\sigma$ is different from the one introduced in \cite{JTW_GWP} for the nonlinear stability analysis: $\ns{\q}_{4N}$ and $\ns{\eta}_{4N-1/2}$ are not included in $\sd{2N}^\sigma$ here. This implies that to control $\ns{\q}_{4N}$ or $\ns{\eta}_{4N-1/2}$, we have to use $\se{2N}^0$; mostly we will replace $\sd{2N}^\sigma$ in the estimates of some nonlinear terms derived in \cite{JTW_GWP} by the sum $\sd{2N}^\sigma+\se{2N}^0$.

Our goal is to derive a priori estimates for solutions $(q,u,\eta)$ to \eqref{geometric} in our functional framework, i.e. for solutions satisfying $\se{2N}^\sigma$, $\sd{2N}^\sigma$, $\f_{2N}<\infty$.  Throughout the rest of this section we will assume that
\begin{equation}
\mathcal{E}^\sigma_{2N}(t)\le \delta^2\le 1
\end{equation}
for some sufficiently small $\delta>0$ and for all $t \in [0,T]$ where $T>0$ is given. This assumption, in particular, will guarantee that the geometric terms introduced in Section \ref{sec1.3} are well-behaved (see Lemma \ref{eta_small}). We will implicitly allow $\delta$ to be made smaller in each result, but we will reiterate the smallness of $\delta$ in our main result. Here is the main result of this section.

\begin{thm}\label{energyeses}
If $\sup_{0 \le t \le T}  \mathcal{E}_{2N}^\sigma(t)\le \delta^2$ for sufficiently small $\delta$, then the following holds.  For any $\varepsilon>0$, there exists $C_\varepsilon>0$ such that
\begin{align}\label{fullenergy}
\nonumber
 {\mathcal{E}}_{2N}^\sigma(t)+\f_{2N}(t) + \int_0^t  {\mathcal{D}}_{2N}^\sigma ds
\le& C_\varepsilon{\mathcal{E}}_{2N}^\sigma(0)+\f_{2N}(0) + C_\varepsilon\int_0^t  \sqrt{\mathcal{E}_{2N}^\sigma }\(  {\mathcal{D}}_{2N}^\sigma+\mathcal{E}_{2N}^\sigma+ \f_{2N}\)ds
\\&+\varepsilon\int_0^t  \(\mathcal{E}_{2N}^\sigma+\f_{2N}\)  ds+  C_\varepsilon\int_0^t\norm{\eta_-}_{0}^2ds
\end{align}
for all $t \in [0,T]$.
\end{thm}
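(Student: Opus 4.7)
\medskip

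The plan is to follow the energy/dissipation scheme developed in the companion paper \cite{JTW_GWP} for the stable regime, adapted to accommodate the two essential new features of the unstable setting: (i) the boundary contribution $\rj g\eta_-$ on $\Sigma_-$ has the wrong sign so it cannot be absorbed into a coercive energy, and (ii) the dissipation functional $\sd{2N}^\sigma$ no longer contains $\ns{\q}_{4N}$ or $\ns{\eta}_{4N-1/2}$, so estimates must be reorganized to push these quantities to the energy side. First, I would apply the temporal derivatives $\dt^j$ for $j=0,\dots,2N$ to the perturbation equations \eqref{geometric}, test the momentum equation against $\dt^j u$, integrate by parts over $\Omega_\pm$, and use the geometric dynamic boundary conditions. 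This will produce the natural $\frac{d}{dt}$ of a coercive bulk energy (involving $\ns{\dt^j u}_\star$ and $\ns{\q}_\star$--type quantities weighted by $h'(\bar\rho)$), the symmetric-gradient dissipation (controlled from below by a Korn inequality on $\H(\Omega)$), capillary energy from $\sigma_\pm\nab_\ast\eta$, the stabilizing top-boundary term $\rho_1 g \ns{\eta_+}_0$, and, crucially, the destabilizing internal contribution $-\rj g\int_{\Sigma_-}|\eta_-|^2$.

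Next, the key new step is to move the destabilizing term to the right-hand side. After applying $\dt^j$, the bad boundary integral reads, schematically, $-\rj g\int_{\Sigma_-}\dt^j \eta_-\,\dt^{j+1}\eta_-$ after using the kinematic condition $\dt\eta = u\cdot\n$ to rewrite the normal trace; this is $-\tfrac{1}{2}\rj g\,\tfrac{d}{dt}\ns{\dt^j\eta_-}_0$ up to nonlinear remainders in the transport equation. Integrating in time and estimating the $j=0$ case gives the term $C_\varepsilon\int_0^t\norm{\eta_-}_0^2\,ds$ appearing in \eqref{fullenergy}; for $j\ge 1$, the time derivative $\dt^j\eta_-$ lives in the dissipation (by the parabolic counting in $\sd{2N}^\sigma$), so the analogous terms can be absorbed via Young's inequality into $\varepsilon\int_0^t(\mathcal{E}_{2N}^\sigma+\f_{2N})\,ds$ after integrating by parts in time and using the trace theorem. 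All the nonlinear remainders produced by the geometric reformulation (the terms involving $\a-I$, $\p_t\theta$, the Taylor remainder $\mathcal{R}$ in \eqref{R_def}, and the commutators with $\dt^j$) are multilinear in the unknowns and estimated by the product structure $\sqrt{\mathcal{E}_{2N}^\sigma}\,(\sd{2N}^\sigma+\mathcal{E}_{2N}^\sigma+\f_{2N})$ exactly as in \cite{JTW_GWP}, with the one proviso that, wherever the stable-regime proof would invoke $\ns{\q}_{4N}$ or $\ns{\eta}_{4N-1/2}$ inside $\sd{2N}^\sigma$, we substitute the corresponding energy term from $\se{2N}^\sigma$; this is harmless since the multilinear estimates only require a crude $L^\infty$-type bound, which $\se{2N}^\sigma$ supplies.

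Second, I would recover the purely spatial regularity of $u$ and $\q$ by treating \eqref{geometric} as a two-phase stationary Stokes-type system with the free-boundary data at each fixed time. Applying horizontal derivatives $\p^\alpha$ with $\alpha\in\mathbb{N}^2$, using the elliptic/regularity estimates for the one-phase and two-phase Stokes problems with dynamic boundary conditions (as set up in \cite{JTW_GWP}), and inverting the div-curl/Stokes operator, one converts bulk $H^k$-norms of $u$ into tangential norms plus data norms (pressure jumps, traces, and source terms coming from $\bar\rho$ inhomogeneities and the $\sigma_\pm \mathcal{H}$ capillary terms). The capillary boundary conditions give control of $\sigma\ns{\nab_\ast\eta}_{4N}$ in the energy and $\sigma^2\ns{\nab_\ast\eta}_{4N+1/2}$ in the dissipation after a careful treatment of the mean-curvature operator. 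Regularity of $\eta$ itself is obtained from the kinematic equation $\dt\eta=u\cdot\n$ by a transport estimate: differentiating in time and in the horizontal variables and using the trace of $u$ on $\Sigma$, we propagate $\ns{\eta}_{4N}$ and $\f_{2N}=\ns{\eta}_{4N+1/2}$ from the initial data up to controllable commutator errors of multilinear type.

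Finally, I would combine the temporal-energy identities, the Stokes elliptic estimates, the transport estimate for $\eta$, and the separate bound for the destabilizing $\eta_-$ term, then close the estimate by choosing the Young-inequality parameter $\varepsilon$ small and using the bootstrap smallness $\se{2N}^\sigma\le\delta^2$ to absorb higher-order products of norms into $C_\varepsilon\sqrt{\mathcal{E}_{2N}^\sigma}(\sd{2N}^\sigma+\mathcal{E}_{2N}^\sigma+\f_{2N})$ on the right-hand side. I expect the main obstacle to be the careful bookkeeping needed to ensure that every place where the stable-regime argument requires $\ns{\q}_{4N}$ or $\ns{\eta}_{4N-1/2}$ in the dissipation can in fact be replaced, in the local-in-time framework, either by the corresponding energy terms (which are permissible here because we only need to close the estimate on an $O(1/\Lambda)$ time interval) or by the lower-order term $C_\varepsilon\ns{\eta_-}_0$ coming from the destabilizing interfacial integral. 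All remaining pieces are a relabeling of the analogous estimates in \cite{JTW_GWP}.
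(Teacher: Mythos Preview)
Your plan has the right architecture and correctly identifies the two difficulties, but the handling of the destabilizing interfacial term contains a gap. You propose to use the kinematic condition to convert $\int_{\Sigma_-}\rj g\,\dt^j\eta_-\,\n\cdot\dt^j u$ into $\tfrac{1}{2}\rj g\,\tfrac{d}{dt}\ns{\dt^j\eta_-}_0$, as in \cite{JTW_GWP}. After time integration this leaves the \emph{pointwise} term $\tfrac{\rj g}{2}\ns{\dt^j\eta_-(t)}_0$ on the right, not a time integral. For $j\ge 1$ there is nothing on the left to absorb it: the temporal energy identity produces $\ns{\dt^j\eta_+}_0$ and $\sigma\ns{\nabla_\ast\dt^j\eta}_0$ but never $\ns{\dt^j\eta_-}_0$, and the maneuvers you suggest (Young's inequality, integration by parts in time) either reproduce the same pointwise term or generate a large multiple of $\int_0^t\sd{2N}^\sigma$ that cannot be absorbed. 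The paper explicitly avoids this route (see the remark following its temporal energy-evolution proposition): it does \emph{not} use the kinematic condition on the bad term, instead leaving $\int_{\Sigma_-}\rj g\,\dt^j\eta_-\,\n\cdot\dt^j u$ on the right side of the differential identity. After integrating in $t$ and applying Cauchy this yields $C_\varepsilon\int_0^t\ns{\dt^j\eta_-}_0 + \varepsilon\int_0^t\ns{\dt^j u}_1$; for $j\ge 1$ the kinematic condition then gives $\ns{\dt^j\eta_-}_0\ls\ns{\dt^{j-1}u}_1 + \text{nonlinear}$, and $\int_0^t\ns{\dt^{j-1}u}_1$ is exactly the dissipation already produced by the $(j{-}1)$-th identity, so a chain from $j=2N$ down to $j=0$ leaves only $C_\varepsilon\int_0^t\ns{\eta_-}_0$. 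The same chaining is performed for the mixed horizontal--temporal derivatives, producing a residual $\int_0^t\ns{\eta_-}_{4N-1/2}$ that is reduced at the very end by Sobolev interpolation to $\varepsilon\int_0^t\se{2N}^\sigma + C_\varepsilon\int_0^t\ns{\eta_-}_0$.

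A second omission: your Stokes-regularity step does not close by itself in the compressible case, because $\diverge u$ is not controlled a priori. The paper inserts an additional energy-evolution estimate for $\pa_3(h'(\bar\rho)\q)$, obtained by eliminating $\pa_{33}u_3$ between the continuity and vertical momentum equations (the Matsumura--Nishida device). This simultaneously gives energy control of the vertical derivatives of $\q$ and dissipative control of the material derivative $\mathcal{Q}:=\dt\q - G^{1,1}=-\diverge(\bar\rho u)+G^{1,2}$, which is precisely the datum fed into the Dirichlet Stokes problem actually used. This step cannot be folded into ``relabeling \cite{JTW_GWP}'' without comment: it is the iteration over vertical-derivative count of this auxiliary evolution coupled with the Stokes estimate that builds the intermediate dissipation $\bar{\mathcal{D}}_{2N}$, from which the full $\sd{2N}^\sigma$ is then recovered by the comparison propositions.
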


Theorem \ref{energyeses} will be established by a series of energy estimates, elliptic estimates, and comparison results and its final proof will be given at the end of this section. We start with the time differentiated version of problem  \eqref{geometric}.

\subsection{Energy evolution for temporal derivatives in geometric form}\label{stable1}

We will employ the form of the equations \eqref{geometric} primarily for estimating the temporal derivatives of the solutions.  Applying the temporal differential operator $\dt^j$ for $j=0,\dots,2N$ to \eqref{geometric}, we find that
\begin{equation}\label{linear_geometric}
\begin{cases}
\partial_t (\dt^j \q )+\diverge_\a ( \bar{\rho}  \dt^j u)=F^{1,j} & \text{in }
\Omega  \\
( \bar{\rho} +  \q+\p_3\bar\rho\theta)\partial_t   (\dt^j u )  + \bar{\rho}\nabla_\a  (h'(\bar{\rho}) \dt^j\q )   -\diva\S_{\a}(\dt^j u) = F^{2,j}  & \text{in } \Omega \\
  \dt (\dt^j \eta)   = \dt^j u\cdot \n+F^{4,j} & \text{on } \Sigma\\
(  P'(\bar{\rho}) \dt^j \q I- \S_{\a}(\dt^j u))\n  =  \rho_1  g \dt^j \eta_+ \n-\sigma_+ \Delta_\ast(\dt^j \eta_+)  \n +F_+^{3,j}
 & \text{on } \Sigma_+
 \\ \jump{P'(\bar\rho)\dt^j \q I- \S_\a(\dt^j u)}\n
= \rj g \dt^j \eta_-\n +\sigma_- \Delta_\ast(\dt^j \eta_-)\n-F_-^{3,j}&\hbox{on }\Sigma_-
 \\\jump{\dt^j u}=0 &\hbox{on }\Sigma_-\\
 \dt^j u_- =0 & \text{on } \Sigma_b,
\end{cases}
\end{equation}
where
\begin{equation}\label{F1j_def}
 F^{1,j}  =  \dt^j F^1-\sum_{0 < \ell \le j }  C_j^\ell \dt^{ \ell}\mathcal{A}_{lk}\p_k( \bar{\rho}  \dt^{j-\ell} u_l),
 \end{equation}
\begin{align}\label{F2j_def}
\nonumber
 F_i^{2,j}  &=\dt^j F_i^2+\sum_{0 < \ell \le j }  C_j^\ell\left\{\mu\mathcal{A}_{lk} \p_k (\dt^{ \ell}\mathcal{A}_{lm}\dt^{j - \ell}\p_m u_i)
+\mu\dt^{ \ell}\mathcal{A}_{lk} \dt^{j - \ell}\p_k (\mathcal{A}_{lm}\p_m u_i)
  \right.
\\\nonumber
 &\quad+   (\mu/3+\mu')\mathcal{A}_{ik} \p_k (\dt^{ \ell}\mathcal{A}_{lm}\dt^{j - \ell}\p_m u_l)
+(\mu/3+\mu')\dt^{ \ell}\mathcal{A}_{ik} \dt^{j - \ell}\p_k (\mathcal{A}_{lm}\p_m u_l)
\\&\quad\left.-  \bar{\rho} \dt^{\ell} \mathcal{A}_{ik} \p_k
(h'(\bar{\rho})\dt^{j - \ell} \q)-\dt^\ell(\q+\p_3\bar\rho\theta)\partial_t   (\dt^{j-\ell} u ) \right\} ,
 \end{align}
\begin{equation}\label{F3j+_def}
\begin{split}
F_{i,+}^{3,j} &= \dt^jF_{i,+}^{3}+ \sum_{0 < \ell \le j} C_j^\ell \left\{\mu_+\dt^{ \ell} ( \n_{l} \mathcal{A}_{ik} ) \dt^{j - \ell} \p_k u_{l} + \mu_+ \dt^{ \ell} ( \n_{l} \mathcal{A}_{lk} ) \dt^{j - \ell} \p_k u_{i}\right.
\\&\quad \left.+(\mu_+'-2\mu_+/3)\dt^{ \ell} ( \n_{i}\mathcal{A}_{lk} ) \dt^{j - \ell} \p_k u_{l}+ \dt^{\ell} \n_{i} \dt^{j - \ell}(\rho_1 g \eta_+- P'(\bar{\rho})   \q-\sigma_+   \Delta_\ast\eta_+ )\right\},
\end{split}
\end{equation}
\begin{align}\label{F3j-_def}
\nonumber-F_{i,-}^{3,j} &= -\dt^jF_{i,-}^{3}+ \sum_{0 < \ell \le j} C_j^\ell \left\{\dt^{ \ell} ( \n_{l} \mathcal{A}_{ik} ) \dt^{j - \ell} \jump{\mu \p_k u_{l}} + \dt^{ \ell} ( \n_{l} \mathcal{A}_{lk} ) \dt^{j - \ell} \jump{\mu\p_k u_{i}}\right.
\\&\quad\left.+\dt^{ \ell} ( \n_{i}\mathcal{A}_{lk} ) \dt^{j - \ell} \jump{(\mu'-2\mu/3)\p_k u_{l}}+ \dt^{\ell} \n_{i} \dt^{j - \ell}( \rj g\eta_-- \jump{P'(\bar\rho)   \q}+\sigma_-  \Delta_\ast\eta_-) \right\},
\end{align}
for  $i=1,2,3,$ and
\begin{equation}\label{F4j_def}
 F^{4,j} =  \sum_{0 < \ell \le j}  C_j^\ell \dt^{ \ell} \n\cdot \dt^{j - \ell}  u.
\end{equation}
In the above, $F^1$, $F^2$ and $F^3$ are defined by
\begin{equation}
 F^{1}=\p_3^2\bar\rho K \theta \p_t\theta+K\p_t\theta \pa_3  \q -\diverge_\a((\q+\p_3\bar\rho\theta) u ),
\end{equation}
\begin{equation}
\begin{split}
 F^2 =  -( \bar{\rho} +  \q+\p_3\bar\rho\theta)
(-K\p_t\theta \pa_3  u
 +   u \cdot \nab_\a  u )
       - \nabla_\a\mathcal{R}-g( \q+\p_3\bar\rho\theta ) \nabla_\a \theta ,
      \end{split}
\end{equation}
 \begin{equation}
F^3_{+}= - \mathcal{R} \n-\sigma_+\diverge_\ast(((1+|\nab_\ast\eta_+|^2)^{-1/2}-1)\nab_\ast\eta_+) \n,
\end{equation}
and
 \begin{equation}
 -F^3_-= - \jump{ \mathcal{R} }\n
 +\sigma_-\diverge_\ast(((1+|\nab_\ast\eta_-|^2)^{-1/2}-1)\nab_\ast\eta_-) \n.
\end{equation}

We present the estimates of these nonlinear terms $F^{1,j}$, $F^{2,j}$, $F^{3,j}$ and $F^{4,j}$ in the following lemma.

\begin{lem}\label{p_F2N_estimates}
For each $0\le j\le 2N$, we have
\begin{equation}\label{p_F_e_01}
 \ns{F^{1,j} }_{0}+ \ns{  F^{2,j} }_{0}   + \ns{F^{3,j}}_{0} + \norm{F^{4,j}}_{0}^2 \ls   \se{2N}^0 \(\sd{2N}^\sigma+\se{2N}^0\).
\end{equation}
\end{lem}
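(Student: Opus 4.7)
The plan is to adapt the nonlinear estimates from the companion paper \cite{JTW_GWP} to the present choice of energy/dissipation, exploiting the fact that the structure of the forcing terms $F^{i,j}$ is exactly the same; only the bookkeeping of which norm controls which factor needs modification. For each $j\in\{0,\dots,2N\}$, I will go through the Leibniz expansions \eqref{F1j_def}--\eqref{F4j_def} term by term and bound each product $\dt^{\ell}(\cdot)\,\dt^{j-\ell}(\cdot)$ in $L^2$ via the standard split: one factor in $L^\infty$ (or a trace $L^4$/$H^1$) estimated by $\sqrt{\se{2N}^0}$, and the other factor in $L^2$ (or $H^{1/2}$ on $\Sigma$) estimated by $\sqrt{\sd{2N}^\sigma+\se{2N}^0}$. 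The key inputs are Sobolev multiplication, the trace theorem, the smallness hypothesis $\se{2N}^\sigma\le\delta^2$ (which controls $\theta,\bna\theta,\a,\n,K$ and their derivatives through the Poisson extension), and the derivative-counting rule: for $j\le N$ the time derivative count is low and the remaining spatial regularity sits high (controlled by $\se{2N}^0$), while for $j>N$ the roles reverse.

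Concretely, I would handle $F^{1,j}$ and $F^{4,j}$ first, since these are the simplest — pure products of $\a$, $\n$, $\bna\theta$, $\dt\theta$, $u$, $\q$, $\p_3\bar\rho\theta$ — and the term $\dt^j F^1$ reduces to products of $\theta$-quantities with $\q$, $u$. For $F^{2,j}$, the nontrivial pieces involve $\dt^\ell\a\,\dt^{j-\ell}\p u$ (and similar with $\q$ replacing $u$ through $\bar\rho\dt^\ell\a\,\p(h'(\bar\rho)\dt^{j-\ell}\q)$), plus the transport-type piece $\dt^\ell(\q+\p_3\bar\rho\theta)\,\dt(\dt^{j-\ell}u)$ and the Taylor remainder $\nab_\a\mathcal{R}$ from $\dt^j F^2$; here the remainder $\mathcal{R}$ is quadratic in $\q+\p_3\bar\rho\theta$, which gives an extra small factor and lets me absorb it into $\se{2N}^0\,\sd{2N}^\sigma$. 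For $F^{3,j}$, the hardest is the boundary term carrying $\rho_1 g\eta_\pm$, $P'(\bar\rho)\q$, and the surface-tension term $\sigma_\pm\Delta_\ast\eta_\pm$; these are traces, so I will use the $\Sigma$-versions of the norms in $\se{2N}^0$ and $\sd{2N}^\sigma$, together with the nonlinear surface-tension piece $\sigma_\pm\diverge_\ast(((1+|\nab_\ast\eta|^2)^{-1/2}-1)\nab_\ast\eta)$, which is cubic in $\nab_\ast\eta$ and therefore automatically small.

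The main obstacle, and the only real departure from \cite{JTW_GWP}, is that the present dissipation $\sd{2N}^\sigma$ no longer contains $\|\q\|_{4N}$ or $\|\eta\|_{4N-1/2}$. So whenever a product estimate in \cite{JTW_GWP} would place one of these quantities on the high-regularity side, I instead place it in $\se{2N}^0$ and compensate by placing its partner on the low-regularity side, where it is controlled by $\sd{2N}^\sigma$ after using Sobolev embedding and the extra half derivative available in $\sd{2N}^\sigma$. In particular, any time a factor $\pa^\al\q$ with $|\al|=4N$ or $\pa^\al\eta$ with $|\al|=4N-1/2$ appears, I write it as a factor in $\sqrt{\se{2N}^0}$, and the remaining factor (having at least one derivative to spare) goes into $\sqrt{\sd{2N}^\sigma+\se{2N}^0}$. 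This is exactly the substitution flagged in the discussion just before the lemma statement.

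Routine calculations are then identical in structure to those in \cite{JTW_GWP}: split the Leibniz sums into low/high halves, apply Hölder with Sobolev embeddings $H^2(\Omega)\hookrightarrow L^\infty$, $H^1(\Omega)\hookrightarrow L^6$, and the trace embedding $H^{1/2}(\Sigma)\hookrightarrow L^4$ on the two-dimensional interfaces, use Proposition on Poisson extensions (appendix) to bound $\bna\theta$ derivatives by $\eta$-norms, and finally sum to obtain the claimed bound $\se{2N}^0(\sd{2N}^\sigma+\se{2N}^0)$. Since the Taylor remainder $\mathcal{R}$ and the surface-tension nonlinearity contribute extra factors of $\sqrt{\se{2N}^0}$, their bounds actually give the stronger estimate $(\se{2N}^0)^2\le\se{2N}^0(\sd{2N}^\sigma+\se{2N}^0)$, which suffices. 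The net effect is that every resulting product $\|F^{i,j}\|_0^2$ is bounded by $\se{2N}^0(\sd{2N}^\sigma+\se{2N}^0)$, yielding \eqref{p_F_e_01}.
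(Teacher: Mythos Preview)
Your proposal is correct and matches the paper's approach exactly: the paper's proof simply cites Lemma 3.8 of \cite{JTW_GWP} and notes, precisely as you do, that the only modification needed is to replace the missing $\ns{\q}_{4N}$ and $\ns{\eta}_{4N-1/2}$ from $\sd{2N}^\sigma$ by $\se{2N}^0$, which accounts for the extra $(\se{2N}^0)^2$ term. Your write-up supplies the details the paper elides, but the strategy is identical.
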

\begin{proof}
The estimate is restated from Lemma 3.8 of \cite{JTW_GWP}. Note that the appearance of $(\se{2N}^0)^2$ in the estimate is due to the lack of $\ns{\q}_{4N}$ and $\ns{\eta}_{4N-1/2}$ in the definition \eqref{p_dissipation_def} of $\sd{2N}^\sigma$; we use $\se{2N}^0$ to control them here.
\end{proof}

We now estimate the energy evolution of the pure temporal derivatives.
\begin{prop}\label{i_temporal_evolution 2N}
It holds that
\begin{align} \label{tem en 2N}
\nonumber
&\sum_{j=0}^{2N}\(\norm{ \partial_t^j
q(t)}_0^2 +\norm{ \partial_t^j
u(t)}_0^2 +\norm{ \partial_t^j
\eta_+(t)}_0^2 +\sigma\norm{\nabla_\ast \partial_t^j
\eta(t)}_0^2\) + \int_0^t\sum_{j=0}^{2N}\norm{ \partial_t^j
u }_1^2\,ds\\&\quad\lesssim \se{2N}^\sigma(0) +\int_0^t\sqrt{{\mathcal{E}^\sigma
_{2N} }} \(\mathcal{D}^\sigma_{2N}+\mathcal{E}^\sigma_{2N}\) \,ds  +  \int_0^t\norm{
\eta_-  }_{0}^2\,ds .
\end{align}
\end{prop}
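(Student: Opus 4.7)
The plan is to perform an energy estimate on the time-differentiated system \eqref{linear_geometric} and then to carefully handle the destabilizing contribution arising at the internal interface.

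For each $j = 0,1,\dots,2N$, I would test the momentum equation in \eqref{linear_geometric} with $\partial_t^j u$ in $L^2(\Omega)$ and integrate by parts in the geometric setting. Combining this with the time-differentiated continuity equation (used to turn the pressure-times-divergence term into a time derivative of $\tfrac12\|\sqrt{h'(\bar\rho)}\,\partial_t^j q\|_0^2$) and with the dynamic boundary conditions (used through the kinematic identity $\partial_t\partial_t^j\eta = \partial_t^j u\cdot\mathcal{N} + F^{4,j}$ to promote the surface integrals into time derivatives of boundary energies) produces an identity of the form
\[
\frac{d}{dt}\left[\mathcal{E}_j^+(t) - \tfrac{\jump{\bar\rho}g}{2}\|\partial_t^j\eta_-\|_0^2\right] + \mathcal{D}_j(t) = \mathcal{R}_j(t),
\]
where $\mathcal{E}_j^+(t) := \tfrac12\|\sqrt{\bar\rho}\,\partial_t^j u\|_0^2 + \tfrac12\|\sqrt{h'(\bar\rho)}\,\partial_t^j q\|_0^2 + \tfrac{\bar\rho_1 g}{2}\|\partial_t^j\eta_+\|_0^2 + \tfrac{\sigma}{2}\|\nabla_\ast\partial_t^j\eta\|_0^2$ reproduces exactly the positive-definite part of the left-hand side of \eqref{tem en 2N}, $\mathcal{D}_j$ is the symmetrized viscous dissipation that controls $\|\partial_t^j u\|_1^2$ via Korn's inequality under smallness of $\mathcal{A}-I$, and $\mathcal{R}_j$ collects all nonlinear errors (the commutators $F^{i,j}$ from \eqref{F1j_def}--\eqref{F4j_def}, time derivatives falling on the inertial factor $\bar\rho + q + \partial_3\bar\rho\,\theta$, and commutators between $\partial_t$ and the geometric tensor $\mathcal{A}$). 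The minus sign in front of $\|\partial_t^j\eta_-\|_0^2$ is the essential Rayleigh--Taylor instability contribution that is absent in the stable regime of \cite{JTW_GWP}.

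All the error terms in $\mathcal{R}_j$ reduce, after Cauchy--Schwarz and the trace theorem and via Lemma \ref{p_F2N_estimates}, to bounds of the form $\sqrt{\mathcal{E}_{2N}^\sigma}(\mathcal{D}_{2N}^\sigma + \mathcal{E}_{2N}^\sigma)$; summing over $j$ and integrating over $[0,t]$ they furnish exactly the middle term on the right-hand side of \eqref{tem en 2N}. What remains is the pointwise sign-indefinite leftover $\tfrac{\jump{\bar\rho}g}{2}\sum_{j=0}^{2N}\|\partial_t^j\eta_-(t)\|_0^2$ that this integration deposits on the right.

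The main obstacle is the control of this leftover. For $j=0$, I would use the identity $\tfrac{d}{dt}\|\eta_-\|_0^2 = 2(\eta_-,\partial_t\eta_-)_{L^2(\Sigma_-)}$ combined with the kinematic relation $\partial_t\eta_- = u\cdot\mathcal{N}$ and Young's inequality with a small parameter $\varepsilon$ to produce $\|\eta_-(t)\|_0^2 \lesssim \|\eta_-(0)\|_0^2 + \int_0^t\|\eta_-\|_0^2\,ds + \varepsilon\int_0^t\|u\|_1^2\,ds + \int_0^t\sqrt{\mathcal{E}_{2N}^\sigma}(\mathcal{D}_{2N}^\sigma + \mathcal{E}_{2N}^\sigma)\,ds$; choosing $\varepsilon$ small the term $\varepsilon\int_0^t\|u\|_1^2$ is absorbed into the dissipation on the left, and the $\int_0^t\|\eta_-\|_0^2$ is precisely the last term of \eqref{tem en 2N}. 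For $j \ge 1$, I would exploit the differentiated kinematic identity $\partial_t^j\eta_- = \partial_t^{j-1}(u\cdot\mathcal{N})$ together with the fundamental theorem of calculus $\|\partial_t^j\eta_-(t)\|_0^2 \le \|\partial_t^j\eta_-(0)\|_0^2 + 2\int_0^t(\partial_t^j\eta_-,\partial_t^{j+1}\eta_-)_{L^2(\Sigma_-)}\,ds$, using the kinematic condition once more on $\partial_t^{j+1}\eta_-$ and the trace theorem to split the pairing into $\varepsilon\|\partial_t^j u\|_1^2$ plus $C_\varepsilon \|\partial_t^j\eta_-\|_0^2$ plus nonlinear remainders; the $\varepsilon$ term is absorbed into the dissipation on the left, while the residual integral $\int_0^t\|\partial_t^j\eta_-\|_0^2$ (for $j\ge 1$) is folded into the nonlinear contribution of size $\sqrt{\mathcal{E}_{2N}^\sigma}(\mathcal{D}_{2N}^\sigma+\mathcal{E}_{2N}^\sigma)$ by invoking the smallness assumption $\mathcal{E}_{2N}^\sigma\le\delta^2$, which writes $\|\partial_t^j\eta_-\|_0^2 \le \sqrt{\mathcal{E}_{2N}^\sigma}\cdot\sqrt{\mathcal{E}_{2N}^\sigma}$ and allows the factor of $\sqrt{\mathcal{E}_{2N}^\sigma}$ to be extracted from the remaining dissipative/energetic piece.
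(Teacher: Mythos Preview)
Your overall strategy differs from the paper's in one crucial respect, and this difference leads to a genuine gap in your final step. The paper deliberately does \emph{not} promote the internal-interface cross term $\int_{\Sigma_-}\jump{\bar\rho}g\,\partial_t^j\eta_-\,\mathcal{N}\cdot\partial_t^j u$ into a time derivative of $\tfrac12\|\partial_t^j\eta_-\|_0^2$ (this is explicitly remarked after the proof). Instead it bounds the cross term directly by $C_\varepsilon\|\partial_t^j\eta_-\|_0^2 + \varepsilon\|\partial_t^j u\|_1^2$ via trace theory, integrates in time, and then for $j\ge 1$ uses the kinematic identity $\partial_t^j\eta_- = \partial_t^{j-1}u\cdot\mathcal{N} + F^{4,j-1}$ to bound $\int_0^t\|\partial_t^j\eta_-\|_0^2 \lesssim \int_0^t\|\partial_t^{j-1}u\|_1^2 + \int_0^t\mathcal{E}_{2N}^\sigma(\mathcal{D}_{2N}^\sigma+\mathcal{E}_{2N}^\sigma)$. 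The first integral on the right is exactly the dissipation at level $j-1$, so a cascade from $j=2N$ down to $j=0$ closes the estimate with only $\int_0^t\|\eta_-\|_0^2$ surviving.

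Your route, by contrast, leaves the \emph{pointwise} term $\|\partial_t^j\eta_-(t)\|_0^2$ on the right. Your attempt to dispose of it for $j\ge 1$ ends with ``$\|\partial_t^j\eta_-\|_0^2 \le \sqrt{\mathcal{E}_{2N}^\sigma}\cdot\sqrt{\mathcal{E}_{2N}^\sigma}$,'' but this equals $\mathcal{E}_{2N}^\sigma$, not $\sqrt{\mathcal{E}_{2N}^\sigma}\,\mathcal{E}_{2N}^\sigma$; for small energy, $\mathcal{E}_{2N}^\sigma$ is \emph{larger} than $\sqrt{\mathcal{E}_{2N}^\sigma}(\mathcal{D}_{2N}^\sigma+\mathcal{E}_{2N}^\sigma)$, so it cannot be folded into that term. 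You are one factor of $\sqrt{\mathcal{E}_{2N}^\sigma}$ short of the cubic smallness the proposition requires, and this gap is not cosmetic: the whole bootstrap argument downstream depends on the right-hand side carrying the extra $\sqrt{\mathcal{E}_{2N}^\sigma}$. The fix is exactly the paper's chaining mechanism: keep the cross term unpromoted so that only time-integrals of $\|\partial_t^j\eta_-\|_0^2$ appear, and then trade each such integral (for $j\ge 1$) against the dissipation $\int_0^t\|\partial_t^{j-1}u\|_1^2$ already produced at the previous level.
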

\begin{proof}
We take the dot product of the second equation of \eqref{linear_geometric} with $J\dt^j u$ and integrate by parts over the domain $\Omega$; using the other conditions in \eqref{linear_geometric} and some easy geometric identities involving $J,\mathcal{A},$ and $\mathcal{N}$, as in Proposition 3.1 of \cite{JTW_GWP}, we obtain the following energy identity:
\begin{align}\label{identity1}
\nonumber &\frac{1}{2}\frac{d}{dt}\left(\int_\Omega   ( \bar{\rho} +  \q+\p_3\bar\rho\theta)J\abs{\partial_t^j u }^2+h'(\bar{\rho})J \abs{\dt^j\q}^2+\int_{\Sigma_+} \rho_1  g\abs{\dt^j \eta_+}^2+\int_\Sigma \sigma\abs{\nab_\ast \dt^j \eta}^2\right)
\\ \nonumber&\quad +\int_{\Omega}\frac{\mu}{2}J\abs{\sgz_\mathcal{A}\partial_t^j u}^2+\mu'J\abs{\diverge_\mathcal{A}\partial_t^j u}^2
\\ \nonumber&\quad =\frac{1}{2} \int_\Omega \dt(J( \bar{\rho} +  \q+\p_3\bar\rho\theta)) \abs{\partial_t^j u }^2+h'(\bar{\rho})\dt J \abs{\dt^j\q}^2+\int_\Omega J(h'(\bar{\rho}) \dt^j\q   F^{1,j}+\partial_t^j u\cdot F^{2,j})
\\  &\qquad+\int_{\Sigma}-\partial_t^j u\cdot F^{3,j}+\int_{\Sigma_+} \rho_1  g\dt^j \eta_+F_+^{4,j} -\int_\Sigma\sigma\Delta_\ast (\dt^j \eta) F^{4,j} + \int_{\Sigma_-}\rj g\dt^j \eta_-\n\cdot \dt^j u.
\end{align}

First, we may argue as in Proposition 4.3 of \cite{GT_per}, utilizing Lemma \ref{eta_small}, to estimate
\begin{equation} \label{i_te_1}
 \int_{\Omega}\frac{\mu}{2}
J\abs{\sgz_\mathcal{A}\partial_t^j u}^2+\mu'J\abs{\diverge_\mathcal{A}\partial_t^j u}^2
\ge  \int_{\Omega}\frac{\mu}{2}
 \abs{\sgz \partial_t^j u}^2+\mu' \abs{\diverge \partial_t^j u}^2-C\sqrt{ \mathcal{E}_{2N}^\sigma}\mathcal{D}_{2N}^\sigma.
 \end{equation}
We then estimate the right hand side of \eqref{identity1} for $0\le j\le 2N$. For the first two terms, we may bound  as usual $\norm{\dt J}_{L^\infty}\ls \sqrt{ \mathcal{E}_{2N}^\sigma}$ and $\norm{\dt( J( \bar{\rho} +  \q+\p_3\bar\rho\theta))}_{L^\infty}\ls \sqrt{ \mathcal{E}_{2N}^\sigma}$ to have
 \begin{align}\label{i_te_1'}
\nonumber
 &\frac{1}{2} \int_\Omega  \dt (J( \bar{\rho} +  \q+\p_3\bar\rho\theta))\abs{\partial_t^j u }^2+h'(\bar{\rho})\dt J \abs{\dt^j\q}^2
 \\&\quad\ls \sqrt{ \mathcal{E}_{2N}^\sigma}\left( \ns{\partial_t^j u}_0+ \ns{\partial_t^j \q}_0\right) \ls \sqrt{ \mathcal{E}_{2N}^\sigma}\mathcal{E}_{2N}^\sigma.
 \end{align}
By Lemma \ref{p_F2N_estimates}, we may bound the $F^{1,j}$ and $F^{2,j}$ terms as
\begin{align}\label{i_te_2}
\nonumber\int_\Omega J(h'(\bar{\rho}) \dt^j\q   F^{1,j}+\partial_t^j u\cdot F^{2,j}) &\ls   \norm{\dt^j \q}_{0}   \norm{F^{1,j}}_0+ \norm{\dt^j u}_{0}   \norm{F^{2,j}}_0\\& \ls \sqrt{\se{2N}^\sigma } \sqrt{\se{2N}^\sigma \(\sd{2N}^\sigma+\se{2N}^\sigma\)}.
\end{align}
For the $F^{3,j}$ and $F^{4,j}$ terms, by Lemma \ref{p_F2N_estimates} and   trace theory, we have
\begin{align}\label{i_te_3}
\nonumber
 &\int_{\Sigma}-\partial_t^j u\cdot F^{3,j}+\int_{\Sigma_+} \rho_1  g\dt^j \eta_+F_+^{4,j} -\int_\Sigma\sigma\Delta_\ast (\dt^j \eta) F^{4,j}
 \\\nonumber &\quad\ls  \snormspace{\dt^{j} u}{0}{\Sigma} \norm{F^{3,j}}_{0} +\left( \norm{\dt^{j} \eta_+}_{0}+ \sigma\norm{\Delta_\ast\dt^{j} \eta}_{0} \right) \norm{F^{4,j}}_{0} \\\nonumber&\quad
\ls  \left( \norm{\dt^{j} u}_{1} +\norm{\dt^{j} \eta_+}_{0}+ \sigma\norm{\Delta_\ast\dt^{j} \eta}_{0} \right)  \sqrt{\se{2N}^\sigma\(\sd{2N}^\sigma+\se{2N}^\sigma\)}
 \\&\quad\ls   \sqrt{ \sd{2N}^\sigma+\se{2N}^\sigma } 
 \sqrt{\se{2N}^\sigma \(\sd{2N}^\sigma+\se{2N}^\sigma\)}.
\end{align}
For the last term, by the trace theorem and Cauchy's inequality, we have
 \begin{equation}\label{i_te_4}
   \int_{\Sigma_-}
 \rj g \partial_t^j \eta_- \n\cdot \partial_t^j u
 \ls\norm{
\partial_t^j \eta_- }_{0} \norm{\partial_t^j u }_{H^0(\Sigma_-)}
\ls C_\varepsilon \norm{
\partial_t^j \eta_-}_{0}^2 +\varepsilon\norm{\partial_t^j u}_1^2
 \end{equation}
for any $\varepsilon>0$.

Consequently, employing Korn's inequality from Proposition \ref{layer_korn} in \eqref{i_te_1} together with the estimates \eqref{i_te_1'}--\eqref{i_te_4}, taking  $\varepsilon$  sufficiently small, and integrating \eqref{identity1} from $0$ to $t$, we deduce  that
\begin{equation}\label{tempor5}
\begin{split}
&\norm{ \partial_t^j
q(t)}_0^2 +\norm{ \partial_t^j
u(t)}_0^2 +\norm{ \partial_t^j
\eta_+(t)}_0^2 +\sigma\norm{\nabla_\ast \partial_t^j
\eta(t)}_0^2 + \int_0^t\norm{ \partial_t^j
u}_1^2\,ds\\&\quad\lesssim \se{2N}^\sigma(0) +\int_0^t\sqrt{{\mathcal{E}^\sigma
_{2N}}} \(\sd{2N}^\sigma+\se{2N}^\sigma\)\,ds  +  \int_0^t\norm{
\partial_t^j \eta_-}_{0}^2\,ds .
\end{split}
\end{equation}

Now taking $j=0$ in \eqref{tempor5}, we have
\begin{equation}\label{tempor7}
\begin{split}
  &\norm{
q(t)}_0^2 +\norm{
u(t)}_0^2 +\norm{
\eta_+(t)}_0^2 +\sigma\norm{\nabla_\ast
\eta(t)}_0^2 + \int_0^t\norm{  u }_1^2\,ds \\&\quad\lesssim
\se{2N}^\sigma(0) +\int_0^t\sqrt{{\mathcal{E}^\sigma
_{2N} }} \(\sd{2N}^\sigma+\se{2N}^\sigma\)\,ds  +  \int_0^t\norm{  \eta_-}_{0} ^2\,ds
.
\end{split}
\end{equation}
For $j=1,\dots,2N$, the kinematic boundary condition,  trace theory and the estimates \eqref{p_F_e_01} show that
\begin{equation}\label{tempor8}
\norm{
\partial_t^j \eta_-}_{0}^2 \le\norm{
\partial_t^{j-1}u\cdot \n}_{H^0(\Sigma)}^2 +\norm{F^{4,j-1} }_{0}^2\ls\norm{
\partial_t^{j-1}u }_{1}^2+ { \mathcal{E}_{2N}^\sigma\(\sd{2N}^\sigma+\se{2N}^\sigma\)} .
\end{equation}
Plugging \eqref{tempor8} into \eqref{tempor5}, by using $\sqrt{{\mathcal{E}^\sigma
_{2N} }} \le 1$, we obtain
\begin{equation}\label{tempor9}
\begin{split}
&\norm{ \partial_t^j
q(t)}_0^2 +\norm{ \partial_t^j
u(t)}_0^2 +\norm{ \partial_t^j
\eta_+(t)}_0^2 +\sigma\norm{\nabla_\ast \partial_t^j
\eta(t)}_0^2 + \int_0^t\norm{ \partial_t^j
u }_1^2\,ds\\&\quad\lesssim \se{2N}^\sigma(0) +\int_0^t\sqrt{{\mathcal{E}^\sigma
_{2N} }} \(\sd{2N}^\sigma+\se{2N}^\sigma\)\,ds  +  \int_0^t\norm{
\partial_t^{j-1}u  }_{1}^2\,ds.
\end{split}
\end{equation}
Hence, by chaining together \eqref{tempor7} and \eqref{tempor9}, we get \eqref{tem en 2N}.
\end{proof}

We remark that the energy identity in Proposition 3.1 of \cite{JTW_GWP} is slightly different from \eqref{identity1}. Unlike in Proposition 3.1 of \cite{JTW_GWP}, we do not employ the kinematic boundary condition in treating the last term in \eqref{identity1} because $\rj>0$; if we did this, it would involve a negative term, $-\rj g\norm{\eta_-}_0^2$, in the energy. As a result, for $\sigma_-<\sigma_c$, the energy becomes non-positive definite, which is the cause of the instability.

\subsection{Energy evolution for horizontal space-time derivatives in linear form}\label{stable2}

We now estimate the energy evolution of the mixed horizontal space-time derivatives.
It turns out to be convenient to rewrite the system \eqref{geometric} in a linear form such that the coefficients get fixed and that the elliptic regularity is readily adapted in later sections.
The PDEs \eqref{geometric} can be also rewritten for $(\q,u,\eta)$ as
\begin{equation}\label{ns_perturb}
\begin{cases}
\partial_t \q +\diverge ( \bar{\rho}   u)=G^1 & \text{in }
\Omega  \\
 \bar{\rho} \partial_t    u   + \bar{\rho}\nabla \left(h'(\bar{\rho})\q\right)   -\diverge \S(u) =G^2 & \text{in }
\Omega  \\
\partial_t \eta = u_3+G^4 &
\text{on }\Sigma  \\
(  P'(\bar\rho)\q I- \S(  u))e_3  = (\rho_1  g \eta_+ -\sigma_+ \Delta_\ast \eta_+ ) e_3 +G_+^3
 & \text{on } \Sigma_+
 \\ \jump{P'(\bar\rho)\q I- \S(u)}e_3
=(\rj g\eta_- +\sigma_- \Delta_\ast \eta_-)e_3-G_-^3&\hbox{on }\Sigma_-
\\\jump{u}=0 &\hbox{on }\Sigma_-
\\ u_-=0 &\hbox{on }\Sigma_b,
\end{cases}%
\end{equation}
where  we have written the function $G^1=G^{1,1}+G^{1,2}$ for
\begin{equation}\label{G1_def}
G^{1,1}= K  \p_t\theta\pa_3  \q-  u_l \mathcal{A}_{lk}\pa_k  \q,
\end{equation}
\begin{equation}
G^{1,2}= \p_3^2\bar\rho K \theta \p_t\theta- \q \mathcal{A}_{lk}\pa_k u_l-\mathcal{A}_{lk}\pa_k(\p_3\bar\rho\theta   u_l)-(\mathcal{A}_{lk}-\delta_{lk} )\pa_k ( \bar{\rho}   u_l),
\end{equation}
the vector $G^2$ for
\begin{align}
\nonumber G^2_i=&-( \q+\p_3\bar\rho\theta )\partial_t    u_i+(\bar\rho+\q+\p_3\bar\rho\theta )
(K  \p_t\theta \pa_3  u_i
 -   u_l\mathcal{A}_{lk}\pa_k u_i)
  \\\nonumber& +  \mu \mathcal{A}_{lk} \pa_k\mathcal{A}_{lm}\pa_mu_i  +
  \mu (\mathcal{A}_{lk}\mathcal{A}_{lm}- \delta_{lk}\delta_{lm}) \pa_{km} u_i
   \\\nonumber&
  +(\mu/3+\mu') \mathcal{A}_{ik} \pa_k\mathcal{A}_{lm}\pa_mu_l
   +(\mu/3+\mu')(\mathcal{A}_{ik}\mathcal{A}_{lm}- \delta_{ik}\delta_{lm}) \pa_{km} u_l
   \\& - \bar{\rho}(\mathcal{A}_{il}-\delta_{il})\pa_l ( h'(\bar{\rho})\q)
       - \mathcal{A}_{il}\pa_l  \mathcal{R}-g( \q+\p_3\bar\rho\theta ) \a_{il}\p_l \theta
\end{align}
for $i=1,2,3,$
the vector $G^3_+ = G^{3,1}_+ + \sigma_+ G^{3,2}_+$ for
\begin{align}
\nonumber G^{3,1}_{i,+}=&
  \mu_+  (\mathcal{A}_{il} \partial_l u_k+\mathcal{A}_{kl} \partial_l u_i)(\n_{k }-\delta_{k3})
 + \mu_+  (\mathcal{A}_{il}-\delta_{il}) \partial_l u_3+ \mu (\mathcal{A}_{3l}-\delta_{3l}) \partial_l u_i
 \\\nonumber & +(\mu_+'-2\mu_+/3)  \mathcal{A}_{lk} \partial_k u_l (\n_{i }-\delta_{i3})
+ (\mu_+'-2\mu_+/3)   (\mathcal{A}_{lk}-\delta_{lk}) \partial_k u_l\delta_{i3}
\\
&+  \rho_1 g \eta_+ (\n_i-\delta_{i3}) -\mathcal{R}\n_{i }
 + P'(\bar{\rho}) \q (\delta_{i3} - \n_i)
  \end{align}
and
\begin{equation}
 G^{3,2}_{i,+} = -\Delta_\ast\eta_+ (\n_i-\delta_{i3}) - \diverge_\ast(((1+|\nab_\ast\eta_+|^2)^{-1/2}-1)\nab_\ast\eta_+)\n_i
\end{equation}
for $i=1,2,3,$ and the vector $G^3_- = G^{3,1}_- + \sigma_- G^{3,2}_-$ for
\begin{align}
\nonumber -G^{3,1}_{i,-}=&
  (\mathcal{A}_{il} \jump{\mu  \partial_l u_k}+\mathcal{A}_{kl} \jump{\mu \partial_l u_i})(\n_{k}-\delta_{k3})
+ (\mathcal{A}_{il}-\delta_{il}) \jump{\mu \partial_l u_3}-(\mathcal{A}_{3l}-\delta_{3l}) \jump{\mu \partial_l u_i}
 \\\nonumber&   + \mathcal{A}_{lk} \jump{(\mu'-2\mu/3) \partial_k u_l} (\n_{i }-\delta_{i3})
+ (\mathcal{A}_{lk}-\delta_{lk}) \jump{(\mu'-2\mu/3)\partial_k u_l}\delta_{i3}
\\&  + \rj g\eta_-(\n_i-\delta_{i3}) -\jump{\mathcal{R}}\n_{i }
  + \jump{P'(\bar{\rho})\q} (\delta_{i3} - \n_i)
  \end{align}
and
\begin{equation}
 G^{3,2}_{i,-} =  \Delta_\ast\eta_-(\n_i-\delta_{i3})  + \diverge_\ast(((1+|\nab_\ast\eta_-|^2)^{-1/2}-1)\nab_\ast\eta_-)\n_i
\end{equation}
for  $i=1,2,3,$  and the function $G^4$ for
\begin{eqnarray}\label{G4_def}
 G^4= -u_1\pa_1\eta-u_2\pa_2\eta.
\end{eqnarray}

We now present the estimates of these nonlinear terms $G^1$, $G^2$, $G^3$ and $G^4$. Recall the notation $\bar{\nab}$ for space-time derivatives  in \eqref{barnotation}.
\begin{lem}\label{p_G2N_estimates}
 It holds that
\begin{equation}\label{p_G_e_0}
 \ns{ \bar{\nab}^{4N-2} G^1}_{1}  +  \ns{ \bar{\nab}^{4N-2}  G^2}_{0} +
 \ns{ \bar{\nab}_{\ast}^{4N-2}  G^3}_{1/2}
+ \ns{\bar{\nab}_{\ast }^{ 4N-1} G^4}_{1/2}
 \ls   \se{2N}^0\(\se{2N}^\sigma +   \f_{2N}\),
\end{equation}
and
\begin{align}\label{p_G_e_00}
\nonumber&\ns{ \bar{\nab}^{4N-1}  G^{1,1}}_{0}+\ns{ \bar{\nab}^{4N-2}\dt G^{1,1}}_{0}  + \ns{ \bar{\nab}^{4N}  G^{1,2}}_{0}+  \ns{ \bar{\nab}^{4N-1}  G^2}_{0}
\\\nonumber&\quad+ \ns{ \bar{\nab}_{\ast}^{4N-1} G^3}_{1/2} + \ns{\bar{\nab}_{\ast }^{  4N-1} G^4}_{1/2}
   + \ns{\bar{\nab}_{\ast }^{ 4N-2} \dt G^4}_{1/2}
+\sigma^2\ns{\bar{\nab}_{\ast }^{ 4N} G^4}_{1/2}
\\  &\quad\ls    \se{2N}^0\(\sd{2N}^\sigma +\se{2N}^0 +   \f_{2N}\).
\end{align}
\end{lem}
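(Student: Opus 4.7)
The plan is to reduce the estimates to routine but careful applications of tame product estimates (of Moser/Kato--Ponce type), applied term by term to each of $G^1,G^2,G^3,G^4$ after expanding via the Leibniz rule. The overall structure follows that of the analogous Lemma~3.7 in the companion paper \cite{JTW_GWP}: the two estimates in the lemma correspond, respectively, to the ``energy-level'' and ``dissipation-level'' bounds. The discrepancy between the two right-hand sides reflects the fact that the definition of $\sd{2N}^\sigma$ in \eqref{p_dissipation_def} is weaker than in \cite{JTW_GWP} (it is missing $\ns{\q}_{4N}$ and $\ns{\eta}_{4N-1/2}$), so wherever those two norms would naturally appear one must instead absorb them into $\se{2N}^0$. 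The appearance of $\f_{2N}=\ns{\eta}_{4N+1/2}$ on the right-hand side is needed exactly because $G^3$ and the Poisson-extended $\theta$ hidden inside $\mathcal{A},J,K$ involve traces of $\eta$ at top regularity.

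First, for each product $abc\cdots$ arising from a term in $G^i$, I would invoke the generic tame inequality $\|abc\|_{H^k}\ls \|a\|_{H^k}\|b\|_{H^{s_b}}\|c\|_{H^{s_c}}\cdots$ (on $\Omega$ or $\Sigma$), choosing the factor that carries the most derivatives to receive the high-order norm and bounding the others in $L^\infty$ by Sobolev embedding at a regularity $\le 2N+\text{const}$. All such low-order factors are uniformly controlled by $\sqrt{\se{2N}^0}\lesssim \delta$ under the standing smallness assumption, which yields the prefactor $\se{2N}^0$. The factors with high-order derivatives are then exactly what the norms on the right-hand side of \eqref{p_G_e_0}--\eqref{p_G_e_00} are designed to capture: for $u$ and $q$ one lands in $\se{2N}^\sigma$ or $\sd{2N}^\sigma$; for $\eta$ inside $\theta,\mathcal{A},J,K$ (through Poisson extensions, which are bounded between the usual Sobolev scales) one lands in $\se{2N}^\sigma$ plus $\f_{2N}$ when a top derivative $\bar{\nab}^{4N}$ is present.

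Second, I would run through the specific terms. For $G^1=G^{1,1}+G^{1,2}$, the form of $G^{1,1}=K\p_t\theta\p_3 q - u_l\mathcal{A}_{lk}\p_k q$ forces one of $q$'s factors to receive the top derivative, and since $\|q\|_{4N-1}$ (with appropriate time counting) is controlled by $\sd{2N}^\sigma$ but $\|q\|_{4N}$ is not, this is precisely why only $G^{1,1}$ appears in the $\bar\nab^{4N-1}$ bound while $G^{1,2}$ can even take a $\bar\nab^{4N}$ derivative (since $G^{1,2}$ has no solo $q$-derivative of top order). For $G^2$ the structure is analogous; all nonlinear quadratic terms split so that no factor is asked for more derivatives than $\se{2N}^\sigma$ (or $\sd{2N}^\sigma$ in the higher estimate) can supply. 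For $G^3$ on $\Sigma$ one first uses the trace theorem to reduce to estimates on $\Omega$, and then handles the capillary pieces $\sigma_\pm G^{3,2}_\pm$ by pulling out a factor of $\sigma$, producing the $\sigma^2$ on the left-hand side via the Cauchy--Schwarz split $\sigma\|\Delta_\ast\eta\cdots\| \ls \|\sigma\nab_\ast\eta\|\|\cdots\|_{L^\infty}$, which is what allows $G^4$ with $4N$ horizontal derivatives to be controlled only after multiplication by $\sigma^2$. For $G^4=-u_1\p_1\eta-u_2\p_2\eta$ the bound follows from the trace theorem plus the product estimate, with the top derivative on $\eta$ using $\f_{2N}$.

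The main obstacle, rather than any single estimate, is the bookkeeping: in the time-differentiated estimates $\ns{\bar\nab^{4N-2}\dt G^{1,1}}_0$ and $\ns{\bar\nab_\ast^{4N-2}\dt G^4}_{1/2}$, some terms in the Leibniz expansion place the extra $\dt$ on the low-order factor, and verifying that the resulting norm still embeds into $\se{2N}^0$ (and not just $\se{2N}^\sigma$) requires the observation that $\dt$ acting on a low-regularity factor gains strictly better regularity through the parabolic counting convention in \eqref{barnotation}. Once this is organized, each individual piece is essentially the argument of Lemma~3.8 (and the allied product lemmas) of \cite{JTW_GWP}, with the single modification that occurrences of $\ns{q}_{4N}$ or $\ns{\eta}_{4N-1/2}$ on the right are rewritten as $\se{2N}^0$ rather than $\sd{2N}^\sigma$, which is consistent with the dissipation \eqref{p_dissipation_def} used here.
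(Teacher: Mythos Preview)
Your proposal is correct and takes essentially the same approach as the paper: both defer to the analogous lemma in the companion paper \cite{JTW_GWP} (the paper cites Lemma~3.3, not 3.7 as you write) and note that the only modification is replacing the would-be $\sd{2N}^\sigma$ control of $\ns{\q}_{4N}$ and $\ns{\eta}_{4N-1/2}$ by $\se{2N}^0$, exactly as you explain. Your more detailed sketch of the underlying product-estimate and Leibniz-rule bookkeeping is a reasonable elaboration of what that referenced proof contains.
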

\begin{proof}
The estimates are restated from Lemma 3.3 of \cite{JTW_GWP}. The reason for the appearance of $(\se{2N}^0)^2$ is the same as in Lemma \ref{p_F2N_estimates}.
\end{proof}

Next we present some variants of these estimates involving integrals of certain products.  First we consider products with derivatives of $G^4$.

\begin{lem}\label{lemma8}
Let $\al\in \mathbb{N}^2$ such that $|\al|=4N$. Then
\begin{equation}\label{eta es}
\left| \int_{\Sigma}   \pa^\al \eta \pa^\al G^4 \right| \lesssim \sqrt{\se{2N}^0} \( \sd{2N}^0+\se{2N}^0\) +\sqrt{  \sd{2N}^0\mathcal{E}_{2N }^0 \f_{2N}}
\end{equation}
and
\begin{equation}\label{eta es2}
\left| \int_{\Sigma}   \sigma \Delta_\ast\pa^\al\eta \pa^\al G^4 \right| \lesssim    \sqrt{\se{2N}^0\sd{2N}^0\sd{2N}^\sigma}  +   \sqrt{  \sd{2N}^\sigma\mathcal{E}_{2N}^0 \f_{2N}} .
\end{equation}
 \end{lem}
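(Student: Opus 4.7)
The plan is to expand $\partial^\alpha G^4 = -\partial^\alpha(u_i \partial_i \eta)$ via the Leibniz rule:
\begin{equation*}
 \partial^\alpha G^4 = - u_i \partial_i \partial^\alpha \eta - \sum_{\substack{\beta \le \alpha \\ |\beta| \ge 1}} \binom{\alpha}{\beta}\, \partial^\beta u_i\, \partial^{\alpha-\beta+e_i}\eta.
\end{equation*}
The critical piece is the first term $u_i \partial_i \partial^\alpha \eta$, which carries $4N+1$ horizontal derivatives of $\eta$ and is therefore not directly controlled by $\se{2N}^\sigma$. Every commutator term has at most $4N$ horizontal derivatives on $\eta$, since $|\alpha - \beta + e_i| = 4N - |\beta| + 1 \le 4N$ whenever $|\beta| \ge 1$.

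For \eqref{eta es}, I would handle the top-order contribution by integration by parts on the closed horizontal surface $\Sigma$:
\begin{equation*}
\int_\Sigma \partial^\alpha \eta \cdot u_i \partial_i \partial^\alpha \eta = -\tfrac{1}{2}\int_\Sigma (\partial_i u_i)(\partial^\alpha \eta)^2,
\end{equation*}
which is bounded by $\|\nabla u\|_{L^\infty(\Sigma)}\|\eta\|_{4N}^2$. Since $N\ge 3$, the trace theorem and Sobolev embedding give $\|\nabla u\|_{L^\infty(\Sigma)} \ls \sqrt{\sd{2N}^0}$, yielding a contribution $\sqrt{\sd{2N}^0}\,\se{2N}^0$, which by AM--GM sits inside $\sqrt{\se{2N}^0}(\sd{2N}^0+\se{2N}^0)$. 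The commutator terms are handled by standard Sobolev product estimates on $\Sigma$: for $1 \le |\beta| \le 2N$ I place $\partial^\beta u$ in $L^\infty(\Sigma)$ (controlled via trace by $\sqrt{\sd{2N}^0}$) and the other factors in $L^2$, whereas for $|\beta| > 2N$ the roles are reversed, with $\partial^{\alpha-\beta+e_i}\eta$ placed in $L^\infty$ and $\partial^\beta u$ in $L^2$.

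The $\sqrt{\sd{2N}^0\,\se{2N}^0\,\f_{2N}}$ contribution will appear as an alternative bound on intermediate commutators where both $u$ and $\eta$ sit close to the top regularity and integration by parts is awkward. In that regime I would invoke the duality $\bigl|\int_\Sigma fg\bigr| \le \|f\|_{H^{1/2}(\Sigma)}\|g\|_{H^{-1/2}(\Sigma)}$ with $f = \partial^\alpha \eta$: the estimate $\|\partial^\alpha \eta\|_{H^{1/2}(\Sigma)} \le \|\eta\|_{4N+1/2} = \sqrt{\f_{2N}}$ is precisely the role that the functional $\f_{2N}$ is designed to play, and the dual norm of the remaining factor is then bounded through standard product estimates in terms of $\sqrt{\sd{2N}^0\,\se{2N}^0}$.

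The second estimate \eqref{eta es2} follows the same scheme after first integrating by parts to shift one $\nabla_\ast$ off of $\Delta_\ast \partial^\alpha \eta$ onto $\partial^\alpha G^4$, producing $-\int_\Sigma \sigma \nabla_\ast \partial^\alpha \eta \cdot \nabla_\ast \partial^\alpha G^4$; the $\sigma$ weight is then distributed using $\sigma\|\nabla_\ast\eta\|_{4N}^2\le \se{2N}^\sigma$ and $\sigma^2\|\nabla_\ast\eta\|_{4N+1/2}^2\le \sd{2N}^\sigma$, so that one power of $\sqrt{\sigma}$ promotes $\sqrt{\se{2N}^0}$ to $\sqrt{\sd{2N}^\sigma}$ in the final bound. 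The main obstacle will be the careful bookkeeping required to show that every middle-order commutator term --- in which both factors sit very close to the top regularity --- fits within the stated right-hand sides without losing an extra half derivative of $\eta$, which is precisely where the $H^{1/2}/H^{-1/2}$ duality and the $\f_{2N}$ norm become essential.
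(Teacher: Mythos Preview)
The paper does not give a self-contained proof of this lemma; it simply records that the estimates are restated from Lemma~3.5 of the companion paper \cite{JTW_GWP}. Your outline---Leibniz expansion of $\partial^\alpha(u\cdot\nabla_\ast\eta)$, integration by parts on the closed torus $\Sigma$ to tame the top-order piece $u_i\,\partial_i\partial^\alpha\eta$, standard Sobolev product bounds for the commutator terms, and the $H^{1/2}/H^{-1/2}$ pairing together with $\f_{2N}=\|\eta\|_{4N+1/2}^2$ at the borderline---is exactly the machinery that the cited argument uses, so your approach is correct and essentially identical.

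One small refinement worth flagging for \eqref{eta es2}: after you integrate by parts once in $\nabla_\ast$ and then again in $\partial_i$ on the top-order piece, you land on a term controlled by $\|\nabla_\ast u\|_{L^\infty(\Sigma)}\,\sigma\|\nabla_\ast\partial^\alpha\eta\|_0^2$. To match the stated right-hand side you should (i) bound $\|\nabla_\ast u\|_{L^\infty(\Sigma)}$ by $\sqrt{\se{2N}^0}$ rather than $\sqrt{\sd{2N}^0}$, which is legitimate since $\se{2N}^0$ already contains $\|u\|_{4N}^2$, and (ii) interpolate $\|\nabla_\ast\eta\|_{4N}^2\lesssim \|\nabla_\ast\eta\|_{4N-1/2}\|\nabla_\ast\eta\|_{4N+1/2}$ so that $\sigma\|\nabla_\ast\eta\|_{4N}^2\lesssim \sqrt{\f_{2N}}\sqrt{\sd{2N}^\sigma}$. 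This yields precisely the $\sqrt{\sd{2N}^\sigma\,\se{2N}^0\,\f_{2N}}$ contribution; your remark that ``one power of $\sqrt{\sigma}$ promotes $\sqrt{\se{2N}^0}$ to $\sqrt{\sd{2N}^\sigma}$'' is the right intuition but is not quite the mechanism.
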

 \begin{proof}
The estimates are restated from Lemma 3.5 of \cite{JTW_GWP}.
\end{proof}

Next we consider products with derivatives of  $G^{1,1}$.

\begin{lem}\label{lemma7}
Let $\al\in \mathbb{N}^3$ such that $|\al|=4N$.
Then
\begin{equation}\label{rho es}
\left| \int_{\Omega}   h'(\bar{\rho})\pa^\al \q \pa^\al G^{1,1} \right|\lesssim  \sqrt{\mathcal{D}_{2N}^0+\se{2N}^0}\sqrt{\se{2N}^0  \(\sd{2N}^0+\se{2N}^0 +   \f_{2N}\)} .
\end{equation}

\end{lem}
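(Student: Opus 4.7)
The plan is to split $\pa^\al G^{1,1}$ by the Leibniz rule into a top-order piece carrying all derivatives on $\q$ and a lower-order remainder, then handle each piece separately. Writing $G^{1,1} = V_k \pa_k \q$ with $V_k := K\pa_t\theta\,\delta_{3k} - u_l \a_{lk}$, this decomposition reads
\begin{equation*}
\pa^\al G^{1,1} = V_k\, \pa_k \pa^\al \q + R_\al, \qquad R_\al := \sum_{0<\beta\le\al} C_\al^\beta \pa^\beta V_k\, \pa^{\al-\beta}\pa_k \q.
\end{equation*}

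For the remainder $R_\al$ I would argue exactly as in the proof of Lemma \ref{p_G2N_estimates} (the analogous estimate in \cite{JTW_GWP}): each summand is a product with factors in Sobolev spaces whose indices add to $4N-1/2$, and for each summand one factor is placed in $L^\infty$ via the embedding $H^2\hookrightarrow L^\infty$. This yields $\norm{R_\al}_0^2 \ls \se{2N}^0\(\sd{2N}^0+\se{2N}^0+\f_{2N}\)$, which combined with $\norm{\pa^\al \q}_0 \le \norm{\q}_{4N} \le \sqrt{\se{2N}^0}$ and Cauchy--Schwarz gives a contribution matching the right side of \eqref{rho es}; the $\sqrt{\sd{2N}^0+\se{2N}^0}$ factor there is absorbed trivially.

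For the top-order contribution $T := \int_\Omega h'(\bar\rho)\, \pa^\al \q \, V_k \, \pa_k \pa^\al \q$ I would rewrite the integrand as $\tfrac{1}{2} h'(\bar\rho) V_k \pa_k (\pa^\al \q)^2$ and integrate by parts in $\Omega_+$ and $\Omega_-$ separately:
\begin{equation*}
T = -\tfrac{1}{2}\int_\Omega \pa_k\bigl(h'(\bar\rho) V_k\bigr)(\pa^\al \q)^2 + \text{boundary terms at } x_3 \in \{-b,\,0^\pm,\,\ell\}.
\end{equation*}
Horizontal periodicity kills the lateral sides, leaving only contributions from the normal component $V_3 = K(\pa_t\theta + u_1\pa_1\theta + u_2\pa_2\theta - u_3)$. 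At $\Sigma_b$ the no-slip condition together with $\tilde b_1(-b) = \tilde b_2(-b) = 0$ give $V_3=0$; at $\Sigma_\pm$ the identity $\theta|_{\Sigma_\pm} = \eta_\pm$ combined with the kinematic condition $\pa_t\eta = u \cdot \n$ in \eqref{geometric} forces $V_3|_{\Sigma_\pm}=0$; and $\jump{u}=0$ at $\Sigma_-$ ensures the two-sided contributions there cancel. Hence all boundary terms vanish and $|T| \ls \norm{\nabla(h'(\bar\rho) V)}_{L^\infty}\norm{\pa^\al \q}_0^2 \ls \sqrt{\se{2N}^0}\,\se{2N}^0$, which also fits inside the right side of \eqref{rho es}.

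The main obstacle is precisely the boundary terms produced by the top-order integration by parts: a direct Sobolev-trace bound would cost $4N$ pure spatial derivatives of $\q$ on a codimension-$1$ set, a regularity not controlled by $\se{2N}^0$, $\sd{2N}^0$, or $\f_{2N}$. The whole argument rests on recognizing that the normal component of the transport velocity $V$ encoded in the flattening map identically vanishes on $\pa\Omega$ by virtue of the kinematic boundary condition, so these potentially dangerous traces never need to be estimated at all.
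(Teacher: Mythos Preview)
Your proof is correct and follows the standard approach that the cited Lemma 3.6 of \cite{JTW_GWP} uses: split off the top-order commutator term, integrate it by parts, and observe that the normal component $V_3 = K(\pa_t\theta + u_1\pa_1\theta + u_2\pa_2\theta - u_3)$ vanishes on every piece of $\pa\Omega_\pm$ by virtue of the kinematic boundary condition, the no-slip condition, and the choice of $\tilde b_1,\tilde b_2$. Your identification of this cancellation as the crux of the estimate is exactly right, and the remainder and bulk terms are routine.
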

  \begin{proof}
The estimate is restated from Lemma 3.6 of \cite{JTW_GWP}.
\end{proof}

We also consider a similar estimate involving weights and derivatives of $G^{1,1}$.

\begin{lem}\label{G11_weighted}
Let $\alpha \in \mathbb{N}^{1+3}$ with $\abs{\alpha} \le 4N$ and $\alpha_0 \leq 2N-1$.
Then
\begin{equation}
  \abs{\int_\Omega \left(1+\frac{ 4\mu/3+\mu'  }{h'(\bar\rho)\bar\rho^2}\right) \partial^\alpha(h'(\bar\rho) \q)\partial^\alpha \left(h'(\bar\rho) G^{1,1} \right)}
 \ls   \sqrt{\mathcal{D}_{2N}^0+\se{2N}^0}\sqrt{\se{2N}^0  \(\sd{2N}^0+\se{2N}^0 +   \f_{2N}\)}  .
\end{equation}
\end{lem}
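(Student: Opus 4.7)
The plan is to adapt the proof of Lemma \ref{lemma7}, which handles the unweighted version for purely spatial multi-indices of exact parabolic order $4N$, to accommodate both the smooth weight $w(x_3) := 1 + (4\mu/3+\mu')/(h'(\bar\rho)\bar\rho^2)$ and the extended range of space-time multi-indices permitted here. The weight $w$ depends only on $x_3$ and is smooth and uniformly bounded above and away from zero on each of the slabs $\Omega_\pm$, since $\mu,\mu'$ are constants and $\bar\rho>0$ is smooth on $[-b,0]$ and $[0,\ell]$; in particular all of its $x_3$-derivatives are bounded universally. Expanding $\partial^\alpha(h'(\bar\rho)\q)$ and $\partial^\alpha(h'(\bar\rho) G^{1,1})$ by Leibniz, with $G^{1,1} = K\p_t\theta\,\p_3\q - u_l\mathcal{A}_{lk}\p_k\q$, produces a sum of terms of the schematic form
\begin{equation*}
\int_\Omega w\, h'(\bar\rho)^2 \,\partial^{\alpha-\beta}\q\cdot \partial^{\beta_1}(K\p_t\theta)\,\partial^{\beta_2}\p_3\q \quad\text{and}\quad \int_\Omega w\, h'(\bar\rho)^2\,\partial^{\alpha-\beta}\q\cdot \partial^{\beta_1}(u_l\mathcal{A}_{lk})\,\partial^{\beta_2}\p_k\q
\end{equation*}
with $\beta_1+\beta_2=\beta$. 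Because $wh'(\bar\rho)^2$ is a smooth function of $x_3$ alone, all splittings with $\beta \neq \alpha$ are controlled by Sobolev embedding and $H^k$ product estimates in exact parallel with the argument of Lemma \ref{lemma7}, yielding bounds by $\sqrt{\sd{2N}^0+\se{2N}^0}\cdot\sqrt{\se{2N}^0(\sd{2N}^0+\se{2N}^0+\f_{2N})}$.

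The main obstacle is the top-order splitting $\beta=\alpha$, in which all derivatives land on $\nabla\q$, producing the dangerous pair
\begin{equation*}
I_k := \int_\Omega w\, h'(\bar\rho)^2 u_l\mathcal{A}_{lk}\,\partial^\alpha\q\,\partial^\alpha\p_k\q, \qquad J := \int_\Omega w\, h'(\bar\rho)^2 K\p_t\theta\,\partial^\alpha\q\,\partial^\alpha\p_3\q,
\end{equation*}
each of which carries $4N+1$ derivatives on $\q$, one more than either $\se{2N}^0$ or $\sd{2N}^0$ directly controls. I would use the symmetrization $2\partial^\alpha\q\,\partial^\alpha\p_k\q = \p_k[(\partial^\alpha\q)^2]$ and integrate by parts in $x_k$. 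The resulting interior term is
\begin{equation*}
-\tfrac{1}{2}\int_\Omega \p_k\!\left(w\,h'(\bar\rho)^2 u_l\mathcal{A}_{lk}\right)(\partial^\alpha\q)^2,
\end{equation*}
whose coefficient is uniformly $L^\infty$-bounded by $C\sqrt{\se{2N}^0}$ via Sobolev embedding on $u,\mathcal{A}$, while $\ns{\partial^\alpha\q}_0 \ls \se{2N}^0$ precisely because the hypothesis $|\alpha|\le 4N$ with $\alpha_0\le 2N-1$ places $\partial^\alpha\q$ within the scope of $\se{2N}^0$ (this is exactly what the restriction on $\alpha_0$ is for). The product thus contributes an acceptable term $\se{2N}^0 \cdot\sqrt{\se{2N}^0}$.

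The boundary contributions from integration by parts require brief inspection: on $\Sigma_b$ the term vanishes because $u_-=0$ there; on $\Sigma_\pm$ only $k=3$ survives, and the factor $u_l\mathcal{A}_{l3}$ equals $J^{-1}u\cdot\n$, which by the kinematic boundary condition equals $J^{-1}(\dt\eta-G^4)$ and is absorbed by trace estimates into the $\f_{2N}$-dependent side of the bound. The integral $J$ is handled identically by integrating by parts in $x_3$, using that the Poisson-type construction of $\theta$ gives $\p_t\theta|_{\Sigma_\pm} = \dt\eta_\pm$ and $\p_t\theta|_{\Sigma_b}=0$, so the boundary terms at $x_3=-b$ vanish and those at $x_3 = 0, \ell$ are controlled via the trace theorem in the same manner. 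Assembling the bound on $I_k$ and $J$ with the routine estimates for all non-top-order splittings gives the stated inequality.
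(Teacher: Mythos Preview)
Your overall architecture is right: Leibniz-expand, handle the sub-top terms by product estimates, and for the top-order pairing symmetrize $\partial^\alpha\q\,\partial^\alpha\partial_k\q = \tfrac12\partial_k[(\partial^\alpha\q)^2]$ and integrate by parts. The interior term you obtain is correctly bounded. The gap is in the boundary terms.

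You treat the two constituents of $G^{1,1}$ separately and assert that each resulting surface integral is ``absorbed by trace estimates.'' At the top order $|\alpha|=4N$ with $\alpha_0=0$ this does not work: a single boundary integral of the form
\[
\int_{\Sigma_\pm} w\,h'(\bar\rho)^2\,K\partial_t\eta_\pm\,(\partial^\alpha\q)^2
\]
requires control of $\norm{\partial^\alpha\q}_{L^2(\Sigma)}$, which by the trace theorem demands at least $H^{1/2}(\Omega)$ control of $\partial^\alpha\q$, i.e.\ essentially $\norm{\q}_{4N+1/2}$. Neither $\se{2N}^0$, $\sd{2N}^0$, nor $\f_{2N}$ contains this; rewriting $\q$ on $\Sigma$ via the dynamic boundary condition does not help either, since the stress terms then require $\norm{u}_{4N+3/2}$ and the surface-tension term requires $\sigma\norm{\eta}_{4N+2}$, both unavailable.

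The point you are missing is that the boundary contributions from the two pieces of $G^{1,1}$ cancel \emph{exactly}. Writing the combined top-order transport field as $V_k$ with $V_{1,2}=-u_l\mathcal{A}_{l\,1,2}$ and $V_3 = K\partial_t\theta - u_l\mathcal{A}_{l3}$, only the $V_3$ component contributes on $\Sigma_\pm,\Sigma_b$. On $\Sigma_\pm$ one has $\partial_t\theta=\partial_t\eta_\pm$ and, by the kinematic condition $\partial_t\eta=u\cdot\n$,
\[
u_l\mathcal{A}_{l3} = K\big(u_3 - A u_1 - B u_2\big) = K\,u\cdot\n = K\,\partial_t\eta,
\]
so $V_3\equiv 0$ there; on $\Sigma_b$ both $\partial_t\theta$ and $u$ vanish, so again $V_3=0$. (On $\Sigma_-$ the continuity of $u,K,\theta$ makes the cancellation hold on each side, so the jump term vanishes as well.) This identity---which is just the statement that $G^{1,1}$ is exactly the difference between $\partial_t\q$ and the geometric material derivative of $\q$---is what makes the integration by parts close without any boundary residue. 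Once you use it, the only surviving terms are the interior ones you already bounded, and the lemma follows.
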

\begin{proof}
The estimate is restated from Lemma 3.7 of \cite{JTW_GWP}.
\end{proof}

We now estimate the energy evolution of the mixed horizontal space-time derivatives.
 \begin{prop}\label{i_spatial_evolution 2N}
It holds that
\begin{align} \label{energy 2N}
 \nonumber&\sum_{\substack{\alpha\in \mathbb{N}^{1+2}\\ |\alpha|\le 4N  \\ \alpha_0\le 2N-1}} \(\norm{\partial^\alpha   q (t)}_0^2+\norm{\partial^\alpha   u (t)}_0^2+\norm{\partial^\alpha  \eta_+ (t)}_0^2+\sigma\norm{\partial^\alpha   \eta (t)}_1^2\)
  +\int_0^t\sum_{\substack{\alpha\in \mathbb{N}^{1+2}\\ |\alpha|\le 4N \\ \alpha_0\le 2N-1}} \norm{\partial^\alpha   u}_1^2\,ds
 \\
&\quad\lesssim  {\mathcal{E}}^\sigma_{2N}(0)+  \int_0^t\sqrt{{\mathcal{E}^\sigma
_{2N} }} \(\mathcal{D}^\sigma_{2N}+\mathcal{E}^\sigma_{2N}+\f_{2N}\) \,ds
 + \int_0^t \norm{ \eta_- }_{4N-1/2}^2\,ds .
 \end{align}
 \end{prop}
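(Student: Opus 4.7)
\medskip

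The plan is to apply $\pa^\alpha$ with $\alpha \in \mathbb{N}^{1+2}$, $|\alpha|\le 4N$, $\alpha_0 \le 2N-1$ directly to the linear form \eqref{ns_perturb}, taking advantage of the fact that $\pa^\alpha$ consists only of temporal and horizontal derivatives and therefore commutes with the coefficients $\bar\rho$, $h'(\bar\rho)$, $P'(\bar\rho)$ (all depending only on $x_3$) and with the flat traces onto $\Sigma_\pm$. This mirrors Proposition \ref{i_temporal_evolution 2N} but now in the linear form, so that the derived estimate plugs naturally into subsequent elliptic regularity arguments.

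First, I would dot the $\pa^\alpha$-differentiated momentum equation with $\pa^\alpha u$ and integrate by parts over $\Omega$. The continuity equation $\dt(\pa^\alpha q) + \diverge(\bar\rho \pa^\alpha u) = \pa^\alpha G^1$ lets me convert the pressure term $\int_\Omega \bar\rho \nabla(h'(\bar\rho)\pa^\alpha q)\cdot \pa^\alpha u$ into $\tfrac{1}{2}\tfrac{d}{dt}\int_\Omega h'(\bar\rho)|\pa^\alpha q|^2$ modulo a $G^1$-term. The viscous piece yields $\int_\Omega \tfrac{\mu}{2}|\sg \pa^\alpha u|^2 + \mu'|\diverge \pa^\alpha u|^2$, bounded below by $\|\pa^\alpha u\|_1^2$ via the Korn inequality (Proposition \ref{layer_korn}). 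The boundary integrations on $\Sigma_\pm$ combined with the dynamic boundary conditions and the kinematic identity $\dt\eta = u_3 + G^4$ generate the time derivatives of $\rho_1 g \|\pa^\alpha \eta_+\|_0^2$ and $\sigma \|\nabla_\ast \pa^\alpha \eta\|_0^2$, up to terms paired with $\pa^\alpha G^4$.

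Second, I would estimate the forcing contributions. The bulk terms $\int_\Omega h'(\bar\rho) \pa^\alpha q\, \pa^\alpha G^1$ and $\int_\Omega \pa^\alpha u\cdot \pa^\alpha G^2$ decompose into the $G^{1,2}$ part, controlled directly by Lemma \ref{p_G2N_estimates}, plus the $G^{1,1}$ part, which requires the sharper Lemma \ref{lemma7} because its highest-order piece contains a full spatial derivative of $q$ that must be handled by the transport structure. The boundary forcings $\int_\Sigma \pa^\alpha u \cdot \pa^\alpha G^3$ are estimated by the trace theorem and the $H^{1/2}$ bound for $G^3$ in Lemma \ref{p_G2N_estimates}; the $G^4$-terms multiplying $\rho_1 g \pa^\alpha \eta_+$ are harmless, while the $\sigma \Delta_\ast \pa^\alpha \eta \cdot \pa^\alpha G^4$ term needs Lemma \ref{lemma8}. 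Together these yield the $\sqrt{\mathcal{E}_{2N}^\sigma}(\mathcal{D}_{2N}^\sigma + \mathcal{E}_{2N}^\sigma + \f_{2N})$ contribution on the right-hand side.

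The main obstacle, and the reason the statement carries $\int_0^t \|\eta_-\|_{4N-1/2}^2\,ds$ on the right, is the boundary term $\rj g \int_{\Sigma_-} \pa^\alpha \eta_-\, \pa^\alpha u_3$ produced from the $\Sigma_-$ dynamic condition. Since $\rj > 0$, I cannot fold this into the energy as done in the stable regime of \cite{JTW_GWP}; instead I would estimate it by duality and the trace theorem,
\begin{equation*}
\left| \rj g \int_{\Sigma_-} \pa^\alpha \eta_- \,\pa^\alpha u_3 \right| \le \rj g \,\|\pa^\alpha \eta_-\|_{H^{-1/2}(\Sigma_-)} \|\pa^\alpha u_3\|_{H^{1/2}(\Sigma_-)} \le C_\varepsilon \|\eta_-\|_{4N-1/2}^2 + \varepsilon \|\pa^\alpha u\|_1^2,
\end{equation*}
absorbing the $\varepsilon$-piece into the Korn dissipation. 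Summing over all admissible $\alpha$ and integrating in time would then produce \eqref{energy 2N}. The same mechanism that drives the instability, the indefiniteness of the $\rj g \eta_-$ boundary energy, is precisely what forces the appearance of the low-regularity $\eta_-$ term on the right-hand side, which must eventually be bootstrapped via the $\eta_-$ transport equation at the level of Theorem \ref{energyeses}.
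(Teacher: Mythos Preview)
Your overall strategy matches the paper's: derive the energy identity from the linear form, apply Korn's inequality, estimate the forcing via Lemmas \ref{p_G2N_estimates}, \ref{lemma8}, \ref{lemma7}, and treat the bad $\rj g$ boundary term by duality. However, there is a genuine gap in the last step. Your displayed bound
\[
\|\pa^\alpha \eta_-\|_{H^{-1/2}(\Sigma_-)} \lesssim \|\eta_-\|_{4N-1/2}
\]
is only valid when $\alpha_0 = 0$. For $\alpha_0 = j \ge 1$ the left side is $\|\partial_t^{j} \eta_-\|_{4N-2j-1/2}$, which involves time derivatives of $\eta_-$ and is \emph{not} controlled by $\|\eta_-\|_{4N-1/2}$. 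Bounding it crudely by $\se{2N}^\sigma$ would leave an unabsorbable $C\int_0^t \se{2N}^\sigma\,ds$ on the right, which is fatal for Theorem \ref{energyeses}. The paper closes this by a chaining induction on $\alpha_0$: for $j \ge 1$ the kinematic boundary condition $\dt \eta = u_3 + G^4$ and trace theory give
\[
\|\partial_t^{j} \eta_-\|_{4N-2j-1/2}^2 \lesssim \|\nabla_\ast^{4N-2(j-1)-3}\partial_t^{j-1} u\|_1^2 + \se{2N}^\sigma\bigl(\sd{2N}^\sigma + \se{2N}^\sigma + \f_{2N}\bigr),
\]
and the $u$-term on the right was already absorbed into the dissipation at level $\alpha_0 = j-1$. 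Chaining down to $j=0$ leaves only $\int_0^t\|\eta_-\|_{4N-1/2}^2\,ds$, as stated.

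A smaller omission: when $|\alpha| = 4N$, Lemma \ref{p_G2N_estimates} only bounds $\bar\nabla_\ast^{4N-1} G^2$ in $L^2$ and $\bar\nabla_\ast^{4N-1} G^3$ in $H^{1/2}$, not $\bar\nabla_\ast^{4N}$. Since $\alpha_0 \le 2N-1$ forces $\pa^\alpha$ to contain at least two horizontal derivatives, the paper writes $\alpha = \beta + (\alpha-\beta)$ with $\beta\in\mathbb{N}^2$, $|\beta|=1$, and integrates by parts one derivative off $G^2$, $G^3$ onto $u$ before applying the lemma. You should make this explicit.
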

\begin{proof}
Since the boundaries of $\Omega_\pm$ are flat we are free to apply time derivatives and horizontal derivatives to the equations \eqref{ns_perturb}.  Let $\alpha\in \mathbb{N}^{1+2}$ such that $\alpha_0\le 2N-1$ and $|\alpha|\le 4N$. We apply $\partial^\alpha$  to \eqref{ns_perturb} and argue as in Proposition \ref{i_temporal_evolution 2N}  to obtain the following energy identity:
\begin{align}\label{es_00}
\nonumber
&\frac{1}{2}\frac{d}{dt}\left(\int_\Omega h'(\bar{\rho})\abs{ \pa^\al \q}^2+\bar{\rho}\abs{\pa^\al u}^2 +\int_{\Sigma_+} \rho_1  g\abs{\pa^\al \eta_+}^2+\int_\Sigma\sigma \abs{\nab_\ast \pa^\al \eta }^2\right)
\\\nonumber&\qquad
 +\int_\Omega \frac{\mu}{2} \abs{ \sgz\pa^\al u}^2+\mu' \abs{{\rm div }\pa^\al u}^2
\\\nonumber&\qquad    = \int_\Omega h'(\bar{\rho})\pa^\al \q \pa^\al G^1 +\pa^\al u\cdot\pa^\al G^2+\int_\Sigma -\pa^\al u\cdot \pa^\al G^3
\\&\qquad\quad+ \int_{\Sigma_+} \rho_1  g \pa^\al \eta_+ \pa^\al G^4_+ -\int_\Sigma\sigma\Delta_\ast (\pa^\al \eta) \pa^\al G^4
+\int_{\Sigma_-} \rj g\pa^\al \eta_- \pa^\al u_3.
\end{align}

We first estimate the $G^2,G^3,G^4$ terms in the right hand side of  \eqref{es_00}. We assume initially that $|\al|\le 4N-1$. Then by the estimates \eqref{p_G_e_00} of Lemma \ref{p_G2N_estimates}, we have
\begin{equation}\label{gg1}
 \left|\int_\Omega  \pa^\al u\cdot\pa^\al G^2\right| \le  \norm{\pa^\al u}_0\norm{\pa^\al G^2}_0 \lesssim \sqrt{\mathcal{D}_{2N}^\sigma}\sqrt{  \mathcal{E}_{2N}^\sigma \(\sd{2N}^\sigma +\mathcal{E}^\sigma_{2N}+   \f_{2N}\)}.
\end{equation}
Similarly,  the estimates \eqref{p_G_e_00} of Lemma \ref{p_G2N_estimates} and trace theory show that
\begin{align}
 \nonumber\left| \int_\Sigma \pa^\al u\cdot \pa^\al G^3 \right|&\le  \norm{\pa^\al u}_{H^0(\Sigma)}  \norm{\pa^\al G^3}_0 \ls  \norm{\pa^\al u}_{1}  \norm{\pa^\al G^3}_0
\\&\lesssim \sqrt{\mathcal{D}_{2N}^\sigma}\sqrt{  \mathcal{E}_{2N}^\sigma\(\sd{2N}^\sigma+\mathcal{E}^\sigma_{2N} +   \f_{2N}\)}
 \end{align}
and
\begin{align}
 \nonumber
\left|\int_{\Sigma_+} \rho_1  g \pa^\al \eta_+ \pa^\al G^4_+  -\int_\Sigma\sigma\Delta_\ast (\pa^\al \eta) \pa^\al G^4\right|&\lesssim\left(\norm{\pa^\al \eta}_0+\sigma\norm{\Delta_\ast \pa^\al \eta}_0\right)\norm{\pa^\al G^4}_0
\\&\lesssim   \sqrt{\mathcal{D}_{2N}^\sigma+\mathcal{E}^\sigma_{2N}}\sqrt{\mathcal{E}_{2N}^\sigma\(\sd{2N}^\sigma +\mathcal{E}^\sigma_{2N}+   \f_{2N}\)}.
\end{align}

Now we assume that $|\al|=4N$. We first estimate the $G^2,G^3$ terms. Since $\al_0\le 2N-1$,  $\pa^\al$ involves at least two spatial derivatives, and so we may write $\al=\beta+(\al-\beta)$ for some $\beta\in\mathbb{N}^2$ with $|\beta|=1$. We then integrate by parts and use the estimates \eqref{p_G_e_00} of Lemma \ref{p_G2N_estimates} to see that
\begin{align}
 \nonumber
 \left|\int_\Omega  \pa^\al u\cdot\pa^\al G^2\right|
&=\left|\int_\Omega  \pa^{\al+\beta} u\cdot\pa^{\al-\beta} G^2\right| \lesssim \norm{\pa^{\al+\beta}u}_{0}\norm{\pa^{\al-\beta} G^2}_{0}
\\&\lesssim \norm{\pa^\al u}_{1}\norm{\bar{\na}_{\ast}^{4N-1} G^2}_{0}\lesssim \sqrt{\mathcal{D}_{2N}^\sigma}\sqrt{  \mathcal{E}_{2N}^\sigma\(\sd{2N}^\sigma +\mathcal{E}^\sigma_{2N}+   \f_{2N}\)}.
\end{align}
Arguing similarly and using  trace theory, we also find that
\begin{align}
 \nonumber
 \left|\int_\Sigma  \pa^\al u\cdot\pa^\al G^3\right|
&=\left|\int_\Sigma  \pa^{\al+\beta}  u\cdot\pa^{\al-\beta} G^3\right|\lesssim \norm{\pa^{\al+\beta} u}_{H^{-1/2}(\Sigma)}\norm{\pa^{\al-\beta} G^3}_{1/2}
\\\nonumber&\lesssim 
\norm{\pa^\al u}_{1}\norm{\bar{\na}_{\ast}^{4N-1} G^3}_{1/2} \lesssim \sqrt{\mathcal{D}_{2N}^\sigma}\sqrt{  \mathcal{E}_{2N}^\sigma\(\sd{2N}^\sigma +\mathcal{E}^\sigma_{2N}+   \f_{2N}\)}.
\end{align}
For the $G^4$ term, we split into two cases: $\al_0\ge 1$ and $\al_0=0$. If $\al_0\ge 1$, then $\pa^\al$ involves at least one temporal derivative, so $\norm{\pa^\al \eta}_{3/2}\le \norm{\dt^{\al_0} \eta}_{4N-2\al_0+3/2}\le \mathcal{D}_{2N}^0$. This together with the estimates \eqref{p_G_e_00} of Lemma \ref{p_G2N_estimates} implies
\begin{align}
 \nonumber
 &\left| \int_{\Sigma_+} \rho_1  g \pa^\al \eta_+ \pa^\al G^4_+
 -\int_\Sigma\sigma\Delta_\ast (\pa^\al \eta) \pa^\al G^4\right| \lesssim\left(\norm{\pa^\al \eta}_{0}+\sigma\norm{\pa^\al \eta}_{3/2}\right)\norm{\pa^\al G^4}_{1/2}
\\&\quad
\lesssim  \sqrt{\mathcal{D}_{2N}^\sigma+\mathcal{E}^\sigma_{2N}}\sqrt{\mathcal{E}_{2N}^\sigma\(\sd{2N}^\sigma+\mathcal{E}^\sigma_{2N} +   \f_{2N}\)}.
\end{align}
If $\al_0=0$, we must resort to the special estimates \eqref{eta es}--\eqref{eta es2} of Lemma \ref{lemma8} to bound, with a use of Cauchy's inequality,
\begin{align}
 &\left| \int_{\Sigma_+} \rho_1  g \pa^\al \eta_+ \pa^\al G^4_+
 -\int_\Sigma\sigma\Delta_\ast \pa^\al \eta \pa^\al G^4\right| \lesssim\sqrt{  \mathcal{E}_{2N}^\sigma}\(\sd{2N}^\sigma +\mathcal{E}^\sigma_{2N}+   \f_{2N}\).
 \end{align}

We now turn back to estimate the $G^1$ term, and we recall that $G^1=G^{1,1}+G^{1,2}$. For the $G^{1,2}$ part, it follows directly from the estimates \eqref{p_G_e_00} of Lemma \ref{p_G2N_estimates} that
\begin{equation}\label{es09}
 \left|\int_\Omega h'(\bar{\rho}) \pa^\al \q \pa^\al G^{1,2} \right|\ls  \norm{\pa^\al \q}_{0}\norm{\pa^\al G^{1,2}}_{0} \lesssim \sqrt{\mathcal{E}_{2N}^\sigma}\sqrt{ \se{2N}^\sigma\(\sd{2N}^\sigma +\se{2N}^\sigma +   \f_{2N}\)}.
\end{equation}
Now for the $G^{1,1}$ term we must split to two cases: $\al_0\ge 1$ and $\al_0=0$. If $\al_0\ge 1$, then by the estimates \eqref{p_G_e_00} of Lemma \ref{p_G2N_estimates}, we have
\begin{equation}
 \left|\int_\Omega h'(\bar{\rho}) \pa^\al \q \pa^\al G^{1,1} \right|\ls  \norm{\pa^\al \q}_{0}\norm{\pa^\al G^{1,1}}_{0}\lesssim \sqrt{\mathcal{D}_{2N}^\sigma}\sqrt{\mathcal{E}_{2N}^\sigma\(\sd{2N}^\sigma+\se{2N}^\sigma +   \f_{2N}\)}.
\end{equation}
If $\al_0=0$, we must resort to the special estimates \eqref{rho es} of Lemma \ref{lemma7} to bound
\begin{equation}\label{gg10}
 \left|\int_\Omega h'(\bar{\rho}) \pa^\al \q \pa^\al G^{1,1} \right| \lesssim \sqrt{\mathcal{D}_{2N}^\sigma+\mathcal{E}^\sigma_{2N}}\sqrt{\mathcal{E}_{2N}^\sigma\(\sd{2N}^\sigma +\mathcal{E}^\sigma_{2N}+   \f_{2N}\)}.
\end{equation}

Finally, for the last term in \eqref{es_00}, by the trace theorem and Cauchy's inequality, since $\alpha_0\le 2N-1$ and $|\alpha|\le 4N$, we have
\begin{align}\label{m6}
\nonumber \int_{\Sigma_-} \rj g\pa^\al \eta_- \pa^\al u_3&\ls\norm{\partial^\alpha\eta_-}_{-1/2}\norm{\partial^\alpha u_{3}}_{H^{1/2}(\Sigma)}
\\&\ls
C_\varepsilon \norm{\partial_t^{\alpha_0}\eta_-}_{4N-2\alpha_0-1/2}^2+\varepsilon\norm{
\partial^\alpha u }_{1}^2
\end{align}
for any $\varepsilon>0$.

In light of \eqref{gg1}--\eqref{m6}, we may now integrate \eqref{es_00} from $0$ to $t$,  apply Korn's inequality, choose $\varepsilon$ sufficiently small, and use  Cauchy's inequality to find that
\begin{align}\label{ls0}
 \nonumber&\norm{\partial^\alpha   q (t)}_0^2+\norm{\partial^\alpha   u (t)}_0^2+\norm{\partial^\alpha  \eta_+ (t)}_0^2+\sigma\norm{\nabla_\ast\partial^\alpha   \eta (t)}_0^2
  +\int_0^t  \norm{\partial^\alpha   u}_1^2\,ds
 \\
&\quad\lesssim  {\mathcal{E}}_{2N}(0)+  \int_0^t  \sqrt{  \mathcal{E}_{2N}^\sigma}\(\sd{2N}^\sigma +\mathcal{E}^\sigma_{2N}+   \f_{2N}\)ds
 +\int_0^t \norm{\partial_t^{\alpha_0}\eta_- }_{4N-2\alpha_0-1/2}^2\,ds.
 \end{align}

Now for $\alpha_0=0$, summing \eqref{ls0} over such $\alpha$ gives
\begin{align}\label{ls1}
 \nonumber&\sum_{\substack{|\alpha|\le 4N  \\ \alpha_0=0}}\(\norm{\partial^\alpha   q (t)}_0^2+\norm{\partial^\alpha   u (t)}_0^2+\norm{\partial^\alpha  \eta_+ (t)}_0^2+\sigma\norm{\nabla_\ast\partial^\alpha   \eta (t)}_0^2\)
  +\int_0^t \sum_{\substack{|\alpha|\le 4N  \\ \alpha_0=0}} \norm{\partial^\alpha   u}_1^2\,ds
 \\
&\quad\lesssim  {\mathcal{E}}_{2N}(0)+  \int_0^t  \sqrt{  \mathcal{E}_{2N}^\sigma}\(\sd{2N}^\sigma +\mathcal{E}^\sigma_{2N}+   \f_{2N}\)ds
 +\int_0^t \norm{ \eta_- }_{4N -1/2}^2\,ds.
 \end{align}
For $ \alpha_0=j$ with $1\le j\le 2N-1$, the kinematic boundary condition,  trace theory, and the estimates \eqref{p_G_e_00} imply that
\begin{equation}\label{ls2}
\begin{split}
\norm{\partial_t^{\alpha_0}\eta_-}_{4N-2\alpha_0-1/2}^2&\le
\norm{\partial_t^{j-1}u_3}_{H^{4N-2j-1/2}(\Sigma_-)}^2+\norm{\partial_t^{j-1}G^4}_{ {4N-2j-1/2}}^2
\\&\le \norm{\nabla_\ast^{4N-2j-1}\partial_t^{j-1}u }_{1} ^2+\norm{\partial_t^{j-1}G^4}_{ {4N-2(j-1)-5/2}}^2
\\&\le \norm{\nabla_\ast^{4N-2(j-1)-3}\partial_t^{j-1}u}_{
1}
^2+\mathcal{E}_{2N}^\sigma\(\sd{2N}^\sigma+\mathcal{E}^\sigma_{2N} +   \f_{2N}\).
\end{split}
\end{equation}
Plugging \eqref{ls2} into \eqref{ls0}, we obtain
\begin{align}\label{ls3}
 \nonumber&\sum_{\substack{|\alpha|\le 4N  \\ \alpha_0=j}}\(\norm{\partial^\alpha   q (t)}_0^2+\norm{\partial^\alpha   u (t)}_0^2+\norm{\partial^\alpha  \eta_+ (t)}_0^2+\sigma\norm{\nabla_\ast\partial^\alpha   \eta (t)}_0^2\)
  +\int_0^t \sum_{\substack{|\alpha|\le 4N  \\ \alpha_0=j}} \norm{\partial^\alpha   u}_1^2\,ds
 \\
&\quad\lesssim  {\mathcal{E}}_{2N}(0)+  \int_0^t  \sqrt{  \mathcal{E}_{2N}^\sigma}\(\sd{2N}^\sigma +\mathcal{E}^\sigma_{2N}+   \f_{2N}\)ds
 +\int_0^t \norm{\nabla_\ast^{4N-2(j-1)-3}\partial_t^{j-1}u}_{
1}
^2\,ds
 \end{align}
since $ \sqrt{  \mathcal{E}_{2N}^\sigma}\le 1$. Consequently, chaining \eqref{ls3} and \eqref{ls1} together leads us to \eqref{energy 2N}.
\end{proof}

 \subsection{Energy evolution for $\eta$ in transport equation}\label{stable3}

Note that the energy estimates of $\eta_-$ are still missing in the energy evolutions presented in Sections \ref{stable1}--\ref{stable2}. We thus need to derive the estimates for $\eta_-$, and this can be done by revisiting the kinematic boundary condition, which is a transport equation for $\eta$:
 \begin{equation}\label{transport}
 \partial_t\eta+u\cdot \nabla_\ast\eta=u_3\quad
 \hbox{in }\Sigma,
 \end{equation}
where $u\cdot \nabla_\ast\eta=u_1\partial_1\eta+u_2\partial_2\eta$.
\begin{prop}\label{transportlemma}
For any $\varepsilon>0$, there exists a constant $C_\varepsilon >0$ such that
\begin{align}\label{transportes}
\nonumber
\sum_{{j}=0}^{2N} \norm{\partial_t^{j}   \eta
(t)}^2_{4N-2{j}} &\lesssim \mathcal{E}_{2N}^\sigma(0) + \int_0^t  \sqrt{  \mathcal{E}_{2N}^\sigma}\(\sd{2N}^\sigma +\mathcal{E}^\sigma_{2N}+   \f_{2N}\)ds
 \\&\quad+\varepsilon\int_0^t  \mathcal{E}_{2N}^\sigma \,ds+C_\varepsilon \int_0^t \norm{\bar{\na}_{\ast}^{4N}   u }_1^2\,ds
\end{align}
and
\begin{equation}\label{transportes2}
 \mathcal{F}_{2N}(t)  \le  \mathcal{F}_{2N}(0)+ C\int_0^t  \sqrt{  \mathcal{E}_{2N}^\sigma}    \f_{2N} ds
 +\varepsilon\int_0^t
\mathcal{F}_{2N}  \,ds+C_\varepsilon \int_0^t\norm{\nabla_\ast^{4N} u }_{1}^2\,ds.
\end{equation}
\end{prop}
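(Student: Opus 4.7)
The plan is to treat both estimates by testing the transport equation \eqref{transport}, after applying suitable derivatives, against an appropriate power of $\eta$, exactly as in the standard energy method for a transport equation with a divergence-free-in-spirit velocity. The source term $u_3$ will be absorbed by the last term on the right of \eqref{transportes}--\eqref{transportes2}, while $u\cdot\nabla_\ast\eta$ and the commutators it produces will be handled by Sobolev product estimates, yielding the $\sqrt{\mathcal{E}_{2N}^\sigma}(\sd{2N}^\sigma+\mathcal{E}_{2N}^\sigma+\mathcal{F}_{2N})$ contribution together with a small $\varepsilon\mathcal{E}_{2N}^\sigma$ absorbable piece from Cauchy's inequality.

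For \eqref{transportes}, first I would fix a multi-index $\alpha=(\alpha_0,\alpha_1,\alpha_2)\in\mathbb{N}^{1+2}$ with $\alpha_0\le 2N$ and $|\alpha|\le 4N$, apply $\partial^\alpha$ to \eqref{transport}, and pair with $\partial^\alpha\eta$ on $\Sigma$ to get
\begin{equation*}
\tfrac12\ddt\ns{\partial^\alpha\eta}_0 = \int_\Sigma \partial^\alpha u_3\,\partial^\alpha\eta - \int_\Sigma \partial^\alpha(u\cdot\nabla_\ast\eta)\,\partial^\alpha\eta.
\end{equation*}
The first term is controlled by $\varepsilon\ns{\partial^\alpha\eta}_0+C_\varepsilon\ns{\partial^\alpha u_3}_{H^0(\Sigma)}\le \varepsilon\mathcal{E}_{2N}^\sigma+C_\varepsilon\ns{\bar\nab_\ast^{4N}u}_1$ via trace theory. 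For the transport term I would split, writing $\partial^\alpha(u\cdot\nabla_\ast\eta) = u\cdot\nabla_\ast\partial^\alpha\eta + [\partial^\alpha,u\cdot\nabla_\ast]\eta$, integrate by parts in the leading piece (picking up only $\p u$ factors in $L^\infty$, which are $\lesssim\sqrt{\mathcal{E}_{2N}^\sigma}$), and estimate the commutator by standard Moser-type product rules on $\Sigma$, giving a bound of the form $\sqrt{\mathcal{E}_{2N}^\sigma}(\mathcal{E}_{2N}^\sigma+\mathcal{F}_{2N})$. When $\alpha_0\ge 1$ the temporal derivatives that land on $u$ rather than $\eta$ are handled by the same product estimates, since $N\ge 3$ ensures enough room for $L^\infty$--$L^2$ splittings in dimension $2$. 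Integrating in time, summing over the relevant $\alpha$, and using Cauchy's inequality yields \eqref{transportes}.

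For \eqref{transportes2}, which involves the half-integer norm $\|\eta\|_{4N+1/2}$, I would introduce the fractional operator $\Lambda_\ast^{4N+1/2}=(I-\Delta_\ast)^{(4N+1/2)/2}$ on $\Sigma$ (componentwise on $\Sigma_\pm$), apply it to \eqref{transport}, and pair with $\Lambda_\ast^{4N+1/2}\eta$ in $L^2(\Sigma)$. This produces
\begin{equation*}
\tfrac12\ddt\mathcal{F}_{2N} = \int_\Sigma \Lambda_\ast^{4N+1/2}u_3\,\Lambda_\ast^{4N+1/2}\eta - \int_\Sigma \Lambda_\ast^{4N+1/2}(u\cdot\nabla_\ast\eta)\,\Lambda_\ast^{4N+1/2}\eta.
\end{equation*}
The $u_3$-contribution is bounded by $C_\varepsilon\ns{u_3}_{H^{4N+1/2}(\Sigma)} + \varepsilon\mathcal{F}_{2N} \lesssim C_\varepsilon\ns{\nab^{4N}u}_1+\varepsilon\mathcal{F}_{2N}$ by trace theory. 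The hard piece is the transport term: splitting via a Kato--Ponce commutator inequality on $\Sigma$ into $u\cdot\nabla_\ast\Lambda_\ast^{4N+1/2}\eta$ (whose integral against $\Lambda_\ast^{4N+1/2}\eta$ is controlled by $\|\nab_\ast u\|_{L^\infty}\mathcal{F}_{2N}$, i.e.\ $\sqrt{\mathcal{E}_{2N}^\sigma}\mathcal{F}_{2N}$, after the usual integration-by-parts of the transport skew-symmetric part) plus the commutator $[\Lambda_\ast^{4N+1/2},u]\cdot\nabla_\ast\eta$ (estimated by $\|\nab u\|_{H^{4N-1/2}}\|\eta\|_{4N+1/2}+\|u\|_{H^{4N+1/2}}\|\nab_\ast\eta\|_{L^\infty}\lesssim\sqrt{\mathcal{E}_{2N}^\sigma}\,\mathcal{F}_{2N}$). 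Integrating in time gives \eqref{transportes2}.

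The main obstacle, and where I expect the argument will require the most care, is the commutator estimate for the fractional piece: I need enough regularity of $u$ on $\Sigma$ to close $[\Lambda_\ast^{4N+1/2},u]\cdot\nabla_\ast\eta$ by a Kato--Ponce estimate while still keeping the constant depending on $u$ purely through $\sqrt{\mathcal{E}_{2N}^\sigma}$. The choice $N\ge 3$ and the $1/2$-gain furnished by trace theory from $\Omega$ to $\Sigma$ for derivatives of $u$ are exactly what makes this possible, so that all terms produced either fold into $\sqrt{\mathcal{E}_{2N}^\sigma}\,\mathcal{F}_{2N}$, into $\varepsilon\mathcal{F}_{2N}$ absorbable on the left, or into $C_\varepsilon\ns{\nab_\ast^{4N}u}_1$ appearing on the right of \eqref{transportes2}.
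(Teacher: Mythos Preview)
Your approach is correct and matches the paper's proof: for \eqref{transportes} the paper writes the equation as $\partial_t\eta = u_3 + G^4$ and handles $\int_\Sigma\partial^\alpha\eta\,\partial^\alpha G^4 = -\int_\Sigma\partial^\alpha\eta\,\partial^\alpha(u\cdot\nabla_\ast\eta)$ via the pre-packaged Lemmas~\ref{p_G2N_estimates} (for $\alpha_0\ge 1$) and~\ref{lemma8} (for the top-order purely spatial case), which is exactly your inline integration-by-parts/commutator argument, while for \eqref{transportes2} the paper uses your operator $\Lambda_\ast^{4N+1/2}$ (there written $\mathcal{J}^{4N+1/2}=(1-\Delta_\ast)^{(4N+1/2)/2}$) together with the same commutator estimate. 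The only imprecision is in your Kato--Ponce bookkeeping: the piece $\|u\|_{H^{4N+1/2}(\Sigma)}\|\nabla_\ast\eta\|_{L^\infty}$ is controlled by $\sqrt{\mathcal{E}_{2N}^\sigma}\,\|\nabla_\ast^{4N}u\|_1$ via trace theory (not by $\sqrt{\mathcal{E}_{2N}^\sigma}\sqrt{\mathcal{F}_{2N}}$), so after pairing with $\Lambda_\ast^{4N+1/2}\eta$ and applying Cauchy's inequality it feeds the $\varepsilon\mathcal{F}_{2N}+C_\varepsilon\|\nabla_\ast^{4N}u\|_1^2$ terms on the right of \eqref{transportes2} rather than $\sqrt{\mathcal{E}_{2N}^\sigma}\,\mathcal{F}_{2N}$---which is precisely the routing you describe in your final paragraph.
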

\begin{proof}
We first prove the estimates \eqref{transportes}. Recall that we have written $\dt \eta= u_3+G^4$.  Applying $\partial^\alpha$ for $\alpha\in \mathbb{N}^{1+2}$ with $|\alpha|\le 4N$ to this and then taking the inner product with $\partial^\alpha\eta$, we obtain
 \begin{equation}\label{transportalpha}
 \hal \dtt\norm{\partial^\alpha\eta}_0^2 =\int_\Sigma \partial^\alpha\eta \partial^\alpha u_3 + \int_\Sigma
\partial^\alpha\eta \partial^\alpha G^4.
 \end{equation}
For the $G^4 $ term, if $\al_0\ge 1$, then $\pa^\al$ involves at least one temporal derivative,  so the estimates \eqref{p_G_e_00} of Lemma \ref{p_G2N_estimates} imply
\begin{equation}
\left| \int_\Sigma
\partial^\alpha\eta \partial^\alpha G^4\right|
 \lesssim \norm{\pa^\al \eta}_{0} \norm{\pa^\al G^4}_{0}
\lesssim  \sqrt{ \mathcal{E}^\sigma_{2N}}\sqrt{\mathcal{E}_{2N}^\sigma\(\sd{2N}^\sigma+\mathcal{E}^\sigma_{2N} +   \f_{2N}\)}.
\end{equation}
If $\al_0=0$, we use the special estimates \eqref{eta es} of Lemma \ref{lemma8} to estimate
\begin{equation}
\left| \int_\Sigma
\partial^\alpha\eta \partial^\alpha G^4\right|\lesssim\sqrt{  \mathcal{E}_{2N}^\sigma}\(\sd{2N}^\sigma +\mathcal{E}^\sigma_{2N}+   \f_{2N}\).
 \end{equation}
 On the other hand,  trace theory and Cauchy's inequality allow us to bound
\begin{equation}
  \abs{\int_\Sigma \partial^\alpha\eta \partial^\alpha u_3} \ls\norm{\partial^\alpha\eta}_{0}\norm{\partial^\alpha u_{3}}_{H^{0}(\Sigma)}
 \ls \sqrt{ \mathcal{E}^\sigma_{2N}}\norm{\partial^\alpha u}_{1}\ls \varepsilon \mathcal{E}_{2N}^\sigma  +C_\varepsilon  \norm{\bar{\na}_{\ast}^{4N}   u }_1^2
\end{equation}
for any $\varepsilon>0$.
Then the estimate \eqref{transportes} follows.

To prove \eqref{transportes2}, we define the operator $\mathcal{J}=\sqrt{1-\Delta_\ast}$. We apply $\mathcal{J}^{4N+1/2}$  to \eqref{transport}, multiply the resulting equation by $\mathcal{J}^{4N+1/2}\eta$, and then integrate  over $ \Sigma$; using the standard commutator estimate, Sobolev embeddings on $\Sigma$, and trace theory, we find that
 \begin{align}
\nonumber  &\frac{1}{2}\frac{d}{dt}\mathcal{F}_{2N}   =
 -\frac{1}{2}\int_\Sigma u\cdot \nabla_\ast|\mathcal{J}^{4N+1/2}\eta|^2+\int_\Sigma
   \left(\mathcal{J}^{4N+1/2} u_3-\left[\mathcal{J}^{4N+1/2},u\right]\cdot\nabla_\ast\eta\right) \mathcal{J}^{4N+1/2}\eta
  \\\nonumber & \quad=
 \frac{1}{2}\int_\Sigma (\partial_1u_1+\partial_2u_2)|\mathcal{J}^{4N+1/2}\eta|^2+
 \int_\Sigma  \left(\mathcal{J}^{4N+1/2} u_3-\left[\mathcal{J}^{4N+1/2},u\right]\cdot\nabla_\ast\eta\right) \mathcal{J}^{4N+1/2}\eta
    \\\nonumber&\quad\lesssim\norm{\nabla_\ast u}_{L^\infty(\Sigma)}\norm{\mathcal{J}^{4N+1/2}\eta}_0^2
+\left(\norm{\mathcal{J}^{4N+1/2} u_3}_{L^2(\Sigma)}+\norm{\nabla_\ast
u}_{L^\infty(\Sigma)}\norm{\mathcal{J}^{4N-1/2}\nabla_\ast\eta}_{0}\right.
 \\\nonumber&\qquad+\left.\norm{\mathcal{J}^{4N+1/2}
u}_{0}\norm{\nabla_\ast\eta}_{L^\infty(\Sigma)}\right)\norm{\mathcal{J}^{4N+1/2}\eta}_{0}
 \\\nonumber&\quad\lesssim\norm{ u}_{H^3(\Sigma)}\norm{ \eta}_{4N+1/2}^2 + \norm{\mathcal{J}^{4N}
u }_{1}\(1+\norm{\eta}_3\)
\norm{\eta}_{4N+1/2}  \\&\quad\lesssim
\sqrt{\mathcal{E}^\sigma_{2N}} \mathcal{F}_{2N} +\sqrt{\mathcal{F}_{2N}}\norm{\nabla_\ast^{4N} u }_{1}^2.
 \end{align}
Then the estimate \eqref{transportes2} follows by using Cauchy's inequality.
\end{proof}

\subsection{The evolution of energies controlling $\pa_3\q$} \label{sec_aux}
The energy evolutions presented in Sections \ref{stable1}--\ref{stable3} are not enough to get the full energy estimates by applying the Stokes regularity as in the incompressible case \cite{WT,WTK} since we have not controlled $\diverge u$.  Motivated by Matsumura and Nishida \cite{MN83}, to control $\diverge u$ we introduce the material derivative of $q$ in our coordinates:
\begin{equation}\label{dt}
\mathcal{Q}:= \pa_t\q- K \p_t\theta \pa_3  \q+u_j \mathcal{A}_{jk}\pa_k  \q = \pa_t\q-G^{1,1}=-\diverge(\bar\rho u)+G^{1,2}.
\end{equation}
We may then derive the following from \eqref{ns_perturb}:
\begin{equation}\label{eeqq}
\begin{split}
& \pa_3\mathcal{Q} +\bar{\rho}\pa_3(\diverge   u)=\pa_3 G^{1,2}-\diverge (\pa_3\bar{\rho} u)-\pa_3 \bar{\rho}\pa_3u_3,
\\
 &  \bar\rho \partial_t    u_3   +\bar\rho  \pa_3(h'(\bar{\rho})\q)     -\mu \Delta u_3-(\mu/3+\mu')\pa_3(\diverge u) =G^2_3.
  \end{split}
\end{equation}
By eliminating $\pa_{33} u_3$ from the equations \eqref{eeqq}, we obtain
\begin{align}\label{density 0}
\nonumber
& \displaystyle
\frac{ 4\mu/3+\mu'  }{h'(\bar\rho)\bar\rho^2}
\pa_3\left(h'(\bar\rho)\mathcal{Q} \right)+ \pa_3(h'(\bar\rho) \q)
 = \frac{ 4\mu/3+\mu'  }{\bar\rho^2} \pa_3  G^{1,2} +\frac{ 1  }{\bar\rho } G^2_3+\frac{ 4\mu/3+\mu'  }{h'(\bar\rho)\bar\rho^2}
\pa_3h'(\bar\rho) \mathcal{Q}
\\& \qquad   -  \partial_t    u_3 -\frac{ 4\mu/3+\mu'  }{\bar\rho^2}(\diverge (\pa_3\bar{\rho} u)+\pa_3 \bar{\rho}\pa_3u_3)+\frac{ \mu }{\bar\rho }(\pa_{11} u _3+\pa_{22}u_3- \pa_{31} u_1-\pa_{32}  u_2 ) .
\end{align}
In the light of \eqref{dt}, we can view \eqref{density 0} as the evolution equation for $\pa_3\q$.

We now present the energy evolution of $\p_3\q$.
\begin{prop}\label{i_rho_evolution 2N}
For $0\le  j\le 2N-1$ and   $0\le k\le 4N-2 j-1$, we have
\begin{align}\label{density es2N}
&\nonumber \sum_{k'\le k}   \norm{
  \nab_{\ast}^{4N-2j-k'-1} \pa_3^{k'+1}\pa_t^ j(h'(\bar\rho) \q)}_0^2 \\
&\nonumber\quad + \int_0^t\sum_{k'\le k} \norm{\nab_{\ast}^{4N-2j-k'-1}  \pa_3^{k'+1}\pa_t^j \left( h'(\bar\rho)\q \right)}_0^2
+ \sum_{k'\le k} \norm{\nab_{\ast}^{4N-2j-k'-1}  \pa_3^{k'+1}\pa_t^j  \mathcal{Q}}_0^2 ds
\\&\nonumber\quad
\ls \mathcal{E}_{2N}^\sigma(0)+\int_0^t \norm{\pa_t^{ j+1} u  }_{4N-2 j-1}^2 +\norm{ \bar{\na}_{\ast}^{4N}  u}_1^2
+ \sum_{k'\le k} \norm{\na_{\ast}^{4N-2 j-k'} \pa_t^j    u }_{k'+1}^2  ds
\\  &\qquad+\int_0^t \sqrt{  \mathcal{E}_{2N}^\sigma}\(\sd{2N}^\sigma +\mathcal{E}^\sigma_{2N}+   \f_{2N}\) ds.
\end{align}

 \end{prop}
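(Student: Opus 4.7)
The plan is to view \eqref{density 0} as a damped evolution equation for the quantity $\pa_3(h'(\bar\rho)q)$. From \eqref{dt} we have $\mathcal{Q} = \dt q - G^{1,1}$, so $\pa_3(h'(\bar\rho)\mathcal{Q}) = \dt\pa_3(h'(\bar\rho)q) - \pa_3(h'(\bar\rho)G^{1,1})$ (using that $h'(\bar\rho)$ is time-independent). Substituting into \eqref{density 0} recasts the left-hand side as $\frac{4\mu/3+\mu'}{h'(\bar\rho)\bar\rho^2}\dt\pa_3(h'(\bar\rho)q) + \pa_3(h'(\bar\rho)q)$ up to the $G^{1,1}$-remainder, so an $L^2(\Omega)$ inner product against $\partial^\alpha\pa_3(h'(\bar\rho)q)$ will yield simultaneously an energy (from the time-derivative term, with the positive weight $\frac{4\mu/3+\mu'}{h'(\bar\rho)\bar\rho^2}$) and a coercive dissipation $\norm{\partial^\alpha\pa_3(h'(\bar\rho)q)}_0^2$.

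For the base case $k'=0$ I would apply $\partial^\alpha=\dt^j\nab_\ast^{4N-2j-1}$ to the reformulated equation and test against $\partial^\alpha\pa_3(h'(\bar\rho)q)$, obtaining
\[
\tfrac12\dtt\int_\Omega \tfrac{4\mu/3+\mu'}{h'(\bar\rho)\bar\rho^2}\abs{\partial^\alpha\pa_3(h'(\bar\rho)q)}^2 + \norm{\partial^\alpha\pa_3(h'(\bar\rho)q)}_0^2 = \mathfrak{R}_\alpha,
\]
where $\mathfrak{R}_\alpha$ collects $\partial^\alpha$ acting on the weighted $G^{1,1}$-commutator, on the source terms $\frac{4\mu/3+\mu'}{\bar\rho^2}\pa_3 G^{1,2}$ and $\frac{1}{\bar\rho}G^2_3$, and on the linear $u$-terms $-\dt u_3$, $-\frac{4\mu/3+\mu'}{\bar\rho^2}(\diverge(\pa_3\bar\rho\,u)+\pa_3\bar\rho\,\pa_3 u_3)$, and $\frac{\mu}{\bar\rho}(\pa_{11}u_3+\pa_{22}u_3-\pa_{31}u_1-\pa_{32}u_2)$.

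The decisive point is the $G^{1,1}$ contribution. A naive Cauchy--Schwarz would demand $\norm{\bar\nab^{4N}(h'(\bar\rho)q)}_0$, which is not in $\sd{2N}^\sigma$; this is precisely why Lemma \ref{G11_weighted} was formulated with the weight $1+\frac{4\mu/3+\mu'}{h'(\bar\rho)\bar\rho^2}$. Writing $\alpha'=\alpha+e_3$ so that $|\alpha'|=4N$ and $\alpha'_0=j\le 2N-1$, the $G^{1,1}$ term matches the weighted inner product in that lemma (after subtracting the trivial unweighted piece, which is bounded by $\sqrt{\se{2N}^0}\sqrt{\se{2N}^0(\sd{2N}^\sigma+\se{2N}^\sigma+\f_{2N})}$ via Lemma \ref{p_G2N_estimates}) and is controlled by $\sqrt{\sd{2N}^0+\se{2N}^0}\sqrt{\se{2N}^0(\sd{2N}^0+\se{2N}^0+\f_{2N})}$. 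The other summands in $\mathfrak{R}_\alpha$ pose no real difficulty: Lemma \ref{p_G2N_estimates} produces the $\se{2N}^0(\sd{2N}^\sigma+\se{2N}^\sigma+\f_{2N})$ error for $\partial^\alpha\pa_3 G^{1,2}$ and $\partial^\alpha G^2_3$, and Cauchy's inequality with an $\varepsilon$-absorption into the dissipation delivers the $u$-dependent bounds $\norm{\dt^{j+1}u}_{4N-2j-1}^2$, $\norm{\bar\nab_\ast^{4N}u}_1^2$, and $\norm{\nab_\ast^{4N-2j-k'}\dt^j u}_{k'+1}^2$ that appear on the right-hand side of \eqref{density es2N}.

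The $\mathcal{Q}$-dissipation in \eqref{density es2N} is obtained separately from the alternative identity $\mathcal{Q}=-\diverge(\bar\rho u)+G^{1,2}$ in \eqref{dt}: this directly gives $\norm{\partial^\alpha\pa_3\mathcal{Q}}_0\lesssim\norm{\dt^j u}_{4N-2j+1}+\norm{\partial^\alpha\pa_3 G^{1,2}}_0$, with the last term controlled by Lemma \ref{p_G2N_estimates}. The induction on $k'$ is carried out by rerunning the same inner-product argument with $\pa_3^{k'+1}$ in place of $\pa_3$; the Leibniz commutators from moving $\pa_3^{k'}$ past the weight $\frac{4\mu/3+\mu'}{h'(\bar\rho)\bar\rho^2}$ only involve strictly fewer $\pa_3$-derivatives on $q$ and $\mathcal{Q}$, and are absorbed using the inductive hypothesis, which explains the sum $\sum_{k'\le k}$ on both sides. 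Integrating in time, absorbing the small-$\mathcal{E}$ factors (using $\se{2N}^\sigma\le\delta^2$), and chaining the inductive step completes the proof. The principal obstacle is the $G^{1,1}$ term: without the carefully tuned weight in Lemma \ref{G11_weighted}, the $\pa_3 q$ factor inside $G^{1,1}$ would cost an extra $\pa_3$-derivative of $q$ that the functional framework simply does not provide.
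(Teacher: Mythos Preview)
Your energy identity for $\pa_3(h'(\bar\rho)q)$ is essentially correct, and the treatment of the $G^{1,1}$ term via Lemma~\ref{G11_weighted} is the right idea. The genuine gap is in how you extract the $\mathcal{Q}$-dissipation. You propose to read it off directly from $\mathcal{Q}=-\diverge(\bar\rho u)+G^{1,2}$, claiming $\norm{\partial^\alpha\pa_3^{k'+1}\dt^j\mathcal{Q}}_0\lesssim\norm{\dt^j u}_{4N-2j+1}+\dots$\,. But $\norm{\dt^j u}_{4N-2j+1}$ does \emph{not} appear on the right-hand side of \eqref{density es2N}: the $u$-terms there are only $\norm{\dt^{j+1}u}_{4N-2j-1}$, $\norm{\bar\nab_\ast^{4N}u}_1$, and $\norm{\nab_\ast^{4N-2j-k'}\dt^j u}_{k'+1}$. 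Concretely, $\pa_3^{k'+1}\diverge(\bar\rho u)$ contains $\bar\rho\,\pa_3^{k'+2}u_3$, which after applying $\nab_\ast^{4N-2j-k'-1}\dt^j$ requires control of $\nab_\ast^{4N-2j-k'-1}\pa_3^{k'+2}\dt^j u$; none of the allowed $u$-norms covers this (they allow at most $k'+1$ vertical derivatives). Worse, $\norm{\dt^j u}_{4N-2j+1}$ is precisely part of $\bar{\mathcal{D}}_{2N}$, the quantity that Proposition~\ref{i_rho_evolution 2N} is later combined with the Stokes estimates (Lemma~\ref{lemmau2N}) to produce --- so your argument would be circular.

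The paper avoids this by testing \eqref{density 0} against the \emph{sum} $\partial^\alpha\pa_3^{k'+1}\dt^j(h'(\bar\rho)q)+\partial^\alpha\pa_3^{k'+1}\dt^j(h'(\bar\rho)\mathcal{Q})$. The second multiplier, paired with the first term on the left of \eqref{density 0}, yields the coercive contribution $\int\frac{4\mu/3+\mu'}{h'(\bar\rho)\bar\rho^2}\abs{\partial^\alpha\pa_3^{k'+1}\dt^j(h'(\bar\rho)\mathcal{Q})}^2$ directly. The point is that \eqref{density 0} was constructed precisely by eliminating $\pa_{33}u_3$ from \eqref{eeqq}, so the $u$-terms on its right-hand side carry at most one $\pa_3$ together with one $\nab_\ast$; after $\partial^\alpha\pa_3^{k'}\dt^j$ these fit into $\norm{\nab_\ast^{4N-2j-k'}\dt^j u}_{k'+1}$. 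This also explains the specific weight $1+\frac{4\mu/3+\mu'}{h'(\bar\rho)\bar\rho^2}$ in Lemma~\ref{G11_weighted}: it is the combined weight arising from the two cross-terms $I$ and $III$ once $\mathcal{Q}=\dt q-G^{1,1}$ is substituted.
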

\begin{proof}
We first fix $0\le  j\le 2N-1$ and then take $0 \le k\le 4N-2 j-1$ and $0\le k'\le k$.  Let $\alpha\in \mathbb{N}^{2}$ so that $|\alpha|\le 4N-2 j-1-k' $. Applying $\partial^\alpha\pa_3^{k'}\pa_t^ j$  to $\eqref{density 0}$,  multiplying the resulting equation by $\partial^\alpha\pa_3^{k'+1}\pa_t^ j(h'(\bar\rho) \q)+\partial^\alpha\pa_3^{k'+1}\pa_t^ j(h'(\bar\rho)\mathcal{Q})$, and then integrating over $\Omega$, we obtain
\begin{equation}\label{dtdensity1}
 I + II + III = IV,
\end{equation}
where
\begin{align}
 &I = \int_\Omega  \partial^\alpha\pa_3^{k'}\pa_t^ j\left(\frac{ 4\mu/3+\mu'  }{h'(\bar\rho)\bar\rho^2}
\pa_3\left(h'(\bar\rho)\mathcal{Q} \right)\right)\partial^\alpha\pa_3^{k'+1}\pa_t^ j(h'(\bar\rho) \q),
\\
& II = \int_\Omega \partial^\alpha\pa_3^{k'}\pa_t^ j\left(\frac{ 4\mu/3+\mu'  }{h'(\bar\rho)\bar\rho^2}
\pa_3\left(h'(\bar\rho)\mathcal{Q} \right)\right)\partial^\alpha\pa_3^{k'+1}\pa_t^ j\left(h'(\bar\rho) \mathcal{Q}\right),
\\
& III = \int_\Omega  \abs{\partial^\alpha\pa_3^{k'+1}\pa_t^ j(h'(\bar\rho) \q)}^2+\int_\Omega  \partial^\alpha\pa_3^{k'+1}\pa_t^ j(h'(\bar\rho) \q)\partial^\alpha\pa_3^{k'+1}\pa_t^ j\left(h'(\bar\rho)\mathcal{Q}\right),
\\
& IV = \int_\Omega \left\{\partial^\alpha\pa_3^{k'+1}\pa_t^ j(h'(\bar\rho) \q)+\partial^\alpha\pa_3^{k'+1}\pa_t^ j\left(h'(\bar\rho)\mathcal{Q} \right)\right\}\nonumber
\\
 &\qquad\times\partial^\alpha\pa_3^{k'}\pa_t^ j\left\{\frac{ 4\mu/3+\mu'  }{\bar\rho^2} \pa_3  G^{1,2} +\frac{ 1  }{\bar\rho } G^2_3+\frac{ 4\mu/3+\mu'  }{h'(\bar\rho)\bar\rho^2}
\pa_3h'(\bar\rho) \mathcal{Q} -  \partial_t    u_3\right.\nonumber
\\&\qquad\qquad
 \left. -\frac{ 4\mu/3+\mu'  }{\bar\rho^2}(\diverge (\pa_3\bar{\rho} u)+\pa_3 \bar{\rho}\pa_3u_3)+\frac{ \mu }{\bar\rho }(\pa_{11} u _3+\pa_{22}u_3- \pa_{31} u_1-\pa_{32}  u_2 ) \right\}.
\end{align}

We will now estimate $I, II, III, IV$.  First, using the Cauchy-Schwarz inequality, we may easily estimate
\begin{align}\label{rev_1}
 IV \ls &\left\{\norm{\partial^\alpha\pa_3^{k'+1}\pa_t^ j(h'(\bar\rho) \q)}_0 +\sum_{k''\le k'} \norm{\partial^\alpha\pa_3^{k''+1}\pa_t^ j\mathcal{Q}}_0\right\}\nonumber
\\
 & \times\left\{\norm{\pa_t^{ j }  G^{1,2}}_{4N-2 j} +\norm{\pa_t^{ j }  G^2}_{4N-2 j-1}+\sum_{k''\le k'}\norm{\partial^\alpha\pa_3^{k''}\pa_t^ j \mathcal{Q}}_0 \right.\nonumber
\\
 &\qquad  \left.+\norm{\pa_t^{ j+1} u  }_{4N-2 j-1} +\sum_{k''\le k'}  \norm{\partial^\alpha\pa_3^{k''}\pa_t^ j u}_1+\norm{\partial^\alpha\pa_3^{k''}\na_\ast\na\pa_t^ j u}_0  \right\}.
\end{align}
For the last term in $III$ we recall  the definition of $\mathcal{Q}$ from \eqref{dt} in order to  rewrite
\begin{align}\label{dens0}
&\nonumber \int_\Omega  \partial^\alpha\pa_3^{k'+1}\pa_t^ j(h'(\bar\rho) \q)\partial^\alpha\pa_3^{k'+1}\pa_t^ j\left(h'(\bar\rho)\mathcal{Q}\right)
\\&\nonumber\quad=\int_\Omega  \partial^\alpha\pa_3^{k'+1}\pa_t^ j(h'(\bar\rho) \q)\partial^\alpha\pa_3^{k'+1}\pa_t^ j
\left(h'(\bar\rho)(\dt\q-G^{1,1})\right)
\\&\quad=\frac{1}{2}\frac{d}{dt} \int_\Omega \abs {\partial^\alpha\pa_3^{k'+1}\pa_t^ j(h'(\bar\rho) \q)}^2
-\int_\Omega  \partial^\alpha\pa_3^{k'+1}\pa_t^ j(h'(\bar\rho) \q)\partial^\alpha\pa_3^{k'+1}\pa_t^ j
\left(h'(\bar\rho) G^{1,1} \right).
\end{align}
For $II$ we estimate by expanding with the Leibniz rule:
\begin{equation} \label{dens1}
II \ge \int_\Omega \frac{ 4\mu/3+\mu'  }{h'(\bar\rho)\bar\rho^2}\abs{\partial^\alpha\pa_3^{k'+1}\pa_t^ j \left(h'(\bar\rho)\mathcal{Q} \right)}^2
-C\norm{\partial^\alpha\pa_3^{k'+1}\pa_t^ j \left( h'(\bar\rho)\mathcal{Q} \right)}_0\sum_{k''\le k'}\norm{\partial^\alpha\pa_3^{k''}\pa_t^ j \mathcal{Q}}_0.
 \end{equation}
For $I$, we have
\begin{align} \label{dens2}
\nonumber I  &\ge \int_\Omega \frac{ 4\mu/3+\mu'  }{h'(\bar\rho)\bar\rho^2}\partial^\alpha\pa_3^{k'+1}\pa_t^ j
\left(h'(\bar\rho)\mathcal{Q} \right) \partial^\alpha\pa_3^{k'+1}\pa_t^ j(h'(\bar\rho) \q)
\\\nonumber &\quad
-C\norm{\partial^\alpha\pa_3^{k'+1}\pa_t^ j \left( h'(\bar\rho)\q \right)}_0\sum_{k''\le k'}\norm{\partial^\alpha\pa_3^{k''}\pa_t^ j \mathcal{Q}}_0
\\\nonumber &= \frac{1}{2}\frac{d}{dt} \int_\Omega  \frac{ 4\mu/3+\mu'  }{h'(\bar\rho)\bar\rho^2}
\abs{\partial^\alpha\pa_3^{k'+1}\pa_t^ j(h'(\bar\rho) \q)}^2
\\&\nonumber \quad
-\int_\Omega \frac{ 4\mu/3+\mu'  }{h'(\bar\rho)\bar\rho^2} \partial^\alpha\pa_3^{k'+1}\pa_t^ j(h'(\bar\rho) \q)\partial^\alpha\pa_3^{k'+1}\pa_t^ j
\left(h'(\bar\rho) G^{1,1} \right)
\\&\quad
-C\norm{\partial^\alpha\pa_3^{k'+1}\pa_t^ j \left( h'(\bar\rho)\q \right)}_0\sum_{k''\le k'}\norm{\partial^\alpha\pa_3^{k''}\pa_t^ j \mathcal{Q}}_0.
 \end{align}

Combining the estimates \eqref{rev_1}--\eqref{dens2} with \eqref{dtdensity1},  applying Cauchy's inequality in order to absorb the term $\norm{\partial^\alpha\pa_3^{k'+1}\pa_t^ j \left( h'(\bar\rho)\q \right)}_0$  onto the left, and then integrating in time from $0$ to $t$, we arrive at the inequality
\begin{align}\label{dens_10}
\nonumber& \norm{
 \partial^\alpha\pa_3^{k'+1}\pa_t^ j(h'(\bar\rho) \q)}_0^2
 +\int_0^t \norm{\partial^\alpha\pa_3^{k'+1}\pa_t^ j \left( h'(\bar\rho)\q \right)}_0^2+  \norm{\partial^\alpha\pa_3^{k'+1}\pa_t^ j \left( h'(\bar\rho)\mathcal{Q}\right)}_0^2 ds \\
&\;\lesssim \mathcal{E}_{2N}^\sigma(0) +\int_0^t\sum_{k''\le k'}\norm{\partial^\alpha\pa_3^{k''}\pa_t^ j \mathcal{Q}}_0^2 ds\\
&\nonumber\quad+\int_0^t \abs{\int_\Omega \left(1+\frac{ 4\mu/3+\mu'  }{h'(\bar\rho)\bar\rho^2}\right)
\partial^\alpha\pa_3^{k'+1}\pa_t^ j(h'(\bar\rho) \q)\partial^\alpha\pa_3^{k'+1}\pa_t^ j \left(h'(\bar\rho) G^{1,1} \right)}ds
\\&\nonumber\quad
+\int_0^t \norm{\pa_t^{ j }  G^{1,2}}_{4N-2 j}^2+\norm{\pa_t^{ j }  G^2}_{4N-2 j-1}^2
+ \norm{\pa_t^{ j+1} u  }_{4N-2 j-1}^2
+ \norm{\na_{\ast}^{4N-2 j-k'} \pa_t^ j    u }_{k'+1}^2 ds.
\end{align}
Owing to the Leibniz rule and the properties of $\bar{\rho}$, we may estimate
\begin{multline}
 \ns{\p_3^{k'+1} \p^\al \dt^j \mathcal{Q}}_0 \ls  \ns{h'(\bar{\rho}) \p_3^{k'+1} \p^\al \dt^j \mathcal{Q}}_0 \\
\ls  \ns{ \p_3^{k'+1} \p^\al \dt^j (h'(\bar{\rho})\mathcal{Q})}_0 + \sum_{k'' \le k'} \ns{\p_3^{k''} \p^\al \dt^j \mathcal{Q}}_0.
\end{multline}
Combining this with \eqref{dens_10} and summing over all $\alpha$ with $\abs{\alpha}\le 4N-2 j-1-k'$, we deduce that
\begin{multline}\label{Cell}
 \norm{
 \nab_{\ast}^{4N-2j-k'-1}
 \pa_3^{k'+1}\pa_t^ j(h'(\bar\rho) \q)}_0^2 \\
 +\int_0^t \norm{\nab_{\ast}^{4N-2j-k'-1} \pa_3^{k'+1}\pa_t^j \left( h'(\bar\rho)\q \right)}_0^2
+   \norm{\nab_{\ast}^{4N-2j-k'-1} \pa_3^{k'+1}\pa_t^j  \mathcal{Q}}_0^2  ds \\
\lesssim \mathcal{E}_{2N}^\sigma(0) + \int_0^t \sum_{k''\le k'}\norm{\nab_{\ast}^{4N-2j-k'-1} \pa_3^{k''}\pa_t^ j \mathcal{Q}}_0^2 ds
\\
+\int_0^t \sum_{\abs{\alpha}\le 4N-2 j-1-k'} \abs{\int_\Omega \left(1+\frac{ 4\mu/3+\mu'  }{h'(\bar\rho)\bar\rho^2}\right)
\p^\al \pa_3^{k'+1}\pa_t^ j(h'(\bar\rho) \q)\partial^\alpha\pa_3^{k'+1}\pa_t^ j \left(h'(\bar\rho) G^{1,1} \right)} ds
\\
+  \int_0^t \norm{\pa_t^{ j }  G^{1,2}}_{4N-2 j}^2+\norm{\pa_t^{ j }  G^2}_{4N-2 j-1}^2
+ \norm{\pa_t^{ j+1} u  }_{4N-2 j-1}^2
+ \norm{\na_{\ast}^{4N-2 j-k'} \pa_t^ j    u }_{k'+1}^2 ds
\end{multline}
for each $0 \le k' \le k$.

Finally, we will estimate the nonlinear terms in the right hand side of \eqref{Cell}. We use the estimates \eqref{p_G_e_00} of Lemma \ref{p_G2N_estimates} to estimate, for $0\le  j\le 2N-1$,
\begin{equation}\label{rhoes2}
\norm{\pa_t^{ j }  G^{1,2}}_{4N-2 j}^2+\norm{\pa_t^{ j }  G^2}_{4N-2 j-1}^2\lesssim   {\mathcal{E}_{2N}^\sigma }\(\mathcal{D}_{2N}^\sigma+\mathcal{E}_{2N}^\sigma+  \f_{2N}\) .
\end{equation}
Then we use Lemma \ref{G11_weighted} to bound
\begin{multline}\label{rhoes25}
  \abs{\int_\Omega \left(1+\frac{ 4\mu/3+\mu'  }{h'(\bar\rho)\bar\rho^2}\right)
  \partial^\alpha\pa_3^{k'+1}\pa_t^ j(h'(\bar\rho) \q)\partial^\alpha\pa_3^{k'+1}\pa_t^ j \left(h'(\bar\rho) G^{1,1} \right)} \\
    \ls   \sqrt{ \mathcal{D}_{2N}^\sigma+\mathcal{E}_{2N}^\sigma }\sqrt{ {\mathcal{E}_{2N}^\sigma }\(\mathcal{D}_{2N}^\sigma+\mathcal{E}_{2N}^\sigma+  \f_{2N}\) }.
\end{multline}
Plugging the nonlinear estimates \eqref{rhoes2} and \eqref{rhoes25}  into \eqref{Cell} then yields that every $0 \le k' \le k$,
\begin{align}\label{Cell1}
\nonumber& \norm{
 \nab_{\ast}^{4N-2j-k'-1}
 \pa_3^{k'+1}\pa_t^ j(h'(\bar\rho) \q)}_0^2
  \\
&\nonumber\quad +\int_0^t \norm{\nab_{\ast}^{4N-2j-k'-1} \pa_3^{k'+1}\pa_t^j \left( h'(\bar\rho)\q \right)}_0^2 +  \norm{\nab_{\ast}^{4N-2j-k'-1} \pa_3^{k'+1}\pa_t^j  \mathcal{Q}}_0^2ds\\
&\nonumber\quad \lesssim \mathcal{E}_{2N}^\sigma(0) + \int_0^t \sum_{k''\le k'}\norm{\nab_{\ast}^{4N-2j-k'-1} \pa_3^{k''}\pa_t^ j \mathcal{Q}}_0^2 ds
\\
&\quad+ \int_0^t \norm{\pa_t^{ j+1} u  }_{4N-2 j-1}^2
+ \norm{\na_{\ast}^{4N-2 j-k'} \pa_t^ j    u }_{k'+1}^2+\sqrt{  \mathcal{E}_{2N}^\sigma}\(\sd{2N}^\sigma +\mathcal{E}^\sigma_{2N}+   \f_{2N}\) ds
\end{align}
since $ \sqrt{  \mathcal{E}_{2N}^\sigma}\le 1$. We recall the notation $\mathcal{Q}$ in \eqref{dt}. We may use the estimates \eqref{p_G_e_00} of Lemma \ref{p_G2N_estimates} to obtain the bound
\begin{equation}\label{drho}
\norm{ \bar{\na}_{\ast}^{4N}  \mathcal{Q}}_0^2 \ls \norm{\bar{\na}_{\ast}^{4N}\diverge(\bar\rho u)}_0^2 +
\norm{\bar{\na}_{\ast}^{4N} G^{1,2}}_0^2
\lesssim \norm{ \bar{\na}_{\ast}^{4N}  u}_1^2  +     \mathcal{E}_{2N}^\sigma \(\mathcal{D}_{2N}^\sigma+\mathcal{E}^\sigma_{2N}+  \f_{2N}\).
\end{equation}
Then a standard induction argument on \eqref{Cell1}, together with \eqref{drho}, yield \eqref{density es2N}.
 \end{proof}

Note that the novelty of Proposition \ref{i_rho_evolution 2N} is twofold. First, it presents the energy estimates of $\pa_3q$. Second, it provides the dissipation estimates of $\pa_3\mathcal{Q}$ and hence $\pa_3\diverge(\bar\rho u)$ by \eqref{dt}. These are crucial for improving the horizontal energy and dissipation estimates derived in the previous section into the full ones in later sections, respectively.

\subsection{Combined energy evolution estimates}\label{sec_combo}

Now we chain the results in Sections \ref{stable1}, \ref{stable2} and \ref{sec_aux} with the elliptic regularity theory of a certain Stokes problem into an intermediate energy-dissipation estimate.

We first derive the elliptic estimates. We deduce from \eqref{ns_perturb} that
\begin{equation}
\begin{split}
 &\diverge ( \bar{\rho}   u)=G^{1,2}-\mathcal{Q},
 \\
 & -\frac{\mu}{\bar{\rho}}\Delta u- \frac{\mu/3+\mu'}{\bar{\rho}} \na \diverge u+ \nabla   \left(h'(\bar{\rho})\q\right) =\frac{1}{\bar{\rho}}G^2-   \partial_t    u.
  \end{split}
\end{equation}
Direct calculations give the form of the Stokes problem we shall use:
\begin{equation}\label{stoke problem}
\begin{cases}
 \displaystyle-\mu\Delta\left (\frac{u}{\bar{\rho}} \right)+ \nabla  \left(h'(\bar{\rho})\q\right)
 =\frac{1}{\bar{\rho}}G^2-   \partial_t    u-\mu\left(2\pa_3\left (\frac{1}{\bar{\rho}} \right)\pa_3u+ \pa_{33}\left (\frac{1}{\bar{\rho}} \right) u\right)
 \\\qquad\qquad\qquad\qquad\qquad\quad\ \displaystyle +\frac{\mu/3+\mu'}{\bar{\rho}}\na \left (\displaystyle\frac{1}{\bar{\rho}} \left(G^{1,2}-\displaystyle\mathcal{Q}-\pa_3\bar{\rho}u_3\right)\right)&\text{in }\Omega_\pm\\\displaystyle
 \diverge \left (\frac{u}{\bar{\rho}} \right)=\frac{1}{\bar{\rho}^2}\left(G^{1,2}-\mathcal{Q}-2\pa_3\bar{\rho}u_3\right)&\text{in }\Omega_\pm
\\u=u&\text{on }\pa\Omega_\pm.
  \end{cases}
\end{equation}

We now prove the Stokes estimates.

\begin{lem}\label{lemmau2N}
Fix $0\le  j\le 2N-1$. Then for any $1\le k\le 4N-2 j$,
\begin{align}\label{u es2N}
\nonumber
 &\norm{ \na_{\ast}^{4N-2 j-k}\pa_t^ j  u }_{k+1}^2 + \norm{\na \na_{\ast}^{4N-2 j-k}\pa_t^ j\left(h'(\bar{\rho})\q\right) }_{ k-1}^2
\\& \quad\lesssim\norm{\pa_t^{ j+1} u  }_{4N-2 j-1}^2+ \norm{\na_{\ast}^{4N-2 j-k} \pa_t^{ j } \mathcal{Q} }_{k}^2+ \norm{\bar{\na}_{\ast}^{4N}   u }_1^2
+   \mathcal{E}_{2N}^\sigma  \(\mathcal{D}_{2N}^\sigma+\mathcal{E}_{2N}^\sigma+   \f_{2N}\).
 \end{align}
\end{lem}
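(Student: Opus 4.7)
The plan is to apply classical elliptic regularity for the Dirichlet Stokes problem on each $\Omega_\pm$ to the reformulation \eqref{stoke problem}, after differentiating with the purely tangential operator $\nabla_\ast^{4N-2j-k}\partial_t^j$.

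First I would apply $\nabla_\ast^{4N-2j-k}\partial_t^j$ to every line of \eqref{stoke problem}. Because $\nabla_\ast$ and $\partial_t$ are tangent to $\Sigma_+$, $\Sigma_-$, and $\Sigma_b$, they commute with the trace, and the resulting system on each subdomain $\Omega_\pm$ is again a Dirichlet Stokes system, for the unknowns $\nabla_\ast^{4N-2j-k}\partial_t^j(u/\bar\rho)$ and $\nabla_\ast^{4N-2j-k}\partial_t^j(h'(\bar\rho)\q)$, with differentiated forcing and differentiated boundary trace. The Agmon--Douglis--Nirenberg regularity for Dirichlet Stokes on the smooth bounded slabs $\Omega_\pm$ then yields
\[
\|v\|_{H^{k+1}(\Omega_\pm)} + \|\nabla p\|_{H^{k-1}(\Omega_\pm)} \lesssim \|F\|_{H^{k-1}(\Omega_\pm)} + \|G\|_{H^k(\Omega_\pm)} + \|v|_{\partial\Omega_\pm}\|_{H^{k+1/2}(\partial\Omega_\pm)},
\]
and because $\bar\rho$ is smooth and bounded away from zero on each slab, this transforms into exactly the LHS of \eqref{u es2N}.

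The boundary trace is then handled by swapping horizontal derivatives for fractional Sobolev order on the flat sheets and invoking the trace inequality:
\[
\|\nabla_\ast^{4N-2j-k}\partial_t^j u\|_{H^{k+1/2}(\Sigma)} \lesssim \|\nabla_\ast^{4N-2j}\partial_t^j u\|_{H^{1/2}(\Sigma)} \lesssim \|\nabla_\ast^{4N-2j}\partial_t^j u\|_{H^1(\Omega)} \le \|\bar{\nabla}_\ast^{4N} u\|_1,
\]
with the data on $\Sigma_b$ vanishing. This absorbs the boundary term into the $\|\bar{\nabla}_\ast^{4N} u\|_1^2$ on the RHS of \eqref{u es2N}. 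For the interior forcing, the $-\partial_t u$ contribution yields $\|\partial_t^{j+1} u\|_{4N-2j-1}^2$ after taking $\nabla_\ast^{4N-2j-k}$ and the $H^{k-1}$ norm; the gradient of $\frac{1}{\bar\rho}(G^{1,2}-\mathcal{Q}-\partial_3\bar\rho u_3)$ in the momentum equation and the divergence--equation forcing $\frac{1}{\bar\rho^2}(G^{1,2}-\mathcal{Q}-2\partial_3\bar\rho u_3)$ together produce $\|\nabla_\ast^{4N-2j-k}\partial_t^j \mathcal{Q}\|_k^2$, modulo $G^{1,2}$ pieces and lower-order $u$ commutator terms which are absorbed into $\|\bar{\nabla}_\ast^{4N} u\|_1^2$; the remaining nonlinear contributions from $\partial_t^j G^2$ and $\partial_t^j G^{1,2}$ are controlled via Lemma \ref{p_G2N_estimates} by $\mathcal{E}_{2N}^\sigma(\mathcal{D}_{2N}^\sigma + \mathcal{E}_{2N}^\sigma + \mathcal{F}_{2N})$. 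No induction on $k$ is needed since the RHS of \eqref{u es2N} does not involve norms of $u$ or $\q$ at lower Sobolev level.

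The main subtlety is the boundary trace step: one must balance the tangential derivative count on the flat $\Sigma$ so that the Dirichlet trace is absorbed into the already-present $\|\bar{\nabla}_\ast^{4N} u\|_1^2$ rather than into the very norm of $u$ one is trying to bound. The flatness of the reference interfaces, combined with the fact that $\nabla_\ast$ and $\partial_t$ are tangential, is precisely what makes this cost-free exchange of derivatives possible.
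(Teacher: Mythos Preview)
Your overall strategy matches the paper's: differentiate \eqref{stoke problem} by the tangential operator $\partial^\alpha\partial_t^j$, invoke Dirichlet Stokes regularity (Lemma \ref{stokes reg}) on each $\Omega_\pm$, and absorb the boundary trace into $\norm{\bar{\nabla}_\ast^{4N} u}_1^2$ exactly as you describe. However, your assertion that ``no induction on $k$ is needed'' is wrong, and this is a genuine gap.

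The issue is the $u$ terms that sit in the Stokes forcing itself (not commutators): the momentum RHS contains $-\mu\bigl(2\partial_3(1/\bar\rho)\,\partial_3 u + \partial_{33}(1/\bar\rho)\,u\bigr)$, and both the momentum and divergence forcings contain $\partial_3\bar\rho\, u_3$. After applying $\partial^\alpha\partial_t^j$ and invoking Stokes regularity at level $r=k+1$, the divergence forcing must be estimated in $H^k$ and the momentum forcing in $H^{k-1}$; these produce a term of size $\norm{\partial^\alpha\partial_t^j u}_{k}^2$ on the right. For $k=1$ this is indeed $\le \norm{\bar\nabla_\ast^{4N} u}_1^2$, but for $k\ge 2$ it involves up to $k$ vertical derivatives and is \emph{not} controlled by $\norm{\bar\nabla_\ast^{4N} u}_1^2$, which carries only a single $\partial_3$. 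The paper handles this by applying the Stokes estimate at each level $r=k'+1$ for $1\le k'\le k$, obtaining \eqref{u claim es} with the residual $\norm{\partial^\alpha\partial_t^j u}_{k'}^2$ on the right, and then closing by a straightforward induction from $k'=1$ up to $k'=k$. You need this induction (or an equivalent iteration) to make the argument go through.
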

\begin{proof}
We first fix $0\le  j\le 2N-1$ and then take $1 \le k\le 4N-2 j$. Let $\alpha\in \mathbb{N}^{2}$ such that $|\alpha|\le 4N-2 j-k$.  We apply $\partial^\alpha\pa_t^ j$  to the equations $\eqref{stoke problem}$ in $\Omega_\pm$ respectively;  then the elliptic estimates of Lemma \ref{stokes reg} with $r=k'+1\ge2$ for any $1\le k'\le k$ and  trace theory allow us to obtain the bounds
\begin{align}\label{u claim es}
\nonumber
&\norm{ \pa^\al\pa_t^ j  u }_{k'+1}^2+ \norm{\na \pa^\al\pa_t^ j   \left(h'(\bar{\rho})\q\right)  }_{k'-1}^2
 \lesssim \norm{ \pa^\al\pa_t^ j \left (\frac{u}{\bar{\rho}} \right) }_{k'+1}^2+ \norm{\na \pa^\al\pa_t^ j  \left(h'(\bar{\rho})\q\right) }_{k'-1}^2
\\\nonumber
&\quad\lesssim \norm{\pa^\al\pa_t^{ j} G^2  }_{k'-1}^2+\norm{\pa^\al\pa_t^{ j+1} u  }_{k'-1}^2+\norm{ \pa^\al\pa_t^ j  u }_{k'}^2
\\\nonumber
&\qquad+ \norm{\pa^\al\pa_t^{ j} G^{1,2}  }_{k'}^2+\norm{\pa^\al \pa_t^{ j } \mathcal{Q} }_{k'}^2+ \norm{\pa^\al\pa_t^ j u }_{H^{k'+1/2}(\Sigma)}^2
\\\nonumber
&\quad\lesssim \norm{\pa^\al\pa_t^{ j} G^{1,2}  }_{k}^2+\norm{\pa^\al\pa_t^{ j} G^2  }_{k-1}^2+\norm{\pa^\al\pa_t^{ j+1} u  }_{k-1}^2 +\norm{\pa^\al \pa_t^{ j } \mathcal{Q} }_{k}^2+\norm{ \pa^\al\pa_t^ j  u }_{k'}^2
\\\nonumber
& \qquad+ \norm{\na_{\ast}^{k'}\pa^\al\pa_t^ j u }_{H^{1/2}(\Sigma)}^2
\\\nonumber
&\quad\lesssim \norm{\pa_t^{ j} G^{1,2}  }_{4N-2 j}^2  +\norm{ \pa_t^{ j} G^2  }_{4N-2 j-1}^2+\norm{\pa_t^{ j+1} u  }_{4N-2 j-1}^2+ \norm{\na_{\ast}^{4N-2 j-k} \pa_t^{ j } \mathcal{Q} }_{k}^2
\\
&\qquad+ \norm{\bar{\na}_{\ast}^{\,\,4N}   u }_1^2 +\norm{ \pa^\al\pa_t^ j  u }_{k'}^2.
\end{align}

A simple induction based on the above yields that
\begin{align}\label{u claim}
\nonumber
\norm{\pa^\al\pa_t^ j  u }_{k+1}^2&+ \norm{\na \pa^\al\pa_t^ j   \q }_{ k-1}^2
    \lesssim \norm{\pa_t^{ j} G^{1,2}  }_{4N-2 j}^2
 +\norm{ \pa_t^{ j} G^2  }_{4N-2 j-1}^2
\\&\quad+\norm{\pa_t^{ j+1} u  }_{4N-2 j-1}^2+ \norm{\na_{\ast}^{4N-2 j-k} \pa_t^{ j } \mathcal{Q} }_{k}^2 + \norm{\bar{\na}_{\ast}^{4N}   u }_1^2.
\end{align}

Finally, we use the estimates \eqref{p_G_e_00} of Lemma \ref{p_G2N_estimates} to have
\begin{equation}\label{u claim es2}
\norm{\pa_t^{ j} G^{1,2}  }_{4N-2 j}^2
 +\norm{ \pa_t^{ j} G^2  }_{4N-2 j-1}^2\ls \mathcal{E}_{2N}^\sigma  \(\mathcal{D}_{2N}^\sigma+\mathcal{E}_{2N}^\sigma+   \f_{2N}\)
 \end{equation}
We then  sum \eqref{u claim} over such $|\alpha|\le 4N-2 j-k$ to conclude \eqref{u es2N}.
\end{proof}

We will now combine the energy evolution estimates of Sections \ref{stable1}--\ref{stable2} with the $\p_3 q$ estimate of Section \ref{sec_aux} and the estimates of Lemma \ref{lemmau2N}. The full dissipation estimates of $u$ will be obtained, and also some estimates of $\q$ will be improved along the way. To do so, we first introduce some notation. We write
\begin{equation}\label{E_frak}
\mathfrak{E}_{2N}^\sigma =
 \norm{\bar{\nab}_{\ast}^{4N} u}_0^2 +\norm{ \bar{\nab}_{\ast}^{4N} \q}_0^2
+   \sum_{j=0}^{2N} \ns{\dt^j \eta_+}_{4N-2j} + \sigma \sum_{j=0}^{2N} \ns{\nabla_\ast\dt^j \eta}_{4N-2j}
\end{equation}
and
\begin{equation}\label{D_frak}
\mathfrak{D}_{2N} = \norm{ \bar{\nab}_{\ast}^{4N} u}_1^2
\end{equation}
for the various terms appearing in Propositions  \ref{i_temporal_evolution 2N} and \ref{i_spatial_evolution 2N}.  Similarly, for integers $0 \le j \le 2N-1$ and $0 \le k \le 4N-2j -1$ we write
\begin{equation}\label{A_frak}
 \mathfrak{A}_{2N}^{j,k} := \sum_{k'=0}^{ k}  \norm{
  \nab_{\ast}^{4N-2j-k'-1} \pa_3^{k'+1}\pa_t^ j(h'(\bar\rho) \q)}_0^2
\end{equation}
and
\begin{align}\label{B_frak}
\nonumber \mathfrak{B}_{2N}^{j,k} := &  \sum_{k'=0}^{k} \norm{\nab_{\ast}^{4N-2j-k'-1}  \pa_3^{k'+1}\pa_t^j \left( h'(\bar\rho)\q \right)}_0^2
+ \sum_{k'=1}^k \ns{ \nab \nab_\ast^{4N-2j-k'} \dt^j(h'(\bar{\rho}) \q)}_{k'-1}
\\ &
+ \norm{ \bar{\na}_{\ast}^{4N}  \mathcal{Q}}_0^2  + \sum_{k'=0}^{k} \norm{\nab_{\ast}^{4N-2j-k'-1}  \pa_3^{k'+1}\pa_t^j  \mathcal{Q}}_0^2,
\end{align}
where $\mathcal{Q}$ is defined in \eqref{dt}. In addition, we introduce the following intermediate energies:
\begin{equation}\label{Enn}
\bar{\mathcal{E}}_{2N}^\sigma := \norm{ {\bar\na}_{\ast}^{4N} u}_0^2  + \sum_{j=0}^{2N} \norm{ \pa_t^j \q}_{4N-2j}^2
+  \sum_{j=0}^{2N} \ns{\dt^j \eta_+}_{4N-2j} + \sigma \sum_{j=0}^{2N} \ns{\nabla_\ast\dt^j \eta}_{4N-2j}
\end{equation}
and
\begin{equation}\label{Dnn}
\bar{\mathcal{D}}_{2N}:= \sum_{j=0}^{2N} \ns{ \dt^j u}_{4N-2j+1} + \sum_{j=0}^{n-1} \ns{\nab \dt^j (h'(\bar{\rho}) \q)}_{4N-2j-1}.
\end{equation}
The rest of the section is devoted to the derivation of the energy bounds for $\bar{\mathcal{E}}_{2N}^\sigma$ and $\bar{\mathcal{D}}_{2N}$ based on the evolution equations for $\mathfrak{E}_{2N}^\sigma, \mathfrak{D}_{2N}, \mathfrak{A}_{2N}^{j,k},  \mathfrak{B}_{2N}^{j,k}$.

\begin{prop}\label{boostrap2N}
Let $\bar{\mathcal{E}}_{2N}^\sigma$ and $\bar{\mathcal{D}}_{2N}$ be as defined by \eqref{Enn} and \eqref{Dnn}.  Then we have
\begin{equation}\label{ebar2N}
\bar{\mathcal{E}}_{2N}^\sigma(t) + \int_0^t \bar{\mathcal{D}}_{2N} ds
\\
\ls{\mathcal{E}}_{2N}^\sigma(0)+ \int_0^t  \sqrt{\mathcal{E}_{2N}^\sigma }\(\mathcal{D}_{2N}^\sigma+ \mathcal{E}_{2N}^\sigma+ \f_{2N}\)ds+ \int_0^t \norm{\eta_-}_{4N-1/2}^2 ds.
\end{equation}
\end{prop}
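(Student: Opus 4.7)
The plan is to build up $\bar{\mathcal{E}}_{2N}^\sigma$ and $\bar{\mathcal{D}}_{2N}$ in three stages, alternating between the ``horizontal'' energy-dissipation estimates of Sections \ref{stable1}--\ref{stable2}, the $\partial_3\q$ evolution estimates of Section \ref{sec_aux}, and the Stokes elliptic estimates of Lemma \ref{lemmau2N}. First, I would simply add the estimates of Proposition \ref{i_temporal_evolution 2N} and Proposition \ref{i_spatial_evolution 2N}. This yields the bound
\begin{equation*}
 \mathfrak{E}_{2N}^\sigma(t)+\int_0^t \mathfrak{D}_{2N}\,ds \lesssim \mathcal{E}_{2N}^\sigma(0)+\int_0^t \sqrt{\mathcal{E}_{2N}^\sigma}\bigl(\mathcal{D}_{2N}^\sigma+\mathcal{E}_{2N}^\sigma+\mathcal{F}_{2N}\bigr)\,ds+\int_0^t \|\eta_-\|_{4N-1/2}^2\,ds,
\end{equation*}
where the $\int\|\eta_-\|_0^2$ term from Proposition \ref{i_temporal_evolution 2N} is absorbed by the higher-order $\int \|\eta_-\|_{4N-1/2}^2$ from Proposition \ref{i_spatial_evolution 2N}, and where I used the kinematic boundary condition together with the $F^{4,j-1}$ estimate of Lemma \ref{p_F2N_estimates} to convert $\ns{\dt^j\eta_-}_0$ (present implicitly in the proof of Prop.\ \ref{i_temporal_evolution 2N}) into $\ns{\dt^{j-1}u}_1$, which is already controlled.

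Second, with $\int \mathfrak{D}_{2N}$ in hand, I would inductively chain Proposition \ref{i_rho_evolution 2N} with Lemma \ref{lemmau2N} to control the mixed $\pa_3$-derivatives of $\q$ in the energy and the full normal regularity of $u$ and $\nabla (h'(\bar\rho)\q)$ in the dissipation. Concretely, for each fixed $j=0,\dots,2N-1$, I would induct on the normal derivative count $k=0,1,\dots,4N-2j-1$. The base case $k=0$ uses Proposition \ref{i_rho_evolution 2N} with the known $\int \mathfrak{D}_{2N}$ bound to control $\mathfrak{A}_{2N}^{j,0}$ and $\int \mathfrak{B}_{2N}^{j,0}\,ds$; then Lemma \ref{lemmau2N} with $k=1$ upgrades the $u$-dissipation and produces the $\nabla \nabla_\ast^{4N-2j-1}\dt^j(h'(\bar\rho)\q)$ piece of $\bar{\mathcal{D}}_{2N}$, since the right-hand side of \eqref{u es2N} involves only $\ns{\pa_t^{j+1}u}_{4N-2j-1}$ (already in $\mathfrak{D}_{2N}$ or handled at a lower $j$), $\ns{\nabla_\ast^{4N-2j-k}\dt^j \mathcal{Q}}_k$ (controlled by $\mathfrak{B}_{2N}^{j,k-1}$), and horizontal $u$-norms already bounded. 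Repeating this alternation for $k=1,2,\dots$ closes all the $\p_3$-regularity for both $\q$ and $u$ at the temporal level $j$. Finally I would iterate in $j$, using that the higher-temporal-derivative Stokes estimate at level $j$ requires the term $\ns{\dt^{j+1}u}_{4N-2j-1}$, which is the full dissipation at temporal level $j+1$ and spatial order strictly below, hence accessible by reverse induction in $j$ (starting from $j=2N-1$ and decreasing).

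Third, I would reassemble the resulting bounds. The $u$-contribution of $\bar{\mathcal{D}}_{2N}$, namely $\sum_{j=0}^{2N}\ns{\dt^j u}_{4N-2j+1}$, is exactly what the chained Stokes estimates provide combined with the $\ns{\dt^{2N} u}_1$ piece already present in $\int \mathfrak{D}_{2N}$. The $q$-part $\sum_{j=0}^{2N-1}\ns{\nabla \dt^j(h'(\bar\rho)\q)}_{4N-2j-1}$ follows by summing the $\nabla \nabla_\ast^{4N-2j-k}\dt^j(h'(\bar\rho)\q)$ outputs of Lemma \ref{lemmau2N} across $1\le k\le 4N-2j$. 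The $\pa_t^j\q$ piece of $\bar{\mathcal{E}}_{2N}^\sigma$ at full order $4N-2j$ is the sum of the horizontal part already in $\mathfrak{E}_{2N}^\sigma$ and the $\pa_3$-decomposition provided by $\mathfrak{A}_{2N}^{j,k}$ for $0\le k\le 4N-2j-1$.

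The main obstacle is the inductive bookkeeping: the Stokes estimate at level $(j,k)$ requires $\ns{\pa_t^{j+1}u}_{4N-2j-1}$, which is a \emph{higher} temporal derivative of $u$, and the $\pa_3\q$ evolution at level $(j,k)$ pulls in $\ns{\nabla_\ast^{4N-2j-k}\pa_t^j u}_{k+1}$, which is the Stokes output. Avoiding circularity requires ordering the induction carefully (decreasing in $j$, then increasing in $k$) and checking that each invoked quantity on the right-hand side has already been estimated. Once the induction loop is arranged correctly, the nonlinear contributions all fall inside the template $\sqrt{\mathcal{E}_{2N}^\sigma}(\mathcal{D}_{2N}^\sigma+\mathcal{E}_{2N}^\sigma+\mathcal{F}_{2N})$ by the nonlinear bounds of Lemmas \ref{p_F2N_estimates}--\ref{G11_weighted}, and the only genuinely non-absorbable term is the $\int\|\eta_-\|_{4N-1/2}^2\,ds$ generated in the first step, which is precisely what appears on the right of \eqref{ebar2N}.
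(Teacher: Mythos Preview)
Your proposal is correct and follows essentially the same route as the paper: sum Propositions \ref{i_temporal_evolution 2N} and \ref{i_spatial_evolution 2N} to obtain the $\mathfrak{E}_{2N}^\sigma$--$\mathfrak{D}_{2N}$ estimate, then run the double induction (increasing in $k$, decreasing in $j$) that alternates Proposition \ref{i_rho_evolution 2N} with Lemma \ref{lemmau2N}, and finally reassemble. The one step you gloss over is that $\mathfrak{A}_{2N}^{j,k}$ controls $\partial_3$-derivatives of $h'(\bar\rho)\q$ rather than of $\q$ itself; the paper spends a short Leibniz-rule/induction argument (equations \eqref{ee_12}--\eqref{ee_2}) to convert $\sum_{j,k}\mathfrak{A}_{2N}^{j,k}$ together with $\ns{\bar\nabla_*^{4N}\q}_0$ into $\sum_j \ns{\partial_t^j\q}_{4N-2j}$, but this is routine once noticed.
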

\begin{proof}

First, we sum the result of Proposition  \ref{i_temporal_evolution 2N} with the result of Proposition
 \ref{i_spatial_evolution 2N}; this yields the estimate
\begin{equation}\label{boo_1}
 \Ef(t) + \int_0^t\Df  ds\ls  {\mathcal{E}}_{2N}^\sigma(0)+ \int_0^t  \sqrt{\mathcal{E}_{2N}^\sigma }\(\mathcal{D}_{2N}^\sigma+ \mathcal{E}_{2N}^\sigma+ \f_{2N}\)ds+ \int_0^t \norm{\eta_-}_{4N-1/2}^2ds,
\end{equation}
where $\Ef$ and $\Df$ are as defined in \eqref{E_frak} and \eqref{D_frak}.

Next, for $0\le  j\le 2N-1$ and  $0\le k\le 4N-2 j-1$, we may combine the results of Proposition \ref{i_rho_evolution 2N} and Lemma \ref{lemmau2N} (summed over  $1 \le k' \le k$) with \eqref{drho} to see that
\begin{align}\label{boo_3}
\nonumber   \Af +  \int_0^t\Bf  ds &\ls  {\mathcal{E}}_{2N}^\sigma(0)+\int_0^t
 \norm{\pa_t^{ j+1} u  }_{4N-2 j-1}^2
+ \sum_{k'=1}^{k}\norm{\na_{\ast}^{4N-2 j-k'} \pa_t^{ j } \mathcal{Q} }_{k'}^2 ds
  \\&\quad+ \int_0^t \Df +  \sqrt{\mathcal{E}_{2N}^\sigma }\(\mathcal{D}_{2N}^\sigma+\mathcal{E}_{2N}^\sigma+  \f_{2N}\) ds,
\end{align}
where $\Af$ and $\Bf$ are given in \eqref{A_frak} and \eqref{B_frak}. Note that Lemma \ref{lemmau2N} used here is to control the term $\norm{\na_{\ast}^{4N-2 j-k'} \pa_t^j    u }_{k'+1}^2$ in the right hand side of \eqref{density es2N}. If we
write
\begin{equation}\label{boo_5de}
 \Hf := \ns{ \bar{\na}_{\ast}^{4N}  \mathcal{Q}}_0 + \ns{\nab_\ast^{4N-2j-k-1} \dt^j \mathcal{Q}}_{k+1},
\end{equation}
then we  have
\begin{equation}\label{boo_7}
\Hf \ls  \norm{ \bar{\na}_{\ast}^{4N}  \mathcal{Q}}_0^2  + \sum_{k'=0}^{k} \norm{\nab_{\ast}^{4N-2j-k'-1}  \pa_3^{k'+1}\pa_t^j  \mathcal{Q}}_0^2\le\Bf.
\end{equation}
In turn, we have
\begin{align}\label{boo_3'}
  \nonumber  \Af +  \int_0^t\Hf ds& \ls {\mathcal{E}}_{2N}^\sigma(0)+\int_0^t
 \norm{\pa_t^{ j+1} u  }_{4N-2 j-1}^2
+ \sum_{k'=1}^{k}\norm{\na_{\ast}^{4N-2 j-k'} \pa_t^{ j } \mathcal{Q} }_{k'}^2 ds
   \\
 & \quad+ \int_0^t \Df +  \sqrt{\mathcal{E}_{2N}^\sigma }\(\mathcal{D}_{2N}^\sigma+\mathcal{E}_{2N}^\sigma+  \f_{2N}\)ds.
\end{align}
A standard induction argument on the above yields
\begin{multline}\label{boo_9}
 \sum_{k=0}^{4N-2j-1}  \Af  +\int_0^t\sum_{k=0}^{4N-2j-1}  \Hf ds
\ls  {\mathcal{E}}_{2N}^\sigma(0)+\int_0^t
\norm{\pa_t^{ j+1} u  }_{4N-2 j-1}^2+\norm{\na_{\ast}^{4N-2 j} \pa_t^{ j } \mathcal{Q} }_{0}^2 ds
\\+ \int_0^t \Df +  \sqrt{\mathcal{E}_{2N}^\sigma }\(\mathcal{D}_{2N}^\sigma+\mathcal{E}_{2N}^\sigma+  \f_{2N}\) ds
\\
\ls  {\mathcal{E}}_{2N}^\sigma(0)+\int_0^t \norm{\pa_t^{ j+1} u  }_{4N-2 j-1}^2
+ \Df +  \sqrt{\mathcal{E}_{2N}^\sigma }\(\mathcal{D}_{2N}^\sigma+\mathcal{E}_{2N}^\sigma+  \f_{2N}\) ds,
\end{multline}
where we have used \eqref{drho} to derive the second inequality.

Note now, using the definition of $\Hf$, that
\begin{equation}
 \ns{ \bar{\na}_{\ast}^{4N}  \mathcal{Q}}_0 + \ns{\dt^j \mathcal{Q}}_{4N-2j}
\ls
 \sum_{k=0}^{4N-2j-1}   \Hf  .
\end{equation}
Using Lemma \ref{lemmau2N} with $k = 4N-2j$, we then have that
\begin{align}
&\nonumber \ns{ \dt^j u}_{4N-2j+1}  + \ns{\nab \dt^j (h'(\bar{\rho}) \q)}_{4N-2j-1}
\\
&\quad\ls
 \sum_{k=0}^{4N-2j-1} \Hf  + \ns{\dt^{j+1} u}_{4N-2j-1}+ \Df +  \mathcal{E}_{2N}^\sigma  \(\mathcal{D}_{2N}^\sigma+\mathcal{E}_{2N}^\sigma+   \f_{2N}\).
\end{align}
Hence \eqref{boo_9} implies
\begin{align}\label{boo_11}
 &\nonumber\sum_{k=0}^{4N-2j-1}  \Af  +\int_0^t   \left( \ns{ \dt^j u}_{4N-2j+1}  + \ns{\nab \dt^j (h'(\bar{\rho}) \q)}_{4N-2j-1}  \right) ds
 \\
&\quad\ls  {\mathcal{E}}_{2N}^\sigma(0)+\int_0^t\norm{\pa_t^{ j+1} u  }_{4N-2 j-1}^2
+  \Df
+  \sqrt{\mathcal{E}_{2N}^\sigma }\(\mathcal{D}_{2N}^\sigma+\mathcal{E}_{2N}^\sigma+  \f_{2N}\) ds,
\end{align}
for all $0 \le j \le 2N-1$. A standard induction argument on the above yields
\begin{align}\label{boo_11'}
 &\nonumber\sum_{j=0}^{2N-1}\sum_{k=0}^{4N-2j-1}  \Af  +\int_0^t\sum_{j=0}^{2N-1}  \left( \ns{ \dt^j u}_{4N-2j+1}  + \ns{\nab \dt^j (h'(\bar{\rho}) \q)}_{4N-2j-1}  \right) ds
 \\ &\quad
\ls   {\mathcal{E}}_{2N}^\sigma(0)+\int_0^t \Df+
\sqrt{\mathcal{E}_{2N}^\sigma }\(\mathcal{D}_{2N}^\sigma+\mathcal{E}_{2N}^\sigma+  \f_{2N}\) ds.
\end{align}

Consequently, a suitable linear combination of \eqref{boo_1} and \eqref{boo_11'} gives
\begin{align}\label{boo_12}
&\nonumber \Ef(t)+ \sum_{j=0}^{2N-1}\sum_{k=0}^{4N-2j-1}  \Af  +\int_0^t   \bar{\mathcal{D}}_{2N} ds
\\&\quad\ls
    {\mathcal{E}}_{2N}^\sigma(0)+ \int_0^t  \sqrt{\mathcal{E}_{2N}^\sigma }\(\mathcal{D}_{2N}^\sigma+ \mathcal{E}_{2N}^\sigma+ \f_{2N}\) ds +\int_0^t \norm{\eta_-}_{4N-1/2}^2 ds.
\end{align}
Note that
\begin{multline}\label{ee_12}
  \sum_{j=0}^{2N-1} \sum_{k=0}^{4N-2j-1}   \Af \asymp \sum_{j=0}^{2N-1} \sum_{k=0}^{4N-2j-1}\sum_{k'=0}^k \ns{\nab_\ast^{4N-2j-k'-1} \p_3^{k'+1} \dt^j(h'(\bar{\rho})\q)}_0
\\
= \sum_{j=0}^{2N-1} \sum_{k=0}^{4N-2j-1} \ns{\nab_\ast^{4N-2j-k-1} \p_3 \dt^j(h'(\bar{\rho})\q)}_k
\asymp \sum_{j=0}^{2N-1} \ns{\p_3 \dt^j(h'(\bar{\rho})\q)}_{4N-2j-1}:=\mathcal{Z}.
\end{multline}
Since
\begin{equation}
 \p_3 \dt^j \q = \frac{1}{h'(\bar{\rho})} \left[ \p_3 \dt^j (h'(\bar{\rho}) \q) - \p_3(h'(\bar{\rho}) )\dt^j\q  \right].
\end{equation}
and $h'(\bar{\rho})$ is smooth on $[-b,0]$ and $[0,\ell]$ and bounded below from zero, we may estimate
\begin{equation}
 \ns{ \p_3 \dt^j q}_{0} \ls \ns{ \p_3 \dt^j(h'(\bar{\rho}) \q)}_0 +  \ns{\dt^j \q}_0 \ls \z + \ns{\dt^j \q}_0 \ls \z + \ns{\bar{\nab}_\ast^{4N-1} \q}_0
\end{equation}
and similarly
\begin{multline}
 \ns{ \p_3 \dt^j q}_{i} \ls \ns{ \p_3 \dt^j(h'(\bar{\rho}) \q)}_i +  \ns{\dt^j \q}_i \ls \z+ \ns{\nab_\ast^i \dt^j \q}_0 + \ns{ \p_3 \dt^j \q}_{i-1} \\
\ls \z+ \ns{\bar{\nab}_\ast^{4N-1} \q}_0 + \ns{ \p_3 \dt^j \q}_{i-1}
\end{multline}
for $i=1,\dotsc,4N-2j-1$. A standard induction argument then yields
\begin{equation}\label{ee_1}
\ns{\p_3 \dt^j  \q}_{4N-2j-1} \le \sum_{i=0}^{4N-2j-1} \ns{\p_3 \dt^j  \q}_{i}  \ls \z + \ns{\bar{\nab}_\ast^{4N-1} \q}_0.
\end{equation}
On the other hand,
\begin{equation}\label{ee_2sum}
 \ns{\dt^j \q}_{4N-2j} \le \ns{\bar{\nab}_\ast^{4N} \q}_{0} + \ns{\p_3 \dt^j \q}_{4N-2j-1},
\end{equation}
so summing  \eqref{ee_1} and \eqref{ee_2sum} yields 
\begin{equation}\label{ee_2}
 \sum_{j=0}^{2N}  \ns{\dt^j \q}_{4N-2j} \ls \z  + \ns{\bar{\nab}_\ast^{4N} \q}_{0}.
\end{equation}
We then deduce \eqref{ebar2N} from \eqref{boo_12} and \eqref{ee_2}.
\end{proof}

\subsection{Full energy estimates}\label{sec_full}

In this section, we will derive our ultimate energy estimates.
First, we combine the results in Sections \ref{stable3} and \ref{sec_combo}. We define
\begin{equation}\label{energytil}
\tilde{\mathcal{E}}_{2N}^\sigma:= \bar{\mathcal{E}}_{2N}^\sigma + \sum_{j=0}^{2N} \ns{\dt^j \eta}_{4N-2j}.
\end{equation}
Then a linear combination of the estimates \eqref{ebar2N} of Proposition \ref{boostrap2N} and the estimates \eqref{transportes}--\eqref{transportes2} of Proposition \ref{transportlemma} gives
\begin{align}\label{ebar2N1}
\nonumber
\tilde{\mathcal{E}}_{2N}^\sigma(t)+\f_{2N}(t) + \int_0^t \bar{\mathcal{D}}_{2N} ds
\le& C_\varepsilon{\mathcal{E}}_{2N}^\sigma(0)+\f_{2N}(0) + C_\varepsilon\int_0^t  \sqrt{\mathcal{E}_{2N}^\sigma }\(\mathcal{D}_{2N}^\sigma+ \mathcal{E}_{2N}^\sigma+ \f_{2N}\)ds
\\&+\varepsilon\int_0^t  \(\mathcal{E}_{2N}^\sigma+\f_{2N}\)  ds+  C_\varepsilon\int_0^t\norm{\eta_-}_{4N-1/2}^2ds
\end{align}
for any $\varepsilon>0$ and a corresponding constant $C_\varepsilon>0$.

Next, we show that ${\mathcal{E}}_{2N}^\sigma$ is comparable to $ \tilde{\mathcal{E}}_{2N}^\sigma$ and that ${\mathcal{D}}_{2N}^\sigma$ is comparable to $ \bar{\mathcal{D}}_{2N}$. We begin with the result for the energy.

\begin{prop}\label{eth}
Let ${\mathcal{E}}_{2N}^\sigma$ and $\tilde{\mathcal{E}}_{2N}^\sigma$ be as defined in \eqref{p_energy_def} and \eqref{energytil}  respectively.  It holds that
\begin{equation}\label{e2n}
{\mathcal{E}}_{2N}^\sigma \lesssim
 \tilde{\mathcal{E}}_{2N}^\sigma  +\se{2N}^\sigma\(\se{2N}^\sigma +   \f_{2N}\).
\end{equation}
\end{prop}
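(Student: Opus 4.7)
The plan is to reconstruct the terms of $\mathcal{E}_{2N}^\sigma$ not already present in $\tilde{\mathcal{E}}_{2N}^\sigma$ and control them up to a quadratic remainder that can be absorbed into $\se{2N}^\sigma(\se{2N}^\sigma + \f_{2N})$. Comparing \eqref{p_energy_def} with \eqref{energytil} and \eqref{Enn}, the gains to recover are of three types: full spatial regularity for $u$ beyond the horizontal--temporal control $\|\bar\nabla_\ast^{4N} u\|_0^2$ in $\tilde{\mathcal{E}}_{2N}^\sigma$; one additional spatial derivative for each $\dt^j \q$ with $j\ge 1$ (passing from $4N-2j$ to $4N-2j+1$); and a $3/2$-derivative gain for each $\dt^j \eta$ with $j\ge 1$ (from $4N-2j$ to $4N-2j+3/2$). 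For the $\eta$ and $\q$ terms with $j=0$, the required norms $\|\eta\|_{4N}^2$, $\sigma\|\nabla_\ast \eta\|_{4N}^2$, and $\|\q\|_{4N}^2$ already sit inside $\tilde{\mathcal{E}}_{2N}^\sigma$, so those contributions are immediate.

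For the velocity regularity, I would proceed by downward induction on $j$, from $j=2N$ to $j=0$. The base case $j=2N$ is trivial since $\|\dt^{2N} u\|_0^2 \le \|\bar\nabla_\ast^{4N} u\|_0^2 \le \tilde{\mathcal{E}}_{2N}^\sigma$. For the inductive step with $0 \le j \le 2N-1$, I would apply the energy-level analogue of the Stokes elliptic estimate for \eqref{stoke problem}, at regularity index $r=4N-2j-2$, to $\dt^j u$ and $\dt^j(h'(\bar\rho)\q)$. The right-hand side then splits into: the driving term $\|\dt^{j+1} u\|_{4N-2j-2}^2 = \|\dt^{j+1} u\|_{4N-2(j+1)}^2$, controlled by the inductive hypothesis; the pressure term $\|\nabla\dt^j(h'(\bar\rho)\q)\|_{4N-2j-2}^2 \ls \|\dt^j \q\|_{4N-2j-1}^2$, controlled by $\tilde{\mathcal{E}}_{2N}^\sigma$; the $\dt^j$-derivatives of the nonlinearities $G^{1,2}, G^2, \mathcal{Q}$ appearing on the right of \eqref{stoke problem}, absorbed by Lemma \ref{p_G2N_estimates} into $\se{2N}^\sigma(\se{2N}^\sigma+\f_{2N})$; and a boundary trace of $\dt^j u$ on $\Sigma$, discussed below.

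The $\q$ and $\eta$ gains for $j \ge 1$ are obtained by trading one temporal derivative for a spatial one using the equations themselves. From the continuity equation in \eqref{ns_perturb} we have $\dt^j \q = -\dt^{j-1} \diverge(\bar\rho u) + \dt^{j-1} G^1$, whence
\begin{equation*}
\|\dt^j \q\|_{4N-2j+1} \ls \|\dt^{j-1} u\|_{4N-2(j-1)} + \|\dt^{j-1} G^1\|_{4N-2j+1};
\end{equation*}
the first summand is covered by the velocity estimate just derived, and the second is a nonlinear contribution handled by Lemma \ref{p_G2N_estimates}. Similarly, from the kinematic boundary condition applied $j$ times as in \eqref{F4j_def}, trace theory yields
\begin{equation*}
\|\dt^j \eta\|_{4N-2j+3/2} \ls \|\dt^{j-1} u\|_{4N-2(j-1)} + \|F^{4,j-1}\|_{4N-2j+3/2},
\end{equation*}
the first term being provided by the $u$-induction and the second being absorbed by Lemma \ref{p_F2N_estimates}.

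The main obstacle is the boundary trace of $\dt^j u$ on $\Sigma$ required at each step of the $u$-induction; a direct appeal to the trace theorem would give $\|\dt^j u\|_{H^{4N-2j-1/2}(\Sigma)} \ls \|\dt^j u\|_{4N-2j}$, which is precisely what the induction is trying to produce and therefore circular. This is circumvented by invoking the dynamic boundary conditions in \eqref{ns_perturb} to re-express the normal trace of $\S(\dt^j u)$ on $\Sigma_\pm$ via the surface functions $\eta$, the density $\q$, and the nonlinear terms $F^{3,j}$ of \eqref{F3j+_def}--\eqref{F3j-_def}: since $\|\eta\|_{4N}^2$ and $\|\dt^j \eta\|_{4N-2j}^2$ (for $j\ge 1$) are explicitly in $\tilde{\mathcal{E}}_{2N}^\sigma$, and the $F^{3,j}$ contributions fall into the quadratic remainder by Lemma \ref{p_F2N_estimates}, the Neumann-type boundary data required by the Stokes estimate is supplied at the correct regularity and the induction closes.
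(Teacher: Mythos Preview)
Your overall architecture --- downward induction on $j$ for the velocity, then trading a time derivative via the continuity equation for $\q$ and the kinematic condition for $\eta$ --- is exactly what the paper does. The gap is in the choice of elliptic problem for the inductive step.

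You propose the Stokes problem \eqref{stoke problem} together with Lemma~\ref{stokes reg}, but that problem carries a Dirichlet condition $u=u$ on $\partial\Omega_\pm$ and a divergence constraint involving $\mathcal{Q}$. Two circularities arise. First, as you note, the Dirichlet trace $\|\dt^j u\|_{H^{4N-2j-1/2}(\Sigma)}$ is equivalent by trace theory to the very norm $\|\dt^j u\|_{4N-2j}$ you are trying to produce. Your proposed fix --- feeding in Neumann-type stress data from the dynamic boundary conditions --- is not compatible with Lemma~\ref{stokes reg}, which is a Dirichlet estimate; it would require a different elliptic lemma. Second, and more seriously, you claim $\mathcal{Q}$ is ``absorbed by Lemma~\ref{p_G2N_estimates}'', but $\mathcal{Q}$ is not a nonlinearity: by \eqref{dt}, $\mathcal{Q}=-\diverge(\bar\rho u)+G^{1,2}$, so to put the divergence datum in $H^{4N-2j-1}$ you need $\|\dt^j u\|_{4N-2j}$ again. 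This circularity is not addressed at all.

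The paper avoids both issues by using the two-phase Lam\'e system \eqref{lame} with Lemma~\ref{lame reg} instead. There is no divergence constraint (hence no $\mathcal{Q}$), and the natural stress boundary conditions on $\Sigma_\pm$ supply boundary data built from $\eta$, the trace of $\q$, and $G^3$, all of which are controlled by $\tilde{\mathcal{E}}_{2N}^\sigma + \mathcal{X}_{2N}$ with $\mathcal{X}_{2N}$ the nonlinear remainder bounded by Lemma~\ref{p_G2N_estimates}. In effect, your last-paragraph ``fix'' is groping toward exactly this Lam\'e formulation; making that switch explicit and dropping the Stokes/$\mathcal{Q}$ route is what closes the argument.
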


\begin{proof}
We compactly write
\begin{equation}\label{n1}
 {\mathcal{X}}_{2N} =  \ns{ \bar{\nab}^{4N-2} G^1}_{1}  +  \ns{ \bar{\nab}^{4N-2}  G^2}_{0} +
 \ns{ \bar{\nab}_{\ast}^{  4N-2} G^3}_{1/2} +
 \ns{\bar{\nab}_{\ast }^{  4N-1} G^4}_{1/2} .
\end{equation}

We first estimate $\dt^j u$ for $j=0,\dots,2N-1$. The key is to use the elliptic regularity theory of the following two-phase Lam\'e system derived from \eqref{ns_perturb}:
\begin{equation}\label{lame}
 \begin{cases}
 -\mu\Delta u-(\mu/3+\mu')\nabla \diverge u =G^2-\bar{\rho} \partial_t    u  - \bar{\rho}\nabla \left(h'(\bar{\rho})\q\right)    & \text{in }
\Omega  \\
 - \S(  u) e_3  = (-P'(\bar\rho)\q+\rho_1  g \eta_+ -\sigma_+ \Delta_\ast \eta_+ ) e_3 +G_+^3
 & \text{on } \Sigma_+
 \\ -\jump{\S(u)}e_3
=(\jump{P'(\bar\rho)\q}+\rj g\eta_- +\sigma_- \Delta_\ast \eta_-)e_3-G_-^3&\hbox{on }\Sigma_-
\\\jump{u}=0 &\hbox{on }\Sigma_-
\\ u_-=0 &\hbox{on }\Sigma_b.
\end{cases}
\end{equation}
We let $  j=0,\dots, 2N-1$ and then apply $\dt^ j$ to the problem \eqref{lame} and use the elliptic estimates of Lemma \ref{lame reg} with $r=4N-2 j\ge2$, by \eqref{n1} and the trace theory to obtain
\begin{align}\label{n3}
 \nonumber\norm{\pa_t^ j u}_{4N-2 j}^2 &\lesssim   \norm{\pa_t^{ j } G^2}_{4N-2 j-2}^2+\norm{\pa_t^{ j+1} u}_{4N-2 j-2}^2
 +\norm{ \pa_t^{ j } \q }_{4N-2 j-1}^2+\norm{\pa_t^{ j }  \q }_{H^{4N-2 j-3/2}(\Sigma) }^2
   \\\nonumber&\quad
+\norm{\pa_t^{ j }  \eta }_{4N-2 j-3/2 }^2+\sigma^2\norm{\pa_t^{ j }  \eta }_{4N-2 j+1/2 }^2+\norm{\pa_t^{ j } G^3}_{4N-2 j-3/2 }^2
 \\
&\lesssim \norm{\pa_t^{ j+1} u}_{4N-2 ( j+1)}^2
+    \tilde{\mathcal{E}}_{2N}^\sigma + {\mathcal{X}}_{2N}  .
\end{align}
Using a simple induction based on the estimate \eqref{n3}, utilizing the $\ns{\dt^{2N} u}_0$ estimate contained in $\tilde{\mathcal{E}}_{2N}^\sigma$ for the base case, we easily deduce that for $ j=0,\dots,2N$,
\begin{equation}\label{n5}
 \norm{ \pa_t^ j u}_{4N-2 j}^2\lesssim     \tilde{\mathcal{E}}_{2N}^\sigma + {\mathcal{X}}_{2N}  .
\end{equation}

We then estimate $\pa_t^ j\q$ and $\pa_t^ j\eta$ for $ j=1,\dots,2N$ to get an improvement. By the first equation of $\eqref{ns_perturb}$, using the estimates \eqref{n5} and \eqref{n1}, we have that for $ j=1,\dots,2N$,
\begin{align}\label{n6}
\nonumber\norm{ \pa_t^ j \q}_{4N-2 j+1}^2 &\lesssim   \norm{ \pa_t^{ j-1} u}_{4N-2 j+2}^2+\norm{\pa_t^{ j-1} G^1}_{4N-2 j+1}^2
 \\ &= \norm{ \pa_t^{ j-1} u}_{4N-2({ j-1})}^2+\norm{\pa_t^{ j-1} G^1}_{4N-2({ j-1})-1}^2
\ls   \tilde{\mathcal{E}}_{2N}^\sigma + {\mathcal{X}}_{2N} .
\end{align}
Now by the kinematic boundary condition
\begin{equation}
\partial_t\eta=u_3+G^4\text{ on }\Sigma,
\end{equation}
we have that for $j=1,\dots,2N$, by the trace theory, \eqref{n5} and \eqref{n1},
\begin{align}\label{n7}
\nonumber\ns{\partial_t^j\eta}_{4N-2j+3/2}
&\le \ns{\dt^{j-1}u_3}_{H^{4N-2j+3/2}(\Sigma)}+\ns{\dt^{j-1} G^4}_{4N-2j+3/2}
\\
&\ls \ns{\dt^{j-1}u }_{4N-2(j-1)}+\ns{\dt^{j-1} G^4}_{4N-2(j-1)-1/2}
\ls    \tilde{\mathcal{E}}_{2N}^\sigma + {\mathcal{X}}_{2N} .
\end{align}

Summing  the estimates \eqref{n5}, \eqref{n6} and \eqref{n7},  we conclude that
\begin{equation}\label{claim}
{\mathcal{E}}_{2N}^\sigma \lesssim       \tilde{\mathcal{E}}_{2N}^\sigma + {\mathcal{X}}_{2N} .
\end{equation}
Using the estimate \eqref{p_G_e_0} of Lemma \ref{p_G2N_estimates} to bound $\mathcal{X}_{2N} \lesssim\se{2N}^\sigma\(\se{2N}^\sigma +   \f_{2N}\)$, we then obtain \eqref{e2n} from \eqref{claim}.
\end{proof}

Next we consider a similar result for the dissipation.
\begin{prop}\label{dth}
Let ${\mathcal{D}}_{2N}^\sigma$ and $\bar{\mathcal{D}}_{2N}$ be as defined in \eqref{p_dissipation_def} and \eqref{Dnn} respectively. It holds that
\begin{equation}\label{d2n}
{\mathcal{D}}_{2N}^\sigma
\ls     \bar{\mathcal{D}}_{2N} + \mathcal{E}_{2N}^\sigma \(\mathcal{D}_{2N}^\sigma  + \mathcal{E}_{2N}^\sigma+{\mathcal{F}}_{2N} \).
 \end{equation}
\end{prop}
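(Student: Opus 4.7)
The plan is to estimate each term of $\mathcal{D}_{2N}^\sigma$ that is not already sitting inside $\bar{\mathcal{D}}_{2N}$, namely the time-derivative $q$-terms $\|\dt q\|_{4N-1}^2$ and $\|\dt^j q\|_{4N-2j+2}^2$ for $2\le j \le 2N+1$, the kinematic $\dt^j\eta$-terms for $j\ge 1$, and the surface-tension term $\sigma^2\|\nabla_\ast\eta\|_{4N+1/2}^2$. The $u$-dissipation $\sum_{j=0}^{2N}\|\dt^j u\|_{4N-2j+1}^2$ is already common to both.

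For the $q$-terms I would use the continuity equation from \eqref{ns_perturb}, rewriting it as $\dt^j q = -\diverge(\bar\rho\,\dt^{j-1}u) + \dt^{j-1}G^1$. Taking norms yields
\begin{equation*}
\|\dt^j q\|_{4N-2j+2}^2 \ls \|\dt^{j-1}u\|_{4N-2(j-1)+1}^2 + \|\dt^{j-1}G^1\|_{4N-2(j-1)}^2,
\end{equation*}
and the first term is precisely in $\bar{\mathcal{D}}_{2N}$, while $G^1$ is controlled by $\mathcal{E}_{2N}^\sigma(\mathcal{D}_{2N}^\sigma+\mathcal{E}_{2N}^\sigma+\f_{2N})$ through Lemma \ref{p_G2N_estimates}. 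For the kinematic $\dt^j\eta$-terms I would apply $\dt^{j-1}$ to $\dt\eta = u_3 + G^4$ and use the trace inequality $\|\dt^{j-1}u_3\|_{H^s(\Sigma)}\ls\|\dt^{j-1}u\|_{s+1/2}$; this places the $u$-piece inside $\bar{\mathcal{D}}_{2N}$ and makes the $G^4$-remainder nonlinear via Lemma \ref{p_G2N_estimates}. The $\sigma^2$-weighted $\|\dt\eta\|_{4N+1/2}^2$ estimate is identical.

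The main obstacle is the surface-tension term $\sigma^2\|\nabla_\ast\eta\|_{4N+1/2}^2$. The third component of each dynamic boundary condition in \eqref{ns_perturb} reduces to an elliptic equation on $\Sigma_\pm\simeq\mathrm{T}^2$: at $\Sigma_+$,
\begin{equation*}
(-\sigma_+\Delta_\ast + \rho_1 g)\eta_+ = P'(\bar\rho)q|_{\Sigma_+} - \S(u)_{33}|_{\Sigma_+} - G^3_{3,+},
\end{equation*}
and analogously at $\Sigma_-$ for the operator $\sigma_-\Delta_\ast + \rj g$ with right-hand side involving $\jump{P'(\bar\rho)q}$, $\jump{\S(u)_{33}}$, and $G^3_{3,-}$. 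Working in Fourier on $\mathrm{T}^2$, I would observe that the multiplier sending the right-hand side to $\sigma_\pm\nabla_\ast\eta_\pm$ at the $H^{4N+1/2}$ level has weight $\sigma_\pm^2(1+|\xi|^2)^{4N+1/2}|\xi|^2/|\mathrm{symbol}_\pm(\xi)|^2$, and a direct case analysis (large $|\xi|$ versus bounded $|\xi|$) shows this is uniformly bounded by $C(1+|\xi|^2)^{4N-1/2}$ on all nonzero lattice frequencies. Hence $\sigma_\pm^2\|\nabla_\ast\eta_\pm\|_{4N+1/2}^2\ls\|\mathrm{RHS}\|_{4N-1/2}^2$. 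Crucially, since $\nabla_\ast$ annihilates the $\xi=0$ mode, only the mean-subtracted part of the right-hand side matters; combining this with the Poincaré-trace bound $\|q|_{\Sigma_\pm}-\overline{q|_{\Sigma_\pm}}\|_{H^{4N-1/2}}^2\ls\|\nabla q\|_{4N-1}^2\ls\bar{\mathcal{D}}_{2N}$ absorbs all linear $q$-contributions into $\bar{\mathcal{D}}_{2N}$ without producing a bare $\|q\|_0^2$ term. The $u$-trace yields $\|u\|_{4N+1}^2\le\bar{\mathcal{D}}_{2N}$, and the $G^3_\pm$-terms are nonlinear through Lemma \ref{p_G2N_estimates}.

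The most delicate feature is the $\eta_-$ case in the unstable regime $\sigma_-<\sigma_c$, where the symbol $-\sigma_-|\xi|^2+\rj g$ is not coercive and may become small near $|\xi|_c=\sqrt{\rj g/\sigma_-}$. For $|\xi|\gg|\xi|_c$ the symbol behaves like $-\sigma_-|\xi|^2$ and the multiplier bound is uniform; only finitely many lattice modes lie near $|\xi|_c$, on which the symbol is bounded away from zero for admissible $\sigma_-$, so a uniform multiplier bound still holds. Collecting the three estimates and applying Lemma \ref{p_G2N_estimates} then yields \eqref{d2n}.
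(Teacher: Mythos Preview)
Your treatment of the $\dt^j q$ terms via the continuity equation and of the $\dt^j\eta$ terms via the kinematic boundary condition matches the paper exactly, and your idea of removing the zero Fourier mode so that only $\nabla_\ast q$ (hence $\nabla(h'(\bar\rho)q)\in\bar{\mathcal D}_{2N}$) enters the boundary trace is essentially the paper's device of applying $\nabla_\ast$ to the dynamic boundary conditions before invoking elliptic estimates.

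The gap is in the $\eta_-$ multiplier step. You assert that on the finitely many lattice modes with $|\xi|$ near $|\xi|_c$ the symbol $\sigma_-|\xi|^2-\rj g$ is ``bounded away from zero for admissible $\sigma_-$.'' This is false in general: for instance, if $L_1<L_2$ then $\sigma_-=\rj g L_1^2<\sigma_c=\rj g L_2^2$ is permitted, yet at $\xi=(L_1^{-1},0)$ the symbol vanishes identically and your multiplier is infinite. More broadly, as $\sigma_-\to 0^+$ the critical radius $|\xi|_c\to\infty$ and sweeps through infinitely many lattice radii, so any lower bound on $|\sigma_-|\xi|^2-\rj g|$ over the lattice degenerates along a sequence of $\sigma_-$-values accumulating at $0$; this violates the paper's convention that universal constants remain bounded as $\sigma_\pm\to 0$. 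Hence the multiplier bound you need cannot hold with a universal constant, and the argument breaks.

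The paper sidesteps this completely by \emph{not} inverting the full operator $-\sigma_-\Delta_\ast-\rj g$. Instead it moves $\rj g\eta_-$ to the data side, applies $\nabla_\ast$, and uses only the pure Laplacian $-\sigma_-\Delta_\ast$ (symbol $\sigma_-|\xi|^2$, harmless on nonzero modes). The price is an extra term $\ns{\nabla_\ast\eta_-}_{4N-3/2}\le\ns{\eta_-}_{4N-1/2}$ on the right, which the paper records in its intermediate inequality (the paper's \eqref{claim2}) and which is subsequently absorbed via interpolation and the energy $\se{2N}^\sigma$ in the proof of Theorem~\ref{energyeses}. Your route, if it worked, would avoid that extra term, but it does not work; the correct fix is precisely to treat $\rj g\eta_-$ as a forcing term rather than as part of the elliptic operator.
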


\begin{proof}
We compactly write
\begin{equation}\label{p_D_b_4}
\begin{split}
 {\mathcal{Y}}_{2N} = & \ns{ \bar{\nab}^{4N-1} G^1}_{0} + \ns{ \bar{\nab}^{4N-2}\dt G^1}_{0} +
 \ns{\bar{\nab}_{\ast}^{4N-1} G^3}_{1/2} \\&+ \ns{\bar{\nab}_{\ast }^{ 4N-1} G^4}_{1/2}
+ \ns{\bar{\nab}^{4N-2} \dt G^4}_{1/2}+ \sigma^2\ns{\nab_\ast^{4N}   G^4}_{1/2}.
\end{split}
\end{equation}

We now estimate the remaining parts of $\bar{\mathcal{D}}_{2N}$ not contained in ${\mathcal{D}}_{2N}^\sigma$.  We divide the proof into several steps.

Step 1 -- $\dt^j\q$ estimates

We first notice that by the first equation of \eqref{ns_perturb},
\begin{equation}\label{n511}
\norm { \pa_t \q
 }_{4N-1}^2\le \norm {u }_{4N}^2+\norm {
G^{1}}_{4N -1}^2\lesssim \sdb{2N}   +\y_{2N},
\end{equation}
and for $2\le  j\le 2N+1$,
\begin{align}\label{n512}
\nonumber
\norm { \pa_t^{ j}\q}_{4N-2 j+2}^2&\le \norm { \pa_t^{ j-1}u}_{4N-2 j+3}^2+\norm { \pa_t^{ j-1}G^{1}}_{4N-2 j+2}^2
\\&\le \norm { \pa_t^{ j-1}u}_{4N-2 j+3}^2+\norm { \pa_t^{ j-1}G^{1}}_{4N-2 j+2}^2\lesssim \sdb{2N}   +\y_{2N}.
\end{align}

Step 2 -- $\dt^j \eta$ estimates

We now derive estimates for time derivatives of $\eta$. For the term $\dt^j \eta$ for $j\ge 2$ we use the kinematic boundary condition
\begin{equation}\label{n61}
\partial_t\eta=u_3+G^4\text{ on }\Sigma.
\end{equation}
Indeed, for $j=2,\dots,2N+1$ trace theory and \eqref{p_D_b_4} imply that
\begin{align}\label{eta1}
\nonumber
\ns{\partial_t^j\eta}_{4N-2j+5/2} & \ls  \ns{\partial_t^{j-1}u_3}_{H^{4N-2j+5/2}(\Sigma)} +\ns{\partial_t^{j-1}G^4}_{4N-2j+5/2}
\\
&\lesssim   \ns{\partial_t^{j-1}u }_{{4N-2(j-1)+1}} +\ns{\partial_t^{j-1}G^4}_{4N-2(j-1)+1/2}
 \lesssim\sdb{2N}   +\y_{2N}.
\end{align}
For the term $\partial_t\eta$, we again use \eqref{n61}, trace theory, and \eqref{p_D_b_4} to find
\begin{align}\label{eta2}
\nonumber\sigma^2\ns{\partial_t \eta}_{4N+1/2}+\ns{\partial_t \eta}_{4N-1/2} &\lesssim (1+\sigma^2)\ns{u_3}_{H^{4N+1/2}(\Sigma)}+\sigma^2\ns{ G^4}_{4N+1/2}+\ns{ G^4}_{4N-1/2}
\\ &\lesssim  \ns{ u }_{4N+1}+\y_{2N}\lesssim   \sdb{2N}   +\y_{2N}.
\end{align}

Step 3 -- $\nab_\ast \eta$ estimates

In this step we use instead the dynamic boundary condition
\begin{equation}\label{pb1}
 -\sigma_+\Delta_\ast\eta_++\rho_1  g\eta_+= P'_+(\rho_1 )\q_+-2 \mu_+\partial_3u_{3,+}-\mu'_+\diverge u_+ -G_{3,+}^3\text{ on }\Sigma_+
\end{equation}
 and
\begin{equation} \label{pb2}
-\sigma_-\Delta_\ast\eta_-=-\jump{P'(\bar\rho)\q}+2\jump{\mu\partial_3u_3}+\jump{\mu'\diverge u}-G_{3,-}^3+\rj g\eta_- \text{ on } \Sigma_-.
\end{equation}
Notice that at this point we do not have any bound of $\q$ on the boundary $\Sigma$, but we have bounded  $\nabla (h'(\bar\rho)\q)$ in $\Omega$.  As such, we first apply $\nab_\ast$ to \eqref{pb1} and \eqref{pb2} and then we employ the standard elliptic theory.  This, trace theory, and \eqref{p_D_b_4} 
then provide the estimate
 \begin{align}\label{n51}
\nonumber& \sigma^2 \ns{\nab_\ast \eta}_{4N+1/2} + \ns{ \nab_\ast \eta_+}_{4N-3/2}
\\\nonumber&\quad
\lesssim \ns{  \nab_\ast (h'(\bar\rho)\q) }_{H^{4N-3/2}(\Sigma)}  + \ns{  \nab_\ast \nabla u }_{H^{4N-3/2}(\Sigma)} +\ns{\nab_\ast G^3_3}_{ 4N-3/2 }+ \ns{ \nab_\ast \eta_-}_{4N-3/2}
\\\nonumber&\quad\lesssim \ns{\nabla (h'(\bar\rho)\q)}_{4N-1} + \ns{u }_{4N+1}  + \ns{G^3}_{4N-1/2}+\ns{ \eta_-}_{4N-1/2}
\\&\quad\lesssim \sdb{2N}   +\y_{2N}+\ns{ \eta_-}_{4N-1/2}.
\end{align}

Consequently, summing  the estimates \eqref{n511}, \eqref{n512}, \eqref{eta1}, \eqref{eta2} and \eqref{n51},  we conclude
\begin{equation}\label{claim2}
{\mathcal{D}}_{2N}^\sigma \lesssim       \bar{\mathcal{D}}_{2N} +\y_{2N}+\ns{ \eta_-}_{4N-1/2}.
\end{equation}
Using the estimate \eqref{p_G_e_00} of Lemma \ref{p_G2N_estimates} to bound $\mathcal{Y}_{2N} \lesssim\se{2N}^\sigma\left(\se{2N}^\sigma +   \f_{2N}\right)$, we then obtain \eqref{d2n} from \eqref{claim2}.
\end{proof}


Finally we are ready to prove Theorem  \ref{energyeses}.

\begin{proof}[Proof of Theorem \ref{energyeses}]
By the estimates \eqref{e2n} of Proposition \ref{eth} and the estimates \eqref{d2n} of Proposition \ref{dth}, we can improve the inequality \eqref{ebar2N1} to be
\begin{align}\label{ebar2N2}
\nonumber
 {\mathcal{E}}_{2N}^\sigma(t)+\f_{2N}(t) + \int_0^t  {\mathcal{D}}_{2N}^\sigma ds
\le& C_\varepsilon{\mathcal{E}}_{2N}^\sigma(0)+\f_{2N}(0) + C_\varepsilon\int_0^t  \sqrt{\mathcal{E}_{2N}^\sigma }\(  {\mathcal{D}}_{2N}^\sigma+\mathcal{E}_{2N}^\sigma+ \f_{2N}\)ds
\\&+\varepsilon\int_0^t  \(\mathcal{E}_{2N}^\sigma+\f_{2N}\)  ds+  C_\varepsilon\int_0^t\norm{\eta_-}_{4N-1/2}^2ds.
\end{align}
Sobolev interpolation on $\Sigma$ allows us to bound
\begin{equation}
C_\varepsilon\norm{\eta_-}_{4N-1/2}^2\le\varepsilon \norm{\eta_-}_{4N }^2+ C_\varepsilon\norm{\eta_-}_{0}^2\le \varepsilon \mathcal{E}_{2N}^\sigma+C_\varepsilon\norm{\eta_-}_{0}^2.
\end{equation}
We can thus refine the inequality \eqref{ebar2N1} to be \eqref{fullenergy}.
\end{proof}
\section{Nonlinear instability}\label{proof}

\subsection{Restated estimates}

In the following, we take $\lambda=\Lambda$ defined by \eqref{Lambda} when $\sigma_->0$, while we take $\lambda=\Lambda_\ast$ defined by \eqref{littlelambda} when $\sigma_-=0$.  In each case, we have that $\frac{\Lambda}{2}<\lambda\le \Lambda $.

We define the norm $\Lvert3\cdot\Rvert3_{00}$ appearing in Theorem \ref{maintheorem} by
\begin{equation}\label{norm3}
 \Lvert3  (q ,  u, \eta) \Rvert3_{00} := \sqrt{\mathcal{E}_{2N}^\sigma+\mathcal{F}_{2N}}
\end{equation}
for an integer $N\ge 3$, where $\mathcal{E}_{2N}^\sigma$ and $\mathcal{F}_{2N}$ are given by \eqref{p_energy_def} and \eqref{fff}. For notational convenience, we denote
\begin{equation}\label{Uvector}
U:= (  q ,  u,  \eta).
\end{equation}

We now restate the main results of the previous Sections in our new notation.
\begin{prop}\label{prop1}
Let the norm $\Lvert3\cdot\Rvert3_{00}$ be given by \eqref{norm3}. Then we have the following.
\begin{enumerate}
\item There is a growing mode $U^\star:=(q^\star, u^\star, \eta^\star)$
 satisfying $\norm{ \eta^\star_-}_0= 1$,
$\Lvert3 U^\star \Rvert3_{00}=C_1<\infty$, and
$e^{\lambda t}U^\star$ is the solution to \eqref{linear}.
\item  Suppose that $U(t)$ is the solution to \eqref{ns_perturb}. There exists a small constant $\delta$ such that  if $\Lvert3U(t) \Rvert3_{00}\le\delta$ for all $t \in [0,T]$, then there exists $C_\delta> 0$ so
that the following inequality holds for $t \in [0,T]$:
\begin{align}
\nonumber  \Lvert3 U(t) \Rvert3_{00}^2 &\le  C_\delta\Lvert3 U(0)  \Rvert3_{00}^2
   +  \frac{\lambda}{2}\int_0^t\Lvert3U(s)  \Rvert3_{00}^2\,ds \\ &\qquad+   C_\delta\int_0^t\Lvert3U(s)  \Rvert3_{00}^3\,ds
   + C_\delta\int_0^t \norm{  \eta_-(s)}_{0}^2\,ds. \label{energyes}
 \end{align}
\item There exists $C_2> 0$ so that
\begin{align}
\nonumber \| \eta_-(t)-\iota e^{\lambda t}\eta^\star_-\|_{0} & \le
C_2 e^{ \Lambda t}\Lvert3U(0)-\iota U^\star  \Rvert3_{00} +C_2\int_0^t  \Lvert3U(s)  \Rvert3_{00}^2 ds
 \\ &\qquad+C_2\sqrt{\int_0^t  e^{2\Lambda
(t-s)} \Lvert3 U(s)  \Rvert3_{00}^2\Lvert3 U(s)-\iota e^{\lambda t} U^\star   \Rvert3_{00} ds}.
\end{align}

\end{enumerate}
 \end{prop}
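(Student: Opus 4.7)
The plan is to assemble the three items by specializing the main results of Sections \ref{growing mode}--\ref{energy} to the notation \eqref{norm3}--\eqref{Uvector}. For part (1), I would invoke Theorem \ref{growingmode}: choose $\abs{\xi}$ realizing $\lambda(\abs{\xi})=\Lambda$ when $\sigma_->0$, or realizing $\lambda(\abs{\xi})=\Lambda_\ast$ when $\sigma_-=0$ (this is possible by \eqref{Lambda}--\eqref{littlelambda} together with Proposition \ref{w_soln_2}), and take the corresponding $(q,u,\eta)=e^{\lambda t}(\tilde q,w,\tilde\eta)$. Since $\psi(\xi,0)\neq 0$ by Proposition \ref{w_soln_2}, the trace $\tilde\eta_-$ is nontrivial; rescaling the mode by a multiplicative constant normalizes $\norm{\eta^\star_-}_0=1$. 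Smoothness of $(\varphi,\theta,\psi)$ away from the interface, combined with the algebraic relations \eqref{ansatz1}--\eqref{ansatz2}, ensures each spatial norm appearing in $\Lvert3\cdot\Rvert3_{00}$ is finite; temporal derivatives reduce to powers of $\lambda$ times the spatial quantities, so $\Lvert3 U^\star\Rvert3_{00}=:C_1<\infty$.

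For part (2), I would apply Theorem \ref{energyeses} with $\varepsilon=\lambda/2$. Under the smallness hypothesis $\Lvert3 U(t)\Rvert3_{00}\le\delta$ we have $\mathcal{E}_{2N}^\sigma\le\delta^2$, so the cubic term on the right of \eqref{fullenergy} becomes $C_\delta\Lvert3 U\Rvert3_{00}\,(\mathcal{D}_{2N}^\sigma+\mathcal{E}_{2N}^\sigma+\mathcal{F}_{2N})$, and the $\sqrt{\mathcal{E}_{2N}^\sigma}\mathcal{D}_{2N}^\sigma$ piece can be absorbed into the $\int_0^t\mathcal{D}_{2N}^\sigma ds$ on the left after choosing $\delta$ small enough. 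What remains on the right is exactly of the form $C_\delta\Lvert3 U(0)\Rvert3_{00}^2+\tfrac{\lambda}{2}\int_0^t\Lvert3 U\Rvert3_{00}^2 ds+C_\delta\int_0^t\Lvert3 U\Rvert3_{00}^3 ds+C_\delta\int_0^t\norm{\eta_-}_0^2 ds$, yielding \eqref{energyes}.

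For part (3), set $V(t):=U(t)-\iota e^{\lambda t}U^\star$. Since $e^{\lambda t}U^\star$ solves the linearization \eqref{linear} exactly, $V$ solves the inhomogeneous linear system \eqref{linear ho} with forcing $G^i$ equal to the nonlinear terms $G^{1},G^{2},G^{3},G^{4}$ appearing in \eqref{ns_perturb} evaluated on $U$. By Lemma \ref{p_G2N_estimates} (or rather its low-order version) we have $\mathcal{E}_G\lesssim\se{2N}^0(\sd{2N}^\sigma+\se{2N}^\sigma+\mathcal{F}_{2N})\lesssim\Lvert3 U\Rvert3_{00}^4$, and in particular $\norm{G^1}_0+\norm{G^4}_0\lesssim\Lvert3 U\Rvert3_{00}^2$. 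Plugging these into the $\eta_-$ estimate \eqref{result2} of Theorem \ref{lineargrownth}, extracting the trace-level bound $\norm{V_3}_1\le\Lvert3 V\Rvert3_{00}$ for the integrand, and using $\sqrt{\mathcal{E}_G}\norm{\dt u}_1\lesssim\Lvert3 U\Rvert3_{00}^2\Lvert3 V\Rvert3_{00}$ (after writing $\dt u=\dt V+\iota\lambda e^{\lambda t}\dt u^\star$ and absorbing the growing-mode piece into the $e^{\Lambda t}\Lvert3 U(0)-\iota U^\star\Rvert3_{00}$ term), gives exactly the claimed inequality, with the $\sqrt{\int_0^t e^{2\Lambda(t-s)}\cdots}$ term absorbing the contribution of the quadratic forcing through Duhamel.

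The only nontrivial point I expect is the bookkeeping in part (3): one must verify that $V$ indeed satisfies \eqref{linear ho} with the \emph{same} $G^i$ as $U$ satisfies \eqref{ns_perturb}, and that each factor $\dt u$ appearing inside $\int_0^t e^{2\Lambda(t-s)}\sqrt{\mathcal{E}_G}\norm{\dt u}_1 ds$ can be replaced by $\dt V$ modulo an exponentially growing term that is already controlled by $e^{\Lambda t}\Lvert3 U(0)-\iota U^\star\Rvert3_{00}$. Parts (1) and (2) are essentially bookkeeping on top of Theorems \ref{growingmode} and \ref{energyeses}.
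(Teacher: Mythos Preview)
Your approach is correct and matches the paper's proof: part (1) via Theorem \ref{growingmode} with normalization, part (2) via Theorem \ref{energyeses} with $\varepsilon=\lambda/2$ and absorption of $\sqrt{\se{2N}^\sigma}\sd{2N}^\sigma$ into the left-hand dissipation, and part (3) via Theorem \ref{lineargrownth} applied to $V=U-\iota e^{\lambda t}U^\star$ with the quadratic bound $\mathcal{E}_G\ls\Lvert3 U\Rvert3_{00}^4$.

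One clarification on the point you flag as nontrivial in part (3): the splitting $\dt u=\dt V+\iota\lambda e^{\lambda t}u^\star$ is unnecessary. In Theorem \ref{lineargrownth} the symbol $u$ denotes the velocity component of whatever solves \eqref{linear ho}; here that solution is $V$, so the $\norm{\dt u}_1$ appearing in \eqref{result2} is already $\norm{\dt V_u}_1\le\Lvert3 V\Rvert3_{00}$ directly. No growing-mode piece needs to be peeled off or absorbed. The paper's proof simply records $\mathcal{E}_G\ls\Lvert3 U\Rvert3_{00}^4$ and invokes \eqref{result2} without further manipulation.
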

\begin{proof}
Statement 1 follows from Theorem \ref{growingmode}.  Statement 2 follows from the estimates \eqref{fullenergy} of Theorem \ref{energyeses} by taking $\varepsilon=\lambda/2$ and then taking $\delta$ sufficiently small to absorb the term $\sqrt{\mathcal{E}_{2N}^\sigma } {\mathcal{D}}_{2N}^\sigma$ on the right hand side by the dissipation.

We now prove Statement 3 by using Theorem \ref{lineargrownth}. We observe that $U(t)-\iota e^{\lambda t}U^\star$ solve the problem \eqref{linear ho} with initial data $U(0)-\iota U^\star$ and the force terms $G^i$ given by \eqref{G1_def}--\eqref{G4_def}. Then Statement 3 follows from \eqref{result2} by noticing that
\begin{equation}
\mathcal{E}_{G}=\norm{G^1}_1^2 +\norm{\dt G^2}_0^2 +\norm{\dt G^3}_0^2 +\norm{ G^4}_2^2\ls  \Lvert3U  \Rvert3_{00}^4.
\end{equation}
We thus conclude the proposition.
\end{proof}

\subsection{Local well-posedness}
Thus far we have not elaborated on the local well-posedness theory for our problem that we developed in our companion paper \cite{JTW_LWP}. In the result we will refer to the ``necessary compatibility conditions'' required for the local well-posedness in our energy spaces.  These are cumbersome to write out explicitly, and we refer to \cite{JTW_LWP} for the explicit statement.

\begin{thm}\label{LWP}
Suppose that the initial data $U(0)$ satisfies the necessary compatibility conditions.  There exist $\delta_0 , T>0$ so that if
\begin{equation}\label{lwp_01}
\Lvert3   U(0) \Rvert3_{00} < \delta_0,
\end{equation}
then there exists a unique solution $U(t)$ to \eqref{ns_perturb} on $[0,T]$ that satisfies the estimate
\begin{equation}\label{lwp_02}
 \Lvert3 U(t) \Rvert3_{00} \lesssim \sqrt{1+T} \Lvert3 U(0) \Rvert3_{00}
\end{equation}
for all $t \in [0,T]$.
\end{thm}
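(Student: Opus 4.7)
The plan is to obtain Theorem \ref{LWP} essentially as a consequence of the local well-posedness framework developed in the companion paper \cite{JTW_LWP}, combined with the a priori energy estimate of Theorem \ref{energyeses}. First I would verify that the norm $\Lvert3 \cdot \Rvert3_{00} = \sqrt{\se{2N}^\sigma + \f_{2N}}$ defined in \eqref{norm3} is equivalent to the energy-space norm used in \cite{JTW_LWP}, and that the compatibility conditions referenced in the theorem statement are exactly those required to inductively define $\dt^j U(0)$ for $j=0,\dots,2N$ from \eqref{ns_perturb} in a manner consistent with the boundary and interface conditions.

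With that identification in place, existence and uniqueness of $U(t)$ on some time interval $[0,T]$ follows from the iterative scheme in \cite{JTW_LWP}. The scheme linearizes the system \eqref{ns_perturb} by freezing the geometric coefficients $\mathcal{A}$, $J$, $\mathcal{N}$ and the nonlinear coefficients $\mathcal{R}$ at a given iterate, solves the resulting linear parabolic-hyperbolic two-phase problem (this is precisely the type of problem analyzed in Section \ref{growth} of the present paper but with general inhomogeneities), and contracts in a lower-regularity norm on a short time interval. The estimates needed to close this contraction are of the same flavor as the nonlinear estimates in Lemmas \ref{p_F2N_estimates}--\ref{G11_weighted} and the combined energy-dissipation argument of Propositions \ref{i_temporal_evolution 2N}--\ref{boostrap2N}.

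To derive the quantitative bound \eqref{lwp_02}, I would apply Theorem \ref{energyeses} to the constructed solution. Taking $\varepsilon$ fixed (say $\varepsilon=1$) in \eqref{fullenergy} and using the smallness assumption $\Lvert3 U(t) \Rvert3_{00} \le \delta_0$ (together with a standard continuity argument) to absorb the $C_\varepsilon \sqrt{\mathcal{E}_{2N}^\sigma}(\mathcal{D}_{2N}^\sigma + \mathcal{E}_{2N}^\sigma + \f_{2N})$ term — using the dissipation on the left for the $\mathcal{D}_{2N}^\sigma$ factor and the smallness of $\delta_0$ otherwise — and bounding $\ns{\eta_-}_0 \le \Lvert3 U \Rvert3_{00}^2$, one is reduced to
\begin{equation*}
\Lvert3 U(t) \Rvert3_{00}^2 \le C \Lvert3 U(0) \Rvert3_{00}^2 + C \int_0^t \Lvert3 U(s) \Rvert3_{00}^2 \, ds.
\end{equation*}
Gronwall's inequality yields $\Lvert3 U(t) \Rvert3_{00}^2 \le C \Lvert3 U(0) \Rvert3_{00}^2 e^{CT}$ on $[0,T]$, and for $T$ restricted to a bounded interval this is dominated by $C'(1+T) \Lvert3 U(0) \Rvert3_{00}^2$, giving \eqref{lwp_02}.

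The main obstacle is not really the estimate itself but the delicate verification of the compatibility conditions at the required order $2N$: computing $\dt^j U(0)$ from the equations at $t=0$ produces successively more involved expressions involving traces of $q$, $u$, $\eta$ and their spatial derivatives at $\Sigma_\pm$ and $\Sigma_b$, all of which must be mutually compatible. A secondary difficulty, which is in fact the primary focus of \cite{JTW_LWP}, is the coupling between the hyperbolic continuity equation for $q$ and the parabolic momentum equation for $u$ at the interface $\Sigma_-$ (where the density jumps): this is what forces the introduction of the material-derivative variable $\mathcal{Q}$ in \eqref{dt} and the auxiliary elliptic-type estimate of Proposition \ref{i_rho_evolution 2N}, ingredients which are shared between the local and global frameworks.
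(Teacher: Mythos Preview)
Your approach is correct but takes a more elaborate route than the paper's. The paper's proof is a one-line citation: the entire statement, including the estimate \eqref{lwp_02}, is deduced directly from Theorem 2.1 of \cite{JTW_LWP}, which already contains a local-in-time bound of this form. You instead use \cite{JTW_LWP} only for existence and uniqueness, and then re-derive the quantitative bound by feeding the solution back into Theorem \ref{energyeses} of the present paper and applying Gronwall. This works, and your continuity argument to maintain the smallness hypothesis $\se{2N}^\sigma \le \delta^2$ is the right way to close it. The trade-off is that your argument yields growth like $e^{CT}$ rather than $\sqrt{1+T}$; since $T$ is fixed by the local theory this is harmless for the statement as written (both are constants), but your sentence ``for $T$ restricted to a bounded interval this is dominated by $C'(1+T)$'' should be read as ``for fixed $T$ one has $e^{CT} \le C'(1+T)$ with $C'$ depending on $T$'' rather than as a uniform bound in $T$. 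The paper's route is cleaner here precisely because the local well-posedness paper \cite{JTW_LWP} was written to deliver \eqref{lwp_02} directly; your route has the virtue of being self-contained modulo existence, since it reuses the energy machinery already developed in Section \ref{energy}.
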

\begin{proof}
The theorem can be deduced readily from Theorem 2.1 of \cite{JTW_LWP}. Indeed, Theorem 2.1 of \cite{JTW_LWP} is stated in more general form, where we only require $\norm{\eta_0}_{4N-1/2}$ to be small and no smallness condition is imposed on $u_0$ or $q_0$. We record this version of local well-posedness so that it can be adapted directly in our instability analysis.
\end{proof}

\subsection{Data analysis}

In order to prove our nonlinear instability result, we want to use the linear growing mode in Proposition \ref{prop1} to construct small initial data for the nonlinear problem \eqref{ns_perturb}. Since we are involved in the higher-order regularity context, we cannot simply set the initial data for the nonlinear problem to be a small constant times the linear growing modes. The reason for this is that the initial data for the nonlinear problem must satisfy certain nonlinear compatibility conditions in order for us to guarantee local existence in the  space corresponding to norm $\Lvert3\cdot\Rvert3_{00}$, which the linear growing mode  solutions do not satisfy.

To get around this obstacle, we note that the nonlinear problem   is  slightly perturbed from the linearized problem and so  their compatibility conditions for the small initial data should be close to each other. We are able to produce  a curve of small initial data satisfying the compatibility conditions for the nonlinear problem which are close to the linear growing modes.

\begin{prop}\label{intialle}
  Let $U^\star$ be the linear growing mode stated in Proposition \ref{prop1}. Then there exists  a number $\iota_0>0$ and a family of initial data
  \begin{equation}\label{initial0}
U_0^\iota
=\iota U^\star+\iota^2
\tilde{U}(\iota)
\end{equation}
for $\iota\in [0,\iota_0)$ so that the followings  hold.

1. $U_0^\iota$ satisfy the nonlinear compatibility conditions required by Theorem \ref{LWP} for a solution to the nonlinear problem \eqref{ns_perturb} to exist in the norm $\Lvert3\cdot\Rvert3_{00}$.

2. There exist $C_3, C_4>0$ independent of $\iota$ so that
\begin{equation}\label{initial1}
\Lvert3\tilde{U}(\iota) \Rvert3_{00} \le
C_3
\end{equation}
and
\begin{equation}\label{initial2}
\Lvert3U_0^\iota\Rvert3_{00}^2\le C_4\iota^2.
\end{equation}
 \end{prop}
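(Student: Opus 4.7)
The plan is to construct $U_0^\iota$ by an implicit function theorem argument in the spirit of Jang--Tice \cite{JT}, correcting the linear growing mode by a higher-order perturbation that absorbs the discrepancy between the linear and nonlinear compatibility conditions. First, I would enumerate the nonlinear compatibility conditions required by Theorem \ref{LWP}: these are the boundary/jump conditions on $\Sigma \cup \Sigma_b$ that the iterated time derivatives $\dt^j U(0)$ for $j=0,1,\ldots,2N$ must satisfy, where each $\dt^j U(0)$ is obtained recursively from $U(0)$ by applying $\dt^{j-1}$ to \eqref{ns_perturb} and solving for the top-order time derivative. Collect these conditions into a smooth map $\Phi$ defined on the data space with norm $\Lvert3 \cdot \Rvert3_{00}$, with target a product of boundary Sobolev spaces. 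The problem then becomes to solve $\Phi(U_0^\iota) = 0$ near $U_0 = 0$.

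Next, I would decompose $\Phi$ into its linear and higher-order parts. Writing $\Phi(U_0) = L(U_0) + \Psi(U_0)$, where $L = D\Phi(0)$ and $\Psi$ vanishes quadratically at the origin, the key observation is that $U^\star$, being a solution of the linearized problem \eqref{linear}, inherits a full set of iterated time derivatives consistent with the linear system, and hence $L U^\star = 0$. Plugging the ansatz $U_0^\iota = \iota U^\star + w$ into $\Phi = 0$ and using $L U^\star = 0$, I would reduce to solving
\begin{equation}
 L w = -\Psi(\iota U^\star + w).
\end{equation}
The main obstacle is verifying that $L$ admits a bounded right inverse on a suitable complement in the data space; this amounts to a linear static solvability result showing that, given prescribed boundary/jump data of the right regularity, one can realize them by adjusting the interior values of $(q_0, u_0, \eta_0)$. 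This is where the elliptic framework developed for the local well-posedness theory in \cite{JTW_LWP} is crucial, and it is essentially the same kind of construction used in \cite{JT}.

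Granted the right inverse $L^{-1}$, the fixed-point equation $w = -L^{-1} \Psi(\iota U^\star + w)$ is a contraction on a small ball of radius $C\iota^2$ in the data space, since $\Psi$ is at least quadratic and $\iota U^\star$ has size $O(\iota)$. This yields a unique solution $w = w(\iota)$ with $\Lvert3 w(\iota)\Rvert3_{00} \le C \iota^2$. Setting $\tilde U(\iota) := \iota^{-2} w(\iota)$ gives \eqref{initial0}, with \eqref{initial1} following from the contraction bound and \eqref{initial2} from
\begin{equation}
 \Lvert3 U_0^\iota \Rvert3_{00}^2 \le 2\iota^2 \Lvert3 U^\star \Rvert3_{00}^2 + 2\iota^4 \Lvert3 \tilde U(\iota)\Rvert3_{00}^2 \le 2\iota^2 C_1^2 + 2\iota^4 C_3^2 \le C_4 \iota^2
\end{equation}
for $\iota_0$ sufficiently small. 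The hardest step is establishing surjectivity of $L$ with quantitative bounds: this requires carefully accounting for the regularity losses at each recursive step of defining $\dt^j U(0)$ from $U(0)$, and matching these losses against the energy/dissipation spaces encoded in $\Lvert3 \cdot \Rvert3_{00}$.
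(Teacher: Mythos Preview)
Your proposal is correct and follows essentially the same approach the paper invokes: the paper's own proof consists solely of the citation ``See the abstract argument before Lemma 5.3 of \cite{JT},'' and what you have outlined is precisely that abstract argument---encode the compatibility conditions as a smooth map vanishing at the origin, observe that the linear growing mode lies in the kernel of its linearization, and correct by a quadratically small perturbation via a contraction/implicit-function argument. Your identification of the surjectivity of $L$ (with tame bounds through the recursive definition of $\dt^j U(0)$) as the crux is accurate and is exactly the content developed in \cite{JT} and adapted to the present setting via the local theory of \cite{JTW_LWP}.
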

 \begin{proof}
 See the abstract argument before  Lemma 5.3 of \cite{JT}.
\end{proof}

\subsection{Proof of Theorem \ref{maintheorem}}

With Propositions \ref{prop1}, Theorem \ref{LWP} and Proposition \ref{intialle} in hand, we can now present the
\begin{proof}[Proof of Theorem \ref{maintheorem}]
Recall the notation \eqref{Uvector}. First, we restrict to $0<\iota<\iota_0\le \theta_0$, where $\iota_0$ is as small as in Proposition \ref{intialle}  and  the value of $\theta_0$ is   sufficiently small to be determined later. For $0<\iota\le \iota_0$, we let $U_0^\iota$ be the initial data given in Proposition \ref{intialle}.  By further restricting $\iota$ we may use \eqref{initial2} to verify that \eqref{lwp_01} holds, which then allows us to use Theorem \ref{LWP} to find  $U^\iota(t)$,  solutions to the system \eqref{ns_perturb} with
 \begin{equation}
 \left.U^\iota \right|_{t=0}=
U_0^\iota
=\iota U^\star+\iota^2
\tilde{U}(\iota).
\end{equation}

Fix $\delta>0$ as small as in Proposition \ref{prop1}, and let $C_\delta>0$ be the constant appearing in Proposition \ref{prop1} for this fixed choice of $\delta$.   We then define $\tilde{\delta}=\min\{\delta,\frac{\lambda}{2C_\delta}\}$.  Denote
 \begin{equation}
 T^\ast=\sup\left\{ s : \Lvert3U^\iota(t) \Rvert3_{00}   \le\tilde{\delta}, \text{ for } 0\le t\le s\right\}
 \end{equation}
 and
 \begin{equation}
 T^{\ast\ast}=\sup\left\{ s: \norm{ \eta^\iota_-(t)}_{0}\le 2 \iota e^{\lambda t}, \text{ for } 0\le t\le s\right\}.
 \end{equation}
 With $\iota_0$ small enough, \eqref{initial2} and \eqref{lwp_02} guarantee that $T^\ast$ and $T^{\ast\ast}>0$.  Recall that $T^\iota$ is defined by \eqref{escape_time}.   Then for all $t\le \min\{T^\iota,T^\ast,T^{\ast\ast}\}$, we deduce from the estimate \eqref{energyes} of Proposition \ref{prop1}, the definitions of $T^\ast$ and $T^{\ast\ast}$, and \eqref{initial2} that
 \begin{equation}\label{ins1}
 \begin{split}\Lvert3U^\iota(t)\Rvert3_{00}^2
&\le  C_\delta\Lvert3U^\iota_0 \Rvert3_{00}^2
   +  \frac{\lambda}{2}\int_0^t\Lvert3U^\iota(s) \Rvert3_{00}^2\,ds \\&\quad+   C_\delta\int_0^t\Lvert3U^\iota(s) \Rvert3_{00}^3\,ds
   + C_\delta\int_0^t \norm{ \eta^\iota_-(s)}_{0}^2\,ds
 \\&\le \left(\frac{\lambda}{2} + \tilde{\delta} C_\delta \right) \int_0^t\Lvert3 U^\iota(s) \Rvert3_{00}^2\,ds+C_\delta C_4\iota^2+\frac{C_\delta(2\iota)^2}{2\lambda}e^{2\lambda t}
  \\&\le \lambda\int_0^t\Lvert3U^\iota(s) \Rvert3_{00}^2\,ds+C_5 \iota^2e^{2\lambda t}.
\end{split}
\end{equation}
for some constant $C_5>0$ independent of $\iota$. We may view \eqref{ins1} as a differential inequality.  Then Gronwall's lemma implies that
\begin{equation}\label{ins11}
 \begin{split}\Lvert3U^\iota(t)\Rvert3_{00}^2
&\le   C_5 \iota^2e^{2\lambda t}+C_5\iota^2e^{\lambda t}\int_0^t \lambda e^{\lambda s}\,ds
 \\&\le   C_5 \iota^2e^{2\lambda t}+ C_5\iota^2 e^{2\lambda t}  =  2C_5\iota^2e^{2\lambda t}.
\end{split}
\end{equation}
We then deduce from Proposition  \ref{prop1} and \eqref{ins11} that
\begin{multline}\label{ins12}
\| \eta_-(t)-\iota e^{\lambda t}\eta^\star_-\|_{0}  \le
C_2 e^{ \Lambda t}\Lvert3\iota^2
\tilde{U}(\iota) \Rvert3_{00} +C_2\int_0^t  \Lvert3U^\iota(s)  \Rvert3_{00}^2 ds
 \\+C_2\sqrt{\int_0^t  e^{2\Lambda
(t-s)} \Lvert3U^\iota(s) \Rvert3_{00}^2\Lvert3U^\iota(s)-\iota e^{\lambda s} U^\star   \Rvert3_{00} ds}
 \\\le
C_2C_3 e^{ \Lambda t}\iota^2+C_2\int_0^t  2C_5\iota^2e^{2\lambda s}
  +C_2\sqrt{\int_0^t  e^{2\Lambda
(t-s)}  2C_5\iota^2e^{2\lambda s} (\sqrt{2C_5}\iota e^{ \lambda s}+C_1\iota e^{ \lambda s}) ds}
 \\\le
C_6 e^{ \Lambda t}\iota^2+ C_6\iota^2 e^{2\lambda t}+C_6\iota^\frac{3}{2} e^{\frac{3}{2}\lambda t}
 \le 2C_6  \iota^2 e^{2\lambda t}+C_6\iota^\frac{3}{2} e^{\frac{3}{2}\lambda t}.
\end{multline}
Here we have used the fact that $\Lambda<2\lambda$.

Now we claim that
\begin{equation}\label{Tmin}
T^\iota= \min\{T^\iota,T^\ast,T^{\ast\ast}\}
\end{equation}
by fixing $\theta_0$ small enough, namely, setting
 \begin{equation}
 \theta_0=\min\left\{\frac{\tilde{\delta}}{2\sqrt{2C_5}},\frac{1}{8C_6},\frac{1}{16C_6^2}\right\}.
 \end{equation}
 Indeed, if $T^\ast= \min\{T^\iota,T^\ast,T^{\ast\ast}\}$, then by \eqref{ins11}, we have
  \begin{equation}
  \begin{split}
  \Lvert3U^\iota(T^\ast)\Rvert3_{00}
\le    \sqrt{2C_5}\iota e^{ \lambda T^\ast} \le   \sqrt{2C_5}\iota e^{ \lambda
T^\iota}=\sqrt{2C_5}\theta_0\le \frac{\tilde{\delta}}{2} < \tilde{\delta},
\end{split}
\end{equation}
which contradicts to the definition of $T^\ast$.  If $T^{\ast\ast} =\min\{T^\iota,T^\ast,T^{\ast\ast}\}$, then by \eqref{ins12} and the fact that $\Lambda/2 < \lambda \le\Lambda$, we have that
\begin{equation}
\begin{split}
\norm{\eta^\iota_-(T^{\ast\ast})}_{0}   &\le\iota e^{\lambda T^{\ast\ast} }\norm{\eta^\star_-}_{0}
  +\| \eta_-^\iota(T^{\ast\ast})-\iota e^{\lambda T^{\ast\ast}}\eta^\star_-\|_{0}\\&\le \iota e^{\lambda T^{\ast\ast} }\norm{\eta^\star_-}_{0}
+ 2C_6\iota^2 e^{2\lambda T^{\ast\ast}}+C_6\iota^\frac{3}{2} e^{\frac{3}{2}\lambda t}
\\
& \le  \iota e^{\lambda T^{\ast\ast}} (1 +2C_6\iota e^{ \lambda T^{\iota}} +C_6\sqrt{\iota}  e^{\hal \lambda T^\iota})
 \\&\le \iota e^{\lambda T^{\ast\ast}}(1 +2C_6\theta_0
+C_6\sqrt{\theta_0})<2\iota e^{\lambda T^{\ast\ast}},
\end{split}
\end{equation}
which contradicts to the definition of $T^{\ast\ast}$.  Hence \eqref{Tmin} must hold, proving the claim.

Now we use \eqref{ins12} again  to find that
\begin{equation}
\begin{split}
\norm{\eta^\iota_-(T^\iota)}_{0}   &\ge \iota e^{\lambda T^\iota }\norm{\eta^\star_-}_{0}
  -\| \eta_-^\iota(T^\iota)-\iota e^{\lambda T^\iota}\eta^\star_-\|_{0}
  \\&\ge\iota e^{\lambda T^\iota } -2C_6\iota^2 e^{ 2\lambda T^\iota}-C_6\iota^\frac{3}{2} e^{\frac{3}{2}\lambda t}
  \\&\ge\theta_0- 2C_6\theta_0^2- C_6\theta_0^{\frac{3}{2}} \ge
 \frac{\theta_0}{2}.
\end{split}
\end{equation}
This completes the proof of Theorem \ref{maintheorem}.
\end{proof}
\appendix

\section{Analytic tools}\label{section_appendix}

\subsection{Poisson extensions}

We will now define the appropriate Poisson integrals that allow us to extend $\eta_\pm$, defined on the surfaces $\Sigma_\pm$, to functions defined on $\Omega$, with ``good'' boundedness.

Suppose that $\Sigma_+ = \mathrm{T}^2\times \{{j}\}$, where $\mathrm{T}^2:=(2\pi L_1 \mathbb{T}) \times (2\pi L_2 \mathbb{T})$. We define the Poisson integral in $\mathrm{T}^2 \times (-\infty,{j})$ by
\begin{equation}\label{P-1def}
\mathcal{P}_{-,1}f(x) = \sum_{\xi \in    (L_1^{-1} \mathbb{Z}) \times
(L_2^{-1} \mathbb{Z}) }  \frac{e^{i \xi \cdot x' }}{2\pi \sqrt{L_1 L_2}} e^{|\xi|(x_3-{j})} \hat{f}(\xi),
\end{equation}
where for $\xi \in  (L_1^{-1} \mathbb{Z}) \times (L_2^{-1} \mathbb{Z})$ we have written
\begin{equation}
 \hat{f}(\xi) = \int_{\mathrm{T}^2} f(x')  \frac{e^{- i \xi \cdot x' }}{2\pi \sqrt{L_1 L_2}} dx'.
\end{equation}
Here ``$-$'' stands for extending downward and ``${j}$'' stands for extending at $x_3={j}$, etc. It is well-known that $\mathcal{P}_{-,{j}}:H^{s}(\Sigma_+) \rightarrow H^{s+1/2}(\mathrm{T}^2 \times (-\infty,{j}))$ is a bounded linear operator for $s>0$. However, if restricted to the domain $\Omega$, we can have the following improvements.

\begin{lem}\label{Poi}
Let $\mathcal{P}_{-,{j}}f$ be the Poisson integral of a function $f$ that is either in $\dot{H}^{q}(\Sigma_+)$ or
$\dot{H}^{q-1/2}(\Sigma_+)$ for $q \in \mathbb{N}=\{0,1,2,\dots\}$, where we have written $\dot{H}^s(\Sigma_+)$ for the homogeneous Sobolev space of order $s$.  Then
\begin{equation}
\norm{\nabla^q \mathcal{P}_{-,{j}}f }_{0} \lesssim \norm{f}_{\dot{H}^{q-1/2}(\mathrm{T}^2)}^2 \text{ and }
\norm{\nabla^q \mathcal{P}_{-,{j}}f }_{0} \lesssim \norm{f}_{\dot{H}^{q}(\mathrm{T}^2)}^2.
\end{equation}
\end{lem}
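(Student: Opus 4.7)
The plan is to reduce everything to a Fourier-side computation and exploit the fact that $\Omega \subset \mathrm{T}^2\times(-\infty,{j}]$ is a strip of finite width, so the exponential factor $e^{|\xi|(x_3-{j})}$ in \eqref{P-1def} is bounded above by $1$ but also integrates to something of order $|\xi|^{-1}$.

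First I would apply $\nabla^q$ termwise to \eqref{P-1def}. Since the Fourier modes $e^{i\xi\cdot x'}e^{|\xi|(x_3-{j})}$ are eigenfunctions of $\p_j$ (with eigenvalues $i\xi_j$ for horizontal derivatives and $|\xi|$ for $\p_3$), every spatial derivative brings down a factor bounded in absolute value by $|\xi|$. For $\xi=0$ the mode is constant, so all contributions with $|\alpha|=q\ge 1$ vanish and only $q=0$ would encounter the zero mode; in that case $\hat f(0)$ is not controlled by a homogeneous norm, so we understand the statement as being meaningful for $q\ge 1$ (or, equivalently, for mean-zero $f$). With this understood, after Plancherel in $x'$,
\begin{equation*}
\|\nabla^q \mathcal{P}_{-,{j}}f\|_{L^2(\Omega)}^2 \lesssim \sum_{0\neq \xi\in (L_1^{-1}\mathbb Z)\times(L_2^{-1}\mathbb Z)} |\xi|^{2q}\,|\hat f(\xi)|^2 \int_{-b}^{\ell} e^{2|\xi|(x_3-{j})}\,dx_3.
\end{equation*}

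Next I would carry out the $x_3$-integral two different ways, which is the whole point of the lemma. The brute bound $e^{2|\xi|(x_3-{j})}\le 1$ for $x_3\le{j}$ gives $\int_{-b}^{\ell}e^{2|\xi|(x_3-{j})}\,dx_3 \le b+\ell$, producing
\begin{equation*}
\|\nabla^q \mathcal{P}_{-,{j}}f\|_{L^2(\Omega)}^2 \lesssim \sum_{\xi\ne 0}|\xi|^{2q}|\hat f(\xi)|^2 = \|f\|_{\dot H^q(\mathrm{T}^2)}^2.
\end{equation*}
On the other hand, the exact computation $\int_{-\infty}^{{j}}e^{2|\xi|(x_3-{j})}\,dx_3 = (2|\xi|)^{-1}$ (valid since ${j}=\ell$ places the full slab $\Omega$ below or at the Poisson reference height) yields
\begin{equation*}
\|\nabla^q \mathcal{P}_{-,{j}}f\|_{L^2(\Omega)}^2 \lesssim \sum_{\xi\ne 0}|\xi|^{2q-1}|\hat f(\xi)|^2 = \|f\|_{\dot H^{q-1/2}(\mathrm{T}^2)}^2,
\end{equation*}
which is the sharper ``half-derivative gain.'' An analogous argument works for $\mathcal{P}_{-}$ in the sense of \eqref{P-def} once one writes that extension as a similar Fourier series with the appropriate modification.

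The only place I expect to have to be careful is the zero-frequency mode in the second (sharper) bound, since $|\xi|^{-1}$ is singular there; this is handled simply by dropping $\xi=0$ from the sum (the extension of a mean-free function is itself mean-free in $x'$), which is consistent with using the \emph{homogeneous} Sobolev norms on the right-hand side. Apart from that, the argument is a one-line Plancherel estimate, and no deeper analytic tool is needed.
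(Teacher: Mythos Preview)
Your argument is correct and is precisely the standard Fourier/Plancherel computation that underlies this lemma. The paper itself does not give a proof but simply cites Lemma~A.3 of \cite{GT_per}; what you have written is essentially the content of that cited result, so there is no meaningful difference in approach.

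Two small remarks. First, the paper's convention $\norm{\nabla^q f}^2 = \sum_{|\alpha|\le q}\norm{\partial^\alpha f}^2$ includes all orders up to $q$, not just the top order; on the torus this is harmless for $\xi\neq 0$ because of the spectral gap $|\xi|\ge \min(L_1^{-1},L_2^{-1})$, so $|\xi|^{2|\alpha|}\lesssim |\xi|^{2q}$, but you may want to say this explicitly. Second, your handling of the zero mode is the right observation: the homogeneous norms on the right-hand side force one to read the statement either for $q\ge 1$ or for mean-zero $f$, and in the paper's applications the extension is always hit by at least one derivative, so this is consistent.
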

\begin{proof}
 See Lemma A.3 of \cite{GT_per}.
\end{proof}

We extend $\eta_+$ to be defined on $\Omega$ by
\begin{equation}\label{P+def}
\bar{\eta}_+(x',x_3)=\mathcal{P}_+\eta_+(x',x_3):=\mathcal{P}_{-,{j}}\eta_+(x',x_3),\text{ for } x_3\le {j}.
\end{equation}
Then Lemma \ref{Poi} implies in particular that if $\eta_+\in H^{s-1/2}(\Sigma_+)$ for $s\ge 0$, then $\bar{\eta}_+\in H^{s}(\Omega)$.

Similarly, for $\Sigma_- = \mathrm{T}^2\times \{0\}$ we define the Poisson integral in $\mathrm{T}^2 \times (-\infty,0)$ by
\begin{equation}\label{P-0def}
\mathcal{P}_{-,0}f(x) = \sum_{\xi \in    (L_1^{-1} \mathbb{Z}) \times (L_2^{-1} \mathbb{Z}) }  \frac{e^{ i \xi \cdot x' }}{2\pi \sqrt{L_1 L_2}} e^{ |\xi|x_3} \hat{f}(\xi).
\end{equation}
It is clear that $\mathcal{P}_{-,0}$ has the  same regularity properties as $\mathcal{P}_{-,{j}}$. This allows us to extend $\eta_-$ to be defined on $\Omega_-$. However, we do not extend $\eta_-$ to the upper domain $\Omega_+$ by the reflection  since this will result in the discontinuity of the partial derivatives in $x_3$ of the extension. For our purposes,  we instead to do the extension through the following. Let $0<\lambda_0<\lambda_1<\cdots<\lambda_m<\infty$ for $m\in \mathbb{N}$ and define the $(m+1) \times (m+1)$ Vandermonde matrix $V(\lambda_0,\lambda_1,\dots,\lambda_m)$ by $V(\lambda_0,\lambda_1,\dots,\lambda_m)_{ij} = (-\lambda_j)^i$ for $i,j=0,\dotsc,m$.  It is well-known that the Vandermonde matrices are invertible, so we are free to let $\alpha=(\alpha_0,\alpha_1,\dots,\alpha_m)^T$ be the solution to
\begin{equation}\label{Veq}
V(\lambda_0,\lambda_1,\dots,\lambda_m)\,\alpha=q_m,\ q_m=(1,1,\dots,1)^T.
\end{equation}
Now we define the specialized Poisson integral in $\mathrm{T}^2 \times (0,\infty)$
by
\begin{equation}\label{P+0def}
\mathcal{P}_{+,0}f(x) = \sum_{\xi \in    (L_1^{-1} \mathbb{Z}) \times
(L_2^{-1} \mathbb{Z}) }  \frac{e^{ i \xi \cdot x' }}{2\pi \sqrt{L_1 L_2}}  \sum_{j=0}^m\alpha_j
e^{- |\xi|\lambda_jx_3} \hat{f}(\xi).
\end{equation}
It is easy to check that, due to \eqref{Veq}, $\partial_3^l\mathcal{P}_{+,0}f(x',0)=
\partial_3^l\mathcal{P}_{-,0}f(x',0)$  for all $0\le l\le m$ and hence
\begin{equation}
\partial^\alpha\mathcal{P}_{+,0}f(x',0)=
\partial^\alpha\mathcal{P}_{-,0}f(x',0), \ \forall\, \alpha\in \mathbb{N}^3 \text{ with }0\le |\alpha|\le m.\end{equation}
These facts allow us to  extend $\eta_-$ to be defined on $\Omega$
by
\begin{equation}\bar{\eta}_-(x',x_3)=
\mathcal{P}_-\eta_-(x',x_3):=\left\{\begin{array}{lll}\mathcal{P}_{+,0}\eta_-(x',x_3),\quad
x_3> 0 \\
\mathcal{P}_{-,0}\eta_-(x',x_3),\quad x_3\le
0.\end{array}\right.\label{P-def}\end{equation}
It is clear now that if $\eta_-\in H^{s-1/2}(\Sigma_-)$ for $ 0\le s\le m$, then $\bar{\eta}_-\in H^{s}(\Omega)$.  Since we will only work with $s$ lying in a finite interval, we may assume that $m$ is sufficiently large in \eqref{Veq} for $\bar{\eta}_- \in H^s(\Omega)$ for all $s$ in the interval.

\subsection{Estimates of Sobolev norms}

We will need some estimates of the product of functions in Sobolev spaces.

\begin{lem}\label{sobolev}
Let $U$ denote a domain either of the form $\Omega_\pm$ or of the form $\Sigma_\pm$.
\begin{enumerate}
 \item Let $0\le r \le s_1 \le s_2$ be such that  $s_1 > n/2$.  Let $f\in H^{s_1}(U)$, $g\in H^{s_2}(U)$.  Then $fg \in H^r(U)$ and
\begin{equation}\label{i_s_p_01}
 \norm{fg}_{H^r} \lesssim \norm{f}_{H^{s_1}} \norm{g}_{H^{s_2}}.
\end{equation}

\item Let $0\le r \le s_1 \le s_2$ be such that  $s_2 >r+ n/2$.  Let $f\in H^{s_1}(U)$, $g\in H^{s_2}(U)$.  Then $fg \in H^r(U)$ and
\begin{equation}\label{i_s_p_02}
 \norm{fg}_{H^r} \lesssim \norm{f}_{H^{s_1}} \norm{g}_{H^{s_2}}.
\end{equation}
\end{enumerate}
\end{lem}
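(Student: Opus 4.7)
The lemma consists of two classical Sobolev product estimates on a Lipschitz domain $U \subset \mathbb{R}^n$ (either one of the layer-type open sets $\Omega_\pm$ or a torus $\Sigma_\pm$). My plan is to first handle the case of integer $r$ by a direct Leibniz-plus-H\"older argument, then recover non-integer $r$ by interpolation and the universal extension property of $U$.

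For the integer case, I will fix $\alpha\in\mathbb{N}^n$ with $|\alpha|\le r$ and expand
\[
 \partial^\alpha(fg)=\sum_{\beta\le\alpha}\binom{\alpha}{\beta}\,\partial^\beta f\,\partial^{\alpha-\beta}g,
\]
and estimate each term in $L^2(U)$ by H\"older's inequality. For statement (1) the exponent pair is chosen so that $\partial^\beta f$ lands in an $L^{p_\beta}(U)$ controlled by $H^{s_1}$ (via the Sobolev embedding $H^{s_1}\hookrightarrow L^\infty$, using $s_1>n/2$, together with Gagliardo--Nirenberg interpolation between $L^\infty$ and $H^{s_1}$) while $\partial^{\alpha-\beta}g$ lands in the dual exponent controlled by $H^{s_2}$ (using $s_2\ge s_1\ge r$). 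Adding up the finitely many terms gives $\|fg\|_{H^r}\lesssim\|f\|_{H^{s_1}}\|g\|_{H^{s_2}}$. Statement (2) is the analogous ``high--low'' product estimate: here $g$ is the regular factor and goes into $L^\infty$ via $H^{s_2}\hookrightarrow L^\infty$ (this is where $s_2>r+n/2$ enters, since one is differentiating up to $r$ times), while $f$ is allowed to sit in $H^{s_1}$ with $s_1\le r$; again Gagliardo--Nirenberg distributes the derivatives appropriately among the $L^p$ factors.

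For non-integer $r$, the plan is to reduce to the integer case via complex or real interpolation. Concretely, pick integers $r_0<r<r_1$ with $r_1\le s_1$ (for part (1)) or $r_1+n/2<s_2$ (for part (2)), write $r=(1-\theta)r_0+\theta r_1$ with $\theta\in(0,1)$, and use the fact that for Lipschitz domains $H^r(U)=[H^{r_0}(U),H^{r_1}(U)]_\theta$ (via Stein's extension operator to $\mathbb{R}^n$). The bilinear map $(f,g)\mapsto fg$ is bounded from $H^{s_1}\times H^{s_2}$ to $H^{r_0}$ and to $H^{r_1}$ by the integer case, so bilinear interpolation yields boundedness into $H^r$ with the claimed estimate.

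The only mildly delicate point is making sure the choice of integer exponents $p_\beta$ in the H\"older step produces an admissible inequality: when $|\beta|=s_1$ one must avoid the endpoint embedding $H^{s_1}\not\hookrightarrow L^\infty$ at $s_1=n/2$, but the hypothesis $s_1>n/2$ in (1) (resp. $s_2>r+n/2$ in (2)) is precisely the gap needed to apply Gagliardo--Nirenberg with a small $\varepsilon$ of room and avoid the endpoint. I expect this bookkeeping to be the only nontrivial aspect; otherwise both estimates are standard and the proof can, as the authors likely intend, simply cite a reference (e.g.\ Adams--Fournier or the companion paper \cite{JTW_GWP}) after noting the bilinear interpolation step.
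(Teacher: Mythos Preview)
Your approach is sound in spirit but differs from the paper's and has a small gap in the interpolation step.

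The paper's proof is a one-liner: these are standard product estimates that follow from the Fourier characterization of $H^s$ on $\mathbb{R}^n$ (or the torus) together with a bounded extension operator from $U$ to the ambient space. No Leibniz expansion or interpolation in $r$ is invoked.

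Your route---Leibniz at integer $r$, then interpolation in $r$---is a legitimate alternative, but the interpolation step as written can fail: you need an integer $r_1$ with $r<r_1\le s_1$ (part (1)) or $r<r_1<s_2-n/2$ (part (2)), and when $r$ lies in $(\lfloor s_1\rfloor,\,s_1]$ (resp.\ in $(\lfloor s_2-n/2\rfloor,\,s_2-n/2)$) no such integer exists. For instance, with $n=3$, $s_1=1.8$, $r=1.7$, the only integer at which your Leibniz argument applies is $r_1=1<r$, so you cannot interpolate up to $r$. The same obstruction appears in part (2) whenever $s_2-n/2$ exceeds $r$ by less than one. (A minor slip: in your part (2) sketch you write ``$s_1\le r$'', but the hypothesis is $r\le s_1$.)

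The fix is precisely what the paper does: after extending to $\mathbb{R}^n$ (or working directly on the torus for $\Sigma_\pm$), prove the estimate for all real $r$ at once via the convolution structure $\widehat{fg}=\hat f*\hat g$ and a high--low frequency split, which needs no integer endpoints. For part (1) this is simply the algebra property of $H^{s_1}$ for $s_1>n/2$; for part (2) it is the standard ``one factor carries more than $r+n/2$ derivatives'' product rule. Your closing remark that one can simply cite a reference is, in fact, the entire content of the paper's proof.
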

\begin{proof}
These results are standard and may be derived, for example, by use of the Fourier characterization of the $H^s$ spaces and extensions.
\end{proof}


%

\subsection{Coefficient estimates}

Here we are concerned with how the size of $\eta$ can control the ``geometric'' terms that appear in the equations.
\begin{lem}\label{eta_small}
There exists a universal $0 < \delta < 1$ so that if $\ns{\eta}_{5/2} \le \delta,$ then
\begin{equation}\label{es_01}
\begin{split}
 & \norm{J-1}_{L^\infty(\Omega)} +\norm{A}_{L^\infty(\Omega)} + \norm{B}_{L^\infty(\Omega)} \le \hal, \\
 & \norm{\n-1}_{L^\infty(\Gamma)} + \norm{K-1}_{L^\infty(\Gamma)} \le \hal, \text{ and }  \\
 & \norm{K}_{L^\infty(\Omega)} + \norm{\mathcal{A}}_{L^\infty(\Omega)} \ls 1.
 \end{split}
\end{equation}
Also, the map $\Theta$ defined by \eqref{cotr} is a diffeomorphism.
\end{lem}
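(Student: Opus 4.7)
The plan is to reduce every bound in \eqref{es_01} to an $L^\infty$-estimate of either $\nabla\theta$ on $\Omega$ or $\nabla_\ast\eta$ on $\Sigma$, and then invoke the Poisson extension bounds of Lemma \ref{Poi} together with Sobolev embedding. First I observe that, since $\ns{\eta_\pm}_{5/2}\le\delta$, Lemma \ref{Poi} yields $\|\bar\eta_\pm\|_3\ls\|\eta_\pm\|_{5/2}\ls\sqrt{\delta}$, and multiplication by the smooth cutoffs $\tilde b_1,\tilde b_2$ preserves this, so $\|\theta\|_3\ls\sqrt\delta$. Because $\dim\Omega = 3$, Sobolev embedding $H^2(\Omega)\hookrightarrow L^\infty(\Omega)$ gives $\|\nabla\theta\|_{L^\infty(\Omega)}\ls\|\theta\|_3\ls\sqrt\delta$.

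With this in hand, the first line of \eqref{es_01} is immediate: $A=\p_1\theta$, $B=\p_2\theta$, and $J-1=\p_3\theta$, so taking $\delta$ small enough that the implicit constant times $\sqrt\delta$ is at most $1/3$ secures $\|A\|_{L^\infty}+\|B\|_{L^\infty}+\|J-1\|_{L^\infty}\le 1/2$. Since $|\p_3\theta|\le 1/3$, one has $J\in[2/3,4/3]$, so $K=J^{-1}\in[3/4,3/2]$ obeys $|K-1|\le 1/2$ and $\|K\|_{L^\infty}\ls 1$. The bound on $\mathcal A$ follows at once from the explicit form \eqref{A_def}, since its entries are polynomials in $A$, $B$, and $K$, all already controlled.

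For the boundary-level bound on $\n-1$, I read $1$ as the vertical unit vector, so that $\n-1=(-\nabla_\ast\eta,0)$. Here I would appeal to Sobolev embedding on the $2$-dimensional interfaces $\Sigma_\pm$: $\eta\in H^{5/2}(\Sigma)$ implies $\nabla_\ast\eta\in H^{3/2}(\Sigma)\hookrightarrow L^\infty(\Sigma)$ (valid because $3/2>\dim\Sigma/2=1$), giving $\|\nabla_\ast\eta\|_{L^\infty(\Sigma)}\ls\sqrt\delta$. Shrinking $\delta$ once more completes the pointwise estimates.

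The one step requiring any care is the diffeomorphism claim. The plan is to exploit the special form $\Theta(x)=(x_1,x_2,\,x_3+\theta(x))$. Because $\p_3(x_3+\theta)=J\ge 2/3>0$ on each slab $\Omega_\pm$, for each fixed horizontal point $(x_1,x_2)$ the third coordinate of $\Theta$ is strictly increasing in $x_3$; combined with the fact that $\Theta$ fixes the horizontal coordinates, this yields global injectivity on $\Omega_\pm$. Surjectivity onto $\Omega_\pm(t)$ follows from the endpoint conditions \eqref{b function}, which guarantee that $\Theta$ sends $\Sigma_+$, $\Sigma_-$, and $\Sigma_b$ exactly onto $\Gamma_+(t)$, $\Gamma_-(t)$, and $\Sigma_b$; by continuity and the positivity of $J$, the image of each open slab is then the corresponding moving slab. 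The subtlety I expect to spend the most attention on is ensuring that $\nabla\Theta$ is regular (not merely has positive determinant) across the internal interface $\Sigma_-$; this is precisely why the specialized Poisson extension $\mathcal P_-$ was introduced in \eqref{P-def}, matching enough derivatives of $\bar\eta_-$ from above and below at $\{x_3=0\}$ to guarantee the required regularity of $\mathcal A$ and $\Theta$ across that interface.
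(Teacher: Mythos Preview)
Your proof is correct and, in fact, supplies considerably more detail than the paper's own argument, which consists entirely of a citation to Lemma 2.4 of \cite{GT_per}. The route you take---Poisson extension bounds (Lemma \ref{Poi}) to control $\theta$ in $H^3(\Omega)$, then Sobolev embedding to bound $\nabla\theta$ in $L^\infty$, followed by the explicit algebraic structure of $A,B,J,K,\mathcal A$ and a monotonicity argument for the diffeomorphism---is exactly what underlies the cited result, so there is no methodological difference to report. One cosmetic point: when you write ``the implicit constant times $\sqrt\delta$ is at most $1/3$'' to secure the \emph{sum} $\norm{A}_{L^\infty}+\norm{B}_{L^\infty}+\norm{J-1}_{L^\infty}\le 1/2$, the arithmetic does not quite close (three terms each $\le 1/3$ give a sum $\le 1$), but this is harmless---simply shrink the threshold to $1/6$.
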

\begin{proof}
The estimate \eqref{es_01} is guaranteed by Lemma 2.4 of \cite{GT_per}.
\end{proof}

\subsection{Korn inequality}

Consider the following operators acting on functions $u $:
\begin{equation*}
 \sg{u} = \nab u + \nab u^T \text{ and } \sgz{u} = \sg{u} - \frac{2 \diverge{u}}{3} I,
\end{equation*}
or in components
\begin{equation*}
 \sg{u}_{ij} = \p_i u_j + \p_j u_i \text{ and } \sgz{u}_{ij} = \p_i u_j + \p_j u_i  - \frac{2 \p_k u_k}{3} \delta_{ij}.
\end{equation*}
Note that $\trace(\sgz{u})=0$.  As such, the operator $\sgz$ is referred to as the ``deviatoric part of the symmetric gradient.''

Now we record a version of Korn's inequality involving only the deviatoric part, $\sgz$,  that we will use for layered domains $\Omega_\pm$.

\begin{prop}\label{layer_korn}
There exists a constant $C>0$ so that
\begin{equation}
 \ns{u}_{1} \le C\ns{\sgz{u}}_0
\end{equation}
for all $u \in H^1(\Omega )$ with $\jump{u}=0$ along $\Sigma$ and $u_- =0$ on $\Sigma_b$.
\end{prop}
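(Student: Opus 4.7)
The plan is a compactness (contradiction) argument, combined with a classification of the kernel of $\sgz$ subject to our constraints. As a preliminary step, I would establish the generalized second Korn inequality with the deviatoric symmetric gradient,
\begin{equation*}
\|v\|_1^2 \le C\bigl(\|v\|_0^2 + \|\sgz v\|_0^2\bigr) \quad \text{for all } v \in H^1(\Omega),
\end{equation*}
valid without any boundary conditions.  This is a version of Dain's generalized Korn inequality, which can be applied separately on each bounded Lipschitz piece $\Omega_\pm$ and summed.  Alternatively it may be derived from the classical second Korn inequality $\|v\|_1^2 \le C(\|v\|_0^2 + \|\sg v\|_0^2)$ combined with a Ne\v{c}as--Lions estimate for $\diverge v$, exploiting the algebraic identity $\nab \diverge v = 3(\diverge \sgz v - \Delta v)$ to control $\|\diverge v\|_0$ by $\|\sgz v\|_0 + \|\nab v\|_0$ up to absorbable terms.

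With the preliminary inequality in hand, the main goal is to remove the lower-order $L^2$ term by using the boundary conditions. Suppose the proposition fails; then there exists a sequence $\{u_n\}$ in the admissible class satisfying $\|u_n\|_1 = 1$ and $\|\sgz u_n\|_0 \to 0$. By Rellich--Kondrachov compactness (the domain is compact, modulo horizontal periodicity), a subsequence converges strongly in $L^2$ and weakly in $H^1$ to some $u \in H^1(\Omega)$ which retains the boundary conditions $\jump{u}=0$ on $\Sigma_-$ and $u_- = 0$ on $\Sigma_b$.  By weak continuity, $\sgz u = 0$ in $L^2$.

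The next and most substantive step is to show $u \equiv 0$.  The pointwise equation $\sgz u = 0$, i.e. $\p_i u_j + \p_j u_i = \tfrac{2}{3}(\diverge u)\delta_{ij}$, characterizes conformal Killing vector fields, whose solution space on an open subset of $\mathbb{R}^3$ is the $10$-dimensional conformal algebra (translations, rotations, dilations, special conformal transformations).  Horizontal periodicity is highly restrictive: writing $f := u_1 + i u_2$, the equations $\p_1 u_1 = \p_2 u_2$ and $\p_1 u_2 + \p_2 u_1 = 0$ show that $f$ is holomorphic in $z := x_1 + i x_2$ for each fixed $x_3$, so periodicity in $(x_1,x_2)\in \mathrm{T}^2$ forces $f$ to depend only on $x_3$. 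The off-diagonal relations $\p_i u_3 = -\p_3 u_i$ ($i=1,2$), combined with the requirement that $u_3$ be periodic, then force $\p_3 u_i = 0$ and $u_3$ to be constant in $(x_1,x_2)$; the diagonal equation $\p_3 u_3 = \tfrac{1}{3}\diverge u$ and the previous reductions give $\p_3 u_3 = 0$, so $u$ is a constant vector.  The Dirichlet trace condition $u_- = 0$ on $\Sigma_b$ then forces this constant to vanish, whence $u \equiv 0$ and in particular $u_n \to 0$ in $L^2$.

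Combining these facts, the preliminary Korn inequality yields $\|u_n\|_1^2 \le C(\|u_n\|_0^2 + \|\sgz u_n\|_0^2) \to 0$, contradicting $\|u_n\|_1 = 1$. The main obstacle is the preliminary generalized Korn inequality: passing from the full symmetric gradient $\sg$ to its deviatoric part $\sgz$ is strictly stronger, since the pointwise kernel enlarges from Killing fields to conformal Killing fields, so the classical Korn proof does not apply directly. Once this inequality is granted, the conformal Killing classification under horizontal periodicity is elementary and supplies the rigidity needed for the compactness argument to close.
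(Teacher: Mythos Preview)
The paper does not actually prove this proposition here; it simply cites Proposition~A.8 of the companion paper \cite{JTW_GWP}. So there is no proof in this paper to compare against, and your argument must be assessed on its own merits.

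Your compactness-plus-rigidity scheme is the standard route to Korn-type inequalities and is correct in outline. Two minor points are worth tightening. First, your Liouville/holomorphic step implicitly assumes that an $H^1$ solution of $\sgz u = 0$ is smooth enough to restrict to slices $\{x_3 = \text{const}\}$; this is true because the conformal Killing operator is overdetermined elliptic in dimension three (concretely, from $\sgz u = 0$ one computes $\Delta u_i = -\tfrac{1}{3}\p_i\diverge u$ and then $\Delta^2 u = 0$, so $u$ is biharmonic and hence smooth), but it should be stated. Second, the Ne\v{c}as--Lions alternative you sketch for the preliminary inequality looks circular as written: you propose to bound $\|\diverge v\|_0$ by quantities including $\|\nabla v\|_0$, which is precisely what you are trying to control, and feeding this back into the classical Korn inequality $\|\nabla v\|_0^2 \lesssim \|v\|_0^2 + \|\sgz v\|_0^2 + \|\diverge v\|_0^2$ does not close. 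It is cleaner simply to invoke the deviatoric (trace-free) Korn inequality of Dain/Reshetnyak directly on each bounded Lipschitz piece $\Omega_\pm$, which is known to hold in dimension $n\ge 3$, and sum.
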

\begin{proof}
We refer to Proposition A.8 of \cite{JTW_GWP}.
\end{proof}



\subsection{Elliptic estimates}

Here we consider the two-phase elliptic problem
\begin{equation}\label{lame eq}
\begin{cases}
-\mu \Delta u -(\mu/3+\mu') \nabla\diverge u =F^2  &\hbox{ in }\Omega
\\-\S(u_+)e_3=F^3_+   &\hbox{ on }\Sigma_+
\\  -\jump{\S(u)}e_3=-F^3_- &\hbox{ on }\Sigma_-
\\ \jump{u}=0 &\hbox{ on }\Sigma_-
\\  u_-=0 &\hbox{ on }\Sigma_b.
\end{cases}
\end{equation}
We have the following elliptic regularity result.
\begin{lem}\label{lame reg}
Let $r\ge 2$. If $F^2\in  {H}^{r-2}(\Omega), F^3\in  {H}^{r-3/2}(\Sigma)$, then the problem \eqref{lame eq} admits a unique strong solution $u\in {H}^r(\Omega)$. Moreover,
\begin{equation}
\norm{u}_{r}\lesssim \norm{F^2}_{r-2}+\norm{F^3}_{r-3/2}.
\end{equation}
\end{lem}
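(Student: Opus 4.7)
The plan is to proceed in two stages: first establish a unique weak solution via Lax-Milgram, and then bootstrap to $H^r$ regularity via standard two-phase elliptic theory. For the variational step, I would work in the Hilbert space
\begin{equation*}
V = \{ v \in H^1(\Omega) \,:\, \jump{v} = 0 \text{ on } \Sigma_-,\; v_- = 0 \text{ on } \Sigma_b \}
\end{equation*}
and consider the bilinear form
\begin{equation*}
B(u,v) = \int_\Omega \frac{\mu}{2} \sgz u : \sgz v + \mu' \diverge u \,\diverge v,
\end{equation*}
which is precisely what one obtains by multiplying the PDE by $v \in V$, integrating by parts over $\Omega_+$ and $\Omega_-$ separately, and combining the boundary terms on $\Sigma_-$ using $\jump{v}=0$ (these produce the pairing with $F^3_-$) together with the boundary terms on $\Sigma_+$ (which produce the pairing with $F^3_+$). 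Continuity of $B$ on $V$ is immediate, and coercivity follows directly from the two-phase Korn inequality (Proposition \ref{layer_korn}), which is why the symmetric-gradient form is the natural one. The linear functional $v \mapsto \int_\Omega F^2 \cdot v - \int_\Sigma F^3 \cdot v$ is bounded on $V$ by trace theory, so Lax-Milgram produces a unique weak solution $u \in V$.

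For regularity, the base case $r=2$ proceeds by a standard localization argument. In the interior of each $\Omega_\pm$, the Lamé operator $-\mu \Delta - (\mu/3 + \mu')\nabla \diverge$ is strongly elliptic (since $\mu_\pm > 0$ and $\mu_\pm/3 + \mu_\pm' > 0$), giving $u \in H^2_{\mathrm{loc}}(\Omega_\pm)$. Near $\Sigma_b$, the Dirichlet condition $u_- = 0$, and near $\Sigma_+$, the traction condition $-\S(u_+)e_3 = F^3_+$, are both complementing boundary conditions in the Agmon-Douglis-Nirenberg sense, yielding $H^2$ regularity up to these flat boundaries. The remaining work is near the internal interface $\Sigma_-$: since $\Sigma_-$ is flat and horizontally periodic and since $V$ is invariant under horizontal translations, I would apply Nirenberg's difference-quotient method in the horizontal directions to the weak formulation, obtaining $\nabla_\ast u \in H^1(\Omega)$. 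The missing pure normal derivatives $\p_3^2 u_i$ are then recovered algebraically from the equation: the $i=3$ component provides $\p_3^2 u_3$ (after solving for it using the coefficient $\mu + \mu/3 + \mu' = 4\mu/3 + \mu' > 0$), and the tangential components then give $\p_3^2 u_\ast$. This upgrades $u$ to $H^2$ on each of $\Omega_\pm$ with the stated estimate.

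For $r \geq 3$ I would induct: once $u \in H^r$ is known, one differentiates the system by $\p^\alpha$ in horizontal directions for $|\alpha| \leq r - 1$, observing that $\p^\alpha u$ solves a system of exactly the same form with data $\p^\alpha F^2 \in H^{r - |\alpha| - 2}$ and $\p^\alpha F^3 \in H^{r - |\alpha| - 3/2}$, so the $r=2$ case applied to the top-order tangential derivatives plus the algebraic recovery of normal derivatives from the equation gives $u \in H^{r+1}$. The main technical obstacle is the transmission coupling at $\Sigma_-$: one must show that the combined boundary/transmission conditions $\jump{u} = 0$, $-\jump{\S(u)} e_3 = -F^3_-$ form a well-posed (Lopatinskii-Shapiro complementing) boundary condition for the Lamé system viewed as a pair of systems on the two sides of $\Sigma_-$. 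The cleanest way to handle this is to flatten (already done, since $\Sigma_-$ is flat), take horizontal Fourier transform, and verify by direct computation that the resulting $6 \times 6$ ODE boundary-value problem on $(-b,\ell)$ has a unique decaying/matching solution for every horizontal frequency $\xi$; this is a finite explicit calculation analogous to the one used in Section \ref{growing mode} for the linearized stability analysis, and it yields the desired a priori estimate, at which point existence of the strong solution and uniqueness follow from the weak solution constructed above.
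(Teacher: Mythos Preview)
Your proposal is correct and outlines the standard argument for two-phase elliptic regularity with a flat interface. The paper itself does not give a proof: it simply refers to \cite[Theorem 3.1]{WTK} (for the analogous two-phase Stokes problem) and remarks that the same reasoning works here because the boundaries $\Sigma_\pm$ are flat, allowing reduction to classical one-phase elliptic theory with Dirichlet data. Your Lax--Milgram/Korn step, horizontal difference-quotient argument at the interface, and algebraic recovery of normal derivatives are exactly the ingredients such a proof entails, so there is no substantive difference in approach --- you have merely unpacked what the cited reference contains. The Lopatinskii--Shapiro verification you mention at the end is not strictly needed once the variational coercivity from Korn is in hand, since uniqueness is already established and regularity follows from the tangential differentiation plus equation-reading; you may safely omit that discussion.
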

\begin{proof}
We refer to \cite[Theorem 3.1]{WTK} for the case of two-phase Stokes problem, but the proof is the same here. It follows by making use of the flatness of the boundaries $\Sigma_\pm$ and applying the standard classical one-phase elliptic theory with Dirichlet boundary condition.
\end{proof}

We let $G$ denote a horizontal periodic slab with its boundary $\pa G$ (not necessarily flat) consisting of two smooth pieces. We shall recall the classical regularity theory for the Stokes problem with Dirichlet boundary conditions on $\pa G$,
\begin{equation}\label{stokes eq}
\begin{cases}
-\mu \Delta u +\nabla p =f  \quad &\hbox{in }G
\\\diverge{u} =h  \quad  &\hbox{in }G
\\u=\varphi\quad &\hbox{on }\pa G.
\end{cases}
\end{equation}
The following records the regularity theory for this problem.
\begin{lem}\label{stokes reg}
Let $r\ge 2$. If $f\in H^{r-2}(G),  h\in H^{r-1}(G), \varphi\in H^{r-1/2}(\pa G)$ be given such that
\begin{equation}
\int_G h =\int_{\pa G} \varphi \cdot\nu ,
\end{equation}
then there exists unique $u\in H^r(G),  p\in H^{r-1}(G)$(up to
constants) solving \eqref{stokes eq}. Moreover,
\begin{equation}
\norm{u}_{H^r(G)}+\norm{\nabla p}_{H^{r-2}(G)}\lesssim\norm{f}_{H^{r-2}(G)}+\norm{h}_{H^{r-1}(G)}+\norm{\varphi}_{H^{r-1/2}(\pa G)}.
\end{equation}
\end{lem}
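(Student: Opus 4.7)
My plan is to reduce to homogeneous Dirichlet data and then invoke the classical Cattabriga/Agmon--Douglis--Nirenberg (ADN) theory for the Stokes system on a smooth bounded domain, adapted to the horizontally periodic slab $G$ whose two boundary pieces are assumed smooth. The trace theorem first yields an extension $\bar\varphi\in H^r(G)$ of $\varphi$ with $\norm{\bar\varphi}_{H^r(G)}\ls \norm{\varphi}_{H^{r-1/2}(\pa G)}$. The compatibility hypothesis and the divergence theorem give $\int_G(h-\diverge\bar\varphi)=0$, so a Bogovskii-type right inverse of the divergence on $G$ produces $w\in H^r(G)\cap H^1_0(G)$ with $\diverge w=h-\diverge\bar\varphi$ and $\norm{w}_{H^r(G)}\ls \norm{h}_{H^{r-1}(G)}+\norm{\varphi}_{H^{r-1/2}(\pa G)}$. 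Writing $u=\tilde u+\bar\varphi+w$ and $\tilde f=f+\mu\Delta(\bar\varphi+w)\in H^{r-2}(G)$ reduces the problem to a homogeneous Dirichlet Stokes system with divergence-free unknown.

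I would then solve the reduced problem by Lax--Milgram applied to $\mu\int_G\nab \tilde u:\nab v$ on $V=\{v\in H^1_0(G):\diverge v=0\}$, recovering the pressure $p\in L^2(G)/\mathbb{R}$ from $-\mu\Delta\tilde u-\tilde f\in V^{\perp}$ via the de Rham/Ne\v cas lemma (valid because $G$ is a bounded Lipschitz connected slab admitting the inf--sup inequality). Uniqueness of $\tilde u$ and of $p$ modulo constants follows by testing against $\tilde u$ in the homogeneous problem. For the higher regularity $r\ge 2$, I localize via a smooth partition of unity subordinate to an atlas of $\pa G$: interior pieces are handled by standard interior Stokes regularity (difference quotients, or equivalently ADN with Douglis--Nirenberg indices $(2,\dots,2,1)$), while near a boundary point I straighten the smooth boundary by a $C^\infty$ diffeomorphism, apply the half-space Stokes estimate (classical via Fourier-side formulas or again via ADN), and pull the estimate back; the variable-coefficient and lower-order perturbations introduced by straightening are absorbed into the right-hand side by an induction on $r$ starting from the base case $r=2$. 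Patching via the partition of unity and undoing the boundary lift then deliver the stated bound.

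The main technical obstacle is the simultaneous bookkeeping of $\tilde u$ in $H^r$ and $\nab p$ in $H^{r-2}$ during the localization and straightening, since $p$ is only determined up to a constant and the coupling between the momentum equation and the incompressibility constraint must be preserved at every step; this is precisely what the ADN framework with its mixed Douglis--Nirenberg indices is designed to handle, and what makes the difference-quotient argument for Stokes subtler than for second-order scalar elliptic equations. The argument runs in close parallel with the proof invoked for the two-phase Lam\'e system in the preceding Lemma \ref{lame reg}---indeed, once the pressure is introduced as a separate unknown the Stokes system is essentially of the same elliptic type---so the net contribution of this lemma to the paper is chiefly to record the precise norms in which the estimate is needed for the Stokes regularity used in Section \ref{sec_combo}.
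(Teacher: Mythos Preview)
Your outline is correct and is essentially the classical argument one finds in the standard references. The paper itself does not give a proof: it simply cites Ladyzhenskaya \cite{L} with the one-line ``See \cite{L}.'' So in a sense you have done more than the paper does, sketching the actual mechanism (trace lift, Bogovskii right inverse, Lax--Milgram on divergence-free fields, de Rham for the pressure, then ADN or difference-quotient bootstrapping for higher regularity). This is indeed the content behind the citation, and nothing in your plan is wrong; just be aware that for the purposes of this paper the lemma is treated as a black box from the classical literature rather than something to be re-proved.
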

\begin{proof}
 See \cite{L}.
\end{proof}

\end{document}